\title{Eventually and asymptotically positive semigroups on
  Banach lattices}
\author[1]{Daniel Daners}%
\author[2]{Jochen Gl\"uck\thanks{Supported by a scholarship within the
    scope of the LGFG Baden-W\"urttemberg, Germany.}}%
\author[3]{James B. Kennedy\thanks{Partly supported by a fellowship of
    the Alexander von Humboldt Foundation, Germany.}}%
\affil[1]{School of Mathematics and Statistics, University of Sydney,
  NSW 2006, Australia\authorcr%
  \nolinkurl{daniel.daners@sydney.edu.au}}%
\affil[2]{Institut f\"ur Angewandte Analysis, Universit\"at Ulm,
  D-89069 Ulm, Germany\authorcr%
  \nolinkurl{jochen.glueck@uni-ulm.de}}%
\affil[3]{Institut f\"ur Analysis, Dynamik und Modellierung,
  Universit\"at Stuttgart, Pfaffenwaldring~57, D-70659 Stuttgart,
  Germany\authorcr%
  \nolinkurl{james.kennedy@mathematik.uni-stuttgart.de}}%
\date{\today}
\newtheorem{theorem}{Theorem}[section]
\newtheorem{lemma}[theorem]{Lemma}
\newtheorem{proposition}[theorem]{Proposition}
\newtheorem{corollary}[theorem]{Corollary}
\theoremstyle{definition}
\newtheorem{definition}[theorem]{Definition}
\newtheorem{example}[theorem]{Example}
\newtheorem{examples}[theorem]{Examples}
\theoremstyle{remark}
\newtheorem{remark}[theorem]{Remark}
\newtheorem{remarks}[theorem]{Remarks}
\numberwithin{equation}{section}
\DeclareMathOperator{\id}{\mathit I}
\DeclareMathOperator{\one}{\mathbf 1}
\DeclareMathOperator{\im}{im}
\DeclareMathOperator{\spr}{r}
\DeclareMathOperator{\spb}{s}
\DeclareMathOperator{\trace}{\gamma}
\DeclareMathOperator{\repart}{Re}
\DeclareMathOperator{\per}{per}
\DeclareMathOperator{\dist}{dist}
\newcommand{\bbN}{\mathbb{N}}
\newcommand{\bbZ}{\mathbb{Z}}
\newcommand{\bbR}{\mathbb{R}}
\newcommand{\bbC}{\mathbb{C}}
\newcommand{\bbT}{\mathbb{T}}
\newcommand{\calL}{\mathcal{L}}
\newcommand{\distPos}[1]{\operatorname{d}_+\left(#1\right)}
\newcommand{\phdot}{\mathord{\,\cdot\,}}
\let\oldthebibliography\thebibliography
\renewcommand\thebibliography[1]{
  \oldthebibliography{#1}
  \setlength{\parskip}{0pt}
  \setlength{\itemsep}{0pt plus 0.3ex}
  \small
}
\begin{document}

\maketitle

\begin{abstract}
  We develop a theory of eventually positive $C_0$-semigroups on Banach
  lattices, that is, of semigroups for which, for every positive initial
  value, the solution of the corresponding Cauchy problem becomes
  positive for large times. We give characterisations of such semigroups
  by means of spectral and resolvent properties of the corresponding
  generators, complementing existing results on spaces of continuous
  functions. This enables us to treat a range of new examples including
  the square of the Laplacian with Dirichlet boundary conditions, the
  bi-Laplacian on $L^p$-spaces, the Dirichlet-to-Neumann operator on
  $L^2$ and the Laplacian with non-local boundary conditions on $L^2$
  within the one unified theory.  We also introduce and analyse a weaker
  notion of eventual positivity which we call ``asymptotic positivity'',
  where trajectories associated with positive initial data converge to
  the positive cone in the Banach lattice as $t \to \infty$. This allows
  us to discuss further examples which do not fall within the
  above-mentioned framework, among them a network flow with non-positive
  mass transition and a certain delay differential equation.
\end{abstract}

{\footnotesize\textbf{Mathematics Subject Classification (2010):}
47D06, 47B65, 34G10, 35B09, 47A10}%
\par\noindent {\footnotesize\textbf{Keywords:} One-parameter semigroups
  of linear operators; semigroups on Banach lattices; eventually
  positive semigroup; asymptotically positive semigroup; positive
  spectral projection; eventually positive resolvent; asymptotically
  positive resolvent; Perron-Frobenius theory}

\section{Introduction}
\label{section:introduction-preliminaries}
While the study of positive operator semigroups is by now a classical
topic in the theory of $C_0$-semigroups (see e.g.~\cite{Arendt1986} for
a survey), the analysis of \emph{eventually} positive semigroups,
i.e.~semigroups which only become positive for positive, possibly large,
times, in various contexts seems to have emerged only within the last
decade.  Probably the first example, the idea to consider matrices whose 
powers are eventually positive, is somewhat older and was in large part 
motivated for example by the consideration of inverse eigenvalues problems 
(see e.g.~\cite{Brauer1961,Friedland1978,Zaslavsky1999} or
\cite[pp.~48--54]{Seneta1981}) and by an attempt to generalise the
classical Perron--Frobenius type spectral results to a wider class of
matrices (see e.g.~\cite{Elhashash2008,MR2117663}).

An analysis of continuous-time eventually positive matrix semigroups can
for example be found in \cite{Noutsos2008}; see also \cite{Olesky2009}
for some related results. The phenomenon of eventually
positive solutions of Cauchy problems was also observed at around the
same time in an infinite-dimensional setting in the context of
biharmonic equations; see \cite{FGG08} and \cite{GaGr08}. Another
infinite-dimensional occurrence of eventual positivity was analysed in
\cite{Daners2014}, where it was proven that the semigroup generated by
a class of Dirichlet-to-Neumann operators on a disk is eventually positive but
not positive in some cases.  A first attempt to develop a unified theory
of eventually positive $C_0$-semigroups was subsequently made by the
current authors in \cite{DanersI}, providing some spectral results on
Banach lattices and a characterisation of eventually strongly positive
semigroups on $C(K)$-spaces with $K$ compact.

The current paper has two principal aims. The first aim is to
characterise eventual strong positivity of resolvents and
$C_0$-semigroups on \emph{general} Banach lattices, not just in $C(K)$; see
Sections~\ref{section:projections-strong}--\ref{section:semigroups-strong}.
Unlike in $C(K)$-spaces, the positive cone in general Banach lattices may have
empty interior, a fact which poses
new challenges but allows us to consider a wide variety of new
examples: on Hilbert lattices, on $L^p$-spaces and on spaces of continuous
functions vanishing at the boundary of a sufficiently smooth bounded
domain; see Section~\ref{section:applications-strong}.

The second aim is to cover situations where the $C_0$-semigroup does not
satisfy the assumptions made in
Sections~\ref{section:projections-strong}--\ref{section:semigroups-strong},
but where there is nevertheless some weaker form of ``eventual
positivity''. This is done in
Sections~\ref{section:resolvents-asymptotic} and
\ref{section:semigroups-asymptotic}, where we introduce and characterise
a notion which we call \emph{asymptotic positivity}, where, roughly
speaking, denoting our semigroup by $(e^{tA})_{t \ge 0}$, the distance
of $e^{tA}f$ to the positive cone of the Banach lattice converges to
zero as $t \to \infty$, whenever $f$ itself is in the cone.  In this
framework we are able to drop the distinction between individual and
uniform eventual behaviour, which was necessary in our theory on
eventual positivity so far.
We give a number of applications in Section~\ref{section:applications-asymptotic}: 
we revisit the finite-dimensional case, a bi-Laplacian, and the case of self-adjoint 
operators on Hilbert lattices, including Dirichlet-to-Neumann operators; we also 
give a couple of new examples, namely a transport problem on a metric graph 
and a particular delay differential equation which generate semigroups that are 
asymptotically positive but not positive nor eventually positive in any 
previously introduced sense.

In the final section, Section~\ref{section:open-problems}, we collect a number 
of problems which are left open in, or emerge from, the current paper.

We note in passing that there are other notions of eventual or asymptotic positivity, 
for example in \cite{Chung2015}, where a positive forcing term is introduced 
to obtain a form of asymptotic positivity. We shall investigate eventual 
positivity as an inherent property of the semigroup, in particular without any
such forcing term.

Let us now formulate two theorems giving a -- somewhat incomplete --
summary of our main results, the first on eventual and the second on
asymptotic positivity. In what follows, we will denote by $E_+$ the
positive cone of a Banach lattice $E$; if $u \in E_+$, then $E_u$ is the
principal ideal generated by $u$, and we will write $v \gg_u 0$ and say
$v$ is strongly positive with respect to $u$ if there is a $c>0$ such
that $v \geq cu$. We refer to Section~\ref{section:notation} for a
complete description of our notation, and to
Definitions~\ref{def:resolvents-strong} and \ref{def:semigroups-strong}
for the relevant terminology.

\begin{theorem}
  Let $(e^{tA})_{t \ge 0}$ be a real and eventually differentiable
  $C_0$-semigroup with $\sigma(A)\neq\emptyset$ on a complex Banach
  lattice $E$. Suppose that the peripheral spectrum $\sigma_{\per}(A)$
  is finite and consists of poles of the resolvent. If $u \in E_+$ is
  such that $D(A) \subseteq E_u$, then the following assertions are
  equivalent:
  \begin{enumerate}[\upshape (i)]
  \item The semigroup $(e^{tA})_{t \ge 0}$ is individually eventually
    strongly positive with respect to $u$.
  \item The spectral bound $\spb(A)$ of $A$ is a dominant spectral value and the
    resolvent $R(\phdot,A)$ is individually eventually strongly positive
    with respect to $u$ at $\spb(A)$.
  \item $\spb(A)$ is a dominant spectral value and the spectral projection
    $P$ associated with $\spb(A)$ fulfils $Pf \gg_u 0$ for every $f>0$.
  \item $\spb(A)$ is a dominant spectral value. Moreover,
    $\ker(\spb(A)\id-A)$ is spanned by a vector $v \gg_u 0$ and
    $\ker(\spb(A)\id-A')$ contains a strictly positive functional.
  \end{enumerate}
\end{theorem}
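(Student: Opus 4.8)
The plan is to reduce to $\spb(A) = 0$ — replacing $A$ by $A - \spb(A)$ leaves all hypotheses and all four conclusions intact — and then to prove the implications out of (iii) (namely (iii)$\Rightarrow$(iv), (iii)$\Rightarrow$(i), (iii)$\Rightarrow$(ii)) together with (iv)$\Rightarrow$(iii) at the spectral/projection level, and finally the two reverse implications (i)$\Rightarrow$(iii) and (ii)$\Rightarrow$(iii) from the semigroup and resolvent sides. Two structural inputs are used throughout. First, since $\sigma_{\per}(A)$ is finite and consists of poles of the resolvent, there is a spectral projection $P$ onto the generalised eigenspace at $\spb(A)$ (and analogous ones at the other peripheral eigenvalues when $\spb(A)$ is not dominant), yielding a decomposition $E = \im P \oplus \ker P$ invariant under $(e^{tA})$ and $R(\phdot,A)$; $P$ is real because the semigroup is real; and eventual differentiability makes the spectral mapping theorem available, so the behaviour of $(e^{tA})$ as $t\to\infty$ is that of the finite polar peripheral part modulo an exponentially decaying remainder. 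Secondly, the smoothing hypothesis $D(A) \subseteq E_u$ together with the closed graph theorem makes $R(\lambda,A)$ — and likewise $P$, $AP$ and the holomorphic part of the resolvent at $\spb(A)$ — bounded from $E$ into the principal ideal $E_u$ with its gauge norm $\|\phdot\|_u$. This is the device that converts $\|\phdot\|$-smallness of remainder terms into order estimates $\le \varepsilon u$, i.e.\ into statements of the type ``$\gg_u 0$''; it is the substitute for an order unit, which a general $E$ lacks.

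For the implications out of (iii), note that (iii) presupposes $\spb(A)$ dominant, so $\sigma_{\per}(A) = \{\spb(A)\}$. To prove (iii)$\Rightarrow$(iv): fix $f_0 > 0$ and set $v := Pf_0 \gg_u 0$; the Krein--Rutman maximality argument gives $\im P = \mathbb{R}v$ — for real $w \in \im P \subseteq D(A) \subseteq E_u$ one has $|w| \le Cu$, and (replacing $w$ by $-w$ if necessary) $s := \sup\{t \ge 0 : v - tw \ge 0\} \in (0,\infty)$, with $v - sw$ fixed by $P$ and $\ge 0$, hence $\gg_u 0$ by (iii) if nonzero, contradicting maximality, so $w \in \mathbb{R}v$. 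A rank-one spectral projection has zero nilpotent part, so the pole is simple, $\im P = \ker(\spb(A)\id - A) = \mathbb{R}v$, dually $\im P' = \ker(\spb(A)\id - A') = \mathbb{R}\psi$ with $P = \psi(\phdot)v$, and $Pf \gg_u 0$ for all $f > 0$ forces $\psi > 0$; this is (iv). Next (iii)$\Rightarrow$(i): with the pole simple, $e^{tA}f = e^{\spb(A)t}\bigl(Pf + e^{-\spb(A)t}e^{tA}(\id - P)f\bigr)$, where eventual differentiability together with the spectral gap on $\ker P$ gives $\|e^{-\spb(A)t}e^{tA}(\id-P)f\| \le Me^{-\varepsilon t}\|f\|$, upgraded by the closed graph fact (and the fact that the orbit eventually lies in $D(A)$) to the same decay in $\|\phdot\|_u$; as $Pf \ge c\,u$ with $c>0$, we obtain $e^{tA}f \ge \tfrac{c}{2}e^{\spb(A)t}u$ for large $t$, i.e.\ (i). Similarly (iii)$\Rightarrow$(ii) follows from $R(\lambda,A) = (\lambda-\spb(A))^{-1}P + S(\lambda)$ with $S$ holomorphic at $\spb(A)$ and $\|S(\lambda)\|_{E\to E_u}$ bounded nearby: for $f>0$ the term $(\lambda-\spb(A))^{-1}Pf \ge (\lambda-\spb(A))^{-1}c\,u$ dominates $S(\lambda)f$ in $\|\phdot\|_u$ as $\lambda\downarrow\spb(A)$. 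Finally (iv)$\Rightarrow$(iii) is immediate: a Jordan chain $(\spb(A)\id - A)w = v$ is impossible since pairing with the strictly positive $\phi$ gives $0 = \langle w,(\spb(A)\id - A')\phi\rangle = \langle v,\phi\rangle > 0$, so the pole is simple, $P = \langle\phdot,\phi\rangle\,v/\langle v,\phi\rangle$, and $\phi > 0$, $v \gg_u 0$ yield (iii).

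The substance is in the two reverse implications. For (i)$\Rightarrow$(iii) one must first show $\spb(A)$ is dominant. Write $E = E_{\per} \oplus E_{<}$, with $E_\per = \im P_\per$ the sum of all peripheral generalised eigenspaces and $\|e^{tA}|_{E_<}\| \le Me^{-\varepsilon t}$; for large $t$, $e^{-\spb(A)t}e^{tA}f$ is then, up to $o(1)$ in $\|\phdot\|_u$, a ``polynomial in $t$ times trigonometric'' expression in the peripheral Laurent data. Individual eventual strong positivity forces this expression to be eventually $\ge c_f u > 0$; testing along sequences of times chosen so that the trigonometric factors work against positivity, and using a separating family of positive functionals on $E$, forces all peripheral Laurent coefficients of order $\ge 2$ and all peripheral points other than $\spb(A)$ to vanish — so $\spb(A)$ is a dominant simple pole. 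Then $e^{-\spb(A)t}e^{tA}f = Pf + o(1)$ and the eventual bound $e^{-\spb(A)t}e^{tA}f \ge c_f u$ passes to $Pf \ge c_f u$, i.e.\ (iii). For (ii)$\Rightarrow$(iii): since $\spb(A)$ is dominant and a pole of order $m$, multiplying the Laurent expansion $R(\lambda,A) = \sum_{n=1}^m (\lambda-\spb(A))^{-n}(A-\spb(A))^{n-1}P + (\text{hol})$ by $(\lambda-\spb(A))^m$ and letting $\lambda\downarrow\spb(A)$ gives $(A-\spb(A))^{m-1}P \ge 0$; the extra information still needed — that the pole is simple and that $P \ge 0$ improves to $Pf \gg_u 0$ — must be extracted from eventual \emph{strong} positivity of the resolvent via a quantitative refinement of this limiting argument, exploiting the lower bounds $R(\lambda,A)f \ge c(f,\lambda)\,u$ rather than mere positivity.

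The step I expect to be the principal obstacle is precisely the Perron--Frobenius core of (i)$\Rightarrow$(iii), and its resolvent counterpart in (ii)$\Rightarrow$(iii): turning positivity of orbits (respectively of resolvents) into the rigidity statements that $\sigma_{\per}(A)$ collapses to the single real point $\spb(A)$ and that the pole there is simple with a strictly positive eigenfunctional. In $C(K)$-spaces this is greatly eased by the presence of an order unit, which lets one pass from individual to uniform estimates and run classical compactness arguments; here $E_+$ may have empty interior, only \emph{individual} eventual strong positivity is assumed, and the oscillation-killing arguments must instead be carried out using the principal ideal $E_u$, its gauge norm, a separating family of positive functionals, and — crucially — the closed-graph boundedness of $R(\lambda,A)$ and $P$ as maps into $(E_u,\|\phdot\|_u)$. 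Making this machinery deliver the simplicity of the peripheral pole and the collapse of the peripheral spectrum is where the real work of the proof lies.
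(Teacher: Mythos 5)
The implications you run out of (iii) and (iv) are essentially fine: your maximality argument for $\dim \im P = 1$ is a legitimate variant of the paper's sublattice argument, and your Laurent-splitting proofs of (iii)$\Rightarrow$(i) and (iii)$\Rightarrow$(ii) amount to the paper's use of $\lambda R(\lambda,A)\to P$ in $\calL(E,D(A))$, the embedding $D(A)\hookrightarrow E_u$, and the strictly negative growth bound on $\ker P$ obtained from eventual norm continuity plus dominance. The genuine gap sits exactly where you flag it, but the mechanisms you sketch there would not close it. For (i)$\Rightarrow$(iii), ``testing along sequences of times so that the trigonometric factors work against positivity'' cannot by itself annihilate the nonzero peripheral frequencies: positivity of a cone-valued almost periodic orbit does not force its nonzero Fourier--Bohr coefficients to vanish (already $1+\cos t\ge 0$ in the scalar case), and what such time sequences actually yield is only that $e^{tA}P_{\per}\ge 0$ for all $t\ge 0$, i.e.\ that the projected semigroup on $\im P_{\per}$ is positive --- not that $\sigma_{\per}(A)=\{\spb(A)\}$. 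The paper closes precisely this step with a lattice-theoretic input you never invoke: it first shows the rescaled semigroup is bounded (using a positive eigenvector for $\spb(A)$, imported from the companion paper, together with the order-boundedness Lemma~\ref{lem:operator-family}(i) and the smoothing into $E_u$), deduces that the peripheral poles are simple, obtains $e^{tA}P_{\per}\ge 0$ along Kronecker-type sequences, and then applies the imaginary additive cyclicity of the peripheral spectrum of a \emph{positive} semigroup (Arendt et al.), which together with finiteness of $\sigma_{\per}(A)$ forces collapse to $\{\spb(A)\}$; only afterwards is $P\ge 0$ upgraded to $P\gg_u 0$ via Lemma~\ref{lem:operator-family}(ii). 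Neither the cyclicity theorem nor any substitute for it appears in your plan, and you also never establish the positive eigenvector for $\spb(A)$ on which the boundedness step rests.

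The implication (ii)$\Rightarrow$(iii) is in the same condition: you state that simplicity of the pole and the upgrade from positivity of $P$ to $P\gg_u 0$ ``must be extracted via a quantitative refinement,'' but that extraction \emph{is} the content of the implication, and your remark that $(\lambda-\spb(A))^m R(\lambda,A)\to(A-\spb(A))^{m-1}P\ge 0$ is only the first step. The paper's actual route is: the leading Laurent coefficient $U\ge 0$ has range consisting of eigenvectors, giving a positive eigenvector; Lemma~\ref{lem:operator-family}(i) (order-domination of $|{\lambda R(\lambda,A)f}|$ by a multiple of a fixed vector that is $\gg_u 0$, available because $f\in E_u$) gives boundedness of $(\lambda R(\lambda,A)|_{E_u})$ in $\calL(E_u,E)$; the resolvent identity together with $R(\mu,A)\in\calL(E,E_u)$ (closed graph plus $D(A)\subseteq E_u$) transfers this to boundedness of $\lambda R(\lambda,A)$ in $\calL(E)$, which forces the pole at $\spb(A)$ to be simple; and Lemma~\ref{lem:operator-family}(ii) then yields $P=U\gg_u 0$. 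Without these steps (or genuine equivalents) your treatment of the two hard implications (i)$\Rightarrow$(iii) and (ii)$\Rightarrow$(iii) remains a plan rather than a proof.
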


The assertions of the above theorem are shown in
Corollary~\ref{cor:projections-strong},
Theorem~\ref{thm:resolvents-strong} and
Corollary~\ref{cor:semigroups-strong-differentiable}.
 
The eventual differentiability of the semigroup and the domination
condition $D(A) \subseteq E_u$ can be partially weakened at the expense
of losing the equivalent assertion (ii) and of needing an additional
boundedness condition in assertions (iii) and (iv); see
Theorem~\ref{thm:semigroups-strong} and
Corollary~\ref{cor:projections-strong}.

For the precise definition of \emph{asymptotic positivity} which appears
in the next theorem we refer the reader to
Definitions~\ref{def:resolvents-asymptotic} and
\ref{def:semigroups-asymptotic}.

\begin{theorem}
  Let $(e^{tA})_{t\ge 0}$ be an eventually norm continuous
  $C_0$-semigroup with $\sigma(A)\neq\emptyset$ on a complex Banach
  lattice $E$. Suppose that the peripheral spectrum $\sigma_{\per}(A)$
  is finite and consists of simple poles of the resolvent. Then the
  following assertions are equivalent:
  \begin{itemize}
  \item[\upshape (i)] The semigroup $(e^{tA})_{t \ge 0}$ is individually
    asymptotically positive.
  \item[\upshape (i')] The semigroup $(e^{tA})_{t \ge 0}$ is uniformly
    asymptotically positive.
  \item[\upshape (ii)] $\spb(A)$ is a dominant spectral value of $A$ and
    $R(\phdot,A)$ is individually asymptotically positive.
  \item[\upshape (ii')] $\spb(A)$ is a dominant spectral value of $A$
    and $R(\phdot,A)$ is uniformly asymptotically positive.
  \item[\upshape (iii)] $\spb(A)$ is a dominant spectral value of $A$
    and the associated spectral projection $P$ is positive.
  \end{itemize}
\end{theorem}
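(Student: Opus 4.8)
The plan is to show that (iii) is equivalent to each of the other four assertions, by closing the two cycles (i$'$)$\Rightarrow$(i)$\Rightarrow$(iii)$\Rightarrow$(i$'$) and (ii$'$)$\Rightarrow$(ii)$\Rightarrow$(iii)$\Rightarrow$(ii$'$). Here (i$'$)$\Rightarrow$(i) and (ii$'$)$\Rightarrow$(ii) are trivial, so the work lies in (iii)$\Rightarrow$(i$'$), (iii)$\Rightarrow$(ii$'$), (ii)$\Rightarrow$(iii) and — the heart of the matter — (i)$\Rightarrow$(iii). First I would set up the spectral picture. Replacing $A$ by $A-\spb(A)\,\id$ changes neither the hypotheses nor, up to the positive factor $e^{-\spb(A)t}$ (resp.\ a Möbius substitution in the resolvent), any of the asymptotic positivity statements, so we may assume $\spb(A)=0$; an eventually norm continuous semigroup with $\sigma(A)\neq\emptyset$ satisfies the spectral mapping theorem and has $\spb(A)=\gbd(A)\in\sigma(A)$. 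Let $P$ be the spectral projection for the finite set $\sigma_{\per}(A)$ of simple poles $i\beta_0=0,i\beta_1,\dots,i\beta_n$, so $E=\im P\oplus\ker P$. Since each $i\beta_k$ is isolated in $\sigma(A)$ and, by eventual norm continuity, $\sigma(A)$ meets $\{\repart\lambda\ge-\varepsilon\}$ in a bounded set for small $\varepsilon>0$, one gets $\spb(A|_{\ker P})<0$, hence $\lVert e^{tA}(\id-P)\rVert\to0$ exponentially; and simplicity of the poles makes $A|_{\im P}=\sum_k i\beta_k P_k$ semisimple, so $e^{tA}P=\sum_k e^{i\beta_k t}P_k$ is uniformly bounded for $t\in\bbR$. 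Consequently $e^{tA}f=\sum_k e^{i\beta_k t}P_k f+o(1)$ as $t\to\infty$, uniformly for $\lVert f\rVert\le1$, and $\lambda R(\lambda,A)\to P_0$ in operator norm as $\lambda\downarrow0$ (the residue at the simple pole $0$; the poles $i\beta_k$, $k\ge1$, are holomorphic near $0$).

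The implication (iii)$\Rightarrow$(i$'$),(ii$'$) is then immediate: if $\spb(A)=0$ is dominant, then $\sigma_{\per}(A)=\{0\}$, $P=P_0$, and both $e^{tA}\to P_0$ and $\lambda R(\lambda,A)\to P_0$ hold in operator norm, so $P_0\ge0$ gives $\sup_{\lVert f\rVert\le1}\dist(e^{tA}f,E_+)\le\lVert e^{tA}-P_0\rVert\to0$ and likewise for the (suitably normalised) resolvent. The implication (ii)$\Rightarrow$(iii) is equally short: dominance is part of the hypothesis, and since $\lambda R(\lambda,A)f\to P_0 f$ with $E_+$ closed, individual asymptotic positivity of the resolvent forces $P_0 f\in E_+$ for every $f\ge0$, i.e.\ $P_0\ge0$.

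It remains to prove (i)$\Rightarrow$(iii). That $P_0\ge0$ holds can be shown \emph{without} knowing dominance: for $f\ge0$ write $R(\lambda,A)f=\int_0^T e^{-\lambda s}e^{sA}f\,ds+\int_T^\infty e^{-\lambda s}e^{sA}f\,ds$ and use that $\dist(\phdot,E_+)$ is sublinear and $1$-Lipschitz (and that the cone is closed) to estimate
$\dist(\lambda R(\lambda,A)f,E_+)\le\lambda\lVert\int_0^T e^{-\lambda s}e^{sA}f\,ds\rVert+e^{-\lambda T}\sup_{s\ge T}\dist(e^{sA}f,E_+)$;
letting $\lambda\downarrow0$ and then $T\to\infty$ yields $\dist(\lambda R(\lambda,A)f,E_+)\to0$, so $P_0 f=\lim_{\lambda\downarrow0}\lambda R(\lambda,A)f\in E_+$. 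The remaining point — that (i) forces $0=\spb(A)$ to be dominant, i.e.\ $\beta_1=\dots=\beta_n=0$ — is the genuinely delicate one. By the displayed asymptotics, for each $f\ge0$ the function $t\mapsto\dist\bigl(\sum_k e^{i\beta_k t}P_k f,\,E_+\bigr)$ is almost periodic and tends to $0$ as $t\to+\infty$, hence vanishes identically; thus $G(t):=\sum_k e^{i\beta_k t}P_k$ satisfies $G(t)\ge0$ for every $t\in\bbR$. So $(G(t))_{t\in\bbR}$ is a bounded group of positive operators with $G(0)=P$; as $G(t)$ and its inverse $G(-t)$ are positive on $\im P$, which carries the lattice structure induced by the positive projection $P$, each $G(t)|_{\im P}$ is a lattice isomorphism, and $(G(t)|_{\im P})_{t\in\bbR}$ is a bounded $C_0$-group of lattice isomorphisms whose semisimple generator has the finite purely imaginary spectrum $\{i\beta_k\}$.

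The main obstacle is precisely to conclude from this that the group is trivial, i.e.\ all $\beta_k=0$. This is a lattice-theoretic rigidity phenomenon which genuinely fails for general ordered Banach spaces — rotations of the ``ice-cream cone'' in $\bbR^3$ form a bounded group of cone-preserving operators with purely imaginary spectrum — so the argument must exploit the lattice structure: a bounded group of lattice isomorphisms whose generator is semisimple with \emph{finite} spectrum must reduce to the identity, the finiteness ruling out the rotation-type examples (such as the shift group on $L^p$ of the circle). This can be extracted from the structure of compact groups of lattice isomorphisms, or routed through the Perron–Frobenius-type analysis of the peripheral point spectrum developed in the earlier sections. Granting it, $\beta_1=\dots=\beta_n=0$, so $\spb(A)=0$ is dominant; together with $P_0\ge0$ this is (iii). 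Stringing the implications together closes both cycles and establishes the equivalence.
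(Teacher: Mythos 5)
Most of your proposal runs parallel to the paper's own argument and those parts are fine: the reduction to $\spb(A)=0$, the use of eventual norm continuity to get $\spb(A|_{\ker P_{\per}})<0$ and hence uniform exponential stability on $\ker P_{\per}$, the asymptotics $e^{tA}=e^{tA}P_{\per}+o(1)$, the implications (iii)$\Rightarrow$(i$'$),(ii$'$) and (ii)$\Rightarrow$(iii), and the almost-periodicity argument giving $e^{tA}P_{\per}\ge 0$ for all $t$ (the paper gets the same conclusion with a sequence $t_n\to\infty$ such that $e^{t_nA}P_{\per}\to P_{\per}$ strongly, from \cite[Proposition~2.3]{DanersI}, combined with Arendt--Batty stability on $\ker P_{\per}$). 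The genuine gap is exactly where you say the heart of the matter lies: the step from ``$(G(t))_{t\in\bbR}$ is a bounded group of positive operators on $\im P_{\per}$ with finite purely imaginary spectrum'' to ``all $\beta_k=0$'' is not proved; you write ``Granting it''. Moreover, neither of your suggested sources fills the hole as stated: the ``Perron--Frobenius-type analysis of the peripheral point spectrum developed in the earlier sections'' does not exist in this paper (Sections~3--5 concern strongly positive projections and eventually strongly positive resolvents/semigroups; no cyclicity theorem is proved there), and ``the structure of compact groups of lattice isomorphisms'' is left entirely unspecified. So, as written, the decisive part of (i)$\Rightarrow$(iii) is an unproven black box.

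The paper closes this step as follows. From $e^{tA}P_{\per}\ge 0$ one gets $P_{\per}\ge 0$, so $\im P_{\per}$ is a complex Banach lattice under an equivalent norm (\cite[Proposition~III.11.5]{Schaefer1974}); the restricted semigroup is then a \emph{positive} bounded $C_0$-semigroup whose generator has spectrum $\sigma_{\per}(A)\neq\emptyset$, and the classical Perron--Frobenius theory of positive semigroups (\cite[Proposition~C-III.2.9 and Theorem~C-III.2.10]{Arendt1986}) yields that $\sigma_{\per}(A)$ is imaginary additively cyclic; finiteness then forces $\sigma_{\per}(A)=\{\spb(A)\}$, which is precisely the rigidity you need (and simultaneously gives $\spb(A)\in\sigma(A)$). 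This is imported from the literature, not recoverable from the earlier sections of the paper; with that citation supplied your argument goes through. Two smaller remarks: your separate Abel-limit argument for $P_0\ge 0$ tacitly assumes $0\in\sigma(A)$, which is part of what dominance asserts, and it becomes redundant once dominance is known, since $P=P_{\per}=G(0)\ge 0$ already; and note that $\im P_{\per}$ need not be finite dimensional (simple poles only control the pole order, not $\dim\ker(i\beta_k\id-A)$), so any route via ``compact groups'' must be formulated for the strong closure of $\{G(t)\}$ on a possibly infinite-dimensional lattice.
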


This theorem follows from Theorems~\ref{thm:resolvents-asymptotic} and
\ref{thm:semigroups-asymptotic} and from 
Remark~\ref{rem:eventually-norm-continuous-semigroup}.

\section{Notation and preliminaries}
\label{section:notation}
Throughout this article, we will use the following notation.  We assume
that the reader is familiar with the theory of $C_0$-semigroups (see for instance
\cite{Engel2000,Engel2006}), and with the theory of real and complex Banach
lattices (see for instance \cite{Schaefer1974,Meyer-Nieberg1991}).
If $E$ is a complex Banach lattice, then it is by definition
the complexification of a real Banach lattice and we always denote
this real Banach lattice by $E_\bbR$ and call it the \emph{real part}
of $E$. Throughout, we suppose that $E$ and $F$
are Banach spaces and denote by $\calL(E,F)$ the space of bounded linear
operators from $E$ to $F$ (or by $\calL(E)$ if $F = E$); in the special
case where $E\subseteq F$ and the natural embedding is continuous we
write $E\hookrightarrow F$.

\paragraph{Positivity and related notions}
Suppose $E$ and $F$ be real or complex Banach lattices. We denote by
\begin{displaymath}
  E_+ = \{u \in E\colon u \ge 0\}
\end{displaymath}
the \emph{positive cone} in $E$. An element $u \in E_+$ is called
\emph{positive}. We write $u > 0$ to say that $u \ge 0$ and $u \neq 0$.
For $f \in E$ we denote by
\begin{equation}
  \label{eq:def-distPos}
  \distPos{f} := \dist(f,E_+)
\end{equation}
the distance of $f$ to the positive cone $E_+$.  As usual, the
\emph{principal ideal} generated by $u \in E_+$ is given by
\begin{displaymath}
  E_u :=\bigl\{f \in E\colon\text{$\exists c \ge 0$, $|f| \le c u$}\bigr\}.
\end{displaymath}
If $E$ is a real Banach lattice, we define the \emph{gauge norm} of $f
\in E_u$ with respect to $u$ by
\begin{displaymath}
  \|f\|_u := \inf\{\lambda\geq 0\colon |f|\leq\lambda u\}.
\end{displaymath}
If $E$ is a complex Banach lattice, and thus the complexification of a
real Banach lattice $E_{\mathbb R}$, then we define the gauge norm
$\|\phdot\|_u$ on $E_u$ to be the lattice norm complexification (see
\cite[Section~II.11]{Schaefer1974}) of the gauge norm $\|\phdot\|_u$ on
$(E_{\mathbb R})_u$. When endowed with the gauge norm, $E_u$ embeds
continuously into $E$.  What is important for our purposes is that
$E_u$ with the gauge norm is a Banach lattice and we have an isometric
lattice isomorphism
\begin{displaymath}
  E_u\cong C(K)
\end{displaymath}
for some compact Hausdorff space $K$; see \cite[Corollary to Prop~II.7.2
and Theorem~II.7.4]{Schaefer1974}).

We call $u \in E_+$ a \emph{quasi-interior point} of $E_+$ and write $u
\gg 0$ if $E_u$ is dense in $E$. If $u$ is a quasi-interior point of
$E_+$, then we say that $f \in E$ is \emph{strongly positive with
  respect to $u$} and write $f \gg_u 0$ if there is a $c > 0$ such that
$f \ge cu$. We say $f$ is \emph{strongly negative with respect to $u$}
and write $f \ll_u 0$ if $-f \gg_u0$. Actually, those definitions make
sense for arbitrary positive vectors $u$, but we shall only need them
in case that $u$ is a quasi-interior point of $E_+$.

An operator $T \in \calL(E,F)$ is called \emph{positive} if
$TE_+\subseteq F_+$; we say that $T$ is \emph{strongly positive} and 
denote this by $T \gg 0$ if $Tf \gg 0$ for all $f >0$. 
Given a quasi-interior point $u
\in E_+$, we say that $T$ is \emph{strongly positive with respect to
  $u$} and write $T \gg_u 0$ if $Tf \gg_u 0$ whenever $f > 0$.  We call
$T$ \emph{strongly negative with respect to $u$} and write $T \ll_u 0$
if $-T \gg_u 0$. A positive operator $T \in \calL(E)$ is called
\emph{irreducible} if $\{0\}$ and $E$ are the only $T$-invariant closed
ideals in $E$.

The dual space of $E$ is denoted by $E'$ and is again a Banach lattice.
A linear functional $\varphi \in E'$ is called \emph{strictly positive}
if $\langle \varphi, f \rangle > 0$ for every $f > 0$, i.e.\ $\varphi$ is
strictly positive if and only if it is strongly positive as an operator from
$E$ to $\bbR$ (or $\bbC$). Note that
$\varphi$ is automatically strictly positive if it is a quasi-interior
point of $(E')_+$, but the converse is not true. We should point out that 
this causes the following ambiguity in our notation: if we write $\varphi \gg 0$ for
a functional $\varphi \in E'$, then this could either mean that $\varphi$
is a quasi-interior point of $E'_+$ or that $\varphi$ is strongly positive
as an operator from $E$ to the scalar field, i.e.\ that $\varphi$ is strictly 
positive. For this reason, we never use the notation $\gg$ for functionals.

\paragraph{Linear operators, resolvent and spectrum}
The domain of an operator $A$ on a Banach space $E$ will always be
denoted by $D(A)$, and if not stated otherwise, $D(A)$ will be assumed
to be endowed with the graph norm. If $A$ is densely defined, then its 
adjoint is well defined and we denote it by $A'$. Let $E$ and $F$ be
complex Banach lattices, i.e.~let them be complexifications of real
Banach lattices $E_{\mathbb R}$ and $F_{\mathbb R}$. We call an operator
$A\colon D(A)\subseteq E\to F$ \emph{real} if $D(A) = D(A) \cap
E_{\mathbb R} + i D(A) \cap E_{\mathbb R}$ and if $A (D(A) \cap
E_{\mathbb R})\subseteq F_{\mathbb R}$. Positive and, in particular,
strongly positive operators are automatically real.

Let $A$ be a closed linear operator on a complex Banach space $E$; we
denote its \emph{spectrum} by $\sigma(A)$, its \emph{resolvent set} by
$\rho(A) := \bbC \setminus \sigma(A)$, and for each $\lambda \in
\rho(A)$ the operator $R(\lambda,A) := (\lambda\id-A)^{-1}$ denotes the
\emph{resolvent} of $A$ at $\lambda$. The \emph{spectral bound} of $A$
is given by
\begin{displaymath}
  \spb(A):=\sup\bigl\{\repart \lambda\colon \lambda \in \sigma(A)\bigr\}
  \in [-\infty, \infty].    
\end{displaymath}  
If $\spb(A) \in \bbR$, the set
\begin{displaymath}
  \sigma_{\per}(A) := \sigma(A) \cap (\spb(A) + i\bbR)
\end{displaymath}
is called the \emph{peripheral spectrum} of $A$. We call $\spb(A)$ a
\emph{dominant spectral value} of $A$ if $\sigma_{\per}(A) =
\{\spb(A)\}$. In particular, this includes the assertion that $\spb(A)
\in \sigma(A)$.

A spectral value $\mu \in \sigma(A)$ is called a \emph{pole of the
  resolvent} of $A$ if the analytic mapping $\rho(A)\to\calL(E)$,
$\lambda\mapsto R(\lambda,A)$ has a pole at $\mu$.  We will make
extensive use of the Laurent series expansion of $R(\phdot,A)$ about its
poles and of the spectral projection $P$ associated with a pole of
$R(\phdot,A)$. Details on this can
be found in \cite[Section~2]{DanersI}, \cite[Section-III.6.5]{Kato1976},
\cite[Section~VIII.8]{Yosida1995}, \cite[Section~IV.1]{Engel2000} or
\cite{campbell:13:lac}.

\paragraph{Semigroups}
Suppose the operator $A$ on a Banach space $E$ generates a
$C_0$-semigroup, which will be denoted by $(e^{tA})_{t \ge 0}$. This
semigroup is called \emph{eventually differentiable} if there is a $t_0
> 0$ such that $e^{t_0A}E \subseteq D(A)$, \emph{eventually norm
  continuous} if there is $t_0 \ge 0$ such that the mapping
$[t_0,\infty) \to \calL(E)$, $t\mapsto e^{tA}$ is continuous with
respect to the operator norm on $\calL(E)$, and \emph{uniformly
  exponentially stable} if $\| e^{tA} \|_{\calL(E)} \to 0$ as $t \to
\infty$. A $C_0$-semigroup $(e^{tA})_{t \ge 0}$ on a complex Banach 
lattice $E$ is called \emph{real} if each operator $e^{tA}$ is real. It is easy 
to check that a $C_0$-semigroup on $E$ is real if and only if its generator
is real.

\section{Strongly positive projections}
\label{section:projections-strong}
In this section, we consider eigenvalues of linear operators on complex
Banach lattices and characterise when the corresponding spectral
projection is strongly positive. Our first result is the following
analogue of \cite[Proposition~3.1]{DanersI} for arbitrary Banach
lattices.

\begin{proposition}
  \label{prop:projections-strong}
  Let $A$ be a closed, densely defined and real operator on a complex
  Banach lattice $E$, let $\lambda_0 \in \bbR$ be an eigenvalue of $A$
  and a pole of $R(\phdot,A)$, and let $P$ be the corresponding spectral
  projection. Then the following assertions are equivalent:
  \begin{enumerate}[\upshape (i)]
  \item $P$ is positive and irreducible.
  \item $P \gg 0$.
  \item The eigenvalue $\lambda_0$ of $A$ is geometrically
    simple. Moreover, $\ker(\lambda_0\id - A)$ contains a quasi-interior
    point of $E_+$ and $\ker(\lambda_0\id - A')$ contains a strictly
    positive vector.
  \item The eigenvalue $\lambda_0$ of $A$ is algebraically simple,
    $\ker(\lambda_0\id-A)$ contains a quasi-interior point of $E_+$ and
    $\im(\lambda_0\id-A) \cap E_+ = \{0\}$.
  \end{enumerate}
  If assertions {\upshape (i)--(iv)} are fulfilled, then $\lambda_0$ is
  a simple pole of the resolvents $R(\phdot,A)$ and $R(\phdot,A')$ and
  $\lambda_0$ is the only eigenvalue of $A$ having a positive
  eigenvector.
\end{proposition}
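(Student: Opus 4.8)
The plan is to establish the cycle of implications (i) $\Rightarrow$ (ii) $\Rightarrow$ (iii) $\Rightarrow$ (iv) $\Rightarrow$ (i), and then to extract the additional assertions afterwards. The conceptual engine is the identification $E_u \cong C(K)$ for a quasi-interior point $u$, together with the corresponding result \cite[Proposition~3.1]{DanersI} on $C(K)$-spaces, which handles the strongly positive case there. So the first step, for (iii) $\Rightarrow$ (iv) and for transporting properties, is: once we know $\ker(\lambda_0\id-A)$ contains a quasi-interior point $u$, the rank-one-or-higher structure of $P$ lives, in a suitable sense, inside $E_u$; more precisely, under (iii) the spectral projection satisfies $P = u \otimes \varphi$ where $\varphi$ is the strictly positive eigenfunctional (after normalising $\langle\varphi,u\rangle=1$), so $P$ maps into $\operatorname{span}\{u\}\subseteq E_u$, and one reads off $PE_+ \subseteq E_+$ and irreducibility directly, while algebraic simplicity follows because $P$ has rank one. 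The implication (ii) $\Rightarrow$ (iii) is where the $C(K)$-theory is invoked head-on: $P\gg 0$ forces $\operatorname{im}P$ to contain a quasi-interior point; since $\operatorname{im}P$ is finite-dimensional and a sublattice-like object under the assumptions, one shows it is one-dimensional, hence $\lambda_0$ is geometrically simple, and the dual projection $P'$ is likewise strongly positive on $E'$, yielding the strictly positive functional in $\ker(\lambda_0\id-A')$.

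For (iv) $\Rightarrow$ (i): algebraic simplicity makes $P$ rank one, say $P = v\otimes\psi$ with $v$ the quasi-interior eigenvector; the condition $\operatorname{im}(\lambda_0\id-A)\cap E_+=\{0\}$ together with the direct sum decomposition $E = \operatorname{im}P \oplus \ker P = \operatorname{span}\{v\}\oplus\operatorname{im}(\lambda_0\id-A)$ forces $\psi$ to be positive (if $f\geq 0$ then $f - \langle\psi,f\rangle v \in \ker P = \operatorname{im}(\lambda_0\id-A)$, and testing positivity/using that this lies in a cone-avoiding subspace pins down the sign of $\langle\psi,f\rangle$), and then $P = v\otimes\psi$ is clearly positive; irreducibility is immediate from $v$ being quasi-interior since any closed $P$-invariant ideal either is $\{0\}$ or contains $v$ hence is dense hence all of $E$. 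The remaining implication (i) $\Rightarrow$ (ii) is the only place one genuinely needs a Perron--Frobenius/de Pagter-type argument: a positive irreducible projection that is a spectral projection of a pole has, by the band/ideal structure and finite rank, rank one with quasi-interior range vector — I would cite the relevant irreducibility machinery (e.g.\ as in \cite{Schaefer1974} or the techniques already used in \cite{DanersI}) rather than redo it.

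Finally, the three trailing claims: simplicity of the pole for $R(\phdot,A)$ follows because algebraic simplicity of $\lambda_0$ as an eigenvalue means the corresponding spectral subspace is one-dimensional with $A$ acting as $\lambda_0$, so the Laurent expansion of $R(\phdot,A)$ at $\lambda_0$ has principal part $P/(\lambda-\lambda_0)$ only, i.e.\ a simple pole; the same for $R(\phdot,A')$ by dualising, noting $P'$ is the spectral projection of $A'$ at $\lambda_0$ and has the same rank. That $\lambda_0$ is the \emph{only} eigenvalue with a positive eigenvector is a standard consequence of the existence of the strictly positive eigenfunctional $\varphi\in\ker(\lambda_0\id-A')$: if $Aw = \mu w$ with $w>0$, then $\mu\langle\varphi,w\rangle = \langle\varphi,Aw\rangle = \langle A'\varphi,w\rangle = \lambda_0\langle\varphi,w\rangle$, and $\langle\varphi,w\rangle>0$ by strict positivity, so $\mu=\lambda_0$.

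The main obstacle I anticipate is the step (ii) $\Rightarrow$ (iii), specifically showing that strong positivity of $P$ forces geometric simplicity of $\lambda_0$ and produces the strictly positive dual eigenfunctional: the finite-dimensionality of $\operatorname{im}P$ helps, but one must argue carefully that a finite-dimensional subspace of $E$ on which a strongly positive projection acts as the identity cannot have dimension $\geq 2$ — this is clear in $C(K)$ but requires the reduction via $E_u$ (with $u$ any quasi-interior point in the range) to pull the general Banach lattice case back to the $C(K)$ result of \cite{DanersI}. Getting that reduction to respect the spectral-projection structure of $A$ (and not merely the lattice structure) is the delicate point.
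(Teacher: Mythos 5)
Your cycle of implications (i)~$\Rightarrow$~(ii)~$\Rightarrow$~(iii)~$\Rightarrow$~(iv)~$\Rightarrow$~(i) differs from the paper's (i)~$\Rightarrow$~(iii)~$\Rightarrow$~(iv)~$\Rightarrow$~(ii)~$\Rightarrow$~(i), and the reordering quietly changes where the real work has to happen. The paper front-loads everything into a lemma (every positive non-zero irreducible projection on a Banach lattice has one-dimensional range spanned by a quasi-interior point, and its adjoint range contains a strictly positive functional), proved not by reducing to $C(K)$ but by a direct argument: showing $\im P$ is a sublattice, hence Archimedean, and then totally ordered, hence one-dimensional. Your plan would work too if the two steps you only gesture at were supplied, but as written there is a genuine gap in each of them.

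The most serious one is in (iii)~$\Rightarrow$~(iv). You assert that under (iii) the spectral projection equals $u\otimes\varphi$ and then say ``algebraic simplicity follows because $P$ has rank one.'' This is circular: $P$ is rank one if and only if $\lambda_0$ is algebraically simple, and (iii) gives you only \emph{geometric} simplicity. Until you have shown $\ker(\lambda_0\id - A)^2 = \ker(\lambda_0\id - A)$, the range of $P$ could be strictly larger than the eigenspace, and the identity $P=u\otimes\varphi$ is simply not available. What is needed here --- and what the paper does --- is to use the strictly positive $\varphi\in\ker(\lambda_0\id-A')$ to kill any generalized eigenvector: if $w\in\ker A^2$ (taking $\lambda_0=0$), then $Aw\in\ker A=\mathrm{span}\{u\}$, so $\alpha Aw\geq 0$ for a suitable scalar $\alpha$, while $\langle\varphi,\alpha Aw\rangle=\langle A'\varphi,\alpha w\rangle=0$; strict positivity forces $Aw=0$. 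The same functional then gives $\im A\cap E_+=\{0\}$. You never run this argument, so (iv) is not actually derived from (iii).

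The second gap is (ii)~$\Rightarrow$~(iii). You propose to deduce $\dim\im P=1$ by transporting the problem into $E_u\cong C(K)$ and invoking the $C(K)$ result from \cite{DanersI}. But there is no reason for elements of $\im P$ to lie in $E_u$, nor for $P$ to restrict to a bounded operator on $E_u$ endowed with the gauge norm, so the reduction you describe does not obviously respect the spectral-projection structure (a difficulty you yourself flag, but then leave unresolved). The paper avoids this entirely: it shows $|v|\in\im P$ for $v\in\im P$, so $\im P$ is a normed vector lattice, hence Archimedean, and then shows it is totally ordered (using that every non-zero positive element of $\im P$ is quasi-interior), which forces $\dim\im P=1$ without ever passing through $C(K)$. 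Your steps (iv)~$\Rightarrow$~(i) and the trailing assertions (simple pole of both resolvents, uniqueness of the eigenvalue with a positive eigenvector via the strictly positive functional) are correct and essentially the same as the paper's.
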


The proof requires some properties of positive projections which  
are given in the next lemma and which are based on 
standard techniques from the Perron--Frobenius theory of
positive operators; compare \cite[Sections~V.4 and~V.5]{Schaefer1974}.

\begin{lemma}
  \label{lem:positive-projections}
  Let $E$ be a complex Banach lattice and let $P \in \calL(E)$ be a
  positive, non-zero and irreducible projection. Then every non-zero
  element of $E_+\cap\im P$ is a quasi-interior point of $E_+$ and $\im
  P'$ contains a strictly positive functional. Moreover, $\dim(\im
  P)=\dim(\im P')=1$.
\end{lemma}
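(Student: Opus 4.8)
The plan is to work inside the range $\im P$, which is a closed subspace of $E$, and to exploit the fact that $\im P$ together with the cone $(\im P)_+ := E_+ \cap \im P$ can be analysed via the restricted operator $P|_{\im P} = \id_{\im P}$. First I would observe that, since $P$ is a bounded projection, $E = \im P \oplus \ker P$ and $P$ maps $E$ onto $\im P$; positivity of $P$ gives $P E_+ \subseteq E_+ \cap \im P$, and since $P$ fixes $\im P$ pointwise we get $P E_+ = E_+ \cap \im P =: C$, so $C$ is a closed generating cone for the real part of $\im P$ (generating because every $f \in \im P$ can be written $f = Pf = Pf^+ - Pf^-$). The key first step is then to show that $C$ has an algebraic interior point relative to $\im P$, equivalently that $\im P$ is finite-dimensional. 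To this end I would invoke irreducibility: the closed ideals of $E$ invariant under $P$ are only $\{0\}$ and $E$. Given any nonzero $u \in C$, the principal ideal $E_u$ is $P$-invariant in an appropriate sense? — more carefully, I would consider the smallest closed ideal $I_u$ containing $u$; irreducibility should force $I_u = E$, which is exactly the statement that $u$ is a quasi-interior point of $E_+$. This handles the first claim directly.

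For the statement that $\im P'$ contains a strictly positive functional, I would dualise: $P' \in \calL(E')$ is again a positive projection, and it is irreducible as well (irreducibility of a positive projection passes to the adjoint — this can be extracted from the standard correspondence between closed $P$-invariant ideals of $E$ and weak-$*$-closed $P'$-invariant bands of $E'$, see \cite[Section~III.8 and Section~V.4]{Schaefer1974}). So by the first part applied to $P'$, every nonzero element of $E'_+ \cap \im P'$ is a quasi-interior point of $E'_+$. A quasi-interior point of $E'_+$ need not be strictly positive in general, but here I would use the following: if $0 \neq \varphi \in E'_+ \cap \im P'$ and $f > 0$ satisfied $\langle \varphi, f\rangle = 0$, then $\langle P'\varphi, f \rangle = 0$, i.e.\ $\langle \varphi, Pf \rangle = 0$; since $Pf$ is a quasi-interior point of $E_+$ (by the first part, provided $Pf \neq 0$) this forces $\varphi = 0$, a contradiction. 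It remains to rule out $Pf = 0$ for $f > 0$: the set $N := \{f \in E : P|f| = 0\}$ — or rather the closed ideal generated by all such $f$ — is $P$-invariant and, by irreducibility, is either $\{0\}$ or $E$; it cannot be $E$ since $P \neq 0$, so $Pf > 0$ for every $f > 0$. (This is precisely the standard fact that a positive irreducible operator which is nonzero maps $f > 0$ to $Pf > 0$; I would cite \cite[Proposition~V.4.1]{Schaefer1974} or reprove the one-line ideal argument.) Then pick any strictly positive $f_0 > 0$ — such exists because $E_+$ has a quasi-interior point, e.g.\ apply the first part to get a quasi-interior point $u \in C$ and take $f_0 = u$ — and the argument above shows any nonzero $\varphi \in E'_+ \cap \im P'$ is strictly positive.

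Finally, for $\dim \im P = \dim \im P' = 1$, I would argue as follows. Suppose $u, v \in C$ are two nonzero positive elements of $\im P$. Both are quasi-interior points of $E_+$ by the first part. Fix a strictly positive $\varphi \in E'_+ \cap \im P'$ from the previous step and set $c := \langle \varphi, u\rangle / \langle \varphi, v \rangle > 0$; then $w := u - cv \in \im P$ with $\langle \varphi, w \rangle = 0$. Now $w = w^+ - w^-$, and $Pw^+, Pw^- \in C$ with $Pw^+ - Pw^- = w$ (using $Pw = w$). Apply $\varphi$: $\langle \varphi, Pw^+\rangle = \langle \varphi, Pw^-\rangle$. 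If $w \neq 0$ then at least one of $w^+, w^-$ is $> 0$, say $w^+ > 0$; then $Pw^+ > 0$ (by the nonvanishing established above) and it is a quasi-interior point, hence $\langle \varphi, Pw^+ \rangle > 0$, so also $\langle \varphi, Pw^- \rangle > 0$, forcing $w^- > 0$ and $Pw^-$ quasi-interior. But then both $Pw^+ \geq \varepsilon Pw^-$ fails to give a contradiction immediately — instead I would iterate the standard Perron–Frobenius trick: consider $\inf\{t > 0 : u \leq t v\}$, which is finite and positive since $u$ is dominated by a multiple of the quasi-interior point $v$; calling it $s$, the element $sv - u \in \im P_+$ has the property that it is not a quasi-interior point (if it were $> 0$ one could decrease $s$ slightly using $u \gg 0$... ) — more cleanly, $sv - u \geq 0$ and $\langle \varphi, sv - u\rangle = (s-c)\langle\varphi,v\rangle$; by minimality of $s$ and quasi-interiority one shows $sv - u$ cannot be $> 0$, hence $= 0$, i.e.\ $u = sv$. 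This shows $C$ is a ray, so $\im P$ (real part) is one-dimensional, and the same applied to $P'$ gives $\dim \im P' = 1$. The main obstacle I anticipate is making the "$sv - u$ cannot be strictly positive" step rigorous without an order-unit: one must substitute quasi-interiority for interiority throughout, which is exactly the device of passing to the principal ideal $E_v \cong C(K)$ (with $v$ as order unit there) where the classical argument applies, and then transporting the conclusion back to $E$.
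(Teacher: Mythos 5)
Your treatment of the first two assertions is essentially sound and matches the paper's route: the closed ideal generated by a nonzero $u \in E_+ \cap \im P$ is $P$-invariant (since $Pu=u$ and $P \ge 0$), hence equal to $E$ by irreducibility, so $u$ is quasi-interior; and a nonzero positive $\varphi \in \im P'$ is strictly positive because $\langle \varphi, f\rangle = \langle \varphi, Pf\rangle$ and $Pf$ is a nonzero element of $E_+\cap\im P$ (your ideal argument for $Pf\neq 0$ is fine), hence quasi-interior. Two side remarks: a quasi-interior point of $(E')_+$ \emph{is} automatically strictly positive (you state the implication backwards), and the claim that irreducibility passes to the adjoint is neither needed for your actual argument nor obviously true, so the detour through ``$P'$ is irreducible'' should be deleted.

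The genuine gap is in the proof that $\dim \im P = 1$. Your Perron--Frobenius trick needs $s := \inf\{t>0\colon u \le tv\}$ to be finite, which you justify by saying that $u$ is dominated by a multiple of the quasi-interior point $v$. That is false in a general Banach lattice: quasi-interior points are not order units, so there is no reason why $u \in E_v$ (the constant $\one$ is quasi-interior in $L^p$ but dominates no unbounded function) --- and this is precisely the difficulty the lemma must overcome. Even granting $s<\infty$, minimality gives no contradiction: if $sv-u>0$ it is quasi-interior by the first part, but quasi-interiority does not yield $sv-u\ge \varepsilon v$, which is what decreasing $s$ requires; passing to $E_v \cong C(K)$ does not repair this, both because $u$ need not lie in $E_v$ and because a positive nonzero element of $C(K)$ need not be bounded below by $\varepsilon\one$. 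The missing idea --- which in fact finishes the very computation you begin and abandon --- is to use the strictly positive $\varphi \in \im P'$ to show that the real part of $\im P$ is a sublattice: for real $w \in \im P$ one has $P|w| \ge |Pw| = |w|$ and $\langle \varphi, P|w|-|w|\rangle = \langle P'\varphi,|w|\rangle - \langle\varphi,|w|\rangle = 0$, hence $P|w|=|w|$; together with $Pw=w$ this gives $Pw^+=w^+$ and $Pw^-=w^-$. If $w\neq 0$ and $\langle\varphi,w\rangle=0$, both $w^+$ and $w^-$ are then nonzero elements of $E_+\cap\im P$, hence quasi-interior, yet disjoint --- impossible, since a positive element disjoint from a quasi-interior point vanishes. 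Thus $\varphi$ separates points of the real part of $\im P$, so $\dim\im P=1$, and $\dim\im P'=\dim\im P$ follows from the standard rank equality for projections rather than from rerunning the argument for $P'$ (which would again need irreducibility of $P'$).
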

\begin{proof}
  Since $P$ is positive and non-zero, $\im P$ contains a vector $u >
  0$. As $\overline{E_u}$ is $P$-invariant and $P$ is irreducible we
  conclude that $E_u$ is dense in $E$ and hence $u\gg 0$. In fact, this
  argument shows that every non-zero element of $\im(P)\cap E_+$
  is a quasi-interior point of $E_+$. Since the adjoint projection $P'$
  is also positive and non-zero, $\im P'$ contains a functional $\varphi
  > 0$. As $P$ is irreducible, we easily conclude that $\varphi$ is strictly
  positive.
 
  Note that $\dim(\im P)=\dim(\im P')$ (see~\cite[Section~III.6.6]{Kato1976}), 
  so it remains to show that
  $\dim(\im P)=1$. To this end, let us show first that $|v| \in \im P$ 
  whenever $v\in\im P$. If
  $v\in\im P$ and if $\varphi\in\im P'$ is strictly positive, then
  $P|v|\ge|Pv|=|v|$ and hence
  \begin{displaymath}
    0 \le \langle \varphi, P|v| - |v| \rangle
    = \langle P'\varphi,|v|\rangle - \langle\varphi,|v|\rangle
    = 0 \text{.}
  \end{displaymath}
  As $\varphi$ is strictly positive we conclude that $P|v| = |v|$, 
  so indeed $|v| \in \im P$.
  
  By definition, the complex Banach lattice $E$ is the
  complexification of a real Banach lattice $E_\bbR$. If we define
  $F_\bbR := E_\bbR \cap \im P$, then $\im P = F_\bbR + i F_\bbR$ since
  $PE_\bbR \subseteq E_\bbR$. Hence it is sufficient to show that
  $F_\bbR$ is one-dimensional over $\bbR$. We have shown that $|v| \in
  F_\bbR$ for each $v \in F_\bbR$, and so $F_\bbR$ is a sublattice of
  $E_\bbR$. Clearly $F_\bbR$ is a normed vector lattice with respect to
  the norm induced by $E_\bbR$ and therefore it is Archimedean (see
  \cite[Proposition~II.5.2(ii)]{Schaefer1974}). Hence, to show that
  $F_\bbR$ is one-dimensional we only need to show that $F_\bbR$ is
  totally ordered, see \cite[Proposition~II.3.4]{Schaefer1974}.  To do
  so, let $v_1,v_2 \in F_\bbR$ and set $v := v_1-v_2$. Then the positive
  part $v^+$ lies in $\im P$ and by what we have shown above either
  $v^+=0$ or $v^+\gg 0$. Hence, either $v_1-v_2\leq 0$ or $v_1-v_2\gg
  0$, showing that $F_\bbR$ is totally ordered and thus one-dimensional
  over $\bbR$.
\end{proof}
Regarding the assumptions of Lemma~\ref{lem:positive-projections}, we
point out that a positive irreducible projection on a Banach
lattice $E$ is automatically non-zero whenever $\dim E \ge 2$.
\begin{proof}[Proof of Proposition~\ref{prop:projections-strong}]
  We may assume without loss of generality that $\lambda_0 = 0$, since
  otherwise we may replace $A$ with $A - \lambda_0\id$. We prove (i)
  $\Rightarrow$ (iii) $\Rightarrow$ (iv) $\Rightarrow$ (ii)
  $\Rightarrow$ (i).
 
  ``(i) $\Rightarrow$ (iii)'' If (i) holds, then
  Lemma~\ref{lem:positive-projections} implies (iii). The lemma in
  particular asserts that $\dim(\im P)=1$, that is, $\lambda_0 = 0$ is
  algebraically and hence geometrically simple.
 
  ``(iii) $\Rightarrow$ (iv)'' By (iii), $0$ is a geometrically simple
  eigenvalue. To show that it is algebraically simple we have to prove
  that $\ker A^2=\ker A$.  Let $u \in \ker A^2$. Then $Au \in \ker A$
  and by (iii) there exists $\alpha \in \bbC \setminus \{0\}$ such that
  $\alpha Au\geq 0$. By assumption there is a strictly positive
  functional $\varphi \in \ker A'$. Thus
  \begin{math}
    \langle \varphi, \alpha Au \rangle = \langle A'\varphi, \alpha u
    \rangle = 0 \text{.}
  \end{math}
  As $\varphi$ is strictly positive, we conclude that $\alpha Au = 0$,
  that is, $u \in \ker A$ as claimed.  Finally, let $v = Au \in E_+ \cap
  \im A$. Then $\langle \varphi, v \rangle = \langle A'\varphi, u
  \rangle = 0$ and thus $v = 0$.
 
  ``(iv) $\Rightarrow$ (ii)'' Since $0$ is algebraically simple we have
  $\ker P = \im A$, $\im P = \ker A$ and $E = \im P \oplus \ker
  P$. Hence, we can decompose each $f \in E_+ \setminus \{0\}$ uniquely
  in the form $f = Pf + g$, where $Pf \in \ker A$ and $g \in \im
  A$. From (iv) we have that $Pf = \alpha u$ for a quasi-interior point
  $u \in E_+$ and for some scalar $\alpha \in \bbC$. Since $A$ is real,
  so is $P$, and hence we have $\alpha \in \bbR$.  Assume for a
  contradiction that $\alpha \le 0$. Then $0<f\leq f - \alpha u =
  g\in\im A$ which contradicts (iv). Hence we must have $\alpha > 0$ and
  thus $Pf = \alpha u \gg 0$.

  The implication ``(ii) $\Rightarrow$ (i)'' is obvious.
 
  Now assume that the equivalent assertions (i)--(iv) hold. By
  Lemma~\ref{lem:positive-projections} we have $\dim \im P = \dim \im P'
  = 1$. Hence $\lambda_0$ is an algebraically simple eigenvalue of $A$
  and $A'$ and thus a simple pole of $R(\phdot,A)$ and
  $R(\phdot,A')$. Finally, let $\lambda \in \bbC \setminus \{0\}$ be an
  eigenvalue of $A$ and $u$ a corresponding eigenvector. Then $0 \not= u =
  \lambda^{-1}Au\in\im A$. As $E_+\cap\im A=\{0\}$ by (iv), $u$ cannot
  be positive.
\end{proof}
The reader may find some related arguments in the proof of
\cite[Remark~B-III.2.15(a)]{Arendt1986}.
As pointed out above, Proposition~\ref{prop:projections-strong} is
an analogous result to \cite[Proposition~3.1]{DanersI} where the situation 
on $C(K)$-spaces was considered. 
However, $u\gg 0$ in $C(K)$ means that $u$ is an \emph{interior} point
of the positive cone, whereas in a general Banach lattice the interior
of the positive cone is empty. This is the main obstacle when seeking to
generalise the results from \cite{DanersI}. For this reason we will not
focus on the relation $\gg$, but on the stronger property of being
strongly positive \emph{with respect to a given quasi-interior point}
(see Section~\ref{section:notation} for details). The following
corollary translates Proposition~\ref{prop:projections-strong} into this
setting.
\begin{corollary}
  \label{cor:projections-strong}
  Let $A$ be a closed, densely defined and real operator on a complex
  Banach lattice $E$. Let $\lambda_0 \in \bbR$ be an eigenvalue of $A$
  and a pole of the resolvent and denote by $P$ the corresponding
  spectral projection. If $u \in E_+$ is a quasi-interior point, then
  the following assertions are equivalent:
  \begin{enumerate}[\upshape (i)]
  \item $P \gg_u 0$.
  \item The eigenvalue $\lambda_0$ of $A$ is geometrically simple.
    Moreover, $\ker(\lambda_0\id-A)$ contains a vector $x \gg_u 0$ and
    $\ker(\lambda_0\id-A')$ contains a strictly positive vector.
  \item The eigenvalue $\lambda_0$ of $A$ is algebraically simple,
    $\ker(\lambda_0\id-A)$ contains a vector $x \gg_u 0$ and
    $\im(\lambda_0\id-A) \cap E_+ = \{0\}$.
  \end{enumerate}
  If assertions {\upshape (i)--(iii)} are fulfilled, then $\lambda_0$ is
  a simple pole of the resolvents $R(\phdot,A)$ and $R(\phdot,A')$ and 
  $\lambda_0$ is the only eigenvalue of $A$ having a positive
  eigenvector.
\end{corollary}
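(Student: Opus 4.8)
The plan is to reduce the statement directly to Proposition~\ref{prop:projections-strong} by passing to the principal ideal $E_u$ equipped with its gauge norm, which by the discussion in Section~\ref{section:notation} is a Banach lattice isometrically lattice-isomorphic to some $C(K)$ with $K$ compact. First I would observe that, since $u$ is a quasi-interior point, a vector $x \in E$ satisfies $x \gg_u 0$ precisely when $x \in E_u$ and $x$ is an interior point of the positive cone $(E_u)_+$; this is exactly the bridge between the relation $\gg$ in a $C(K)$-space (interior of the cone) and the relation $\gg_u$ in $E$. Thus "$P \gg_u 0$" should be recast as the statement that $P$ maps into $E_u$ and, viewed as an operator on $E_u$, is strongly positive in the $C(K)$-sense.

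The key technical step is therefore to show that, under the standing hypotheses, $\im P \subseteq E_u$ and that $P$ restricts to a bounded operator on $E_u$ whose spectral data match those of $A$ at $\lambda_0$. Concretely: if any of (i)--(iii) holds, then $\ker(\lambda_0\id-A)$ contains a vector $x \gg_u 0$, hence $\im P = \ker(\lambda_0\id - A)$ is contained in $E_u$ in cases where $\lambda_0$ is algebraically simple (so that $\im P = \ker(\lambda_0\id-A)$); more generally one argues that $P E \subseteq E_u$ because the residue operator at a pole can be written via a Dunford integral whose range lies in the span of the (finitely many, by simplicity) generalized eigenvectors, and each of these can be shown to lie in $E_u$ using that $x \gg_u 0$ dominates them. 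Once $\im P \subseteq E_u$, the restriction $P|_{E_u}$ is a projection on $E_u$, it is positive for the order of $E_u$ (which is the order inherited from $E$), and irreducibility of $P$ on $E$ transfers to irreducibility of $P|_{E_u}$ on $E_u$ because the closed ideals of $E_u$ that are $P$-invariant correspond to closed ideals of $E$. Then Proposition~\ref{prop:projections-strong}, applied with $E$ replaced by $E_u$ and with $P$ replaced by $P|_{E_u}$, yields the equivalences: condition (ii) of that proposition, namely $P|_{E_u} \gg 0$ in the $C(K)$-sense, translates back to $P \gg_u 0$; the simplicity statements for $\lambda_0$ as an eigenvalue of $A$ are unchanged since $\ker(\lambda_0\id-A) = \ker(\lambda_0\id - A|_{E_u})$ once the eigenvector lies in $E_u$; the condition that $\ker(\lambda_0\id-A')$ contains a strictly positive functional carries over because a strictly positive functional on $E$ restricts to a strictly positive functional on $E_u$, and conversely one extends using the dominating vector $u$; and the range condition $\im(\lambda_0\id-A)\cap E_+ = \{0\}$ is seen to be equivalent to the corresponding condition inside $E_u$ because $E_+ \cap E_u = (E_u)_+$ and the relevant range vectors lie in $E_u$.

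The main obstacle I expect is the bookkeeping needed to guarantee that $\im P \subseteq E_u$ \emph{before} one knows which of (i)--(iii) holds --- i.e., one wants this inclusion as a consequence of each hypothesis rather than assuming it. For the implications starting from (ii) or (iii), algebraic simplicity of $\lambda_0$ gives $\im P = \ker(\lambda_0\id-A) = \operatorname{span}\{x\}$ with $x \gg_u 0$, so $\im P \subseteq E_u$ is immediate; the delicate direction is (i) $\Rightarrow$ everything else, where a priori $P \gg_u 0$ only tells us $P f \in E_u$ for $f > 0$, hence $Pf \in E_u$ for all $f \in E$ by linearity and the decomposition $f = f^+ - f^-$, so in fact $\im P \subseteq E_u$ holds here too, and the restriction is automatically bounded from $E$ into $(E_u,\|\cdot\|_u)$ by a closed-graph argument, hence in particular bounded on $E_u$. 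The remaining care is to check that $\lambda_0$ really is a pole of the resolvent of $A|_{E_u}$ with the same spectral projection --- this follows since $P|_{E_u}$ is a nonzero projection commuting with $A|_{E_u}$ and the decomposition $E_u = \im P \oplus (\ker P \cap E_u)$ reduces $A|_{E_u}$, after which Proposition~\ref{prop:projections-strong} applies verbatim. The final clause about $\lambda_0$ being a simple pole of $R(\phdot,A)$ and $R(\phdot,A')$ and the uniqueness of the positive eigenvector then transfers from the corresponding clause in Proposition~\ref{prop:projections-strong}, noting that an eigenvector of $A$ in $E$ for an eigenvalue $\lambda \neq \lambda_0$ that happened to be positive would, once one shows it lies in $E_u$ (using $\im(\lambda_0\id - A) \cap E_+ = \{0\}$ and the ideal decomposition), contradict the $E_u$-version of that uniqueness statement.
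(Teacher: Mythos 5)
Your plan is to transport the whole statement into the ideal $E_u\cong C(K)$ and then invoke Proposition~\ref{prop:projections-strong} there.  This misses the much shorter route the paper takes, and it has genuine gaps as written.  The decisive observation you do not make is that $v\gg_u 0$ implies $v\gg 0$ (because $v\ge cu$ with $c>0$ gives $E_v\supseteq E_u$, which is dense), and hence $P\gg_u 0$ implies $P\gg 0$.  With that in hand, each of (i), (ii), (iii) immediately implies the corresponding assertion (ii), (iii) or (iv) of Proposition~\ref{prop:projections-strong} \emph{on the original space} $E$, and the only thing left to do is to upgrade ``quasi-interior'' to ``$\gg_u 0$'': for (i)~$\Rightarrow$~(ii), the quasi-interior eigenvector $v\in\im P$ satisfies $v=Pv\gg_u 0$ by (i); for (iii)~$\Rightarrow$~(i), algebraic simplicity gives $\im P=\ker(\lambda_0\id-A)=\operatorname{span}\{v\}$ with $v\gg_u 0$, and since $Pf\gg 0$ one reads off $Pf=\alpha v$ with $\alpha>0$.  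No change of space is needed.

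The gaps in your version are concrete.  First, to apply Proposition~\ref{prop:projections-strong} on $E_u$ you would need a closed, densely defined, real operator on $E_u$ with $\lambda_0$ an eigenvalue and a pole of \emph{its} resolvent, and with $P|_{E_u}$ as \emph{its} spectral projection.  You never produce such an operator.  A naive restriction of $A$ to $E_u$ need not be densely defined (the corollary does not assume $D(A)\subseteq E_u$), and there is no reason the resolvent of such a restriction should exist near $\lambda_0$, let alone have a pole there with $P|_{E_u}$ as residue.  Your remark that ``$E_u=\im P\oplus(\ker P\cap E_u)$ reduces $A|_{E_u}$'' does not establish any of this.  Second, your intermediate claim that $\im P$ lies in $E_u$ because ``the residue operator \dots lies in the span of the (finitely many, by simplicity) generalized eigenvectors'' is wrong in direction (i): in (i) you do not yet know simplicity, and for a pole of order $m$ the range $\im P=\ker(\lambda_0\id-A)^m$ need not be finite-dimensional.  (That it \emph{is} one-dimensional is one of the conclusions, reached via $P\gg 0$ and Lemma~\ref{lem:positive-projections}; your argument uses it prematurely.)  Third, several translations between $E$ and $E_u$ are asserted without proof and are not true in general: closed $P$-invariant ideals of $E_u$ do not correspond bijectively to closed $P$-invariant ideals of $E$ (e.g.\ $L^\infty$ has far more closed ideals than $L^2$), a strictly positive functional on $E_u$ has no canonical extension to $E$, and $\ker(\lambda_0\id-A')$ is the kernel of the adjoint on $E'$, not on $(E_u)'$, so ``carries over'' needs an argument.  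All of these difficulties evaporate once one notices $\gg_u 0\Rightarrow\gg 0$ and stays on $E$.
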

\begin{proof}
  We assume throughout the proof that $\lambda_0 = 0$.
 
  ``(i) $\Rightarrow$ (ii)'' Assertion (i) clearly implies that $P \gg
  0$. Hence by Proposition~\ref{prop:projections-strong} we only have to
  show that $\ker A$ contains a vector $v \gg_u 0$. We already know that
  $\ker A$ contains a quasi-interior point $v \gg 0$. As $v \in \im P$
  we indeed have $v = Pv \gg_u 0$.
 
  ``(ii) $\Rightarrow$ (iii)'' By (ii) we already know that $\ker A$
  contains a vector $v \gg_u 0$. The remaining assertions follow from
  Proposition~\ref{prop:projections-strong}.
 
  ``(iii) $\Rightarrow$ (i)'' Let $v\in\ker A$ with $v\gg_u 0$. If
  $f>0$, then $Pf \gg 0$ by
  Proposition~\ref{prop:projections-strong}. As $0$ is algebraically
  simple we have $\im P = \ker A$ and $\dim\im P=1$.  Hence, $Pf =
  \alpha v$ for some $\alpha \in \bbC$.  As $Pf \gg 0$ we see that
  $\alpha > 0$. Thus $Pf \gg_u 0$.
  
  Suppose now that (i)--(iii) are fulfilled. Due to (i) we clearly have
  $Pf \gg 0$ for every $f>0$ and hence the remaining assertions follow from
  Proposition~\ref{prop:projections-strong}.
\end{proof}

\section{Eventually strongly positive resolvents}
\label{section:resolvents-strong}
To prepare for our analysis of eventually positive semigroups, we first
consider what we shall call eventually positive resolvents.  Here we
will generalise certain results on $C(K)$-spaces from
\cite[Section~4]{DanersI} to the technically more demanding case of
general Banach lattices. As pointed out before
Corollary~\ref{cor:projections-strong} it seems appropriate in this
setting not to consider merely strong positivity, but strong positivity
with respect to a fixed quasi-interior point.

\begin{definition}
  \label{def:resolvents-strong}
  Let $A$ be a closed, real operator on a complex Banach lattice $E$,
  let $u \in E_+$ be a quasi-interior point and let $\lambda_0$ be
  either $-\infty$ or a spectral value of $A$ in $\bbR$.
  \begin{enumerate}[(a)]
  \item The resolvent $R(\phdot,A)$ is called \emph{individually
      eventually strongly positive with respect to $u$ at $\lambda_0$}
    if there exists $\lambda_2 > \lambda_0$ with the following
    properties: $(\lambda_0,\lambda_2] \subseteq \rho(A)$ and for each $f
    \in E_+\setminus \{0\}$ there is a $\lambda_1 \in
    (\lambda_0,\lambda_2]$ such that $R(\lambda,A)f \gg_u 0$ for all
    $\lambda \in (\lambda_0,\lambda_1]$.
  \item The resolvent $R(\phdot,A)$ is called \emph{uniformly eventually
      strongly positive with respect to $u$ at $\lambda_0$} if there
    exists $\lambda_1 > \lambda_0$ with the following properties:
    $(\lambda_0,\lambda_1] \subseteq \rho(A)$ and $R(\lambda,A) \gg_u 0$
    for all $\lambda \in (\lambda_0,\lambda_1]$.
  \end{enumerate}
\end{definition}
While eventual positivity focuses on what happens to the resolvent in a
right neighbourhood of a spectral value, we might also ask what happens
in a left neighbourhood. As we will see, \emph{eventual negativity} is
the appropriate notion to describe this behaviour in our setting.

\begin{definition}
  \label{def:resolvents-strong-negative}
  Let $A$ be a closed, real operator on a complex Banach lattice $E$,
  let $u \in E_+$ be a quasi-interior point and let $\lambda_0$ be
  either $\infty$ or a spectral value of $A$ in $\bbR$.
  \begin{enumerate}[(a)]
  \item The resolvent $R(\phdot,A)$ is called \emph{individually
      eventually strongly negative with respect to $u$ at $\lambda_0$}
    if there exists $\lambda_2 < \lambda_0$ with the following
    properties: $[\lambda_2,\lambda_0) \subseteq \rho(A)$ and for each $f
    \in E_+\setminus \{0\}$ there is a $\lambda_1 \in
    [\lambda_2,\lambda_0)$ such that $R(\lambda,A)f \ll_u 0$ for all
    $\lambda \in [\lambda_2,\lambda_0)$.
  \item The resolvent $R(\phdot,A)$ is called \emph{uniformly eventually
      strongly negative with respect to $u$ at $\lambda_0$} if there
    exists $\lambda_1 < \lambda_0$ with the following properties:
    $[\lambda_1,\lambda_0) \subseteq \rho(A)$ and $R(\lambda,A) \ll_u 0$
    for all $\lambda \in [\lambda_1,\lambda_0)$.
  \end{enumerate}
\end{definition}

Concerning eventual strong positivity of resolvents (with respect to a
quasi-interior point) we can make similar observations on arbitrary
Banach lattices as were made for $C(K)$-spaces in \cite[Propositions~4.2
and~4.3]{DanersI}. However, we do not pursue this in detail
here. Instead, we concentrate on proving a characterisation of
individually eventually strongly positive resolvents. To state this
characterisation, the following notion concerning powers of a given
operator will be useful.

\begin{definition}
  \label{def:powers-strong}
  Let $T$ be a bounded linear operator on a complex Banach lattice $E$
  and let $u \in E_+$ be a quasi-interior point.
  \begin{enumerate}[(a)]
  \item The operator $T$ is called \emph{individually eventually
      strongly positive with respect to $u$} if for every $f \in E_+
    \setminus \{0\}$ there is an $n_0 \in \bbN$ such that $T^nf \gg_u 0$
    for all $n \ge n_0$.
  \item The operator $T$ is called \emph{uniformly eventually strongly
      positive with respect to $u$} if there is an $n_0 \in \bbN$ such
    that $T^n \gg_u 0$ for all $n \ge n_0$.
  \end{enumerate}
\end{definition}

We can now formulate the following theorem, which was previously known
only on $C(K)$-spaces, cf.~\cite[Theorem~4.5]{DanersI}. Beside its wider
applicability, the main difference in the case of general Banach
lattices is that the cone might not contain interior points; but if
instead we take a quasi-interior point $u$, then we cannot control other
vectors by $u$ unless they are contained in the principal ideal $E_u$.
This makes it necessary to impose an additional domination hypothesis on 
$D(A)$, which in turn requires more technical proofs; nevertheless, in many
practical examples, this condition seems to be satisfied, suggesting it
is in a sense quite natural. See also the discussion below.

\begin{theorem}
  \label{thm:resolvents-strong}
  Let $A$ be a closed, densely defined and real operator on a complex
  Banach lattice $E$.  Suppose that $\lambda_0 \in \bbR$ is an
  eigenvalue of $A$ and a pole of the resolvent. Denote by $P$ the
  corresponding spectral projection. Moreover, let $0 \le u \in E$ and
  assume that $D(A) \subseteq E_u$. Then $u$ is a quasi-interior point
  of $E_+$ and the following assertions are equivalent:
  \begin{enumerate}[\upshape (i)]
  \item $P \gg_u 0$.
  \item The resolvent $R(\phdot,A)$ is individually eventually strongly
    positive with respect to $u$ at $\lambda_0$.
  \item The resolvent $R(\phdot,A)$ is individually eventually strongly
    negative with respect to $u$ at $\lambda_0$.
  \end{enumerate}
  If $\lambda_0 = \spb(A)$, then {\upshape (i)}--{\upshape (iii)} are
  also equivalent to the following assertions.
  \begin{enumerate}[\upshape (i)]\setcounter{enumi}{3}
  \item There exists $\lambda > \spb(A)$ such that the operator
    $R(\lambda,A)$ is individually eventually strongly positive with respect to
    $u$.
  \item For every $\lambda > \spb(A)$ the operator $R(\lambda,A)$ is
    individually eventually strongly positive with respect to $u$.
  \end{enumerate}
\end{theorem}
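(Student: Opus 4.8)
The plan is to establish the equivalences by linking everything through the spectral projection $P$ and the Laurent expansion of the resolvent around $\lambda_0$. As always we may assume $\lambda_0 = 0$ after replacing $A$ by $A - \lambda_0\id$. First I would verify the preliminary claim that $D(A) \subseteq E_u$ forces $u$ to be a quasi-interior point: since $A$ is densely defined, $D(A)$ is dense in $E$, and $D(A) \subseteq E_u$ implies $E_u$ is dense, which is exactly the definition of $u \gg 0$. This observation is also what makes the gauge norm and the identification $E_u \cong C(K)$ available, which will be used repeatedly.

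For the core equivalence (i) $\Leftrightarrow$ (ii), the key tool is the Laurent series
\begin{displaymath}
  R(\lambda, A) = \sum_{k=-m}^{\infty} (\lambda - \lambda_0)^k U_k
\end{displaymath}
about the pole $\lambda_0 = 0$, where $U_{-1} = P$ and $m$ is the order of the pole. I would divide by the leading term: for $\lambda$ in a small right neighbourhood of $\lambda_0$, $(\lambda - \lambda_0)^{m}R(\lambda,A)$ converges to $U_{-m}$, and more usefully $(\lambda-\lambda_0) R(\lambda,A) \to P$ in operator norm as $\lambda \downarrow \lambda_0$ \emph{when the pole is simple}; but since the pole need not be simple a priori, I would instead argue that the dominant contribution to $R(\lambda,A)f$ as $\lambda \downarrow \lambda_0$ is governed by the principal part. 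The clean way: assuming (ii), fix $f > 0$; then $R(\lambda,A)f \gg_u 0$ for $\lambda$ near $\lambda_0$, so in particular $R(\lambda,A)f \geq 0$ there, and multiplying by suitable positive powers of $(\lambda-\lambda_0)$ and passing to the limit (using that $E_+$ is closed and the convergence is in norm) yields that the leading Laurent coefficient applied to $f$ is positive; iterating / using strict positivity one pushes this down to conclude first that the pole is simple, so $U_{-m}=P$, and then that $Pf \geq 0$, indeed $Pf \gg_u 0$, by keeping track of the gauge-norm estimates — here the domination $D(A) \subseteq E_u$ is essential because $R(\lambda,A)$ maps into $D(A) \subseteq E_u$, so all the vectors in play actually lie in $E_u$ and the relation $\gg_u 0$ makes sense after the limit. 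Conversely, assuming (i), so $0$ is a simple pole with $P \gg_u 0$, I would write $R(\lambda,A) = \frac{1}{\lambda-\lambda_0}P + R_{\mathrm{reg}}(\lambda)$ with $R_{\mathrm{reg}}$ holomorphic near $\lambda_0$, hence bounded as an operator into $E$ near $\lambda_0$; for $f > 0$ one has $Pf \gg_u 0$, i.e.\ $Pf \geq c u$ for some $c>0$, and since the leading term blows up like $(\lambda-\lambda_0)^{-1}$ while the correction stays bounded, for $\lambda$ sufficiently close to $\lambda_0$ the sum still dominates a positive multiple of $u$ — but to make "$\gg_u 0$" rigorous rather than just "$\geq 0$" I must control $R_{\mathrm{reg}}(\lambda)f$ in the gauge norm $\|\cdot\|_u$, which again uses $\operatorname{im} R(\lambda,A) \subseteq E_u$ together with the closed graph theorem to see that $R(\lambda,A)\colon E \to E_u$ is bounded, uniformly for $\lambda$ in a compact subset of $\rho(A)$.

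The equivalence (ii) $\Leftrightarrow$ (iii) (eventual positivity on the right versus eventual negativity on the left) I would handle in the same spirit: near $\lambda_0$, $R(\lambda,A)f$ is dominated by $\frac{1}{\lambda-\lambda_0}Pf$, and $\frac{1}{\lambda-\lambda_0}$ changes sign as $\lambda$ crosses $\lambda_0$, so positivity for $\lambda > \lambda_0$ is equivalent to negativity for $\lambda < \lambda_0$, both being equivalent to $Pf \gg_u 0$. One just has to be slightly careful that $(\lambda_0, \lambda_2] \subset \rho(A)$ on one side and $[\lambda_2, \lambda_0) \subset \rho(A)$ on the other are both automatically available because $\lambda_0$ is an isolated point of the spectrum (being a pole). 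Finally, for the block (iv), (v) under the extra hypothesis $\lambda_0 = \spb(A)$: here $\spb(A)$ being a pole and there being no spectrum to the right means that for $\lambda > \spb(A)$, the resolvent $R(\lambda,A)$ is given by the standard integral / Neumann-type representation, and I would relate individual eventual positivity of the operator $R(\lambda,A)$ (Definition~\ref{def:powers-strong}) to that of the semigroup-free object via the resolvent identity and the spectral mapping for the resolvent: the spectrum of $R(\lambda,A)$ has its peripheral part governed by $\frac{1}{\lambda - \spb(A)}$, which is a pole of $R(\cdot, R(\lambda,A))$ with the same spectral projection $P$; then $R(\lambda,A)^n f$ for large $n$ is dominated by $\left(\frac{1}{\lambda-\spb(A)}\right)^n Pf$, so eventual positivity of the powers is again equivalent to $Pf \gg_u 0$. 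I would prove (i) $\Rightarrow$ (v) $\Rightarrow$ (iv) $\Rightarrow$ (i), the middle implication being trivial.

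The main obstacle I expect is \emph{not} the sign bookkeeping but the passage from "$\geq 0$ in $E$" to "$\gg_u 0$", i.e.\ obtaining quantitative lower bounds $c\,u$ with $c > 0$ after taking limits $\lambda \to \lambda_0$. The closedness of $E_+$ gives the former for free, but $\gg_u$ is not a closed relation, so one genuinely needs uniform control of the $\|\cdot\|_u$-norms of $R(\lambda,A)$ near $\lambda_0$, and this is precisely where the hypothesis $D(A) \subseteq E_u$ earns its keep (via the closed graph theorem applied to $R(\lambda,A)\colon E \to E_u$, plus analyticity of $\lambda \mapsto R(\lambda,A)$ as an $\calL(E, E_u)$-valued map on $\rho(A)$ away from the pole). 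A secondary delicate point is ruling out a pole of order $\geq 2$: one must show that if the principal part has a genuine $(\lambda-\lambda_0)^{-m}$ term with $m \geq 2$ then eventual positivity of $R(\cdot,A)f$ for \emph{all} $f$, combined with strict positivity extracted from $P' $-side data, forces $U_{-m} = 0$ — this is the analogue of the reasoning in Proposition~\ref{prop:projections-strong} showing algebraic simplicity, and I would import that argument essentially verbatim once the positivity of $P$ is in hand.
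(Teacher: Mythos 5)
Your outline handles the ``soft'' directions correctly and in essentially the paper's way: (i)~$\Rightarrow$~(ii) via the simple-pole expansion (simplicity coming from Proposition~\ref{prop:projections-strong}), gauge-norm control through $D(A)\subseteq E_u$ and the closed graph theorem, and the sign flip for (iii). (Your justification of the uniform $\calL(E,E_u)$-bound on the regular part near $\lambda_0$ via ``compact subsets of $\rho(A)$'' is slightly off, since $\lambda_0$ lies on the boundary of $\rho(A)$, but this is repairable, e.g.\ by a Cauchy-integral estimate for the regular part or by the convergence $\lambda R(\lambda,A)\to P$ in $\calL(E,D(A))$ as in Lemma~\ref{lem:behaviour-resolvent}.)

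The genuine gap is in the hard directions (ii)~$\Rightarrow$~(i) and (iv)~$\Rightarrow$~(i), precisely at the point you yourself flag as ``secondary'': ruling out a pole of order $m\ge 2$ and upgrading to $P\gg_u 0$. Your plan is to import the algebraic-simplicity argument of Proposition~\ref{prop:projections-strong} ``once the positivity of $P$ is in hand'', but that argument requires a strictly positive functional in $\ker(\lambda_0\id-A')$, and under hypothesis (ii) alone neither this functional nor even $P\ge 0$ is available: if $m\ge 2$ then $\lambda R(\lambda,A)$ diverges as $\lambda\downarrow\lambda_0$, so you cannot obtain $P=U_{-1}\ge 0$ by passing to the limit (only the top coefficient $U_{-m}\ge 0$ comes for free), and individual eventual positivity does not dualise, because the threshold $\lambda_1$ depends on $f$, so you cannot extract positivity of $R(\lambda,A)'$ or of any eigenfunctional of $A'$ either. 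Thus the proposed route is circular. The paper closes this gap by a different mechanism: from $U_{-m}\ge 0$ one gets a positive eigenvector $v>0$ of $A$, which by the eventual strong positivity is a fixed vector with $v\gg_u 0$; Lemma~\ref{lem:operator-family}(i) then shows that every $f\in E_u$ is dominated by a multiple of $v$, giving boundedness of $\bigl(\lambda R(\lambda,A)|_{E_u}\bigr)$ in $\calL(E_u,E)$ as $\lambda\downarrow\lambda_0$, and the hypothesis $D(A)\subseteq E_u$ together with the resolvent identity upgrades this to boundedness of $\lambda R(\lambda,A)$ in $\calL(E)$, which forces $m=1$. Even after that, $P\gg_u 0$ does not follow merely from gauge-norm limits (the relation $\gg_u$ is not closed); one needs the argument of Lemma~\ref{lem:operator-family}(ii) to see that $Pf\neq 0$ for every $f>0$ and then that $Pf=\lambda R(\lambda,A)Pf\gg_u 0$. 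The same machinery (with the Krein--Rutman-type fixed-vector lemma for the single operator $\lambda R(\lambda,A)$) is what makes (iv)~$\Rightarrow$~(i) work; your sketch there again presupposes the convergence $[\lambda R(\lambda,A)]^n\to P$, i.e.\ the simplicity you have not yet established.
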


Before we prove the above theorem, a few remarks on the condition $D(A)
\subseteq E_u$ are in order.  First, if we endow $E_u$ with the
(complexification of the) gauge norm, then the embedding $D(A)
\hookrightarrow E_u$ is automatically continuous due to the closed graph
theorem, a fact of which we will make repeated use. Second,
it is natural to ask whether the condition $D(A) \subseteq E_u$ in the
above theorem can be omitted, but
Example~\ref{example:domination-property} below shows that it is
required. Finally, one might wonder how to check this condition in
applications. In a typical situation, the Banach lattice $E$ is an
$L^p(\Omega)$-space on a finite measure space $\Omega$, $u$ is the
constant function $1$ and the principal ideal $E_u$ is thus given by
$L^\infty(\Omega)$. If $\Omega$ is a bounded open set in $\bbR^n$ and $A$ is a
differential operator which is defined on some Sobolev space, then the
condition $D(A) \subseteq E_u$ is fulfilled if an appropriate Sobolev
embedding theorem holds.  Some concrete examples of this type can be
found in Section~\ref{section:applications-strong}, but see also the
following example and remark.

\begin{example}
  \label{ex:anti-maximum-principle}
  The above theorem contains what is often referred to as an
  \emph{anti-maximum principle}. Let $A$ be a closed densely defined
  real operator on the Banach Lattice $E$. Suppose that
  $\lambda_0\in\mathbb R$ is a pole of the resolvent $R(\phdot,A)$ with
  spectral projection $P$. We consider the equation
  \begin{equation}
    \label{eq:anti-maximum-principle}
    \lambda f-Af=g\qquad\text{in $E$}
  \end{equation}
  with $\lambda<\lambda_0$ close to $\lambda_0$ and $g>0$. If we assume
  that there exists $u\gg 0$ such that $D(A)\subseteq E_u$ and $P\gg_u
  0$, then Theorem~\ref{thm:resolvents-strong}(iii) implies that for
  every $g>0$ there exists $\lambda_g<\lambda_0$ such that the solution
  $f$ of \eqref{eq:anti-maximum-principle} satisfies $f\ll_u0$ whenever
  $\lambda\in(\lambda_g,\lambda_0)$. This is known as a (non-uniform)
  anti-maximum principle and has been the focus of many papers such as
  \cite{MR0550042,MR1761420,MR1896075,MR1396904}, looking at standard
  second order elliptic equations, but also higher order elliptic
  equations of order $2m$. The key assumptions in our language are that
  $P\gg_u0$ and that $D(A)\subseteq E_u$. The first one is most
  conveniently obtained by checking
  Proposition~\ref{prop:projections-strong}(iii). In many applications
  the dual problem has the same structure as the original problem, and
  therefore guarantees the existence of a positive eigenfunction for
  both. The condition that $D(A)\subseteq E_u$ follows from elliptic
  regularity theory as well as boundary maximum principles. If these
  regularity conditions are violated, an anti-maximum principle may fail
  as shown in \cite{MR1429095}.
\end{example}

\begin{remark}
  The known anti-maximum principles also show that \emph{uniform} strong
  eventual positivity is not in general equivalent to \emph{uniform}
  strong eventual negativity of the resolvent. As an example, for second
  order elliptic boundary value problems, the strong maximum principle
  implies $R(\lambda,A) \gg 0$ for all $\lambda>\spb(A)$. 
  However, as
  shown in \cite{MR1896075}, the anti-maximum principle is not
  necessarily uniform. At an abstract level, it does not seem to be
  obvious what guarantees uniform strong eventual negativity (or
  positivity); in \cite{MR1896075} it is a certain kernel estimate that
  does this.
\end{remark}
 
For the proof of Theorem~\ref{thm:resolvents-strong} we need a few
lemmata which are generalisations and extensions of similar auxiliary
results in \cite[Section~4]{DanersI}. In particular, in the next lemma
we obtain convergence in a stronger norm than in
\cite[Lemma~4.7(ii)]{DanersI}.
\begin{lemma}
  \label{lem:behaviour-resolvent}
  Let $A$ be a closed linear operator on a complex Banach space $E$.
  Suppose that $0$ is an eigenvalue of $A$ and a simple pole of
  $R(\phdot,A)$. Let $P$ be the corresponding spectral projection.
  \begin{enumerate}[\upshape (i)]
  \item We have $\lambda R(\lambda,A)\to P$ in $\calL(E,D(A))$ as
    $\lambda \to 0$.
  \item If in addition $0=\spb(A)$, then for every $\lambda>0$ we have
    $[\lambda R(\lambda,A)]^n \to P$ in $\calL(E,D(A))$ as $n\to\infty$.
  \end{enumerate}
\end{lemma}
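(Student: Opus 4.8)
The plan is to reduce both assertions to the Riesz decomposition of $E$ induced by the isolated spectral value $0$. Since $0$ is a pole of $R(\phdot,A)$ it is isolated in $\sigma(A)$, so the associated spectral projection $P$ commutes with every $R(\lambda,A)$, $\lambda\in\rho(A)$, and $E=E_1\oplus E_0$ with $E_1:=\im P$ and $E_0:=\ker P$ both $A$-invariant and invariant under each $R(\lambda,A)$. Writing $A_1:=A|_{E_1}$ and $A_0:=A|_{E_0}$, the operator $A_1$ is bounded with $\sigma(A_1)=\{0\}$, the operator $A_0$ is closed on $E_0$ with $0\in\rho(A_0)$ and $\sigma(A_0)=\sigma(A)\setminus\{0\}$, and $R(\lambda,A)=R(\lambda,A_1)\oplus R(\lambda,A_0)$ for every $\lambda\in\rho(A)$. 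The crucial consequence of the pole being \emph{simple} is that the eigennilpotent $AP$ attached to $0$ vanishes, i.e.\ $A_1=0$ and $AP=0$; hence $R(\lambda,A_1)=\lambda^{-1}\id_{E_1}$, so $\lambda R(\lambda,A)$ restricts to $\id_{E_1}=P|_{E_1}$ on $E_1$. Throughout I will use that, with the graph norm on $D(A)$, operators $T_\lambda$ converge to $P$ in $\calL(E,D(A))$ precisely when both $T_\lambda\to P$ and $AT_\lambda\to AP$ hold in $\calL(E)$.

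For part (i) the $E_1$-component of $\lambda R(\lambda,A)$ is identically $P|_{E_1}$ and is annihilated by $A$, so only the $E_0$-component requires work. There I would use continuity of $\lambda\mapsto R(\lambda,A_0)$ at $0\in\rho(A_0)$ to get $\lambda R(\lambda,A_0)\to 0=P|_{E_0}$ in $\calL(E_0)$ as $\lambda\to 0$, and then the resolvent identity $A_0R(\lambda,A_0)=\lambda R(\lambda,A_0)-\id_{E_0}$ to get $A_0\cdot\lambda R(\lambda,A_0)=\lambda^2R(\lambda,A_0)-\lambda\id_{E_0}\to 0$ in $\calL(E_0)$ as well. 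Reassembling the two direct summands yields $\lambda R(\lambda,A)\to P$ in $\calL(E,D(A))$.

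For part (ii) fix $\lambda>0$. Again $[\lambda R(\lambda,A)]^n=P|_{E_1}$ on $E_1$ for every $n$, with $A$ vanishing there, so the content lies in the $E_0$-component. Here I would bring in the extra hypothesis $\spb(A)=0$, which forces $\sigma(A_0)\subseteq\{\mu\in\bbC:\repart\mu\le 0\}$, together with the isolatedness of $0$, which gives $\delta:=\dist\bigl(0,\sigma(A_0)\bigr)>0$. For any $\mu\in\sigma(A_0)$ and $\lambda>0$ the elementary identity $|\lambda-\mu|^2=|\mu|^2-2\lambda\repart\mu+\lambda^2\ge\delta^2+\lambda^2$, together with the spectral mapping theorem for the resolvent, gives $\spr\bigl(\lambda R(\lambda,A_0)\bigr)\le\lambda/\sqrt{\lambda^2+\delta^2}<1$; by Gelfand's formula it follows that $[\lambda R(\lambda,A_0)]^n\to 0$ in $\calL(E_0)$. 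The identity $A_0[\lambda R(\lambda,A_0)]^n=\lambda\bigl([\lambda R(\lambda,A_0)]^n-[\lambda R(\lambda,A_0)]^{n-1}\bigr)$, again a consequence of $A_0R(\lambda,A_0)=\lambda R(\lambda,A_0)-\id_{E_0}$, then shows $A_0[\lambda R(\lambda,A_0)]^n\to 0$ in $\calL(E_0)$ as well. Adding the $E_1$- and $E_0$-components gives $[\lambda R(\lambda,A)]^n\to P$ in $\calL(E,D(A))$.

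The only real obstacle I anticipate is organisational: one must carefully justify the decomposition $E=E_1\oplus E_0$ with the listed properties, the equivalence ``simple pole'' $\Leftrightarrow$ $A_1=0$, and the splitting $R(\lambda,A)=R(\lambda,A_1)\oplus R(\lambda,A_0)$ — these are standard facts about spectral projections of isolated spectral values (cf.\ the references listed in Section~\ref{section:notation}), but they deserve a clean statement since several later results rest on this lemma. The degenerate case $E_0=\{0\}$ (equivalently $\sigma(A)=\{0\}$) is trivial and needs only a remark, and the spectral-radius estimate in part (ii) is the single genuinely quantitative step, which is short.
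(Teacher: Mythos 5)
Your proof is correct and reaches the same conclusion, but for part (ii) it takes a genuinely different and more self-contained route than the paper's. The paper establishes the $\calL(E)$-convergence of $[\lambda R(\lambda,A)]^n$ by citing \cite[Lemma~4.7(ii)]{DanersI}, then upgrades to $\calL(E,D(A))$ by the slick composition trick
\begin{displaymath}
  [\lambda R(\lambda,A)]^n = \lambda R(\lambda,A)\,[\lambda R(\lambda,A)]^{n-1}
  \longrightarrow \lambda R(\lambda,A)\,P = P,
\end{displaymath}
where $\lambda R(\lambda,A)\in\calL(E,D(A))$ by the closed graph theorem. You instead re-derive the $\calL(E)$-convergence from scratch: after passing to the Riesz decomposition $E=\im P\oplus\ker P$, you observe that the simplicity of the pole together with $\spb(A)=0$ forces $\spr\bigl(\lambda R(\lambda,A_0)\bigr)\le\lambda/\sqrt{\lambda^2+\delta^2}<1$, so the powers decay geometrically by Gelfand's formula. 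Your argument buys an explicit exponential rate and avoids the external citation; the paper's is shorter because the operator-norm convergence is already available. Your upgrade to the graph norm via $A_0[\lambda R(\lambda,A_0)]^n=\lambda\bigl([\lambda R(\lambda,A_0)]^n-[\lambda R(\lambda,A_0)]^{n-1}\bigr)$ is sound, and your part (i) is essentially the paper's argument organised through the decomposition. One small inaccuracy: $E_0=\{0\}$ is \emph{not} equivalent to $\sigma(A)=\{0\}$ — one can have $\sigma(A_0)=\emptyset$ with $E_0\ne\{0\}$ (e.g.\ a Volterra-type unbounded $A_0$), in which case your estimate via $\delta$ is vacuous; fortunately $\spr\bigl(\lambda R(\lambda,A_0)\bigr)=0$ there, so the conclusion still holds, but the parenthetical should be dropped or corrected.
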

\begin{proof}
  (i) As $0$ is a simple pole of $R(\lambda,A)$ and $P$ is the
  corresponding residue, $\lambda R(\lambda,A)\to P$ in $\calL(E)$ as
  $\lambda \to 0$ and $\im P=\ker A$. Therefore
  \begin{displaymath}
    A\lambda R(\lambda,A)
    =\lambda\bigl(\lambda R(\lambda,A)-\id\bigr)
    \to 0=AP
  \end{displaymath}
  in $\calL(E)$ as $\lambda\downarrow 0$ and so the required convergence holds
  in $\calL(E,D(A))$.
 
  (ii) By \cite[Lemma~4.7(ii)]{DanersI} and its proof we have that the
  convergence holds in $\calL(E)$, and that $\lambda R(\lambda,A)P=P$. Since
  $R(\lambda,A)\in \calL(E,D(A))$ due to the closed graph theorem, it 
  follows that
  \begin{displaymath}
    [\lambda R(\lambda,A)]^n 
    =\lambda R(\lambda,A) [\lambda R(\lambda,A)]^{n-1}
    \to\lambda R(\lambda,A) P = P
  \end{displaymath}
  as $n\to\infty$ in $\calL(E,D(A))$.
\end{proof}

\begin{lemma}
  \label{lem:operator-family}
  Let $E$ be a complex Banach lattice and let $u \in E_+$ be a
  quasi-interior point. Let $(J,\preceq)$ be a non-empty, totally
  ordered set and let $\mathcal{T} = (T_j)_{j \in J}$ be a family in
  $\calL(E)$ whose fixed space is denoted by
  \begin{displaymath}
    F := \{v \in E\colon T_j v = v \text{ for all $j \in J$}\}
  \end{displaymath}
  Assume that for every $f \in E_+ \setminus \{0\}$ there exists $j_f
  \in J$ such that $T_jf \gg_u 0$ for all $j \succeq j_f$.
  \begin{enumerate}[\upshape (i)]
  \item Suppose that for every $j_0\in J$ the family $(T_j|_{E_u})_{j
      \preceq j_0}$ is bounded in $\calL(E_u,E)$ and that $F$ contains
    an element $v_0 > 0$. Then the entire family $(T_j|_{E_u})_{j \in
      J}$ is bounded in $\calL(E_u,E)$.
  \item Let $P > 0$ be a projection in $\calL(E)$ with $\im P \subseteq
    F$ and suppose that each operator $T_j$, $j \in J$, leaves $\ker P$
    invariant. Then $P \gg_u 0$.
  \end{enumerate}
\end{lemma}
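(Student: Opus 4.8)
\emph{Part (i).} The plan is to reduce the assertion to \emph{pointwise} boundedness of the family $(T_j|_{E_u})_{j\in J}$ on the Banach space $E_u$ (which embeds continuously into $E$, so that each $T_j|_{E_u}$ is bounded from $E_u$ into $E$) and then to invoke the uniform boundedness principle. First I would use the hypothesis on $v_0$: since $v_0\in F$, i.e.\ $T_jv_0=v_0$ for all $j\in J$, while the standing assumption of the lemma gives $T_jv_0\gg_u 0$ for all large $j$, we conclude $v_0\gg_u 0$, i.e.\ $v_0\ge cu$ for some $c>0$. Put $h:=c^{-1}v_0\in F$, so $0\le u\le h$. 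Now fix $f\in E_u$ with $0\le f\le u$; then $h-f\ge 0$. Setting aside the trivial cases $f=0$ and $f=h$ (the latter forcing $f=u=h\in F$, so $T_jf=f$ for all $j$), both $f$ and $h-f$ lie in $E_+\setminus\{0\}$, so the standing assumption furnishes indices $j_f,j_{h-f}\in J$; taking $j_0$ to be the larger of the two (possible since $J$ is totally ordered), for every $j\succeq j_0$ we get $T_jf\ge 0$ and $T_j(h-f)\ge 0$, hence $0\le T_jf\le T_jf+T_j(h-f)=T_jh=h$ and so $\|T_jf\|_E\le\|h\|_E$. For $j\preceq j_0$, the assumed boundedness of $(T_j|_{E_u})_{j\preceq j_0}$ together with $\|f\|_u\le 1$ gives a bound. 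Hence $\sup_{j\in J}\|T_jf\|_E<\infty$ for all such $f$.

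For an arbitrary $f\in E_u$, writing it as a combination of the positive and negative parts of its real and imaginary parts — four positive elements of $E_u$, each dominated by $\|f\|_u\,u$ — and rescaling reduces to the previous case, so $\sup_{j\in J}\|T_jf\|_E<\infty$ for every $f\in E_u$. The uniform boundedness principle then yields $\sup_{j\in J}\|T_j|_{E_u}\|_{\calL(E_u,E)}<\infty$, which is the claim. The step I expect to require the most care is recognising that, because the eventual positivity of $\calT$ is only \emph{individual}, no direct uniform estimate is available and one must route through pointwise boundedness, with $v_0$ (rescaled to $h\ge u$) supplying the order sandwich $0\le T_jf\le h$ for large $j$.

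\emph{Part (ii).} The crucial point is that $Pu\neq 0$: were $Pu=0$, then positivity of $P$ would force $Pg=0$ for every $g$ with $0\le g\le cu$ (as $0\le Pg\le cPu=0$), and decomposing into real/imaginary and positive/negative parts would show $P$ vanishes on the ideal $E_u$; since $u$ is a quasi-interior point, $E_u$ is dense in $E$ and $P$ is bounded, so $P=0$, contradicting $P>0$. Hence $Pu\neq 0$, and $Pu\ge 0$ since $P\ge 0$ and $u\ge 0$, so $Pu>0$. As $Pu\in\im P\subseteq F$, applying the standing assumption to $Pu$ gives $Pu=T_j(Pu)\gg_u 0$, say $Pu\ge\gamma u$ with $\gamma>0$.

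Finally, let $f>0$ be arbitrary. Because $\im P\subseteq F$, each $T_j$ fixes $Pf$; because $\ker P$ is $T_j$-invariant, $T_j(f-Pf)\in\ker P$; hence $PT_jf=Pf$ for every $j$. The standing assumption gives an index $j_f$ with $T_{j_f}f\gg_u 0$, i.e.\ $T_{j_f}f\ge c_fu$ for some $c_f>0$. Applying the positive operator $P$ and using $PT_{j_f}f=Pf$ and $Pu\ge\gamma u$ yields $Pf=PT_{j_f}f\ge c_fPu\ge c_f\gamma u$, so $Pf\gg_u 0$; since $f>0$ was arbitrary, $P\gg_u 0$. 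Here the only substantive step is the reduction to $Pu\neq 0$; the remainder is a routine computation with the splitting $E=\im P\oplus\ker P$.
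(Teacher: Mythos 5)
Your proof is correct, and part (i) follows essentially the same route as the paper: reduce to pointwise boundedness of $(T_j|_{E_u})_j$ on $E_u$, use $v_0$ to produce an order sandwich $|T_j f|\le h$ for large $j$, and conclude with the uniform boundedness principle. (The paper works directly with $0<f\in E_u$ and a constant $c$ with $cv_0\pm f\ge 0$ rather than normalising to $h\ge u$ and considering $0\le f\le u$, but this is a cosmetic difference.)

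For part (ii) your argument is correct but genuinely different from the paper's. The paper fixes $f>0$ and first proves $Pf\neq 0$ as follows: pick $0<g\in E_u$ with $Pg>0$ (possible by density of $E_u$), choose $j$ with $T_jf\gg_u 0$, so $T_jf\ge cg$ and hence $PT_jf\ge cPg>0$; since $T_j$ leaves $\ker P$ invariant, this forces $Pf\neq 0$, and then $Pf\in F$ gives $Pf=T_jPf\gg_u 0$ via the standing assumption applied to $Pf$. You instead front-load the argument onto $u$: show $Pu\neq 0$ by the ideal/density argument, deduce $Pu\gg_u 0$ from $Pu\in F$, and then for each $f>0$ derive the inequality $Pf=PT_{j_f}f\ge c_f\,Pu\ge c_f\gamma u$. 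Both are sound; yours has the modest advantage of producing a single fixed element $Pu\gg_u 0$ from which $Pf\gg_u 0$ follows for every $f$ by one positivity estimate, whereas the paper invokes the standing assumption on a different element $Pf$ for each $f$. The paper's version needs the auxiliary $g\in E_u$ only to show $Pf\neq 0$; your version replaces that with the direct observation that $Pu=0$ would force $P=0$.
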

\begin{proof}
  (i) By the uniform boundedness principle we only have to show that
  $(T_jf)_{j \in J}$ is bounded in $E$ for every $0 < f\in E_u$. By
  assumption there is a vector $0 < v_0 \in F$ and $j_0\in J$ such that
  $v_0 = T_{j_{0}}v_0 \gg_u 0$. For $0 < f \in E_u$ we can thus find a
  constant $c \ge 0$ such that $cv_0 \pm f \ge 0$. Hence we have
  $T_j(cv_0 \pm f) \ge 0$ and thus $|T_jf| \le cv_0$ for all
  sufficiently large $j$. This yields the assertion.
 
  (ii) If $f > 0$ then $Pf \in F$ and $Pf \ge 0$. In the case that
  $Pf\neq 0$ we have $Pf = T_jPf \gg_u 0$ for some $j \in J$. To show
  that $Pf \neq 0$ for every $f > 0$ fix $f > 0$. As $P\neq 0$ and $E_u$
  is dense in $E$ there is an element $0 < g \in E_u$ such that $Pg >
  0$. By assumption $T_jf \gg_u 0$ for some $j\in J$, and thus $T_jf -
  cg \ge 0$ for some $c > 0$. Hence, $PT_jf \ge cPg > 0$ and in
  particular $PT_jf \neq 0$. Since $T_j$ leaves $\ker P$ invariant, this
  implies that $Pf \not= 0$.
\end{proof}

We are now able to prove Theorem~\ref{thm:resolvents-strong}.

\begin{proof}[Proof of Theorem~\ref{thm:resolvents-strong}]
  We may assume throughout the proof that $\lambda_0 = 0$. First,
  observe that the domination condition $D(A)\subseteq E_u$ implies that
  $u$ is a quasi-interior point of $E_+$ since $D(A)$ is dense in
  $E$. We shall prove (i) $\Leftrightarrow$ (ii), (i) $\Leftrightarrow$
  (iii) and (i) $\Rightarrow$ (v) $\Rightarrow$ (iv) $\Rightarrow$ (i).
 
  ``(i) $\Rightarrow$ (ii)'' If (i) holds, then
  Proposition~\ref{prop:projections-strong} implies that $0$ is a simple
  pole of $R(\phdot,A)$. Let $f>0$.
  Lemma~\ref{lem:behaviour-resolvent}(i) now yields that $\lambda
  R(\lambda,A)f \to Pf \gg_u 0$ in $D(A)$ as $\lambda \downarrow 0$. By
  the closed graph theorem, $D(A)\hookrightarrow E_u$ if $E_u$ is
  endowed with the gauge norm with respect to $u$. Hence, $\lambda
  R(\lambda,A)f \to Pf \gg_u 0$ in $E_u$. Since $\lambda
  R(\lambda,A)f\in E_{\bbR}$ for every $\lambda\in(0,\infty)\cap\rho(A)$
  and $E_u=C(K)$ for some compact Hausdorff space $K$, this implies that
  $\lambda R(\lambda,A)f \gg_u 0$ for sufficiently small $\lambda > 0$.
  
  ``(ii) $\Rightarrow$ (i)'' First we show that the eigenvalue $0$ of
  $A$ admits a positive eigenvector. Let $m \ge 1$ be the order of $0$
  as a pole of $R(\phdot,A)$ and let $U\in \calL(E)$ be the coefficient
  of $\lambda^{-m}$ in the Laurent expansion of $R(\phdot,A)$ about
  $0$. Then $U\neq 0$ and $\im U$ consists of eigenvectors of $A$ (see
  for instance \cite[Remark~2.1]{DanersI}).  Moreover, $\lambda^m
  R(\lambda,A) \to U$ in $\calL(E)$. As the resolvent is individually
  eventually positive, $U\geq 0$ and so $A$ has an eigenvector $v > 0$
  corresponding to the eigenvalue $0$.
  
  We now apply Lemma~\ref{lem:operator-family} to the operator family
  $(\lambda R(\lambda,A))_{\lambda \in (0,\varepsilon]}$, where 
  $\varepsilon > 0$ is sufficiently small to ensure that $(0,\varepsilon] \in 
  \rho(A)$ and where the order on $(0,\varepsilon]$ 
  is given by the relation $\preceq := \ge$. Note that the fixed
  space of this operator family coincides with $\ker A$.  Therefore, all
  assumptions of part (i) of the Lemma are fulfilled, and we conclude
  that the operator family $(\lambda R(\lambda,A)|_{E_u})_{\lambda \in
    (0,\varepsilon]}$ is bounded in $\calL(E_u,E)$.
 
  If we fix some $\mu \in \rho(A)$, then by the resolvent identity
  \begin{displaymath}
    \lambda R(\lambda,A)
    = (\mu - \lambda)\lambda R(\lambda,A) R(\mu,A)+\lambda R(\mu,A)
  \end{displaymath}
  for all $\lambda \in (0,\varepsilon]$. We have $R(\mu,A)\in\calL(E,D(A))$. As
  $D(A)\hookrightarrow E_u$, we conclude that
  $R(\mu,A)\in\calL(E,E_u)$. Hence we have
  \begin{displaymath}
    \|\lambda R(\lambda,A)\|_{\calL(E)} \le |\mu - \lambda| \, 
    \|\lambda R(\lambda,A)|_{E_u}\|_{\calL(E_u,E)} \, 
    \|R(\mu,A)\|_{\calL(E,E_u)} + |\lambda| \, \|R(\mu,A)\|_{\calL(E)}
  \end{displaymath}
  for every $\lambda \in (0,\varepsilon]$. The operator family $(\lambda
  R(\lambda,A))_{\lambda \in (0,\varepsilon]}$ is therefore bounded in $\calL(E)$,
  showing that $0$ is a simple pole of $R(\phdot,A)$.
 
  Hence, we have $P = U > 0$. Moreover, $\im P=\ker A$ and therefore the
  fixed space of $(\lambda R(\lambda,A))_{\lambda \in (0,\varepsilon]}$ is $\im
  P$. Applying Lemma~\ref{lem:operator-family} we conclude that $P \gg_u
  0$.
  
  ``(i) $\Leftrightarrow$ (iii)'' Note that $0$ is also an eigenvalue of
  $-A$ and that the corresponding spectral projection is also $P$. Thus,
  (i) holds if and only if $R(\phdot,-A)$ is individually
  eventually strongly positive with respect to $u$ at $0$. This however
  is true if and only if $R(\phdot,A)$ is individually eventually
  strongly negative with respect to $u$ at $0$.
  
  From now on we assume that $\lambda_0=\spb(A)=0$.
  
  ``(i) $\Rightarrow$ (v)'' We argue similarly as in the implication
  ``(i) $\Rightarrow$ (ii)'', but use part (ii) of
  Lemma~\ref{lem:behaviour-resolvent} instead of part (i) to conclude
  that for every $f > 0$ we have $[\lambda R(\lambda,A)]^n f\gg_u
  0$ for all $n$ sufficiently large.
 
  ``(v) $\Rightarrow$ (iv)'' This implication is obvious.
  
  ``(iv) $\Rightarrow$ (i)'' We proceed similarly as in the proof of
  ``(ii) $\Rightarrow$ (i)'', so we only provide an outline. Let
  $\lambda>0$ such that $R(\lambda,A)$ is individually strongly positive
  with respect to $u$. We apply \cite[Lemma~4.8]{DanersI} to $T :=
  \lambda R(\lambda,A)$ to conclude that $\lambda R(\lambda, A)$ admits
  an eigenvector $v > 0$ for the eigenvalue $1$. Then we apply
  Lemma~\ref{lem:operator-family}(i) to the operator family $([\lambda
  R(\lambda,A)]^n)_{n \in \bbN_0}$ and conclude that its restriction to
  $E_u$ is bounded in $\calL(E_u,E)$.  Since $\lambda
  R(\lambda,A)\in\calL(E,E_u)$, the family is also bounded in $\calL(E)$
  and hence $0$ is a simple pole of $R(\phdot,A)$. It follows from
  Lemma~\ref{lem:behaviour-resolvent}(ii) that $P$ is positive and since
  $0$ is a spectral value of $A$, $P$ is non-zero. As above,
  we can now apply Lemma~\ref{lem:operator-family}(ii) to conclude that
  $P \gg_u 0$.
\end{proof}

In the proof of the implication ``(i) $\Rightarrow$ (ii)'' we used
Lemma~\ref{lem:behaviour-resolvent}(ii) asserting that $\lambda
R(\lambda,A)\to P$ with respect to the operator norm in $\calL(E,D(A))$
as $\lambda \downarrow 0$. One might thus be tempted to conjecture that
$R(\phdot,A)$ is uniformly eventually strongly positive with respect to
$u$. However, a counterexample from \cite[Example~5.7]{DanersI} with
$E=C(K)$ and $u=\one$ shows that this is not the case. Although for
each $f \in E$ we have $f \ge cu$ for some $c > 0$, the problem is that,
as $f$ varies, the constant $c$ can be become arbitrarily small, even if
we require $\|f\| = 1$.

\section{Eventually strongly positive semigroups}
\label{section:semigroups-strong}
We are finally ready to turn to one of the main topics of our article and
consider eventually strongly positive semigroups. We start with the
precise definitions that we will use.
\begin{definition}
  \label{def:semigroups-strong}
  Let $(e^{tA})_{t \ge 0}$ be a real $C_0$-semigroup on a complex Banach
  lattice $E$.  Let $u \in E_+$ be a quasi-interior point.
  \begin{enumerate}[(a)]
  \item The semigroup $(e^{tA})_{t \ge 0}$ is called \emph{individually
      eventually strongly positive with respect to $u$} if for each $f
    \in E_+ \setminus \{0\}$ there is a $t_0 > 0$ such that $e^{tA}f
    \gg_u 0$ for each $t \ge t_0$.
  \item The semigroup $(e^{tA})_{t \ge 0}$ is called \emph{uniformly
      eventually strongly positive with respect to $u$} if there is a
    $t_0 > 0$ such that $e^{tA} \gg_u 0$ for each $t \ge t_0$.
  \end{enumerate}
\end{definition}
Our main characterisation of individually eventually strongly positive
semigroups is the following theorem. While we needed the domination condition
$D(A) \subseteq E_u$ in Theorem~\ref{thm:resolvents-strong}, we now
require the ``smoothing'' assumption $e^{t_0A}E \subseteq E_u$, which in
practice is usually weaker. See
Corollary~\ref{cor:semigroups-strong-differentiable} below for a
connection between the two conditions.
\begin{theorem}
  \label{thm:semigroups-strong}
  Let $(e^{tA})_{t \ge 0}$ be a real $C_0$-semigroup with $\sigma(A)
  \not= \emptyset$ on a complex Banach lattice $E$. Suppose that the
  peripheral spectrum $\sigma_{\per}(A)$ is finite and consists of poles
  of the resolvent. If $u \in E_+$ is a quasi-interior point and if
  there exists $t_0 \ge 0$ such that $e^{t_0A}E \subseteq E_u$, then the
  following assertions are equivalent:
  \begin{enumerate}[\upshape (i)]
  \item The semigroup $(e^{tA})_{t \ge 0}$ is individually eventually
    strongly positive with respect to $u$.
  \item The semigroup $(e^{t(A-\spb(A)I)})_{t \ge 0}$ is bounded,
    $\spb(A)$ is a dominant spectral value, and its associated spectral
    projection $P$ fulfils $P \gg_u 0$.
  \item The semigroup $e^{t(A-\spb(A)I)}$ converges to some operator $Q
    \gg_u 0$ with respect to the strong operator topology as $t \to
    \infty$.
  \end{enumerate}
  If assertions {\upshape (i)--(iii)} hold, then $P = Q$.
\end{theorem}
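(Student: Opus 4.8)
My plan is to prove the cycle (i)~$\Rightarrow$~(ii)~$\Rightarrow$~(iii)~$\Rightarrow$~(i), using the resolvent-level results from Section~\ref{section:resolvents-strong} as the bridge between spectral and semigroup information. First I would reduce to the case $\spb(A)=0$ by rescaling the semigroup, i.e.\ replacing $A$ by $A-\spb(A)I$; note that the smoothing hypothesis $e^{t_0A}E\subseteq E_u$ is unaffected, and that $\spb(A)\neq-\infty$ since $\sigma(A)\neq\emptyset$ and $\sigma_{\per}(A)$ is assumed nonempty (it consists of poles). The key preliminary observation is that the smoothing condition forces any eventually positive trajectory through $u$: since $e^{t_0A}E\subseteq E_u$ and (by the closed graph theorem) $e^{t_0A}\in\calL(E,E_u)$, and since $E_u\cong C(K)$, one can transfer strong-positivity-with-respect-to-$u$ statements into honest positivity/order statements in $C(K)$, exactly as in the proof of Theorem~\ref{thm:resolvents-strong}.

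For ``(i)~$\Rightarrow$~(ii)'': Assuming individual eventual strong positivity, I would first show the rescaled semigroup is bounded. The trajectory $t\mapsto e^{tA}f$ is eventually $\gg_u 0$; applying this to a spanning positive eigenvector (whose existence must be extracted) and using that $f$ can be dominated by a multiple of such an eigenvector in $E_u$, one bounds $\|e^{tA}f\|$ along the lines of Lemma~\ref{lem:operator-family}(i). To produce the eigenvector and to pin down $\spb(A)=0$ as dominant, I would pass from the semigroup to its resolvent via the integral representation $R(\lambda,A)f=\int_0^\infty e^{-\lambda t}e^{tA}f\,dt$ for $\lambda>0$, which shows that eventual positivity of the semigroup implies $R(\lambda,A)f$ is, for small $\lambda$, a positive combination of eventually-strongly-positive vectors — hence $R(\phdot,A)$ is individually eventually positive with respect to $u$ at $0$ in the sense of Theorem~\ref{thm:resolvents-strong}(iv) (here I would need $0$ to actually be a pole, which follows from the boundedness of the rescaled semigroup together with $0\in\sigma_{\per}(A)$ and the finiteness/pole hypothesis). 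Then Theorem~\ref{thm:resolvents-strong} gives $P\gg_u 0$. To see that $0$ is \emph{dominant} — the genuinely delicate point — I would argue that any other peripheral eigenvalue $i\beta$ would contribute an oscillating term $e^{i\beta t}P_{i\beta}f$ to $e^{tA}f$ that cannot be absorbed into the $\gg_u 0$ cone for all large $t$; concretely, in the $C(K)$-picture for $E_u$, a bounded oscillating component added to a component bounded below by $cu$ would, at suitable times, push the sum outside the cone, contradicting individual eventual strong positivity. This uses that $\sigma_{\per}(A)$ is finite so the almost-periodic part is a finite trigonometric sum.

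For ``(ii)~$\Rightarrow$~(iii)'': With $P\gg_u 0$, $0$ dominant, and the rescaled semigroup bounded, the peripheral part is just the one-dimensional projection $P$, and a standard Tauberian/spectral-decomposition argument (e.g.\ via the ABLV-type splitting, or directly: $e^{tA}=P+e^{tA}(I-P)$ where $A$ restricted to $\ker P$ has spectral bound $<0$ after the pole at $0$ is removed and the rescaled semigroup on $\ker P$ is bounded with no peripheral spectrum) gives $e^{tA}\to P$ strongly. Then $Q:=P\gg_u 0$. For ``(iii)~$\Rightarrow$~(i)'': if $e^{tA}f\to Qf$ with $Q\gg_u 0$, then for $f>0$ we get $Qf\gg_u 0$, i.e.\ $Qf\ge cu$ for some $c>0$; applying $e^{t_0A}$ and working in $E_u\cong C(K)$, convergence $e^{tA}f\to Qf$ in $E$ can be upgraded (using the smoothing $e^{t_0A}$) to convergence in $E_u$-norm, whence $e^{tA}f\ge\tfrac{c}{2}u\gg_u 0$ for all large $t$. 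The final claim $P=Q$ is then immediate since $Q$ is the strong limit and $P$ is the peripheral spectral projection, which under boundedness is exactly the Cesàro/Abel limit of the semigroup, hence equals the strong limit $Q$.

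The main obstacle I expect is establishing that $\spb(A)$ is \emph{dominant} from the individual eventual strong positivity alone — ruling out non-trivial peripheral spectrum requires a careful quantitative argument in the $C(K)$-model of $E_u$, exploiting that the strongly positive component stays uniformly above $cu$ while the oscillatory remainder is merely bounded, and is the step where the finiteness of $\sigma_{\per}(A)$ and the smoothing hypothesis are used in an essential way; the boundedness of the rescaled semigroup (needed before one can even speak of a clean peripheral decomposition) is a secondary technical hurdle handled as in Lemma~\ref{lem:operator-family}(i).
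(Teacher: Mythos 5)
Your reduction to $\spb(A)=0$ and your implications (ii)$\Rightarrow$(iii) and (iii)$\Rightarrow$(i) are essentially the paper's own arguments (stability on $\ker P$ via the Arendt--Batty theorem, and smoothing through $e^{t_0A}\in\calL(E,E_u)$ to upgrade strong convergence to convergence in the gauge norm, where $Qf\gg_u0$ then forces $e^{tA}f\gg_u0$ eventually). The genuine gap is in (i)$\Rightarrow$(ii), and it occurs twice. First, your bridge from the semigroup to the resolvent does not work: the representation $R(\lambda,A)f=\int_0^\infty e^{-\lambda t}e^{tA}f\,dt$ is only valid for $\lambda$ above the growth bound, which you do not control before boundedness is established, and in any case the integral contains the initial segment of the trajectory, on which $e^{tA}f$ need be neither positive nor even an element of $E_u$, so no order bound against $u$ is available and you cannot conclude eventual (strong) positivity of $R(\lambda,A)$ or of its powers; Example~\ref{examples:ind-evtl-pos-is-complicated}(b) shows that eventual positivity of a semigroup does not pass to the resolvent in general. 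Even granting that step, you could not invoke Theorem~\ref{thm:resolvents-strong}: it has the hypothesis $D(A)\subseteq E_u$, which is \emph{not} assumed in Theorem~\ref{thm:semigroups-strong} (the smoothing condition $e^{t_0A}E\subseteq E_u$ is deliberately weaker and does not give $R(\lambda,A)E\subseteq E_u$). The paper avoids the resolvent entirely: it quotes \cite[Theorems~7.6 and~7.7]{DanersI} to get that $\spb(A)$ is an eigenvalue with a positive eigenvector and that the peripheral spectrum consists of simple poles, feeds that eigenvector into Lemma~\ref{lem:operator-family}(i) to obtain boundedness, obtains dominance and $P\ge0$ from Theorem~\ref{thm:semigroups-asymptotic}, and finally applies Lemma~\ref{lem:operator-family}(ii) to the family $(e^{tA})_{t\ge0}$ itself to upgrade to $P\gg_u0$. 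Note also that the positive eigenvector you need for the boundedness step is exactly what your flawed resolvent route was supposed to deliver.

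Second, your argument for dominance --- that an oscillating peripheral term ``cannot be absorbed into the $\gg_u0$ cone'' --- is not a proof, and the claim is false as stated for an individual trajectory: for a fixed $f$ the strongly positive component dominates some $c_fu$ with $c_f>0$ and the oscillatory component has some fixed amplitude, and nothing prevents the sum from staying in (indeed far inside) the cone for all large $t$. Excluding nontrivial peripheral spectrum genuinely requires the Perron--Frobenius machinery: one first shows that the peripheral spectral projection $P_{\per}$ is positive, that $\im P_{\per}$ is a Banach lattice on which the semigroup acts positively, and then uses the imaginary additive cyclicity of the peripheral spectrum of positive semigroups together with the finiteness of $\sigma_{\per}(A)$ to force $\sigma_{\per}(A)=\{\spb(A)\}$; this is precisely the content of the implication (ii)$\Rightarrow$(i) in Theorem~\ref{thm:semigroups-asymptotic}, which the paper's proof invokes. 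A smaller slip: $\sigma_{\per}(A)\neq\emptyset$ is not a hypothesis (``consists of poles'' is vacuous if it is empty), so even the fact that $\spb(A)\in\sigma(A)$ must be derived from the eventual positivity, again via \cite[Theorem~7.6]{DanersI}, rather than read off from the assumptions.
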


Again, this theorem was previously only known in the case $E = C(K)$ and
$u = \one$ \cite[Theorem~5.4]{DanersI}. In this case we have $E_u=E$,
hence the condition $e^{t_0A} \subseteq E_u$ is automatically satisfied;
this shows that the known result on $C(K)$-spaces is indeed a special
case of our general Theorem~\ref{thm:semigroups-strong} above.

\begin{proof}[Proof of Theorem~\ref{thm:semigroups-strong}]
  We may assume throughout that $\spb(A) = 0$.
 
  ``(i) $\Rightarrow$ (ii)'' By \cite[Theorem~7.6]{DanersI} $\spb(A) =
  0$ is a spectral value of $A$ and by \cite[Theorem~7.7(i)]{DanersI} it
  is even an eigenvalue and admits a positive eigenvector. Applying
  Lemma~\ref{lem:operator-family}(i) to the operator family $(e^{tA})_{t
    \in [0,\infty)}$ we conclude that the family $(e^{tA}|_{E_u})_{t \in
    [0,\infty)}$ is bounded in $\calL(E_u,E)$.  By assumption there
  exists $t_0 > 0$ such that $e^{t_0A}E \subseteq E_u$ and thus, due to
  the closed graph theorem, $e^{t_0A}\in\calL(E,E_u)$.  Hence, the
  operator family $(e^{tA})_{t \in [t_0,\infty)}$ is bounded in
  $\calL(E)$ and therefore also $(e^{tA})_{t \ge 0}$ is bounded in
  $\calL(E)$.
 
  As $(e^{tA})_{t \ge 0}$ is bounded the spectral bound $\spb(A) = 0$ is
  a simple pole of $R(\phdot,A)$ and so $\im P = \ker A$. 
  Theorem~\ref{thm:semigroups-asymptotic} below
  implies (under even weaker positivity assumptions) that $\spb(A)$
  is a dominant spectral value of $A$ and that $P$ is positive.  As
  $\spb(A)$ is an eigenvalue of $A$, $P$ is non-zero and thus
  Lemma~\ref{lem:operator-family}(ii) applied to the operator family
  $(e^{tA})_{t \in [0,\infty)}$ implies that $P \gg_u 0$.
 
  ``(ii) $\Rightarrow$ (iii)'' Since all spectral values of $A|_{\ker
    P}$ have strictly negative real part and since the semigroup
  $(e^{tA})_{t \ge 0}$ is bounded, it follows from
  \cite[Theorem~2.4]{MR933321} or \cite[Corollary~5.2.6]{Neerven1996}
  that $e^{tA}$ converges strongly to $0$ on $\ker P$ as $t \to \infty$.
  As $P \gg_u 0$, $0$ is a simple pole of $R(\phdot,A)$ according to
  Proposition~\ref{prop:projections-strong} and hence we have $\im P =
  \ker A$. Thus, $e^{tA}f\to Pf$ as $t \to \infty$ for each $f \in E$.
  In particular, (iii) holds with $Q = P$.
 
  ``(iii) $\Rightarrow$ (i)'' Let $f > 0$. By assumption $\lim_{t \to
    \infty}e^{tA}f = Qf$ in $E$ and clearly, $Qf$ is a fixed point of
  each operator $e^{tA}$. As $e^{t_0A}\in\calL(E,E_u)$ for some $t_0>0$
  we conclude for $t \ge t_0$ that
  \begin{displaymath}
    e^{tA}f = e^{t_0A}e^{(t-t_0)A}f \to e^{t_0A}Qf = Qf 
    \quad \text{in } E_u \quad \text{ as } t \to \infty.
  \end{displaymath}
  Since $e^{tA}f$ is real and $Qf\gg_u 0$, this implies that $e^{tA}f
  \gg_u 0$ for all sufficiently large $t$.
\end{proof}

As in \cite[Corollary~5.6]{DanersI}, the boundedness condition in
Theorem~\ref{thm:semigroups-strong}(ii) is redundant if the semigroup
$(e^{tA})_{t \ge 0}$ is eventually norm-continuous. If we assume that
$(e^{tA})_{t \ge 0}$ is a little more regular, then we can also give a
criterion to check the condition $e^{t_0A} \subseteq E_u$: Recall that a
$C_0$-semigroup $(e^{tA})_{t \ge 0}$ on a Banach space $E$ is called
\emph{eventually differentiable} if there exists $t_0 \ge 0$ such that
$e^{t_0A}E \subseteq D(A)$. In that case $e^{tA}E \subseteq D(A)$ for
all $t \ge t_0$. Note that each analytic semigroup is eventually (in
fact immediately) differentiable.

\begin{corollary}
  \label{cor:semigroups-strong-differentiable}
  Let $(e^{tA})_{t \ge 0}$ be a real, eventually differentiable
  $C_0$-semigroup with $\sigma(A) \not= \emptyset$ on a complex Banach
  lattice $E$. Suppose that the peripheral spectrum $\sigma_{\per}(A)$
  is finite and consists of poles of the resolvent. If $u \in E_+$ is a
  quasi-interior point and if there exists $n \in \bbN$ such that
  $D(A^n) \subseteq E_u$, then the following assertions are equivalent:
  \begin{enumerate}[\upshape (i)]
  \item The semigroup $(e^{tA})_{t \ge 0}$ is individually eventually
    strongly positive with respect to $u$.
  \item $\spb(A)$ is a dominant spectral value, and its associated
    spectral projection $P$ fulfils $P \gg_u 0$.
  \end{enumerate}
  \begin{proof}
    Let $t_0 \ge 0$ such that $e^{t_0A} E \subseteq D(A)$. Then
    $e^{nt_0A}E \subseteq D(A^n) \subseteq E_u$ and hence the assumptions of
    Theorem~\ref{thm:semigroups-strong} are fulfilled. The implication
    ``(i) $\Rightarrow$ (ii)'' therefore follows.
    
    Now assume that (ii) is true. To conclude from
    Theorem~\ref{thm:semigroups-strong} that (i) holds, we only have to
    show that $(e^{t(A-\spb(A))})_{t \ge 0}$ is bounded. Without loss of
    generality we assume that $\spb(A) = 0$. Since $P \gg_u 0$,
    Proposition~\ref{prop:projections-strong} tells us that $\spb(A) =
    0$ is a simple pole of the resolvent; hence, $(e^{tA})_{t \ge 0}$ is
    bounded on $\im P$. Since the semigroup is eventually
    differentiable, it is in particular eventually norm-continuous, and
    hence $\sigma(A)\cap\{z \in \bbC\colon \repart z \ge \alpha\}$ is
    bounded for every $\alpha \in \bbR$, see
    \cite[Theorem~II.4.18]{Engel2000}. Since $\spb(A) = 0$ is a dominant
    spectral value of $A$ this implies that $\spb(A|_{\ker P}) < 0$.
    Using again the eventual norm-continuity of the semigroup we
    conclude that the growth bound of $(e^{tA}|_{\ker P})_{t \ge 0}$
    is strictly negative; in particular, $(e^{tA}|_{\ker
      P})_{t \ge 0}$ is bounded.
  \end{proof}
\end{corollary}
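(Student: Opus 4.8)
The plan is to deduce the corollary from Theorem~\ref{thm:semigroups-strong}, so the first task is to check that the present hypotheses imply the smoothing condition $e^{t_0A}E \subseteq E_u$ required there. If $t_1 \ge 0$ satisfies $e^{t_1A}E \subseteq D(A)$ (eventual differentiability), then for $x \in D(A)$ one has $e^{t_1A}x \in D(A)$ and $A e^{t_1A}x = e^{t_1A}Ax \in e^{t_1A}E \subseteq D(A)$, so $e^{t_1A}D(A) \subseteq D(A^2)$; iterating gives $e^{nt_1A}E \subseteq D(A^n) \subseteq E_u$, and thus $t_0 := nt_1$ works. With this, all hypotheses of Theorem~\ref{thm:semigroups-strong} are in force, and the implication ``(i) $\Rightarrow$ (ii)'' of the corollary is precisely ``(i) $\Rightarrow$ (ii)'' of that theorem, discarding the (now superfluous) boundedness clause in the theorem's condition (ii).

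For the converse ``(ii) $\Rightarrow$ (i)'' it only remains to upgrade (ii) to the form demanded by Theorem~\ref{thm:semigroups-strong}(ii), i.e.\ to show, after normalising $\spb(A)=0$, that $(e^{tA})_{t \ge 0}$ is bounded; the theorem then finishes the argument. I would split along $E = \im P \oplus \ker P$. Since $P \gg_u 0$, Proposition~\ref{prop:projections-strong} shows that $0$ is a simple pole of $R(\phdot,A)$, so $\im P = \ker A$ is finite-dimensional and $(e^{tA})$ is bounded on it. On $\ker P$ one uses regularity: eventual differentiability implies eventual norm-continuity, whence by \cite[Theorem~II.4.18]{Engel2000} the set $\sigma(A) \cap \{z \in \bbC : \repart z \ge \alpha\}$ is bounded for every $\alpha \in \bbR$; since $\spb(A)=0$ is a dominant spectral value this forces $\spb(A|_{\ker P}) < 0$. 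Invoking eventual norm-continuity once more, the growth bound $\gbd(A|_{\ker P})$ equals $\spb(A|_{\ker P}) < 0$, so $(e^{tA}|_{\ker P})$ is uniformly exponentially stable, in particular bounded. Boundedness on both summands yields boundedness of $(e^{tA})$ on $E$, and Theorem~\ref{thm:semigroups-strong} now gives (i).

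The \emph{only} delicate point is this boundedness step in ``(ii) $\Rightarrow$ (i)'': one must close the potential gap between the spectral bound $\spb(A|_{\ker P})$ and the growth bound $\gbd(A|_{\ker P})$, and it is exactly the eventual norm-continuity of the semigroup --- itself a consequence of eventual differentiability --- that supplies the identity $\gbd = \spb$ here. Everything else is routine: the propagation of $D(A^n) \subseteq E_u$ through the semigroup to recover the smoothing hypothesis, and two applications of Theorem~\ref{thm:semigroups-strong}.
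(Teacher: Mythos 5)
Your proposal is correct and follows essentially the same route as the paper's own proof: propagate eventual differentiability to obtain $e^{nt_0A}E \subseteq D(A^n) \subseteq E_u$ so that Theorem~\ref{thm:semigroups-strong} applies, and for ``(ii)~$\Rightarrow$~(i)'' recover the missing boundedness via the decomposition $E = \im P \oplus \ker P$, using the simple pole on $\im P$ and eventual norm-continuity (to force both a spectral gap and equality of spectral and growth bounds) on $\ker P$. The only cosmetic slip is the remark that $(e^{tA})$ is bounded on $\im P$ ``because it is finite-dimensional''---the actual reason is that the simple pole makes $A$ vanish on $\im P$, so $e^{tA}|_{\im P}$ is the identity---but this does not affect the argument.
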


It is interesting to note that we needed the condition $D(A) \subseteq
E_u$ to prove Theorem~\ref{thm:resolvents-strong} about
resolvents, while we only need the weaker assumption $D(A^n) \subseteq
E_u$ for some power $n \in \bbN$ in
Corollary~\ref{cor:semigroups-strong-differentiable} about (eventually
differentiable) semigroups. When we consider the bi-Laplacian with
Dirichlet boundary conditions in
Section~\ref{section:applications-strong}, this will allow us to prove
stronger results on the semigroup than on the resolvent (compare
Propositions~\ref{prop:resolvent-bi-laplacian}
and~\ref{prop:semigroup-bi-laplacian}).

We now adapt \cite[Example~5.7]{DanersI} to show that we cannot in
general drop the domination conditions $D(A) \subseteq E_u$ and
$e^{t_0A}E \subseteq E_u$ in Theorems~\ref{thm:resolvents-strong} and
\ref{thm:semigroups-strong}. Interestingly, the example was used in
\cite{DanersI} as a counterexample to a rather different question.

\begin{example}
  \label{example:domination-property}
  Let $p \in [1,\infty)$ and $E = L^p((-1,1))$.  We denote by $\one$ the
  constant function with value one and by $\varphi\colon E \to \bbC$ the
  continuous linear functional given by $\varphi(f) = \int_{-1}^1
  f(\omega)\,d\omega$. Consider the decomposition
  \begin{displaymath}
    E = \langle \one \rangle \oplus F \qquad \text{with} \qquad F:= \ker
    \varphi \text{.}
  \end{displaymath}
  Let $S \colon F \to F$ denote the reflection operator given by
  $Sf(\omega) = f(-\omega)$ for all $\omega\in (-1,1)$. As $S^2=\id_F$
  we have $\sigma(S) = \{-1,1\}$.  We define $A\in\calL(E)$ by
  \begin{displaymath}
    A := 0_{\langle \one \rangle} \oplus (- 2 \id_F - S) \text{.}
  \end{displaymath}
  Clearly, $\sigma(A)=\{0,-1,-3\}$ and using $S^2 = \id_F$, we can 
  immediately check that
  \begin{align}
    e^{tA} & = \id_{\langle \one \rangle} \oplus e^{-2t}\big( \cosh (t)
    \, \id_F - \sinh (t) \, S \big) \quad \text{and}
    \label{form_sg_counter_ex_on_l_p_space} \\
    R(\lambda,A) & = \frac{1}{\lambda} \id_{\langle \one \rangle} \oplus
    \frac{1}{(\lambda + 2)^2 - 1} \big( (\lambda + 2) \id_F - S \big)
    \text{.}
    \label{form_res_counter_ex_on_l_p_space}
  \end{align}
  for all $t \ge 0$ and all $\lambda \in \rho(A) = \bbC \setminus
  \{0,-1,-3\}$.

  Now let $P$ be the spectral projection associated with $\spb(A) =
  0$. We clearly have $P f = \frac{1}{2} \varphi(f) \cdot \one$ for all
  $f \in E$. Thus, $P$ is strongly positive with respect to $u = \one$.
  Moreover, all assumptions of Theorems~\ref{thm:resolvents-strong} and
  \ref{thm:semigroups-strong} are fulfilled, except that $D(A) = E \not
  \subseteq L^\infty((-1,1)) = E_u$ and $e^{t_0A}E = E \not \subseteq
  E_u$ for each $t_0 \ge 0$.

  Now, consider $f \in E$ given by $f(\omega) =
  (1-\omega)^{-\frac{1}{2p}}$ for all $\omega \in (-1,1)$. Note that $f$
  is bounded on $[-1,0]$, but unbounded for $\omega$ close to $1$. By
  splitting $f$ into $P f \in \langle \one \rangle$ and $(1 - P)f \in F$
  and applying the formulae \eqref{form_sg_counter_ex_on_l_p_space} and
  \eqref{form_res_counter_ex_on_l_p_space}, we see that $e^{tA} f \not
  \ge 0$ for all $t > 0$ and $R(\lambda,A)f \not \ge 0$ for all $\lambda
  > 0$, that is, both the semigroup and the resolvent are not
  individually eventually positive. In particular, they are not
  individually eventually strongly positive with respect to $u$.
\end{example}

\section{Applications of eventual strong positivity}
\label{section:applications-strong}
We shall now give some applications of the theory developed so far.
Several applications were already given in \cite[Section~6]{DanersI},
but now we have much more freedom since we are not confined to
$C(K)$-spaces. Our first two examples are concerned with bi-harmonic
operators with different boundary conditions and on different spaces.
Then we show how our results can be reformulated in the setting of a
self-adjoint operator on a Hilbert lattice, which we apply to the
Dirichlet-to-Neumann operator in two dimensions realised on
$L^2$-spaces.  Our final example is a class of Laplacians with non-local
boundary conditions.

\paragraph*{The square of the Dirichlet Laplacian}%
In \cite[Section~6.4]{DanersI} it was shown that, under sufficiently
strong regularity conditions, the negative square of the Robin Laplacian
on a bounded domain $\Omega$ of class $C^2$ generates an eventually
strongly positive semigroup on $C(\overline{\Omega})$.  However, the
negative square of the Dirichlet Laplacian $\Delta_D$ does not fit into
that framework, since it generates a $C_0$-semigroup on
$C_0(\Omega)$. Here we want to show that our theory on general Banach lattices 
naturally allows us to deal with such an operator. The Dirichlet
Laplacian is given by
\begin{displaymath}
  D(\Delta_D):=\{f\in C_0(\Omega)\colon \Delta f\in
  C_0(\Omega)\},\qquad
  \Delta_Df:=\Delta f,
\end{displaymath}
where $\Delta f$ is understood in the sense of distributions.

\begin{theorem}
  Let $\Omega\subseteq\mathbb R^n$ be a bounded domain of class
  $C^2$. On the Banach lattice $E = C_0(\Omega)$, consider the operator
  \begin{displaymath}
    D(A)=\{f\in D(\Delta_D)\colon \Delta_D f\in D(\Delta_D)\},
    \qquad Af  = -\Delta_D^2f\text{.}
  \end{displaymath}
  Let $u\in C_0(\Omega)$ be given by $u(x):=\dist(x,\partial\Omega)$.
  Then $A$ generates a holomorphic $C_0$-semigroup on $C_0(\Omega)$ of
  angle $\pi/2$ which is not positive, but individually eventually
  strongly positive with respect to $u$.
  \begin{proof}
    It is known that $\Delta_D$ generates a holomorphic $C_0$-semigroup
    of angle $\pi/2$ on $C_0(\Omega)$; see
    \cite[Theorem~2.3]{arendt:99:wrh}. We have $\sigma(\Delta_D)
    \subseteq (-\infty,0)$. Therefore, $A=-\Delta_D^2$ also generates a
    holomorphic $C_0$-semigroup of angle $\pi/2$ on $C_0(\Omega)$ as
    shown in the first part of \cite[Proposition~6.5]{DanersI}.
 
    To show that $R(0,\Delta_D)\gg_u 0$ assume that $f \in D(\Delta_D)$
    with $-\Delta_D f = g>0$. As $\Omega$ is of class $C^2$ and
    $C_0(\Omega)\subseteq L^p(\Omega)$ for all $p\in (1,\infty)$,
    classical regularity theory implies that $f\in W^{2,p}(\Omega)\cap
    C_0(\Omega)$ for all $p>n$. In particular, by standard Sobolev
    embedding theorems, $f\in C^1(\overline\Omega)$. Applying a Sobolev
    space version of the maximum principle and the strong boundary
    maximum principle we see that $\partial f/\partial\nu<0$ on the
    compact manifold $\partial\Omega$, where $\nu$ is the outer unit
    normal, see \cite{MR0223711} or \cite[Theorem~6.1]{amann:83:dss}. 
    Hence $f \gg_u 0$. It
    follows that $R(0,\Delta_D)\gg_u0$ and thus $R(0,A) =
    R(0,\Delta_D)^2 \gg_u 0$. For $\lambda \in (\spb(A),0)$ we obtain
    \begin{displaymath}
      R(\lambda,A) = \sum_{n=0}^\infty (-\lambda)^n R(0,A)^{n+1} \gg_u 0
      \text{,}
    \end{displaymath}
    so the resolvent $R(\phdot,A)$ is uniformly eventually strongly
    positive with respect to $u$.
 
    By the Sobolev embedding theorem, for $p>n$, $D(A) \hookrightarrow
    W^{2,p}(\Omega)\cap C_0(\Omega)\hookrightarrow C_0(\Omega) = E$ is
    compact, so $A$ has compact resolvent, and hence, $\spb(A)$ is a
    pole of the resolvent.  By \cite{MR614221}, we have $\partial
    u/\partial\nu<0$ on $\partial\Omega$ and hence $D(A) \subseteq
    C^1(\overline\Omega)\cap C_0(\Omega) \subseteq
    C_0(\Omega)_u$. Theorem~\ref{thm:resolvents-strong} now yields that
    the spectral projection $P$ associated with $\spb(A)$ fulfils $P
    \gg_u 0$. As $\spb(A)$ is dominant,
    Theorem~\ref{thm:semigroups-strong} finally implies that the
    semigroup $(e^{tA})_{t \ge 0}$ is individually eventually strongly
    positive with respect to $u$. That the semigroup is not positive
    follows from \cite[Proposition~2.2]{ABR90}.
  \end{proof}
\end{theorem}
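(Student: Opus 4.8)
The plan is to verify, one hypothesis at a time, that the machinery of Theorems~\ref{thm:resolvents-strong} and~\ref{thm:semigroups-strong} applies to $A=-\Delta_D^2$ with the candidate quasi-interior point $u=\dist(\phdot,\partial\Omega)\in C_0(\Omega)$. For the \emph{generation} statement I would start from the known fact that $\Delta_D$ generates a holomorphic $C_0$-semigroup of angle $\pi/2$ on $C_0(\Omega)$ (see \cite[Theorem~2.3]{arendt:99:wrh}) and that $\sigma(\Delta_D)\subseteq(-\infty,0)$ since the first Dirichlet eigenvalue is strictly positive. Then $-\Delta_D$ is a non-negative (sectorial of spectral angle $0$) operator, hence so is its square $(-\Delta_D)^2=\Delta_D^2$, and therefore $A=-\Delta_D^2$ generates a holomorphic semigroup of angle $\pi/2$; this is precisely the content of the first part of \cite[Proposition~6.5]{DanersI}, so this step is routine.

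Next I would set up the \emph{spectral picture}. Since $\Omega$ is a bounded $C^2$ domain, Sobolev embedding makes the resolvent of $\Delta_D$, and hence that of $A$, compact, so $\sigma(A)$ is discrete and every spectral value is an eigenvalue and a pole of $R(\phdot,A)$; in particular $\spb(A)$ is a pole. Moreover $\sigma(A)=\{-\mu^2:\mu\in\sigma(-\Delta_D)\}\subseteq(-\infty,0)$ is real, so $\sigma_{\per}(A)=\{\spb(A)\}$, i.e.\ $\spb(A)$ is a dominant spectral value. Thus the spectral hypotheses of both Theorems~\ref{thm:resolvents-strong} and~\ref{thm:semigroups-strong} are in place, and everything reduces to the two remaining inputs $D(A)\subseteq E_u$ and $P\gg_u 0$.

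The \emph{analytic core} — and the place where I expect the real difficulty — is to establish $R(0,\Delta_D)\gg_u 0$ and $D(A)\subseteq E_u$. For the first: if $-\Delta_D f=g>0$ with $f\in D(\Delta_D)$, then since $g\in C_0(\Omega)\subseteq L^p(\Omega)$ for all $p$ and $\Omega\in C^2$, elliptic regularity gives $f\in W^{2,p}(\Omega)\cap C_0(\Omega)$ for $p>n$, hence $f\in C^1(\overline\Omega)$ by Sobolev embedding; the interior strong maximum principle yields $f>0$ in $\Omega$ and the Hopf boundary point lemma yields $\partial f/\partial\nu<0$ on $\partial\Omega$, which together force $f\ge c\,\dist(\phdot,\partial\Omega)=cu$ for some $c>0$, i.e.\ $f\gg_u 0$; here one invokes a Sobolev-space maximum principle together with a boundary point lemma, e.g.\ \cite{MR0223711} or \cite[Theorem~6.1]{amann:83:dss}. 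For the domination: any $f\in D(A)$ has $\Delta f$ bounded (indeed $\Delta_Df\in C_0(\Omega)$), so the same regularity argument gives $f\in C^1(\overline\Omega)\cap C_0(\Omega)$; and since for a $C^2$ domain $u$ itself lies in $C^1(\overline\Omega)$ with $\partial u/\partial\nu<0$ on $\partial\Omega$ (cf.\ \cite{MR614221}), every $C^1$ function vanishing on $\partial\Omega$ is dominated by a multiple of $u$, whence $D(A)\subseteq C^1(\overline\Omega)\cap C_0(\Omega)\subseteq E_u$. The genuinely delicate points are the uniform $C^1$-regularity up to the merely $C^2$ boundary and the Hopf-type bound from below by the distance function; everything else here is bookkeeping.

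Finally I would \emph{assemble the pieces}. From $R(0,\Delta_D)\gg_u 0$ one gets $R(0,A)=R(0,\Delta_D)^2\gg_u 0$, and then the Neumann-type expansion $R(\lambda,A)=\sum_{n\ge 0}(-\lambda)^nR(0,A)^{n+1}$ shows $R(\lambda,A)\ge R(0,A)\gg_u 0$ for every $\lambda\in(\spb(A),0]$, so $R(\phdot,A)$ is (even uniformly) eventually strongly positive with respect to $u$ at $\spb(A)$; combined with $D(A)\subseteq E_u$, Theorem~\ref{thm:resolvents-strong} then gives $P\gg_u 0$. Since the semigroup is holomorphic, hence eventually differentiable and eventually norm continuous, we have $e^{t_0A}E\subseteq D(A)\subseteq E_u$ for each $t_0>0$ and the boundedness condition in Theorem~\ref{thm:semigroups-strong}(ii) is automatic; as $\spb(A)$ is dominant and $P\gg_u 0$, Theorem~\ref{thm:semigroups-strong} (or Corollary~\ref{cor:semigroups-strong-differentiable} with $n=1$) yields that $(e^{tA})_{t\ge 0}$ is individually eventually strongly positive with respect to $u$. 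That the semigroup fails to be positive is a separate, soft fact about fourth-order operators, and I would simply cite \cite[Proposition~2.2]{ABR90}.
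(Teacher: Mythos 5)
Your proposal follows the paper's proof essentially step by step: generation via \cite[Proposition~6.5]{DanersI}, the elliptic-regularity/Hopf boundary argument to get $R(0,\Delta_D)\gg_u 0$ and $D(A)\subseteq E_u$, the Neumann series for uniform eventual strong positivity of the resolvent, and then Theorems~\ref{thm:resolvents-strong} and~\ref{thm:semigroups-strong} to conclude. The only cosmetic difference is that you spell out the spectral picture (compact resolvent, $\sigma(A)\subseteq(-\infty,0)$, dominance) a bit more explicitly, but the approach and all essential steps coincide with the paper's.
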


\paragraph*{The bi-Laplace operator with Dirichlet boundary conditions}

Let $\Omega \subseteq \bbR^n$ be a bounded domain of class
$C^\infty$. Consider the bi-Laplace operator $A_p$ with Dirichlet
boundary conditions on $L^p(\Omega)$ ($1<p<\infty$), given by
\begin{displaymath}
  A_p\colon D(A_p):= W^{4,p}(\Omega) \cap W^{2,p}_0(\Omega)
  \to L^p(\Omega),\quad
  f\mapsto A_p f:=-\Delta^2f \text{.}
\end{displaymath}
This operator has the following properties:
\begin{proposition}
  \label{prop:bi-laplacian-on-lp}
  For $p \in (1,\infty)$ the operator $A_p$ is a closed, densely defined
  operator on $L^p(\Omega)$ having compact resolvent, and $\sigma(A_p)$
  is independent of $p \in (1,\infty)$. Moreover, the resolvent
  operators are consistent on the $L^p$-scale for $p \in (1,\infty)$.
\end{proposition}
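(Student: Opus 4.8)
The plan is to establish each of the four assertions in turn, relying on standard elliptic theory together with the known properties of the Dirichlet Laplacian.

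First I would record that $\Delta_D$, the Dirichlet Laplacian on $L^p(\Omega)$, is a closed, densely defined operator on $W^{2,p}(\Omega) \cap W^{1,p}_0(\Omega)$ with compact resolvent (since $W^{2,p}(\Omega) \hookrightarrow L^p(\Omega)$ is compact and $\Omega$ is sufficiently smooth). Consequently $A_p = -\Delta_D^2$, with $D(A_p) = \{f \in W^{2,p} \cap W^{1,p}_0 \colon \Delta_D f \in W^{2,p} \cap W^{1,p}_0\}$; by elliptic regularity, for a $C^\infty$ domain this coincides with $W^{4,p}(\Omega) \cap W^{2,p}_0(\Omega)$. That $A_p$ is closed and densely defined follows since $\Delta_D$ has these properties and $0 \in \rho(\Delta_D)$: indeed $R(0,A_p) = R(0,\Delta_D)^2$ is bounded, and $A_p$ is closed as the square of a closed operator with bounded inverse. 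Compactness of the resolvent of $A_p$ then follows from compactness of the resolvent of $\Delta_D$ (a product of a compact and a bounded operator is compact), or directly from the compact embedding $W^{4,p}(\Omega) \hookrightarrow L^p(\Omega)$.

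Next I would address $p$-independence of the spectrum and consistency of the resolvents. The key input is that the resolvents of the \emph{Dirichlet Laplacians} on the various $L^p$-spaces are consistent (they agree on $L^p \cap L^q$) and that $\sigma(\Delta_D)$ is $p$-independent; this is classical, following for instance from the fact that $\Delta_D$ generates a consistent family of analytic semigroups, or from Davies' heat-kernel bounds. Squaring the resolvent identity $R(0,A_p) = R(0,\Delta_D)^2$ shows that the $R(0,A_p)$ are likewise consistent; more generally, for $\lambda \in \rho(A_p)$, write $R(\lambda, A_p) = \bigl((-\Delta_D)^2 - \lambda\bigr)^{-1}$ and factor $(-\Delta_D)^2 - \lambda = (-\Delta_D - \sqrt{\lambda}\,)(-\Delta_D + \sqrt{\lambda}\,)$ (for a suitable branch of the square root), so that $R(\lambda, A_p) = R(\sqrt{\lambda}, \Delta_D)\,R(-\sqrt{\lambda}, \Delta_D)$ whenever $\pm\sqrt{\lambda} \in \rho(\Delta_D)$. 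This simultaneously gives consistency of $R(\phdot,A_p)$ on the $L^p$-scale and the identity $\sigma(A_p) = \{\mu^2 \colon \mu \in \sigma(\Delta_D)\}$; since $\sigma(\Delta_D)$ does not depend on $p$, neither does $\sigma(A_p)$. One must be slightly careful about which $\lambda$ are covered by the factorisation, but since $\sigma(\Delta_D) \subseteq (-\infty,0)$ the map $\mu \mapsto \mu^2$ is a bijection from $\sigma(\Delta_D)$ onto $(0,\infty)\cap\sigma(A_p)$ and the excluded set is handled by analytic continuation of the resolvent.

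I expect the main obstacle to be the precise identification of $D(A_p)$ with $W^{4,p}(\Omega) \cap W^{2,p}_0(\Omega)$ and the careful handling of the square-root factorisation near the spectrum — in particular, verifying that for $\lambda \notin (0,\infty)$ one can choose $\sqrt{\lambda}$ with both $\pm\sqrt{\lambda} \in \rho(\Delta_D)$, and that the resulting product representation extends analytically to all of $\rho(A_p)$. The rest is a routine assembly of standard elliptic regularity and known facts about the Dirichlet Laplacian; none of it requires the lattice-theoretic machinery of the earlier sections, which is only needed for the eventual-positivity statements that follow this proposition.
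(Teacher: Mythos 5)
There is a fundamental error at the very first step that invalidates the entire approach. You identify $A_p$ with $-\Delta_D^2$, the square of the Dirichlet Laplacian, and then run the whole argument through that operator: factorising $\Delta^2 - \lambda = (-\Delta_D - \sqrt{\lambda})(-\Delta_D + \sqrt{\lambda})$, deducing $R(\lambda,A_p) = R(\sqrt{\lambda},\Delta_D)R(-\sqrt{\lambda},\Delta_D)$, and concluding that $\sigma(A_p) = \{\mu^2 : \mu \in \sigma(\Delta_D)\}$. But $A_p$ is \emph{not} the square of the Dirichlet Laplacian. The domain is $D(A_p) = W^{4,p}(\Omega)\cap W^{2,p}_0(\Omega)$, encoding the boundary conditions $f = \partial f/\partial\nu = 0$ on $\partial\Omega$ (the clamped plate). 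The operator $\Delta_D^2$ instead has domain $\{f \in W^{2,p}\cap W^{1,p}_0 : \Delta_D f \in W^{2,p}\cap W^{1,p}_0\}$, which after elliptic regularity corresponds to the boundary conditions $f = \Delta f = 0$ on $\partial\Omega$ (Navier, or hinged plate). These two domains are not equal — neither contains the other — and the corresponding operators have genuinely different spectra, resolvents, and positivity properties. The factorisation you rely on is precisely what distinguishes the \emph{iterated} Dirichlet problem from the Dirichlet problem for $\Delta^2$: it works for Navier conditions and fails for clamped conditions, because $(-\Delta_D + \sqrt{\lambda})^{-1}g$ need not satisfy $\partial f/\partial\nu = 0$. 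Consequently the claimed spectral formula is false for $A_p$, and with it every downstream step (consistency, $p$-independence of $\sigma(A_p)$, compactness "as a product of compact and bounded", $R(0,A_p) = R(0,\Delta_D)^2$). Note also that the paper treats the genuine square $-\Delta_D^2$ as a \emph{separate} example, on $C_0(\Omega)$, earlier in the same section; the two must be kept apart.

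The paper's actual argument makes no reference to $\Delta_D$ at all. It cites the polyharmonic $L^p$-regularity and solvability theory of Gazzola--Grunau--Sweers (\cite[Corollary~2.21]{Gazzola2010}) directly for the Dirichlet bi-Laplacian to get $0\in\rho(A_p)$ (hence closedness), then shows $p$-independence of the point spectrum by a bootstrap showing eigenfunctions are bounded, and deduces compactness of the resolvent from the compact embedding $D(A_p)\hookrightarrow L^p$ once $\rho(A_p)\neq\emptyset$. Consistency is a one-line uniqueness argument: if $\lambda\in\rho(A_{p_1})=\rho(A_{p_2})$ and $f \in L^{p_2}$, then $w = R(\lambda,A_{p_2})f \in D(A_{p_2})\subseteq D(A_{p_1})$ solves $(\lambda-A_{p_1})w = f$, so the resolvents agree. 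If you want a route that avoids the Gazzola--Grunau--Sweers machinery, you must still work with the correct (clamped) boundary conditions, and no factorisation through $\Delta_D$ is available.
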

\begin{proof}
  Clearly, $A_p$ densely defined. If $p_1 < p_2$ and
  $\lambda \in \bbC$ is an eigenvalue of $A_{p_2}$, then
  $\ker(\lambda\id - A_{p_2}) \subseteq \ker(\lambda\id - A_{p_1})$
  since $D(A_{p_2}) \subseteq D(A_{p_1})$. On the other hand,
  \cite[Corollary~2.21]{Gazzola2010} together with a simple
  bootstrapping argument shows that each function in $\ker(\lambda\id -
  A_{p_1})$ is continuous up to the boundary, hence in
  $L^{p_2}(\Omega)$, and therefore in $\ker(\lambda\id -
  A_{p_2})$. Hence, the point spectrum of $A_p$ and the corresponding
  eigenspaces do not depend on $p$.
 
  It follows from \cite[Corollary~2.21]{Gazzola2010}
  that $0 \in \rho(A_p)$. In particular, $A_p$ is closed. Since $D(A_p)$
  is compactly embedded in $L^p(\Omega)$ and since
  $\rho(A_p)\neq\emptyset$, we conclude that $A_p$ has compact
  resolvent. Therefore, $\sigma(A_p)$ consists of eigenvalues only and
  is independent of $p$ by what we have shown above.
 
  To see that the resolvent operators are consistent on the $L^p$-scale,
  let $p_1 < p_2$ and suppose $\lambda \not\in \sigma(A_{p_1}) =
  \sigma(A_{p_2})$. If $f \in L^{p_2}$, then $w = R(\lambda,A_{p_2})f$
  is a function in $D(A_{p_2}) \subseteq D(A_{p_1})$ and $(\lambda - A_{p_2})w =
  f$. Hence $(\lambda - A_{p_1})w = f$ and thus $R(\lambda,A_{p_1})$ and
  $R(\lambda,A_{p_2})$ agree on $D(A_{p_2})$.
\end{proof}

We shall consider the function $u\colon \Omega \to \bbC$, $u(x) =
\dist(x,\Omega)^2$; $u$ is a quasi-interior point of $L^p(\Omega)_+$
for every $p \in [1,\infty)$. The following result was proved by Grunau
and Sweers in \cite[Theorem~5.2]{GrSw98}.

\begin{theorem}
  \label{thm:positive-eigenfunction-bi-laplacian}
  Let $1 < p < \infty$. Suppose that $\Omega$ is sufficiently close to
  the unit ball in $\bbR^n$ in the sense of \cite[Theorem~5.2]{GrSw98}
  (where we have $m=2$). Then the eigenspace of the operator $A_p$ for
  the largest real eigenvalue is spanned by a function $v\gg_u 0$.
\end{theorem}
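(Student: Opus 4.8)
Since this result is essentially \cite[Theorem~5.2]{GrSw98}, the plan is to isolate from that result a single positivity-improving property of the Green operator and then feed it into the Perron--Frobenius machinery developed above. First I would record the structural facts coming from Proposition~\ref{prop:bi-laplacian-on-lp}: $A_p$ has compact resolvent, so $\sigma(A_p)$ consists of eigenvalues, each a pole of $R(\phdot,A_p)$, and $\sigma(A_p)$ is independent of $p$. The clamped bi-Laplace operator $-A_2$ on $L^2(\Omega)$, with domain $W^{4,2}(\Omega)\cap W^{2,2}_0(\Omega)$, is the self-adjoint operator associated with the coercive symmetric form $(f,g)\mapsto\int_\Omega\Delta f\,\overline{\Delta g}$ on $W^{2,2}_0(\Omega)$, and it is strictly positive; hence $\sigma(A_p)=\sigma(A_2)\subseteq(-\infty,0)$ is real, and the ``largest real eigenvalue'' of $A_p$ is exactly $\lambda_0:=\spb(A_p)$, a pole of the resolvent.

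The heart of the argument is to show that $T:=R(0,A_p)\gg_u 0$. By Proposition~\ref{prop:bi-laplacian-on-lp} we have $0\in\rho(A_p)$, and $T=(\Delta^2)^{-1}$ is the Green operator of the clamped biharmonic Dirichlet problem, with integral kernel $G$. This is precisely the regime of \cite[Theorem~5.2]{GrSw98} with $m=2$: if $\Omega$ is sufficiently close to the unit ball, then $G$ obeys a two-sided estimate whose lower bound is, up to a strictly positive continuous weight, comparable to $\dist(x,\partial\Omega)^2\,\dist(y,\partial\Omega)^2$. Integrating this lower bound against an arbitrary $0<f\in L^p(\Omega)$ gives $Tf\ge c_f\,\dist(\phdot,\partial\Omega)^2=c_f u$ for some constant $c_f>0$, i.e.\ $Tf\gg_u 0$. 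This is where the closeness of $\Omega$ to the ball is genuinely used --- for a general smooth domain the biharmonic Green function changes sign --- and it is the main obstacle in the sense that it carries the entire analytic content, which we import wholesale from \cite{GrSw98}.

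With $T\gg_u 0$ in hand the rest is soft. The operator $T$ is compact and strongly positive, in particular positive and irreducible, so by the Krein--Rutman theorem (see e.g.\ \cite[Section~V.5]{Schaefer1974}) the spectral radius $\spr(T)>0$ is an algebraically simple eigenvalue of $T$, it is the only eigenvalue of $T$ admitting a positive eigenvector, and a corresponding eigenvector $v$ may be chosen with $v>0$; since $v=\spr(T)^{-1}Tv$ and $Tv\gg_u 0$, we in fact obtain $v\gg_u 0$. Because $0\in\rho(A_p)$ and $\sigma(A_p)\subseteq(-\infty,0)$, the eigenvalue $\lambda_0$ of $A_p$ corresponds under $\mu\mapsto-1/\mu$ to the eigenvalue $\spr(T)=-1/\lambda_0$ of $T$, with the same eigenspace; hence the eigenspace of $A_p$ for $\lambda_0$ is one-dimensional and spanned by $v\gg_u 0$, as claimed. (Alternatively, more in the spirit of the rest of this section: a Neumann series expansion of $R(\phdot,A_p)$ about $0$ gives $R(\lambda,A_p)\ge T\gg_u 0$ for every $\lambda\in(\lambda_0,0)$, and for $p$ large a Sobolev-embedding and boundary-maximum-principle argument as in the proof of the previous theorem shows $D(A_p)\subseteq E_u$; Theorem~\ref{thm:resolvents-strong} and Corollary~\ref{cor:projections-strong} then yield $P\gg_u 0$ together with the stated description of the eigenspace, while the general case $p\in(1,\infty)$ follows because, by Proposition~\ref{prop:bi-laplacian-on-lp}, the eigenspace and the pointwise bound $v\ge cu$ are independent of $p$.)
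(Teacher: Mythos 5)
The paper itself offers no proof of this statement: it is quoted verbatim from \cite[Theorem~5.2]{GrSw98}. Your argument, by contrast, supplies a genuine derivation from the two-sided bounds on the biharmonic Green function for domains close to a ball --- which are also due to Grunau and Sweers --- combined with the Krein--Rutman theorem, and the chain is sound. The lower bound $G(x,y)\ge c\,\dist(x,\partial\Omega)^2\dist(y,\partial\Omega)^2$ yields $T:=R(0,A_p)\gg_u 0$; $T$ is compact and strongly positive, hence irreducible, so $\spr(T)>0$ is an algebraically simple eigenvalue with eigenvector $v>0$, and $v=\spr(T)^{-1}Tv\gg_u 0$; the map $\mu=-1/\lambda$ then identifies this eigenspace with $\ker(\spb(A_p)\id-A_p)$. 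One caveat: what the paper labels \cite[Theorem~5.2]{GrSw98} is the eigenfunction statement itself, not the Green function estimate, so your proof effectively re-derives the cited result from the underlying kernel bounds; this is legitimate and transparent, but you should be explicit that the two-sided estimate is a \emph{separate} input from \cite{GrSw98}. Your parenthetical alternative (Neumann series plus Theorem~\ref{thm:resolvents-strong} and Corollary~\ref{cor:projections-strong}) is closer to the machinery the paper actually develops, but it needs the domination $D(A_p)\subseteq E_u$, hence the detour through $p$-independence to reach small $p$ --- a complication the Krein--Rutman route avoids entirely, which is a point in favour of your main argument.
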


In \cite[Proposition~5.3]{GrSw98} Grunau and Sweers used this result to
prove that for sufficiently large $p$ the resolvent $R(\phdot,A_p)$ is
in a sense individually eventually strongly positive (though
they did not use this terminology). We now demonstrate that this result
fits into our general theory; we also do not require their assumption $p
\geq 2$. In fact, for the semigroup, we do not even need to assume
that $p>n/2$.

\begin{lemma}
  \label{lemma:spectral-bound-bi-laplacian}
  Let $p \in (1,\infty)$ and let $\Omega \in C^\infty$ be such that the
  conclusion of Theorem~\ref{thm:positive-eigenfunction-bi-laplacian}
  holds. Then $\lambda_0:=\spb(A_p)$ is a dominant spectral value of
  $A_p$ and a simple pole of $R(\phdot, A_p)$; the corresponding
  spectral projection $P$ satisfies $P \gg_u 0$.
\end{lemma}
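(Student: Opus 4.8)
The plan is to deduce the statement from Proposition~\ref{prop:projections-strong} (or rather its quasi-interior-point version, Corollary~\ref{cor:projections-strong}) together with the spectral input supplied by Theorem~\ref{thm:positive-eigenfunction-bi-laplacian}. First I would record the basic structural facts already available: by Proposition~\ref{prop:bi-laplacian-on-lp} the operator $A_p$ has compact resolvent, so $\sigma(A_p)$ consists entirely of eigenvalues which are poles of $R(\phdot,A_p)$, and both the point spectrum and the eigenspaces are independent of $p$. In particular $\lambda_0 = \spb(A_p)$ is well-defined, lies in $\sigma(A_p)$, and is a pole of the resolvent; moreover, since $A_p$ is real (it is a real elliptic differential operator with real coefficients), $\lambda_0$ is real and we may speak of ``the largest real eigenvalue'' coinciding with $\lambda_0$ once we check that no eigenvalue has real part exceeding $\lambda_0$ — but that is automatic from the definition of $\spb$, so the content is really that $\sigma_{\per}(A_p) = \{\lambda_0\}$, i.e.\ dominance.

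Next I would verify the hypotheses of Corollary~\ref{cor:projections-strong} for $\lambda_0$ and the quasi-interior point $u(x) = \dist(x,\partial\Omega)^2$. Theorem~\ref{thm:positive-eigenfunction-bi-laplacian} gives directly that $\ker(\lambda_0\id - A_p)$ is spanned by a single vector $v \gg_u 0$; in particular $\lambda_0$ is geometrically simple and its eigenspace contains a vector strongly positive with respect to $u$. For the dual condition I would argue that the adjoint problem is of the same type: the $L^{p'}$-realisation of the bi-Laplacian with Dirichlet boundary conditions is (up to the natural identification) the adjoint of $A_p$, it has the same spectrum by Proposition~\ref{prop:bi-laplacian-on-lp} applied with $p$ replaced by $p'$, and Theorem~\ref{thm:positive-eigenfunction-bi-laplacian} applied to $A_{p'}$ produces a vector $\varphi \gg_{u} 0$ in $\ker(\lambda_0\id - A_{p'}) = \ker(\lambda_0\id - A_p')$; such a $\varphi$ is in particular strictly positive as a functional on $L^p(\Omega)$. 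Thus assertion (ii) of Corollary~\ref{cor:projections-strong} holds, whence (i) holds, i.e.\ $P \gg_u 0$; the concluding clause of the Corollary then tells us that $\lambda_0$ is a \emph{simple} pole of $R(\phdot,A_p)$ and that $\lambda_0$ is the only eigenvalue of $A_p$ admitting a positive eigenvector.

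It remains to upgrade ``$\lambda_0$ is the only eigenvalue with a positive eigenvector'' to ``$\lambda_0$ is a dominant spectral value'', i.e.\ to exclude eigenvalues $\lambda \in \sigma(A_p)$ with $\repart\lambda = \lambda_0$ but $\lambda \ne \lambda_0$. This is the step I expect to be the main obstacle, since the previous results only control real eigenvalues with positive eigenvectors, not arbitrary peripheral eigenvalues. I would handle it by invoking the fact, recorded already in the excerpt, that the resolvent $R(\phdot,A_p)$ is individually eventually strongly positive with respect to $v$ near $\lambda_0$ (this is the Grunau--Sweers result, which by the equivalence in Theorem~\ref{thm:resolvents-strong} follows once $D(A_p) \subseteq E_u$ and $P \gg_u 0$ are known — and $D(A_p) = W^{4,p}\cap W^{2,p}_0 \hookrightarrow E_u$ holds because functions in $D(A_p)$ vanish to second order at the smooth boundary, so they are dominated by $\dist(\phdot,\partial\Omega)^2$, at least for $p$ large, while for small $p$ one passes through the higher-power domain $D(A_p^k)$ as in Corollary~\ref{cor:semigroups-strong-differentiable}). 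Alternatively, and more cleanly, I would cite \cite[Theorem~7.7]{DanersI}: individual eventual positivity of the resolvent at $\spb(A_p)$, together with the peripheral spectrum consisting of poles, forces $\spb(A_p)$ to be a dominant spectral value. Combining this dominance with $P \gg_u 0$ and the simplicity of the pole already established completes the proof.
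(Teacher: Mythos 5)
The first part of your argument is essentially the paper's: Proposition~\ref{prop:bi-laplacian-on-lp} gives that $\spb(A_p)$ is an eigenvalue and a pole of the compact resolvent, Theorem~\ref{thm:positive-eigenfunction-bi-laplacian} gives the eigenvector $v \gg_u 0$, and a positive dual eigenvector lets you invoke Corollary~\ref{cor:projections-strong} to get $P \gg_u 0$, algebraic simplicity and hence the simple pole. (Your identification of the dual eigenvector via ``$(A_p)' = A_{p'}$'' needs a justification; the paper avoids it by noting that, since $A_2$ is self-adjoint and $v$ is smooth by elliptic regularity, $v$ itself lies in $\ker(\spb(A_p)\id - (A_p)')$. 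This is a fixable detail.) The genuine gap is your treatment of dominance.

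Neither $P \gg_u 0$ nor individual eventual strong positivity of $R(\phdot,A_p)$ at $\spb(A_p)$ implies that $\spb(A_p)$ is a dominant spectral value: take $B \in \bbR^{3\times 3}$ generating the rotation group about the axis $(1,1,1)$; the spectral projection associated with $\spb(B)=0$ is $\frac13(1,1,1)^T(1,1,1)$, which is strongly positive with respect to $\one$, and by Theorem~\ref{thm:resolvents-strong} (here $E_u = E = D(B)$) the resolvent is individually eventually strongly positive at $0$, yet $\sigma_{\per}(B) = \{0,\pm i\}$. So the implication you want to use simply fails; accordingly, Theorem~4.4 of this paper makes no dominance claim, and the result you cite from \cite{DanersI} (Theorem~7.7) concerns individually eventually positive \emph{semigroups}, not resolvents, and even there dominance is not asserted (in this paper it is extracted from Theorem~\ref{thm:semigroups-asymptotic} via cyclicity of the peripheral spectrum). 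Using semigroup eventual positivity here would in any case be circular, since Proposition~\ref{prop:semigroup-bi-laplacian} derives it \emph{from} this lemma; and the resolvent route additionally needs $D(A_p)\subseteq E_u$, i.e.\ $p > n/2$, whereas the lemma is claimed for all $p \in (1,\infty)$. The paper's argument for dominance is much simpler and uses an ingredient you left untouched: by Proposition~\ref{prop:bi-laplacian-on-lp} the spectrum is $p$-independent, and $A_2$ is self-adjoint, so $\sigma(A_p) = \sigma(A_2) \subseteq \bbR$; hence $\sigma_{\per}(A_p) = \{\spb(A_p)\}$ automatically, with no positivity input at all.
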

\begin{proof}
  Since $A_2$ is self-adjoint, all of its spectral values are real.
  Proposition~\ref{prop:bi-laplacian-on-lp} thus implies that
  $\spb(A_p)$ is the largest real eigenvalue and a dominant spectral
  value of $A_p$; moreover, it is a pole of the resolvent
  $R(\phdot,A_p)$ since the resolvent is compact. According to
  Theorem~\ref{thm:positive-eigenfunction-bi-laplacian} there is an
  eigenfunction $v$ for the eigenvalue $\spb(A_p)$ such that $v \gg_u
  0$. As $\Omega$ is of class $C^\infty$, we have $v \in C^\infty
  (\overline\Omega)$ by standard regularity theory. Hence, $v$ is also
  an eigenvector of $A_2$ and thus of $A_2' = A_2$. Again since 
  $v \in C^\infty(\overline\Omega)$, we conclude that $v$
  is also an eigenfunction of $A_p'$.
  Corollary~\ref{cor:projections-strong} now yields that the spectral
  projection $P$ associated with the eigenvalue $\spb(A_p)$ of $A_p$ is
  strictly positive with respect to $u$ and that $\spb(A_p)$ is an
  algebraically simple eigenvalue; in particular, it is a simple pole of
  the resolvent.
\end{proof}

\begin{proposition}
  \label{prop:resolvent-bi-laplacian}
  Let $p \in (n/2,\infty)$ and let $\Omega \in C^\infty$ be such that the
  conclusion of Theorem~\ref{thm:positive-eigenfunction-bi-laplacian}
  holds. Then the
  resolvent $R(\phdot,A_p)$ is individually eventually strongly positive
  with respect to $u$ at the largest real eigenvalue $\lambda_0 =
  \spb(A_p)$ of $A_p$.
\end{proposition}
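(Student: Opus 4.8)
The plan is to deduce the statement from Theorem~\ref{thm:resolvents-strong}, applied to $A_p$ with a suitable quasi-interior point. Most hypotheses of that theorem are already available: $A_p$ is closed and densely defined by Proposition~\ref{prop:bi-laplacian-on-lp}, it is real since $-\Delta^2$ has real coefficients, and Lemma~\ref{lemma:spectral-bound-bi-laplacian} tells us that $\lambda_0 = \spb(A_p)$ is an eigenvalue of $A_p$ and a simple pole of $R(\phdot,A_p)$, with spectral projection $P$ satisfying $P \gg_u 0$ for $u(x) := \dist(x,\partial\Omega)^2$. What is still missing — and this is the only place where the hypothesis $p > n/2$ is used — is the domination condition $D(A_p) \subseteq E_u$, together with the fact that strong positivity with respect to $u$ and with respect to $v$ coincide.

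So first I would check that $D(A_p) \subseteq E_u$. For $p > n/2$ we have $4 - n/p > 2$, hence the Sobolev embedding theorem (on the $C^\infty$ domain $\Omega$) gives $W^{4,p}(\Omega) \hookrightarrow C^{2,\alpha}(\overline\Omega)$ for some $\alpha \in (0,1)$. Thus every $f \in D(A_p) = W^{4,p}(\Omega) \cap W^{2,p}_0(\Omega)$ is of class $C^2$ up to the boundary, and, belonging to $W^{2,p}_0(\Omega)$, satisfies $f = 0$ and $\nabla f = 0$ on $\partial\Omega$. A second-order Taylor expansion in the normal direction near the smooth compact boundary then yields $|f(x)| \le C\,\|f\|_{C^2(\overline\Omega)}\,\dist(x,\partial\Omega)^2 = C\,\|f\|_{C^2(\overline\Omega)}\, u(x)$ for $x$ near $\partial\Omega$, while away from the boundary the estimate is trivial; hence $f \in E_u$ and $D(A_p) \subseteq E_u$. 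Next, the eigenfunction $v$ of Theorem~\ref{thm:positive-eigenfunction-bi-laplacian} lies in $D(A_p)$ (in fact in $C^\infty(\overline\Omega)$ by elliptic regularity) and satisfies $v \gg_u 0$; combining $v \in E_u$, i.e.\ $v \le Cu$, with $v \ge cu$ gives $c u \le v \le C u$ for some $c,C > 0$. Consequently $E_u = E_v$ and the relations $\gg_u$ and $\gg_v$ coincide.

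With all hypotheses in place, Theorem~\ref{thm:resolvents-strong} (applied with the quasi-interior point $u$) gives that $P \gg_u 0$ is equivalent to $R(\phdot,A_p)$ being individually eventually strongly positive with respect to $u$ at $\lambda_0$; since $P \gg_u 0$ by Lemma~\ref{lemma:spectral-bound-bi-laplacian}, the latter holds, and as $\gg_u = \gg_v$ it is the same as individual eventual strong positivity with respect to $v$ at $\lambda_0 = \spb(A_p)$, which is exactly the claim. The one genuinely analytic step is the quantitative boundary estimate $|f(x)| \le C\,\dist(x,\partial\Omega)^2$ for $f \in D(A_p)$; everything else is bookkeeping together with citations of Theorem~\ref{thm:resolvents-strong}, Lemma~\ref{lemma:spectral-bound-bi-laplacian} and Theorem~\ref{thm:positive-eigenfunction-bi-laplacian}. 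It is precisely this estimate, via the embedding $W^{4,p}(\Omega)\hookrightarrow C^2(\overline\Omega)$, that forces the restriction $p>n/2$ and constitutes the main obstacle.
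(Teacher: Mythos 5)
Your proof is correct and takes essentially the same route as the paper: both reduce to Theorem~\ref{thm:resolvents-strong} via Lemma~\ref{lemma:spectral-bound-bi-laplacian} and verify the domination $D(A_p)\subseteq L^p(\Omega)_u$ using $W^{4,p}(\Omega)\hookrightarrow C^2(\overline\Omega)$ (valid since $p>n/2$) together with the vanishing of $f$ and $\nabla f$ on $\partial\Omega$. You are a bit more explicit than the paper in two places — the second-order Taylor estimate near $\partial\Omega$, and the observation that $cu\le v\le Cu$ so that $\gg_u$ and $\gg_v$ coincide — but these are details the paper leaves implicit, not a different argument.
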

\begin{proof}
  By Theorem~\ref{thm:resolvents-strong}, using
  Lemma~\ref{lemma:spectral-bound-bi-laplacian} it remains to show that
  $D(A_p) \subseteq L^p(\Omega)_u$. As $p > n/2$, we know that
  $D(A_p) \subseteq W^{4,p}(\Omega) \hookrightarrow
  C^2(\overline{\Omega})$. For every $f \in D(A_p)$, the trace of $f$
  and of its weak gradient $\nabla f$ on $\partial \Omega$ are
  $0$. Thus, $f=0$ and $\nabla f=0$ on $\partial \Omega$ in the
  classical sense. Hence $D(A_p) \subseteq L^p(\Omega)_u$.
\end{proof}

The operator $A_p$ generates an analytic $C_0$-semigroup 
$(e^{tA_p})_{t \ge 0}$ on $L^p(\Omega)$ 
\cite[Theorem~5.6 on p.\,189]{Tanabe1997}. This semigroup has the following
eventual positivity property.

\begin{proposition}
  \label{prop:semigroup-bi-laplacian}
  Let $p \in (1,\infty)$ and let $\Omega \in C^\infty$ be such that the
  conclusion of Theorem~\ref{thm:positive-eigenfunction-bi-laplacian}
  holds. Then the
  semigroup $(e^{tA_p})_{t \ge 0}$ is individually eventually strongly
  positive with respect to $u$.
\end{proposition}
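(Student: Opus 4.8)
The plan is to deduce the statement from Corollary~\ref{cor:semigroups-strong-differentiable}, whose hypothesis (ii) is essentially the content of Lemma~\ref{lemma:spectral-bound-bi-laplacian}. Concretely, I would first record that $A_p$ is real (it is a differential operator with real coefficients and real boundary conditions) and that it generates an analytic, hence eventually differentiable, $C_0$-semigroup on $L^p(\Omega)$; this is classical for the bi-Laplacian with Dirichlet boundary conditions $f = \partial_\nu f = 0$, which satisfy the Lopatinskii--Shapiro condition for $\Delta^2$ (see for instance \cite{Gazzola2010} and the references therein, or the general theory of higher-order strongly elliptic operators on $L^p$). By Lemma~\ref{lemma:spectral-bound-bi-laplacian}, $\lambda_0 = \spb(A_p)$ is a dominant spectral value and a simple pole of $R(\phdot,A_p)$ with associated spectral projection $P \gg_u 0$; in particular $\sigma(A_p)\neq\emptyset$ and $\sigma_{\per}(A_p) = \{\lambda_0\}$ is finite and consists of poles of the resolvent. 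Hence the only remaining hypothesis of Corollary~\ref{cor:semigroups-strong-differentiable} to be verified is that $D(A_p^k) \subseteq L^p(\Omega)_u$ for some $k \in \bbN$; once this is done, assertion (ii) of that corollary holds, and assertion (i), which is the claim, follows.

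To establish the domination condition I would fix $k \in \bbN$ with $4k > n/p + 2$ and first show $D(A_p^k) \subseteq W^{4k,p}(\Omega)$ by a finite bootstrap based on boundary elliptic regularity for the Dirichlet bi-Laplacian on the $C^\infty$-domain $\Omega$. If $f \in D(A_p^k)$, then $A_p^j f \in D(A_p) \subseteq W^{2,p}_0(\Omega)$ for $0 \le j \le k-1$ and $\Delta^2(A_p^j f) = -A_p^{j+1} f$; starting from $A_p^{k-1} f \in W^{4,p}(\Omega)$ and decreasing $j$, elliptic regularity gains four derivatives at each step and yields $f \in W^{4k,p}(\Omega)$. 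By the Sobolev embedding theorem and the choice of $k$ we obtain $D(A_p^k) \subseteq W^{4k,p}(\Omega) \hookrightarrow C^2(\overline{\Omega})$.

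Finally I would use the boundary conditions: every $f \in D(A_p^k) \subseteq D(A_p) \subseteq W^{2,p}_0(\Omega)$ has vanishing trace of $f$ and of $\nabla f$ on $\partial\Omega$, and since $f \in C^2(\overline{\Omega})$ these vanish in the classical sense. A second-order Taylor expansion of $f$ along the inward normal in a tubular neighbourhood of $\partial\Omega$ (using $f = 0$, $\nabla f = 0$ on $\partial\Omega$ and the bound on $D^2 f$), together with the trivial bound $|f| \le \|f\|_\infty$ away from the boundary, produces a constant $c \ge 0$ with $|f(x)| \le c\,\dist(x,\partial\Omega)^2 = c\,u(x)$ for all $x \in \Omega$, so $f \in L^p(\Omega)_u$. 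Thus $D(A_p^k) \subseteq L^p(\Omega)_u$, and Corollary~\ref{cor:semigroups-strong-differentiable} gives the result.

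The argument is largely routine once Lemma~\ref{lemma:spectral-bound-bi-laplacian} is in hand; the two points needing a little care are the iterated elliptic regularity estimate $D(A_p^k) \subseteq W^{4k,p}(\Omega)$ (which is where the $C^\infty$-regularity of $\Omega$ is genuinely used) and the pointwise boundary decay estimate $|f(x)| \le c\,\dist(x,\partial\Omega)^2$ — neither of which is deep. Note that, unlike in Proposition~\ref{prop:resolvent-bi-laplacian}, no lower bound on $p$ is required here, since we are free to take the power $k$ as large as the Sobolev embedding demands.
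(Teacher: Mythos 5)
Your proposal follows essentially the same route as the paper: verify the spectral hypotheses via Lemma~\ref{lemma:spectral-bound-bi-laplacian}, note that the analytic semigroup is eventually differentiable, establish $D(A_p^k) \subseteq W^{4k,p}(\Omega) \hookrightarrow C^2(\overline\Omega)$ (the paper cites \cite[Corollary~2.21]{Gazzola2010} for the iterated regularity rather than spelling out the bootstrap), deduce $D(A_p^k) \subseteq L^p(\Omega)_u$ from the vanishing of $f$ and $\nabla f$ on the boundary, and invoke Corollary~\ref{cor:semigroups-strong-differentiable}. Your more explicit Taylor-expansion justification of the estimate $|f(x)|\le c\,\dist(x,\partial\Omega)^2$ fills in a step the paper leaves implicit, but the argument is otherwise identical.
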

\begin{proof}
  The semigroup $(e^{tA_p})_{t \ge 0}$ is analytic and, using
  Lemma~\ref{lemma:spectral-bound-bi-laplacian}, by
  Corollary~\ref{cor:semigroups-strong-differentiable} it only remains
  to show that $D(A_p^n) \subseteq L^p(\Omega)_u$ for some $n \in
  \bbN$. However, we see from \cite[Corollary~2.21]{Gazzola2010} that
  $D(A_p^n) \subseteq W^{4n,p}(\Omega)$ for all $n \in \bbN$. Hence, the
  Sobolev embedding theorem yields $D(A_p^n) \subseteq
  C^2(\overline{\Omega})$ for all sufficiently large $n$. Since we also
  have $D(A_p^n) \subseteq W_0^{2,p}(\Omega)$ for all $n$, we can now
  conclude as in the proof of
  Proposition~\ref{prop:resolvent-bi-laplacian} that $D(A_p^n) \subseteq
  L^p(\Omega)_u$.
\end{proof}

It seems quite interesting that we need the assumption $p \in
(n/2,\infty)$ only for the individual eventual strong positivity of the
resolvent $R(\phdot,A_p)$, but not for the same property of the
semigroup $(e^{tA_p})_{t \ge 0}$. This is of course due to the fact that
Theorem~\ref{thm:resolvents-strong} requires the condition $D(A) \subseteq
E_u$, while Corollary~\ref{cor:semigroups-strong-differentiable} only
requires the weaker assumption $D(A^n) \subseteq E_u$ for some $n \in
\bbN$; compare also the related discussion after
Corollary~\ref{cor:semigroups-strong-differentiable}.

\paragraph*{Eventual strong positivity for self-adjoint operators on
  Hilbert lattices}
In this paragraph we reformulate our results for the special case of
self-adjoint operators on Hilbert lattices. Recall that a \emph{Hilbert
  lattice} is a Banach lattice $H$ whose norm is induced by an inner
product. For every measure space $\Omega$ the space $L^2(\Omega)$ is a
Hilbert lattice and conversely, every Hilbert lattice $H$ is
isometrically lattice isomorphic to $L^2(\Omega)$ for some measure space
$\Omega$ (see \cite[Theorem~IV.6.7 and Exercise~18(f) on p.\,303]{Schaefer1974} 
for a slightly stronger
result).

For self-adjoint operators on Hilbert lattices, our main result can be
summarised as follows.

\begin{theorem}
  \label{thm:evtl-pos-hilbert-lattice}
  Let $H$ be a complex Hilbert lattice and let $u \in H_+$ be a
  quasi-interior point. Let $A$ be a real, densely defined and self-adjoint
  operator on $H$ and assume that $\spb(A)\in\bbR$ is an isolated point
  of $\sigma(A)$. Moreover, suppose that $D(A) \subseteq H_u$. Then the
  following assertions are equivalent:
  \begin{enumerate}[\upshape (i)]
  \item The eigenvalue $\spb(A)$ is geometrically simple and has an
    eigenvector $v \gg_u 0$.
  \item The spectral projection $P$ associated with $\spb(A)$ satisfies
    $P \gg_u 0$.
  \item The resolvent $R(\phdot,A)$ is individually eventually strongly
    positive with respect to $u$ at $\spb(A)$.
  \item The semigroup $(e^{tA})_{t \ge 0}$ is individually eventually
    strongly positive with respect to $u$.
  \end{enumerate}
\end{theorem}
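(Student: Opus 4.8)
The plan is to obtain Theorem~\ref{thm:evtl-pos-hilbert-lattice} as a direct specialisation of Corollary~\ref{cor:projections-strong}, Theorem~\ref{thm:resolvents-strong} and Corollary~\ref{cor:semigroups-strong-differentiable} to the self-adjoint case, in which several hypotheses of those results become automatic. First I would record the structural consequences of self-adjointness. Since $A = A^*$ we have $\sigma(A) \subseteq \bbR$; as $\spb(A) \in \bbR$, the spectrum is bounded above, so $\spb(A) = \max \sigma(A) \in \sigma(A)$ and, most importantly, $\sigma_{\per}(A) = \{\spb(A)\}$, i.e.\ $\spb(A)$ is automatically a dominant spectral value. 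Being an isolated point of the spectrum of a self-adjoint operator, $\spb(A)$ is an eigenvalue, the associated spectral projection $P$ is the orthogonal projection onto $\ker(\spb(A)\id-A)$, and $\spb(A)$ is a simple pole of $R(\phdot,A)$. Finally, a self-adjoint operator with spectrum bounded above generates an analytic, hence eventually differentiable, $C_0$-semigroup, so $(e^{tA})_{t\ge 0}$ exists and, together with $D(A)\subseteq H_u$ (so that $D(A^n)\subseteq H_u$ with $n=1$), all regularity hypotheses of Corollary~\ref{cor:semigroups-strong-differentiable} hold. Note also that, since conditions (iii) and (iv) presuppose that $A$ (resp.\ the semigroup) is real, reality of $A$ is part of the standing setup.

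The second ingredient is the self-duality of Hilbert lattices: the Riesz map identifies $H$ with $H'$ as Banach lattices, and under this identification $A'$ becomes $A$ again; in particular $\ker(\spb(A)\id - A')$ is identified with $\ker(\spb(A)\id-A)$, and strictly positive functionals correspond to strictly positive vectors. Since $v \gg_u 0$ forces $v$ to be a quasi-interior point of $H_+$ (as $H_v \supseteq H_u$ is dense), any eigenvector $v \gg_u 0$ of $A$ for $\spb(A)$ is thus simultaneously an eigenvector of $A'$ and induces a strictly positive functional, so the ``dual'' conditions appearing in Corollary~\ref{cor:projections-strong} are automatically met once their primal counterparts are.

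With these preliminaries the three equivalences are short. For (i) $\Leftrightarrow$ (ii) I would apply Corollary~\ref{cor:projections-strong} at $\lambda_0 = \spb(A)$: (i) supplies both the eigenvector $v \gg_u 0$ and, by the duality remark, a strictly positive vector in $\ker(\spb(A)\id - A')$, so part (ii) of that corollary holds and hence $P \gg_u 0$; conversely $P \gg_u 0$ returns (i) directly from part (ii) of the corollary. (Alternatively one may argue by hand: if $\ker(\spb(A)\id-A) = \langle v \rangle$ with $v \gg_u 0$, then, $P$ being the orthogonal projection onto this line, $Pf = \langle f,v\rangle\,\langle v,v\rangle^{-1} v \gg_u 0$ for every $f > 0$, because $\langle f, v\rangle > 0$.) For (ii) $\Leftrightarrow$ (iii) I would invoke Theorem~\ref{thm:resolvents-strong} at $\lambda_0 = \spb(A)$, whose hypotheses ($\lambda_0$ an eigenvalue and a pole of the resolvent, $D(A) \subseteq H_u$) we have verified; it asserts that $P \gg_u 0$ if and only if $R(\phdot,A)$ is individually eventually strongly positive with respect to $u$ at $\lambda_0$, which is exactly (ii) $\Leftrightarrow$ (iii) here. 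Finally, for (ii) $\Leftrightarrow$ (iv) I would apply Corollary~\ref{cor:semigroups-strong-differentiable}: it asserts that $(e^{tA})_{t\ge 0}$ is individually eventually strongly positive with respect to $u$ if and only if $\spb(A)$ is a dominant spectral value and $P \gg_u 0$, and since $\spb(A)$ is automatically dominant this is precisely (iv) $\Leftrightarrow$ (ii).

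I expect the only genuinely delicate point to be the careful bookkeeping around complexification and duality: one has to check that the Hilbert-lattice identification of $A'$ with $A$ really does match $\ker(\spb(A)\id-A')$ with $\ker(\spb(A)\id-A)$ and strictly positive functionals with strictly positive vectors, and that ``self-adjoint'' is used in the sense making $A$ a real operator (as the very formulation of (iii)--(iv) requires). Everything else is a mechanical translation of the three cited results, together with the standard spectral-theoretic facts about isolated spectral points of self-adjoint operators recalled above.
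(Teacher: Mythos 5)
Your proposal is correct and follows essentially the same route as the paper: identify $H$ with $H'$ via the Riesz (or $L^2$) lattice isomorphism so that the self-adjointness hands you the dual eigenvector for free, then read off (i)$\Leftrightarrow$(ii) from Corollary~\ref{cor:projections-strong} (the paper cites Proposition~\ref{prop:projections-strong} there, but the substance is the same), (ii)$\Leftrightarrow$(iii) from Theorem~\ref{thm:resolvents-strong}, and (ii)$\Leftrightarrow$(iv) from Corollary~\ref{cor:semigroups-strong-differentiable} via analyticity of the semigroup. Your preliminaries (dominance of $\spb(A)$, $\spb(A)$ a simple pole because it is isolated in the spectrum of a self-adjoint operator, $v\gg_u 0$ being quasi-interior so that the dual condition is automatic) are exactly the bookkeeping the paper leaves implicit, so the argument is sound.
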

\begin{proof}
  As mentioned above, $H$ can be identified with $L^2(\Omega)$ for some
  measure space $\Omega$. This shows that, when restricted to the real
  part of $H$, the canonical identification $H \simeq H'$ is a lattice
  isomorphism. Under this identification, the Hilbert space adjoint of
  $A$ coincides with the Banach space dual of $A$ on the real part of
  $H$, so the equivalence ``(i) $\Leftrightarrow$ (ii)'' follows from
  Proposition~\ref{prop:projections-strong}. The equivalence ``(ii)
  $\Leftrightarrow$ (iii)'' follows from
  Theorem~\ref{thm:resolvents-strong}. Moreover, $\spb(A)$ is a dominant
  eigenvalue and since the semigroup $(e^{t(A-\spb(A))})_{t\ge 0}$ is
  analytic, the equivalence ``(ii)
  $\Leftrightarrow$ (iv)'' follows from
  Corollary~\ref{cor:semigroups-strong-differentiable}.
\end{proof}

Note that the condition $D(A) \subseteq H_u$ in 
Theorem~\ref{thm:evtl-pos-hilbert-lattice} is only needed
to show the equivalence ``(ii)
$\Leftrightarrow$ (iii)''. If we replace the condition
$D(A) \subseteq H_u$ with the weaker condition
$D(A^n) \subseteq H_u$ for some $n \in \bbN$, or even with 
$e^{t_0A} \subseteq H_u$ for some $t_0 > 0$, then the assertions
(i), (ii) and (iv) are still equivalent.

\paragraph*{The Dirichlet-to-Neumann operator in two dimensions}

In \cite[Section~6.2]{DanersI} the Dirichlet-to-Neumann operator on
$C(\Gamma)$ was analysed, where $\Gamma\subseteq\bbR^2$ is the unit
circle. Using our theory for general Banach lattices we can now consider
the more natural setting of $L^2$-spaces on more general domains.

We assume for simplicity that $\Omega \subseteq \bbR^2$ is a bounded
domain with $C^\infty$ boundary, although much of what follows still
holds under weaker assumptions. Let $\lambda \in \bbR$ be contained in
the resolvent set of the Dirichlet Laplacian $\Delta_D$ on
$L^2(\Omega)$. For $g \in L^2(\partial \Omega)$ we solve, whenever
possible, the Dirichlet problem
\begin{displaymath}
  \Delta f = \lambda f\quad \text{in $\Omega$,} \qquad
  f  = g\quad \text{on $\partial\Omega$.}
\end{displaymath}
Afterwards, we map the solution $f$ to its (distributional) normal
derivative $\frac{\partial}{\partial \nu}f$ on the boundary $\partial
\Omega$, if this is in $L^2(\partial\Omega)$.  The operator
$D_\lambda\colon g \mapsto\partial f/\partial\nu$ thus defined is called
the \emph{Dirichlet-to-Neumann} operator for the domain $\Omega$ and for
the parameter $\lambda$. For a precise definition of the
Dirichlet-to-Neumann operator $D_\lambda$ we refer the reader to
\cite{MR3146835} or \cite{arendt:12:fei}.

It can be shown that $-D_\lambda$ is a densely defined self-adjoint
operator on $L^2(\partial \Omega)$ with spectral bound $\spb(-D_\lambda)
\in \bbR$ and compact resolvent on $L^2(\partial \Omega)$; see
\cite[Proposition~2]{arendt:12:fei}.  In \cite{Daners2014} it was shown
that the semigroup $(e^{-tD_\lambda})_{t\geq 0}$ is uniformly eventually
positive, but not positive for certain $\lambda$ if $\Omega$ is the disk
in $\bbR^2$. The abstract theory developed in this paper allows us to
give a \emph{characterisation} of the semigroups $(e^{-tD_\lambda})_{t
  \ge 0}$ that are individually eventually strongly positive with
respect to $\one$.

\begin{proposition}
  \label{prop:dtn-operator}
  Let $\Omega \subseteq \bbR^2$ be a domain with $C^\infty$ boundary and
  let $\lambda \in \bbR$ be contained in the resolvent set of the
  Dirichlet Laplacian on $L^2(\Omega)$. Then the following assertions
  are equivalent:
  \begin{enumerate}[\upshape (i)]
  \item The semigroup $(e^{-tD_\lambda})_{t \ge 0}$ is individually
    eventually strongly positive with respect to $\one$.
  \item The largest real eigenvalue of $-D_\lambda$ is geometrically
    simple and admits an eigenfunction which is strongly positive with
    respect to $\one$.
  \end{enumerate}
  \begin{proof}
    It follows from \cite[Theorem~5.2]{MR3397313} that $D(D_\lambda) =
    H^1(\partial\Omega)$.  Since $\partial \Omega$ is a smooth
    one-dimensional manifold, standard embedding theorems imply
    $H^1(\partial\Omega) \subseteq C(\partial \Omega)$, the latter
    clearly being contained in $L^\infty(\partial \Omega) = L^2(\partial
    \Omega)_{\one}$.  Hence the proposition follows from
    Theorem~\ref{thm:evtl-pos-hilbert-lattice}.
  \end{proof}
\end{proposition}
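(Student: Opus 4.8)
The plan is to check that the operator $A := -D_\lambda$ on the Hilbert lattice $H := L^2(\partial\Omega)$, together with the quasi-interior point $u := \one$, satisfies all the hypotheses of Theorem~\ref{thm:evtl-pos-hilbert-lattice}, and then to translate its conclusions into the language of the proposition. First I would recall from \cite[Proposition~2]{arendt:12:fei} that $-D_\lambda$ is densely defined and self-adjoint on $L^2(\partial\Omega)$, with $\spb(-D_\lambda) \in \bbR$ and compact resolvent; compactness of the resolvent immediately gives that $\spb(-D_\lambda)$ is an isolated point of $\sigma(-D_\lambda)$ (in fact an eigenvalue of finite multiplicity), which is the spectral hypothesis needed in the theorem. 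Moreover $\one$ is a quasi-interior point of $L^2(\partial\Omega)_+$, since $\partial\Omega$ carries a finite measure and hence $L^\infty(\partial\Omega) = L^2(\partial\Omega)_{\one}$ is dense in $L^2(\partial\Omega)$.

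The one genuinely substantive step is to verify the domination condition $D(-D_\lambda) \subseteq H_u = L^2(\partial\Omega)_{\one} = L^\infty(\partial\Omega)$. For this I would first identify $D(D_\lambda) = H^1(\partial\Omega)$, which holds here by \cite[Theorem~5.2]{MR3397313} because $\partial\Omega$ is $C^\infty$ and $\lambda$ lies in the resolvent set of the Dirichlet Laplacian. Since $\partial\Omega$ is a compact $C^\infty$ manifold of dimension $1$, the Sobolev embedding theorem then yields $H^1(\partial\Omega) \hookrightarrow C(\partial\Omega) \subseteq L^\infty(\partial\Omega)$, which is precisely the required inclusion.

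With all hypotheses of Theorem~\ref{thm:evtl-pos-hilbert-lattice} verified, its equivalences apply. Assertion~(iv) there — individual eventual strong positivity of $(e^{tA})_{t\ge 0} = (e^{-tD_\lambda})_{t \ge 0}$ with respect to $\one$ — is exactly assertion~(i) of the proposition. Assertion~(i) there — geometric simplicity of the eigenvalue $\spb(-D_\lambda)$ together with the existence of an eigenvector $v \gg_\one 0$ — is exactly assertion~(ii) of the proposition, once we note that, by self-adjointness, $\sigma(-D_\lambda) \subseteq \bbR$ consists of eigenvalues and so $\spb(-D_\lambda)$ is precisely the largest real eigenvalue of $-D_\lambda$. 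This completes the argument.

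The hard part will be nothing deep but rather a matter of quoting the right facts correctly: one must be sure that the cited description $D(D_\lambda) = H^1(\partial\Omega)$ is available for the present sign convention and for every $\lambda \notin \sigma(\Delta_D)$, and that the $C^\infty$-regularity of $\partial\Omega$ (which is in fact stronger than needed) is what legitimises both this domain identification and the one-dimensional Sobolev embedding into $C(\partial\Omega)$. Everything else is a routine appeal to self-adjointness, compactness of the resolvent, and Theorem~\ref{thm:evtl-pos-hilbert-lattice}.
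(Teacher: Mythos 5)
Your proposal is correct and follows essentially the same route as the paper: identify $D(D_\lambda)=H^1(\partial\Omega)$ via \cite[Theorem~5.2]{MR3397313}, use the one-dimensional Sobolev embedding to get $D(-D_\lambda)\subseteq L^\infty(\partial\Omega)=L^2(\partial\Omega)_{\one}$, and then invoke Theorem~\ref{thm:evtl-pos-hilbert-lattice}. The additional verifications you spell out (self-adjointness, compact resolvent, isolatedness of $\spb(-D_\lambda)$, quasi-interiority of $\one$) are exactly the background facts the paper takes from \cite[Proposition~2]{arendt:12:fei} and the preceding discussion.
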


\paragraph*{The Laplace operator with non-local Robin boundary
  conditions}
It is well known that the Laplace operator with Dirichlet or Neumann
boundary conditions (or more generally with Robin boundary conditions)
generates a positive $C_0$-semigroup on $L^2(\Omega)$ whenever $\Omega
\subseteq \bbR^n$ is a sufficiently regular bounded domain. We consider
the non-local Robin problem
\begin{equation}
  \label{eq:non-local-robin}
  \begin{aligned}
    -\Delta u&=f&&\text{in $\Omega$,}\\
    \frac{\partial u}{\partial\nu}+B\trace(u)&=0 &&\text{on
      $\partial\Omega$,}
  \end{aligned}
\end{equation}
where $\Omega\subseteq\bbR^2$ is a bounded Lipschitz domain, 
$B\in\calL\bigl( L^2(\partial\Omega)\bigr)$ a bounded linear operator
and $\trace\in\calL\bigl(H^1(\Omega),L^2(\partial\Omega)\bigr)$ the
trace operator.  The usual local Robin boundary condition can be
recovered as a special case, by choosing $B$ to be a multiplication
operator of the form $Bu = \beta u$ for $\beta \in
L^\infty(\partial\Omega)$.

There has been considerable interest in non-local Robin boundary
conditions in recent times. Possibly the first paper studying this problem 
in a general setting was \cite{MR2568160}. In \cite{MR3183528}, conditions 
for positivity of the semigroup are given. Earlier results on positivity and loss 
of positivity in a special case, namely a simple model of a
thermostat, appear in \cite{MR1787081}, and \cite{MR1067499} deals with
applications to Bose condensates. We discuss three examples where
eventual positivity occurs, but before we do so we look at some general
properties of \eqref{eq:non-local-robin}.

First note that the sesquilinear form corresponding to
\eqref{eq:non-local-robin} is given by
\begin{equation}
  \label{eq:non-local-robin-form}
  a(u,v):=\int_\Omega\nabla u\cdot\overline{\nabla v}\,dx
  +\langle B\trace(u),\trace(v)\rangle
\end{equation}
for all $u,v\in H^1(\Omega)$. Since $B$ and $\gamma$ are bounded
operators, the form \eqref{eq:non-local-robin-form} is bounded from
below and the values $a(u,u)$ are contained in a sector with vertex 
somewhere on 
the real line. Therefore the induced operator $-A$ generates an analytic
semigroup on $L^2(\Omega)$. As $D(A)\subseteq H^1(\Omega)$, $A$ has
compact resolvent.  We can say a bit more if $B$ is self-adjoint.
\begin{lemma}
  \label{lem:non-local_bc}
  Assume that $B\in \calL\bigl(L^2(\partial\Omega)\bigr)$ is
  self-adjoint and positive semi-definite, and that $\langle
  B\one,\one\rangle>0$. Then the operator $A$ induced by
  \eqref{eq:non-local-robin-form} on $L^2(\Omega)$ is self-adjoint and
  $[0,\infty)\subseteq\rho(-A)$.
\end{lemma}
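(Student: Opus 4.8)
The plan is to combine the standard form--operator correspondence with the single extra hypothesis $\langle B\one,\one\rangle>0$, which will be used exactly once, to remove $0$ from the spectrum.

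First I would observe that self-adjointness of $B$ turns the form $a$ of \eqref{eq:non-local-robin-form} into a \emph{symmetric} form: for $u,v\in H^1(\Omega)$ one has $\overline{a(v,u)}=\int_\Omega\nabla u\cdot\overline{\nabla v}\,dx+\langle\trace(u),B\trace(v)\rangle=a(u,v)$ because $B=B'$. Since, in addition, $B$ is positive semi-definite, $a(u,u)=\|\nabla u\|_{L^2(\Omega)}^2+\langle B\trace(u),\trace(u)\rangle\ge 0$ for all $u\in H^1(\Omega)$. As already noted before the lemma, $a$ is densely defined, closed and bounded below (its form norm is equivalent to the $H^1(\Omega)$-norm); being now also symmetric and non-negative, standard form theory (the first representation theorem) shows that the operator $A$ it induces is self-adjoint with $A\ge 0$, and that $\langle Au,u\rangle=a(u,u)$ for every $u\in D(A)$. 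In particular $\sigma(A)\subseteq[0,\infty)$, so $\sigma(-A)\subseteq(-\infty,0]$, and it only remains to show that $0\notin\sigma(A)$.

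Recall from the discussion preceding the lemma that $A$ has compact resolvent; since $A$ is self-adjoint we have $\rho(A)\neq\emptyset$, hence $\sigma(A)$ consists of eigenvalues only, and it suffices to prove $\ker A=\{0\}$. So let $u\in D(A)$ with $Au=0$. Then
\begin{displaymath}
  0=\langle Au,u\rangle=a(u,u)=\|\nabla u\|_{L^2(\Omega)}^2+\langle B\trace(u),\trace(u)\rangle,
\end{displaymath}
and, both summands being non-negative, both vanish. From $\nabla u=0$ and the connectedness of the domain $\Omega$ we get $u\equiv c$ for some $c\in\bbC$, hence $\trace(u)=c\one$ and $0=\langle B\trace(u),\trace(u)\rangle=|c|^2\langle B\one,\one\rangle$. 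Since $\langle B\one,\one\rangle>0$, this forces $c=0$, i.e.\ $u=0$. Therefore $0\notin\sigma(A)$, so $\sigma(A)\subseteq(0,\infty)$, $\sigma(-A)\subseteq(-\infty,0)$ and $[0,\infty)\subseteq\rho(-A)$, as claimed.

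The computations above are routine and the self-adjointness is immediate from general form theory; the one place with genuine content is the final step, where connectedness of $\Omega$ together with the hypothesis $\langle B\one,\one\rangle>0$ rules out the constant functions. (If one preferred to avoid appealing to the compact resolvent, the same conclusion follows by showing directly that $a$ is coercive, $a(u,u)\ge\delta\|u\|_{L^2(\Omega)}^2$ for some $\delta>0$, via a Rellich compactness argument together with the weak lower semicontinuity of $v\mapsto\langle Bv,v\rangle$.)
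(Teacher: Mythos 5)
Your proof is correct and follows essentially the same route as the paper's: positive semi-definiteness of $B$ gives $a(u,u)\ge 0$ hence $\sigma(-A)\subseteq(-\infty,0]$, and the hypothesis $\langle B\one,\one\rangle>0$ is used exactly once to rule out constant functions from $\ker A$, so that compactness of the resolvent removes $0$ from the spectrum. Your write-up is in fact slightly more careful than the paper's, which leaves the self-adjointness of $A$ (via symmetry of the form) and the step from injectivity to $0\in\rho(-A)$ implicit.
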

\begin{proof}
  The form $a$ is symmetric since $B$ is self-adjoint.
  Hence $A$ is self-adjoint, too.
  By assumption $B$ is positive semi-definite. Hence $a(u,u)\geq 0$ for
  all $u\in H^1(\Omega)$ and so $(0,\infty)\subseteq\rho(-A)$. We now
  show that $A$ is injective and therefore $0\in\rho(-A)$. Assume that
  $u\in D(A)$ such that $Au=0$, that is, $0=a(u,u)=\|\nabla
  u\|_2^2+\langle B\trace(u),\trace(u)\rangle$. As $\langle
  B\trace(u),\trace(u)\rangle\geq 0$ we conclude that $\|\nabla
  u\|_2=\langle B\trace(u),\trace(u)\rangle=0$. In particular, $\nabla
  u=0$ on $\Omega$ and therefore $u=c\one$ for some constant $c\in
  \mathbb C$. Hence $\langle
  B\trace(c\one),\trace(c\one)\rangle=|c|^2\langle
  B\one,\one\rangle=0$. By assumption $\langle B\one,\one\rangle>0$, so
  $c=0$.  Therefore $u=0$, showing that $a$ is coercive and that $A$ is
  injective. 
\end{proof}

We now proceed to discuss the specific examples. The first is a simple
model of a thermostat of the form \eqref{eq:non-local-robin} with
$\Omega=(0,\pi)\subseteq\mathbb R$,
\begin{equation}
  \label{eq:B-thermostat}
  \gamma(u)=
  \begin{bmatrix}
    u(0)\\u(\pi)
  \end{bmatrix}
  \qquad\text{and}\qquad
  B:=
  \begin{bmatrix}
    0&\beta\\0&0
  \end{bmatrix},
\end{equation}
where $\beta\in\mathbb R$; note that $L^2(\partial\Omega)\simeq\mathbb
C^2$ here. An explicit calculation in \cite[Theorem~6.1]{MR1787081} or
\cite[Section~3]{MR1473861} shows that $\spb(-A)$ is a positive, dominant
and geometrically simple eigenvalue with an eigenfunction $v\gg_{\one}0$
if and only if $\beta<1/2$.  It is also shown there that the
corresponding semigroup is positive if and only if $\beta\leq 0$. The
dual interchanges the roles of the boundary points $0$ and $\pi$ and
therefore it has the same spectrum, with correspondingly reflected
eigenfunctions. By Corollary~\ref{cor:projections-strong} the spectral
projection associated with $\spb(-A)$ is strongly positive with respect
to $\one$. Regarding the
domination condition, note that $D(A)\subseteq H^1((0,1))\subseteq
C([0,1])$, so that $D(A)\subseteq E_{\one}$. Hence, applying
Corollary~\ref{cor:semigroups-strong-differentiable}, we have proved the following
theorem.
\begin{theorem}
  \label{thm:thermostat}
  Let $\Omega=(0,\pi)$ and let $B=B(\beta)$ be given as in
  \eqref{eq:B-thermostat}. Denote by $A$ the operator associated with
  the form \eqref{eq:non-local-robin-form}. Then the semigroup
  $(e^{-tA})_{t\geq 0}$ is individually eventually strongly positive with respect to
  $\one$ but not positive if and only if $\beta\in(0,1/2)$.
\end{theorem}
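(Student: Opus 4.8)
The plan is to read the biconditional off from the abstract characterisations already available -- Corollary~\ref{cor:projections-strong} together with Corollary~\ref{cor:semigroups-strong-differentiable} (equivalently Theorem~\ref{thm:semigroups-strong}) -- once these are combined with the explicit spectral analysis of this one-dimensional thermostat in \cite[Theorem~6.1]{MR1787081} and \cite[Section~3]{MR1473861}; in fact most of this has already been assembled in the discussion preceding the theorem. First I would check that the standing hypotheses of those abstract results are in force for \emph{every} $\beta\in\bbR$: on $E=L^2((0,\pi))$ the operator $A$ is real (the form \eqref{eq:non-local-robin-form} has real coefficients) and has compact resolvent, so $\sigma(A)\neq\emptyset$ and the peripheral spectrum of its generator is finite and consists of poles of the resolvent; $\one$ is a quasi-interior point of $E_+$; the semigroup $(e^{-tA})_{t\ge0}$ is analytic, hence eventually differentiable; and $D(A)\subseteq H^1((0,\pi))\hookrightarrow C([0,\pi])\subseteq L^\infty((0,\pi))=E_{\one}$, so the domination condition $D(A)\subseteq E_{\one}$ holds (with $n=1$ in Corollary~\ref{cor:semigroups-strong-differentiable}). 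Applying that corollary to the generator $-A$, and then Corollary~\ref{cor:projections-strong}, I would record the chain of equivalences, valid for every $\beta$: the semigroup $(e^{-tA})_{t\ge0}$ is individually eventually strongly positive with respect to $\one$ $\iff$ $\spb(A)$ is a dominant spectral value whose spectral projection $P$ satisfies $P\gg_{\one}0$ $\iff$ $\spb(A)$ is a dominant, geometrically simple eigenvalue of $A$ with an eigenvector $\gg_{\one}0$ and $A'$ has a strictly positive eigenvector for $\spb(A)$.

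Granting this, the implication ``$\beta\in(0,1/2)\Rightarrow$ individually eventually strongly positive with respect to $\one$ but not positive'' is short: for $\beta<1/2$, \cite[Theorem~6.1]{MR1787081} furnishes exactly a positive, dominant, geometrically simple eigenvalue $\spb(A)$ with an eigenfunction $v\gg_{\one}0$; since the adjoint problem is the same thermostat with the endpoints $0$ and $\pi$ exchanged, $A'$ has the same spectral bound with the \emph{reflected} eigenfunction, which is again $\gg_{\one}0$ and a fortiori strictly positive. All conditions on the right of the chain are met, so the semigroup is individually eventually strongly positive with respect to $\one$; and since $\beta>0$, it is not positive, again by \cite[Theorem~6.1]{MR1787081}.

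For the converse, suppose $\beta\notin(0,1/2)$. If $\beta\le0$, the semigroup is positive by \cite[Theorem~6.1]{MR1787081}, so the conjunction in the statement fails. If $\beta\ge1/2$ and the semigroup \emph{were} individually eventually strongly positive with respect to $\one$, then by the chain $\spb(A)$ would be a dominant, geometrically simple eigenvalue of $A$ with an eigenvector $\gg_{\one}0$; combined with the fact that every real eigenvalue of $A$ is positive whenever $\beta>0$ (clear from the explicit eigenfunction computations, see \cite{MR1787081}), this would place us precisely in the situation characterised in \cite[Theorem~6.1]{MR1787081} and force $\beta<1/2$, a contradiction; hence for $\beta\ge1/2$ the semigroup is not individually eventually strongly positive with respect to $\one$, and the conjunction again fails. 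I expect the one genuinely delicate step to be this last one: one must be sure that in the regime $\beta\ge1/2$ the cited spectral configuration fails through a real loss of dominance, geometric simplicity, or of an eigenfunction $\gg_{\one}0$ -- rather than merely through the sign of $\spb(A)$, which is automatic for $\beta>0$ -- and this is exactly the point at which the explicit analysis of \cite{MR1787081,MR1473861} is indispensable. (For instance, at $\beta=1/2$ the lowest eigenfunction is $\sin(\phdot/2)$, which vanishes at $0$ and is therefore positive but not $\gg_{\one}0$.)
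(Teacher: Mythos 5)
Your proposal is correct and follows essentially the same route as the paper: the paper, too, combines the explicit spectral analysis cited from \cite{MR1787081} and \cite{MR1473861} (including the reflection observation giving the positive dual eigenfunction) with Corollary~\ref{cor:projections-strong}, the domination $D(A)\subseteq H^1\hookrightarrow C([0,\pi])\subseteq E_{\one}$, and Theorem~\ref{thm:semigroups-strong}. The only cosmetic difference is that you pass through Corollary~\ref{cor:semigroups-strong-differentiable} via analyticity where the paper applies Theorem~\ref{thm:semigroups-strong} after noting boundedness of the rescaled semigroup, and your explicit treatment of the converse for $\beta\ge 1/2$ simply spells out what the paper obtains from the cited ``if and only if''.
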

Let us now give a second example where we have eventual positivity
without positivity and where $B$ is symmetric; 
as we are not interested in a general theoretical
development, we will merely consider one special case, more precisely
taking $\Omega:=(0,1)$, 
\begin{equation}
  \label{eq:B-ones}
  \gamma(u)=
  \begin{bmatrix}
    u(0)\\u(1)
  \end{bmatrix}
  \qquad\text{and}\qquad
  B:=
  \begin{bmatrix}
    1 & 1 \\
    1 & 1
  \end{bmatrix}.
\end{equation}
For the operator $A$ induced by \eqref{eq:non-local-robin-form} on
$L^2((0,1))$ we have the following theorem.
\begin{theorem}\label{thm:nonlocal-robin-1}
  Let $\Omega = (0,1)$ and let $B$ be given as in \eqref{eq:B-ones}. The operator
  $A$ associated with the form \eqref{eq:non-local-robin-form} has the following 
  properties:
  \begin{enumerate}[\upshape (i)]
  \item $\sigma(-A)\subseteq(-\infty,0)$ and $R(0,-A) \gg_{\one} 0$.
  \item The semigroup $(e^{-tA})_{t \ge 0}$ is individually eventually
    strongly positive with respect to $\one$.
  \item The semigroup $(e^{-tA})_{t \ge 0}$ is not positive.
  \end{enumerate}
\end{theorem}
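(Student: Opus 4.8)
The plan is to handle the three assertions in turn, using Lemma~\ref{lem:non-local_bc} together with a short one–dimensional computation for (i), then feeding the outcome into the abstract machinery of Sections~\ref{section:resolvents-strong}--\ref{section:semigroups-strong} for (ii), and a form–level positivity criterion for (iii).

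\emph{Assertion (i).} The matrix $B$ is self-adjoint and positive semi-definite (its eigenvalues are $0$ and $2$) and $\langle B\one,\one\rangle = 4>0$, so Lemma~\ref{lem:non-local_bc} applies: $A$ is self-adjoint and $[0,\infty)\subseteq\rho(-A)$; since a self-adjoint operator has real spectrum, $\sigma(-A)\subseteq(-\infty,0)$. For the second part I would solve the boundary value problem explicitly. Given $f>0$ put $u:=R(0,-A)f\in D(A)$, so that $-u''=f$ on $(0,1)$ and, reading off the natural boundary conditions from \eqref{eq:non-local-robin-form}, $u'(0)=u(0)+u(1)$ and $u'(1)=-(u(0)+u(1))$. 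Integrating gives $u'(x)=\tfrac12\int_0^1 f-\int_0^x f$, and a second integration together with $u(0)+u(1)=\tfrac12\int_0^1 f$ yields $u(0)=\tfrac12\int_0^1(1-r)f(r)\,dr>0$ and $u(1)=\tfrac12\int_0^1 r\,f(r)\,dr>0$. Since $u''=-f\le 0$, the function $u$ is concave on $[0,1]$, hence $u(x)\ge(1-x)u(0)+x\,u(1)\ge\min\{u(0),u(1)\}>0$ for all $x$, i.e.\ $u\gg_{\one}0$. Thus $R(0,-A)\gg_{\one}0$.

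\emph{Assertion (ii).} The generator $-A$ is real and self-adjoint with compact resolvent (as $D(A)\hookrightarrow H^1((0,1))$ compactly), so $\sigma(-A)$ consists of eigenvalues which are poles of $R(\phdot,-A)$; in particular $\spb(-A)$ is an eigenvalue and a pole, and it is a dominant spectral value because $\sigma(-A)\subseteq\bbR$. Moreover $D(-A)\subseteq H^1((0,1))\subseteq C([0,1])\subseteq L^\infty((0,1))=L^2((0,1))_{\one}$. By (i), $R(0,-A)\gg_{\one}0$, hence inductively $R(0,-A)^n f\gg_{\one}0$ for every $f>0$ and every $n\ge 1$; since $0>\spb(-A)$ this is precisely assertion~(iv) of Theorem~\ref{thm:resolvents-strong} for the operator $-A$ at $\lambda_0=\spb(-A)$ with $u=\one$, so that theorem gives $P\gg_{\one}0$ for the spectral projection $P$ associated with $\spb(-A)$. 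Finally $(e^{-tA})_{t\ge 0}$ is analytic, hence eventually differentiable, and so Corollary~\ref{cor:semigroups-strong-differentiable} (applied with $n=1$), using that $\spb(-A)$ is dominant and $P\gg_{\one}0$, shows the semigroup is individually eventually strongly positive with respect to $\one$. (Alternatively, one could invoke Theorem~\ref{thm:evtl-pos-hilbert-lattice} directly.)

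\emph{Assertion (iii).} Suppose for a contradiction that $(e^{-tA})_{t\ge 0}$ were positive. The form $a$ in \eqref{eq:non-local-robin-form} is symmetric, closed and bounded below with form domain $H^1((0,1))$, and $a(w,w)=\int_0^1|w'|^2\,dx+|w(0)+w(1)|^2$. By the classical characterisation of positivity of the semigroup generated by (minus) the operator associated with a symmetric form (the first Beurling--Deny criterion; equivalently, the positivity conditions in \cite{MR3183528}), positivity would force $a(|w|,|w|)\le a(w,w)$ for every real $w\in H^1((0,1))$. But $|w|\in H^1((0,1))$ with $\||w|'\|_2=\|w'\|_2$, so $a(|w|,|w|)=\int_0^1|w'|^2\,dx+(|w(0)|+|w(1)|)^2$; taking $w(x):=1-2x$ we get $w(0)w(1)=-1<0$ and hence $(|w(0)|+|w(1)|)^2=4>0=(w(0)+w(1))^2$, so $a(|w|,|w|)>a(w,w)$, a contradiction. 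Therefore $(e^{-tA})_{t\ge 0}$ is not positive. I expect the only genuinely delicate point of the whole argument to be the one-dimensional boundary value analysis in (i): one must pin down the boundary conditions encoded by the form correctly and verify that $u(0)$ and $u(1)$ are strictly positive for $f>0$, after which the concavity observation upgrades plain positivity to strong positivity with respect to $\one$ at no cost and the remaining two assertions reduce to invoking the abstract theorems (together, for (iii), with the correct form-level positivity criterion).
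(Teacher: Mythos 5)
Your proposal is correct and follows essentially the same route as the paper: an explicit solution of the one-dimensional problem giving $R(0,-A)\gg_{\one}0$ (your boundary values $u(0)=\tfrac12\int_0^1(1-r)f(r)\,dr$ and $u(1)=\tfrac12\int_0^1 r f(r)\,dr$ agree with the paper's explicit resolvent kernel, and your concavity step replaces the paper's ``positive continuous function on a compact interval'' argument), followed by the abstract machinery and the first Beurling--Deny criterion with a sign-changing test function, where your formulation $a(|w|,|w|)\le a(w,w)$ is equivalent to the paper's $a(w^+,w^-)\le 0$. The only cosmetic difference in (ii) is that you route through Theorem~\ref{thm:resolvents-strong}(iv) and Corollary~\ref{cor:semigroups-strong-differentiable}, while the paper uses the Neumann series $R(\lambda,-A)=\sum_{n\ge0}(-\lambda)^nR(0,-A)^{n+1}$ and Theorem~\ref{thm:evtl-pos-hilbert-lattice}, which you yourself note as an alternative; both are valid here.
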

\begin{remark}
  The above example was considered in \cite[Example~4.5]{aw:03:dni},
  where it was claimed that the associated semigroup dominates the
  semigroup associated with the Dirichlet Laplacian on
  $L^2((0,1))$. Khalid Akhlil observed that the semigroup is not in fact
  positive (private communication), meaning the claimed domination
  cannot hold, but we see that the semigroup is at least ``almost'',
  that is, eventually, positve.
\end{remark}

\begin{proof}[Proof of Theorem~\ref{thm:nonlocal-robin-1}]
  (i) Lemma~\ref{lem:non-local_bc} implies that $\sigma(-A) \subseteq 
  (-\infty,0)$. Now let $f
  \in L^2$.  One easily verifies that the resolvent at $0$ is given by
  \begin{displaymath}
    \bigl(R(0,-A)f\bigr)(x)
    = \frac{1}{2} \int_0^x \int_y^1 f(z) \, dz \, dy
    + \frac{1}{2} \int_x^1 \int_0^y f(z) \, dz \, dy
    \quad \text{for all } x \in [0,1]
  \end{displaymath}
  for all $f\in L^2((0,1))$. If $f > 0$, then we have
  $\bigl(R(0,-A)f\bigr)(x) > 0$ for every $x \in [0,1]$. By continuity
  we conclude that even $R(0,-A)f = u \gg_{\one} 0$.
 
  (ii) For $\lambda \in (\spb(A),0)$ we have the power series expansion
  \begin{displaymath}
    R(\lambda,-A)
    = \sum_{n=0}^\infty (-\lambda)^n R(0,-A)^{n+1}
  \end{displaymath}
  and hence $R(\lambda,A)\gg_{\one} 0$. As $\sigma(A) \subseteq \bbR$
  and $A$ has compact resolvent, we conclude that
  $\spb(A)$ is a spectral value and a pole of the resolvent. Since $D(A)
  \subseteq H^1((0,1)) \subseteq C([0,1]) \subseteq (L^2)_{\one}$, we
  conclude from Theorem~\ref{thm:evtl-pos-hilbert-lattice} that
  $(e^{tA})_{t \ge 0}$ is individually eventually strongly positive with
  respect to $\one$.
 
  (iii) By the Beurling-Deny criterion \cite[Theorem~2.6]{MR2124040}, 
  the semigroup $(e^{-tA})_{t \ge 0}$ is positive if and only if 
  the form $a$ satisfies the estimate
  $a(u^+, u^-) \le 0$ for each $u \in H^1((0,1);\bbR)$. However, if we
  choose $u \in H^1((0,1);\bbR)$ such that $u(0) = -1$ and $u(1) = 1$,
  this condition is not fulfilled. Hence $(e^{-tA})_{t \ge 0}$ is not
  positive.
\end{proof}

Our third and final example of non-local boundary conditions
of the form \eqref{eq:non-local-robin} comes from Bose condensation as
studied in \cite{MR1067499,MR1696142}.  As in \cite{MR1067499} we will
consider the example of the unit disc $\Omega$ in $\bbR^2$ and a
convolution operator $B$. We express functions on $\Omega$ in terms of
polar coordinates $r\in[0,1]$ and $\theta\in(-\pi,\pi]$ and let $B$ be
defined by
\begin{equation}
  \label{eq:bose-condensation-bc}
  (Bf)(\theta)=(q*f)(\theta)
  :=\int_{-\pi}^\pi q(\theta-\varphi)f(\varphi)\,d\varphi,
\end{equation}
where $q\in L^1((-\pi,\pi))$ and $f\in L^2((-\pi,\pi))$. We identify $q$
and $f$ with $2\pi$-periodic functions on $\mathbb R$ so that the
integral in \eqref{eq:bose-condensation-bc} makes sense. We consider
conditions under which $(e^{-tA})_{t\geq 0}$, called the Schr\"odinger
semigroup, is individually eventually strongly positive but not
positive.

By Young's inequality for convolutions we have $\|Bf\|_2=\|q*f\|_2\leq
\|q\|_1\|f\|_2$ for all $f\in L^2(\partial\Omega)$ and therefore
$B\in\calL(L^2(\partial\Omega))$.  To ensure that $B$ is real, self-adjoint
and positive semi-definite we assume that 
the Fourier coefficients $q_k$ of $q$ are real and satisfy
\begin{equation}
  \label{eq:B-positive-definite-conditions}
  q_k:=\int_{-\pi}^\pi q(\varphi)e^{-ik\varphi}\,d\varphi\geq 0
\end{equation}
for all $k\in\bbZ$. Since all Fourier coefficients $q_k$
are real, we have $q(\theta)=\overline{q(-\theta)}$ for all $\theta 
\in \bbR$, a condition which is necessary and sufficient for $B$ 
to be self-adjoint.
\begin{theorem}[Bose condensation]
  \label{thm:bose-condensation}
  Let $\Omega$ be the unit disc in $\bbR^2$. Let $B$ be the convolution
  operator \eqref{eq:bose-condensation-bc} with $q\in
  L^1(\partial\Omega)$ so that \eqref{eq:B-positive-definite-conditions} is
  satisfied with $q_0>0$. Then $B$ is positive definite and the
  operator $A$ associated with \eqref{eq:non-local-robin-form} on
  $L^2(\Omega)$ has the following properties.
  \begin{enumerate}[\upshape (i)]
  \item $A$ has compact resolvent and $\spb(-A)<0$ is an algebraically
    simple eigenvalue.
  \item The spectral projection $P$ associated with $\spb(-A)$ is
    strongly positive with respect to $\one$.
  \item If in addition, $q$ is real, then $(e^{-tA})_{t\geq 0}$ is
    individually eventually strongly positive with respect to $\one$,
    but not positive.
  \end{enumerate}
\end{theorem}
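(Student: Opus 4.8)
The plan is to follow the same pattern used in the thermostat example (Theorem~\ref{thm:thermostat}) and in Theorem~\ref{thm:nonlocal-robin-1}, namely to verify the hypotheses of the self-adjoint Hilbert-lattice machinery (Theorem~\ref{thm:evtl-pos-hilbert-lattice} together with Corollary~\ref{cor:projections-strong}) and then invoke it. First I would check that $B$ is positive definite: condition \eqref{eq:B-positive-definite-conditions} says $q(\theta)=\overline{q(-\theta)}$, which makes the convolution operator $B$ self-adjoint, and $q_k \ge 0$ for all $k$ makes $B$ positive semi-definite, since in the Fourier basis $(e^{ik\theta})_{k\in\bbZ}$ the operator $B$ acts diagonally with eigenvalues $2\pi q_k$ (up to normalisation). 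The extra assumption $q_0 > 0$ gives $\langle B\one,\one\rangle = 2\pi q_0 \cdot 2\pi > 0$ (again up to normalisation of the inner product), so Lemma~\ref{lem:non-local_bc} applies: $A$ is self-adjoint and $[0,\infty) \subseteq \rho(-A)$. Since $D(A) \subseteq H^1(\Omega)$ and $\Omega$ is bounded, $A$ has compact resolvent, so $\sigma(-A)$ consists of eigenvalues only; combined with $[0,\infty) \subseteq \rho(-A)$ this gives $\spb(-A) < 0$ and $\spb(-A)$ is an isolated eigenvalue and a pole of the resolvent. This establishes the compact-resolvent part of (i).

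The heart of the argument, and the part needing real work, is showing that the largest real eigenvalue of $-A$ (equivalently $\spb(A)$ for the generator $A$ of the semigroup, with the sign conventions of the theorem) is geometrically simple with an eigenfunction $v \gg_{\one} 0$, and that the dual problem also has a strictly positive eigenfunction — this is exactly the hypothesis needed to feed into Corollary~\ref{cor:projections-strong}(iii) to get $P \gg_{\one} 0$. Because $A$ is self-adjoint, the adjoint eigenfunction question reduces to the same question for $A$ itself, so only one positivity statement is needed. To obtain it, I would exploit the rotational symmetry of the unit disc: the operator $A$ commutes with rotations, so its eigenspaces decompose along the Fourier modes $e^{ik\theta}$ on the boundary; the candidate for the ground state is the rotationally invariant ($k=0$) eigenfunction, which is radial and (by the classical maximum principle / Perron–Frobenius argument for the Robin-type problem restricted to radial functions, using that $q_0>0$ couples in the ``right'' sign) can be chosen strictly positive, in fact bounded below by a positive multiple of $\one$ on $\Omega$. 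One then must rule out that some higher Fourier mode $k \neq 0$ yields a larger eigenvalue — here the hypothesis $q_k \ge 0$ is used, since making $B$ ``more positive definite'' on the higher modes only pushes those eigenvalues in the stabilising direction, so the $k=0$ mode, which sees the largest boundary contribution $q_0>0$, dominates. This monotonicity-in-$k$ / variational comparison argument is the main obstacle and will require a careful reduction to a family of one-dimensional (radial) Sturm–Liouville-type problems indexed by $k$, together with an application of the usual Perron–Frobenius/strong maximum principle argument to the $k=0$ problem to get the strong positivity $v \gg_{\one} 0$ rather than mere positivity.

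Once geometric (hence, via Corollary~\ref{cor:projections-strong}, algebraic) simplicity of $\spb(-A)$ and $v \gg_{\one} 0$ are in hand, Corollary~\ref{cor:projections-strong} gives that $\spb(-A)$ is algebraically simple — completing (i) — and that the associated spectral projection $P$ satisfies $P \gg_{\one} 0$, which is (ii). For (iii), the domination condition $D(A) \subseteq (L^2(\Omega))_{\one}$ would be verified exactly as in Theorem~\ref{thm:nonlocal-robin-1}: elliptic regularity for the form domain on the smooth disc gives $D(A) \subseteq H^2(\Omega) \subseteq C(\overline\Omega) \subseteq L^\infty(\Omega) = (L^2(\Omega))_{\one}$ (using that $n=2$, so $H^2$ embeds into $C(\overline\Omega)$; one must check the boundary condition does not obstruct the $H^2$ regularity, which it does not since $B$ is bounded on $L^2(\partial\Omega)$ and the domain is smooth). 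Since $\spb(-A)$ is a dominant spectral value (all of $\sigma(-A)$ is real and $\spb(-A)$ is the strictly largest eigenvalue), Theorem~\ref{thm:evtl-pos-hilbert-lattice} then yields that $(e^{-tA})_{t \ge 0}$ is individually eventually strongly positive with respect to $\one$. Finally, non-positivity of the semigroup is checked via the Beurling–Deny criterion as in the proof of Theorem~\ref{thm:nonlocal-robin-1}(iii): one exhibits $u \in H^1(\Omega;\bbR)$ whose boundary trace changes sign in a way that makes $a(u^+,u^-) > 0$ — concretely, choosing $u$ whose trace is positive on part of $\partial\Omega$ and negative on another part so that the convolution term $\langle B\gamma(u^+),\gamma(u^-)\rangle$ is strictly positive (here one uses that $q$ is not supported solely where it would force $a(u^+,u^-)\le0$; since $q_0 > 0$, $q$ cannot vanish identically and the nonlocal coupling genuinely mixes positive and negative parts of the trace).
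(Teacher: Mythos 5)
Your overall skeleton matches the paper's: Fourier-series positive definiteness of $B$, Lemma~\ref{lem:non-local_bc} for self-adjointness, compactness of the resolvent and $\spb(-A)<0$, reduction by rotational symmetry, then Corollary~\ref{cor:projections-strong}/Theorem~\ref{thm:evtl-pos-hilbert-lattice}, and Beurling--Deny for non-positivity. However, the crucial step -- that the smallest eigenvalue of $A$ is attained in the radial mode $k=0$ and has an eigenfunction $\gg_{\one}0$ -- is exactly the step you defer to a ``monotonicity-in-$k$ / variational comparison'', and the heuristic you offer for it does not work. The hypotheses contain no comparison between $q_0$ and the $q_k$; while $q_k\ge 0$ pushes the mode-$k$ eigenvalues up from their Neumann values, $q_0>0$ equally pushes the radial eigenvalue up from $0$ towards the radial Dirichlet value $j_{0,1}^2$. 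Concretely, the mode-$k$ eigenvalues are the roots of $\sqrt{\lambda}J_k'(\sqrt{\lambda})+q_kJ_k(\sqrt{\lambda})=0$ (equation~\eqref{eq:bose-q_k}); for the constant kernel $q\equiv c/(2\pi)$, which satisfies all the hypotheses, every mode $k\neq 0$ sees a pure Neumann condition whose lowest ($k=\pm1$) eigenvalue is $(j'_{1,1})^2\approx 3.39$, while the radial eigenvalue increases to $j_{0,1}^2\approx 5.78$ as $c\to\infty$. So ``the $k=0$ mode, which sees the largest boundary contribution, dominates'' cannot be obtained from soft monotonicity or from making $B$ ``more positive definite'' on higher modes; the entire content of the paper's proof of (ii) is the explicit Bessel-function analysis of these transcendental equations (sign information on $J_k$, $J_k'$ on $(0,j_{0,1})$, uniqueness of the radial root $\lambda_1\in(0,j_{0,1}^2)$, and completeness of the resulting Fourier--Bessel system), and this analysis -- which is delicate precisely because $j'_{1,1}<j_{0,1}$ -- is what your proposal does not supply.

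Two further gaps. For non-positivity, ``the nonlocal coupling genuinely mixes $u^+$ and $u^-$'' is not an argument: a local Robin coupling $Bu=\beta u$ is also non-vanishing and yields a positive semigroup, and since $q$ is only assumed positive definite (the $q_k\ge 0$), not pointwise nonnegative, you cannot simply place the supports of $\gamma(u^+)$ and $\gamma(u^-)$ so as to make $\langle B\gamma(u^+),\gamma(u^-)\rangle>0$. The paper takes $u_m=r^m\sin m\theta$, computes $a(u_m^+,u_m^-)=\tfrac{2}{\pi}q_0-\tfrac{\pi}{4}q_m+\tfrac{4}{\pi}\sum_{k\ge 1}q_{2km}\bigl((2k)^2-1\bigr)^{-2}$, and uses the Riemann--Lebesgue lemma ($q_m\to 0$) together with $q_0>0$ to make this strictly positive for suitable $m$; some computation of this kind is required. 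Finally, your regularity claim $D(A)\subseteq H^2(\Omega)$ is not justified by $B\in\calL\bigl(L^2(\partial\Omega)\bigr)$: boundedness on $L^2(\partial\Omega)$ only puts the Neumann datum $B\gamma(u)$ in $L^2(\partial\Omega)$, which yields $H^{3/2}(\Omega)$-regularity -- this is exactly what the paper proves via the splitting $u=w+v$ and the Jerison--Kenig result -- and $H^{3/2}(\Omega)\hookrightarrow C(\overline\Omega)$ in dimension two is all that is needed for $D(A)\subseteq L^2(\Omega)_{\one}$. (To get $H^2$ you would instead have to use that the convolution maps $H^{1/2}(\partial\Omega)$ into itself because the sequence $(q_k)$ is bounded, not merely that $B$ is bounded on $L^2(\partial\Omega)$.)
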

\begin{proof}
  We start by showing that $B$ given by \eqref{eq:bose-condensation-bc}
  is positive semi-definite. Let $f\in L^2(\partial\Omega)$ with Fourier
  coefficients $f_k=\int_{-\pi}^\pi
  f(\theta)e^{-ik\theta}\,d\theta\in\bbC $. The convolution theorem for
  Fourier series asserts that
  \begin{displaymath}
    (Bf)(\theta)
    =(q*f)(\theta)
    =\frac{1}{2\pi}\sum_{k=-\infty}^\infty q_kf_ke^{ik\theta}
  \end{displaymath}
  in $L^2(\partial\Omega)$; see
  \cite[Section~1.7]{katznelson:04:iha}. Hence, by the orthogonality of
  $(e^{ik\theta})_{k\in\bbZ}$ in $L^2(\partial\Omega)$ and
  \eqref{eq:B-positive-definite-conditions}
  \begin{equation}
    \label{eq:B-positive-definite}
    \langle Bf,f\rangle
    =\frac{1}{(2\pi)^2}
    \Bigl\langle\sum_{k=-\infty}^\infty q_kf_ke^{ik\theta},
    \sum_{k=-\infty}^\infty f_ke^{ik\theta}\Bigr\rangle
    =\frac{1}{2\pi}\sum_{k=-\infty}^\infty q_k|f_k|^2\geq 0.
  \end{equation}
  Hence $B$ is positive semi-definite.  If we choose $f=\one$, then
  $f_0=1$ and $f_k=0$ otherwise, so $\langle
  B\one,\one\rangle=q_0$. Hence the condition $\langle
  B\one,\one\rangle>0$ is equivalent to $q_0>0$.

  (i) Having shown that $B$ is positive definite on
  $L^2(\partial\Omega)$ we deduce from Lemma~\ref{lem:non-local_bc} and
  the discussion preceding it that $A$ has compact resolvent and
  $\spb(-A)<0$.

  (ii) To show that the spectral projection associated with $\spb(A)>0$
  is strongly positive we compute the eigenvalues and
  eigenfunctions. Let $J_k$ be the Bessel functions of the first kind,
  whose properties we use freely, see for instance
  \cite[Chapter~VII]{MR0065391}. The function
  $u(r,\theta)=J_k(\sqrt{\lambda}r)e^{ik\theta}$ is a solution of
  $\Delta u+\lambda u=0$ for every $\lambda>0$; see
  \cite[Section~5.5]{MR0065391}. The values of $\lambda>0$ such that $u$
  satisfies the boundary conditions in \eqref{eq:non-local-robin} are
  eigenvalues. We require that
  \begin{multline}
    \label{eq:bose-boundary-conditions}
    \frac{\partial}{\partial\nu}u+Bu
    =\frac{\partial}{\partial r}
    J_k\bigl(\sqrt{\lambda}r\bigr)e^{ik\theta}\Big|_{r=1}
    +J_k\bigl(\sqrt{\lambda}\bigr)\int_{-\pi}^\pi
    q(\theta-\varphi)e^{ik\varphi}\,d\varphi\\
    =\sqrt{\lambda}J_k'\bigl(\sqrt{\lambda}\bigr)e^{ik\theta}+
    q_kJ_k\bigl(\sqrt{\lambda}\bigr)e^{ik\theta} =0.
  \end{multline}
  As $J_{-k}(x)=(-1)^kJ_k(x)$ we seek $\lambda>0$ such that
  \begin{equation}
    \label{eq:bose-q_k}
    \sqrt{\lambda}J_{|k|}'\bigl(\sqrt{\lambda}\bigr)
    +q_kJ_{|k|}\bigl(\sqrt{\lambda}\bigr)=0
  \end{equation}
  for some $k\in\bbZ$. Denote by $j_{k,l}$ ($l=1,2,\dots$) the positive
  zeros of $J_k$. Note that $J_{|k|}'(s)>0$ and $J_{|k|}(s)>0$ for all
  $s\in (0,j_{0,1})$ and all $k\neq 0$. Moreover, $J_0'(s)<0$ and
  $J_0(s)>0$ for all $s\in(0,j_{0,1})$. Hence, as $q_0>0$, the smallest
  possible value of $\lambda$ satisfying \eqref{eq:bose-q_k} occurs for
  $k=0$ and $\sqrt{\lambda}\in(0,j_{0,1})$. Here we use that $J_0'(0)=0$
  and $J_0(j_{0,1})=0$ so that \eqref{eq:bose-q_k} with $k=0$ has a
  unique solution $\lambda_1\in(0,j_{0,1}^2)$. Then
  $u_1(r,\theta):=J_0(\sqrt{\lambda_1}r)>0$ is the only eigenfunction
  corresponding to $\lambda_1$. In particular, as $A$ is self-adjoint,
  $\lambda_1$ is algebraically simple and $u_1\gg_{\one} 0$. Hence the
  corresponding spectral projection $P$ satisfies $P\gg_{\one} 0$
  according to Proposition~\ref{prop:projections-strong}.

  To be sure that $\lambda_1$ is the dominant eigenvalue we need to know
  that the system of eigenfunctions we have constructed is complete; we
  sketch a proof of this fact. For any given $k\in\mathbb Z$ let
  $\lambda_{kj}$ ($j\in\mathbb N$) be the positive zeros of
  \eqref{eq:bose-q_k}. Then, the functions
  $v_{kj}(r)=J_k\bigl(\sqrt{\lambda_{kj}}r\bigr)$ ($j\in\mathbb N)$ form
  an orthonormal basis in the weighted space $L^2((0,1);r)$. A
  proof of this fact for Dirichlet boundary conditions, but easily
  modified for our conditions, appears in \cite[Example~7.12 and
  Theorem~14.10]{MR923320}.  As $e^{ik\theta}$ ($k\in\mathbb Z$) is a
  complete system on the circle it follows that
  $u_{kj}(r,\theta)=v_{kj}(r)e^{ik\theta}$ ($k\in\mathbb Z$,
  $j\in\mathbb N$) is a complete system in $L^2(\Omega)$.

  (iii) If $q$ is real valued, then $B$ is real and
  $q_k=q_{-k}\in\mathbb R$ for all $k\in\mathbb N$. It follows from (ii)
  and Theorem~\ref{thm:evtl-pos-hilbert-lattice} that $(e^{-tA})_{t\geq
    0}$ is individually eventually strongly positive with respect to
  $\one$, if in addition $D(A)\subseteq E_{\one}$. To see this note that
  every solution $u\in H^1(\Omega)$ of \eqref{eq:non-local-robin} can be
  written as $u=w+v$, where
  \begin{equation}
    \label{eq:bose-w}
    -\Delta w=f\quad\text{in $\Omega$},
    \qquad\frac{\partial w}{\partial\nu}
    =0\quad\text{on $\partial\Omega$}.
  \end{equation}
  and
  \begin{equation}
    \label{eq:bose-v}
    -\Delta v=0\quad\text{in $\Omega$},
    \qquad\frac{\partial v}{\partial\nu}
    =-Bu\quad\text{on $\partial\Omega$}.
  \end{equation}
  By standard regularity theory, $w\in H^2(\Omega)$. As $Bu\in
  L^2(\Omega)$ it follows from \cite{MR598688} that $v\in
  H^{3/2}(\Omega)$. Hence $u\in H^{3/2}(\Omega)$ and by the usual
  Sobolev embedding theorems we conclude $D(A)\subseteq
  H^{3/2}(\Omega)\hookrightarrow C(\bar\Omega)\subseteq E_{\one}$, as
  required.

  To show that $(e^{-tA})_{t\geq 0}$ is not positive we use the
  Beurling-Deny criterion which states that $(e^{-tA})_{t\geq 0}$ is
  positive if and only if $a(u^+,u^-)\leq 0$ for all $u\in H^1(\Omega)$,
  see \cite[Theorem~2.6]{MR2124040}. We will show that the criterion is
  violated for the harmonic function $u_m(r,\theta):=r^m\sin m\theta$
  for some choice of $m\in\mathbb N$. Similarly as in
  \cite[Proposition~4.8]{Daners2014}, an explicit calculation shows that
  \begin{displaymath}
    u_1^+(1,\theta)
    =\frac{1}{4i}(e^{i\theta}+e^{-i\theta})
    +\frac{1}{\pi}\sum_{k=-\infty}^\infty\frac{1}{(2k)^2-1}e^{2ik\theta}
  \end{displaymath}
  in $L^2(\partial\Omega)$. As $u_1^-=u_1^+-u_1$ and
  $u_m^\pm(1,\theta)=u_1^\pm(1,m\theta)$ we see that
  \begin{equation}
    \label{eq:fourier-expansion_u_m}
      u_m^\pm(1,\theta) =\pm\frac{1}{4i}(e^{im\theta}-e^{-im\theta})
      +\frac{1}{\pi}\sum_{k=-\infty}^\infty\frac{1}{(2k)^2-1}e^{2ikm\theta}
  \end{equation}
  Applying the convolution theorem for Fourier series as before we see
  that
  \begin{equation}
    \label{eq:Bu-plus}
    \bigl(B\gamma(u_m^+)\bigr)(\theta)
    =\bigl(q*\gamma(u_m^+)\bigr)(\theta)
    =\frac{1}{4i}(q_me^{im\theta}-q_{-m}e^{-im\theta})
    +\frac{1}{\pi}\sum_{k=-\infty}^\infty
    \frac{q_{2km}}{(2k)^2-1}e^{2ikm\theta}.
  \end{equation}
  Using \eqref{eq:fourier-expansion_u_m}, \eqref{eq:Bu-plus}, the
  orthogonality of $(e^{ik\theta})_{k\in\bbZ}$ and the fact that
  $q_k=q_{-k}$ we deduce that
  \begin{equation}
    \label{eq:beurling-deny-computation}
    \begin{split}
      a(u_m^+,u_m^-) &=\int_\Omega\nabla u_m^+\nabla
      u_m^-\,dx+\bigl\langle
      B\trace(u_m^+),\trace(u_m^-)\bigr\rangle\\
      &=0+2\pi\left(-\frac{q_m+q_{-m}}{16}+\frac{1}{\pi^2}
        \sum_{k=-\infty}^\infty\frac{q_{2km}}{\bigl((2k)^2-1\bigr)^2}\right)\\
      &=\frac{2}{\pi}q_0-\frac{\pi}{4}q_m
      +\frac{4}{\pi}\sum_{k=1}^\infty
      \frac{q_{2km}}{\bigl((2k)^2-1\bigr)^2}
    \end{split}
  \end{equation}
  for all $m\geq 1$. Since $q_k\to 0$ as $k\to\infty$ by the
  Riemann-Lebesgue lemma and $q_0>0$ we can choose $m\geq 1$ such that
  $2q_0/\pi-q_m\pi/4>0$. Since $q_{2km} \geq 0$ for all $k \geq 1$ by
  assumption, we conclude that $a(u_m^+,u_m^-)>0$ for this $m$.  This
  violates the Beurling-Deny criterion for the positivity of the
  semigroup generated by $-A$ and therefore $(e^{-tA})_{t\geq 0}$ is not
  positive.
\end{proof}

\section{Asymptotically positive resolvents}
\label{section:resolvents-asymptotic}
In \cite{DanersI} and the preceding sections we considered semigroups
and resolvents which were, in some appropriate sense, eventually
strongly positive.  Nevertheless, the results presented have some
limitations.

First, for our characterisations of eventual positivity, we have always
required a domination or smoothing condition such as $D(A) \subseteq
E_u$, cf.~Theorems~\ref{thm:resolvents-strong} and~\ref{thm:semigroups-strong}
and Corollary~\ref{cor:semigroups-strong-differentiable}. As 
Example~\ref{example:domination-property} illustrates, such conditions
cannot in general be dropped.

Second, the relationship between individual and uniform eventual
positivity properties is not clear. We showed in \cite[Examples~5.7 and
5.8]{DanersI} that it is essential to distinguish between individual and
uniform eventual positivity, even under rather strong regularity and
compactness assumptions.

Third, one might suspect that in certain applications a form of eventual
positivity could occur which cannot be described in terms of
\emph{strong} positivity. At first glance the following notions seem to
be appropriate to describe such a more general behaviour: we recall from
\cite[Section~7]{DanersI}) that a $C_0$-semigroup $(e^{tA})_{t \ge 0}$
on a complex Banach lattice $E$ is called \emph{individually eventually
  positive} if for every $f \in E_+$ there is a $t_0 \ge 0$ such that
$e^{tA}f \in E_+$ whenever $t \ge t_0$. Similarly, if $A$ is a closed
operator on $E$ and $\lambda_0$ is either $-\infty$ or a spectral value
of $A$ in $\bbR$, then we call the resolvent $R(\phdot,A)$ on $E$
\emph{individually eventually positive at $\lambda_0$} if there exists
$\lambda_2 > \lambda_0$ with the following properties:
$(\lambda_0,\lambda_2] \subseteq \rho(A)$ and for every every $f \in E_+$
there exists $\lambda_1 \in (\lambda_0,\lambda_2]$ such that
$R(\lambda,A)f \in E_+$ for all $\lambda \in
(\lambda_0,\lambda_1]$. Unfortunately, it turns out that these eventual
positivity properties are difficult to characterise. This is
demonstrated by the following two examples.

\begin{examples}
  \label{examples:ind-evtl-pos-is-complicated}
  (a) If $\spb(A)$ is a dominant spectral value and the associated
  spectral projection $P$ is positive, then $(e^{tA})_{t\geq 0}$ is not
  necessarily individually eventually positive. Indeed, consider the
  linear operator on $\bbC^3$ given by
  \begin{displaymath}
    A:=
    \begin{bmatrix}
      0 & 0 & 0 \\
      0 & -1 & -1 \\
      0 & 1 & -1
    \end{bmatrix} \text{.}
  \end{displaymath}
  Then $\sigma(A) =
  \{0,i-1,-i-1\}$ and the spectral projection $P$ associated with
  $\spb(A) = 0$ is the projection onto the first component. In
  particular $\spb(A)$ is dominant and $P$ is positive.  However, the
  semigroup
  \begin{displaymath}
    e^{tA}=
    \begin{bmatrix}
      1&0&0\\
      0&e^{-t}\cos t & - e^{-t}\sin t \\
      0&e^{-t}\sin t & e^{-t}\cos t
    \end{bmatrix} \text{.}
  \end{displaymath}
  is not eventually positive. Yet, we observe that the orbits spiral
  towards the $x$-axis, so the distance to the positive cone approaches
  zero for every positive initial condition.
  
  (b) Even if $\spb(A)$ is a dominant spectral value, individual
  eventual positivity of the resolvent at $\spb(A)$ is in general not
  equivalent to individual eventual positivity of the semigroup. Indeed,
  let $E = \bbC^n$ and let $A\in\calL(E)$ with $\spb(A) = 0$ such that
  $(e^{tA})_{t \ge 0}$ is bounded and eventually positive, but not
  positive (such semigroups exist in all dimensions $n \geq 3$, 
  cf.~\cite[Remark~5.3(a)]{DanersI}). Then $\{\lambda > 0\colon 
  R(\lambda,A) \not \ge 0\}$ is non-empty and open in $(0,\infty)$. We 
  choose an arbitrary element $\lambda_0$ from this set.

  Now, consider the operator $B := (A-\lambda_0I) \oplus 0$ on $E \oplus
  \bbC$. Then $0$ is a dominant spectral value of $B$ and a simple pole
  of its resolvent. Clearly, $B$ generates an eventually positive
  semigroup on $E \times \bbC$ with $\spb(B)=0$.  However, the resolvent
  of $B$ is not eventually positive at $0$, since $R(\lambda,B)|_{E} =
  R(\lambda_0 + \lambda, A) \not\geq 0$ for small $\lambda > 0$.
\end{examples}

In \cite[Example~8.2]{DanersI} the reader can find another example of a
$C_0$-semigroup $(e^{tA})_{t \ge 0}$ (on an infinite dimensional Banach
lattice) which is uniformly eventually positive but whose resolvent is
not individually eventually positive at $\spb(A)$. However, in that
example the semigroup is nilpotent and therefore $\spb(A) = -\infty$.

For the reasons described above it seems appropriate to introduce yet another
concept of eventual positivity which does not exhibit the above
mentioned disadvantages. An indication of what concept this should be 
can be found in Examples~\ref{examples:ind-evtl-pos-is-complicated}. 
Observe that in Example~(a) the semigroup is
not eventually positive, but its ``negative part'' tends to $0$ as time
evolves. Similarly, in Example~(b) the resolvent is
not eventually positive but, despite having a pole in $0$, its
``negative part'' remains bounded as $\lambda \downarrow 0$.  These
observations motivate Definitions~\ref{def:resolvents-asymptotic},
\ref{def:resolvents-asymptotic-bounded} and
\ref{def:semigroups-asymptotic} below.
 
Recall that for every element $f$ of a Banach lattice $E$, we denote by
$\distPos{f}$ the distance of $f$ to the positive cone $E_+$ as defined
in \eqref{eq:def-distPos}.
\begin{definition}
  \label{def:resolvents-asymptotic}
  Let $A$ be a closed linear operator on a complex Banach lattice
  $E$. Suppose that $\lambda_0\in\mathbb R$ is a spectral value of $A$
  such that $(\lambda_0,\lambda_0+\delta] \subseteq \rho(A)$
  for some $\delta > 0$ and such that $R(\phdot,A)$ satisfies the
  Abel-type growth condition
  \begin{equation}
    \label{form:growth-condition}
    \limsup_{\lambda \downarrow \lambda_0}
    \|(\lambda-\lambda_0)R(\lambda,A)\|
    <\infty\text{.}
  \end{equation}
  \begin{enumerate}[\upshape (a)]
  \item The resolvent $R(\phdot,A)$ is called \emph{individually
      asymptotically positive at $\lambda_0$} if for each $f \ge 0$ we
    have $(\lambda - \lambda_0)\distPos{R(\lambda,A)f} \to 0$ as
    $\lambda \downarrow \lambda_0$.
  \item The resolvent $R(\phdot,A)$ is called \emph{uniformly
      asymptotically positive at $\lambda_0$} if for each $\varepsilon
    > 0$ there is a $\lambda_1 > \lambda_0$ with the following
    properties: $(\lambda_0, \lambda_1] \subseteq \rho(A)$ and $(\lambda -
    \lambda_0) \distPos{R(\lambda,A)f} \le \varepsilon \|f\|$ for all $f
    \in E_+$ and all $\lambda \in (\lambda_0,\lambda_1]$.
  \end{enumerate}
\end{definition}

Note that, in contrast to Definition~\ref{def:resolvents-strong}, we do
not allow for the case $\lambda_0 = -\infty$ here, since in this case
the growth condition \eqref{form:growth-condition} does not make sense.
Let us also introduce a somewhat stronger refinement of the above
definitions.

\begin{definition}
  \label{def:resolvents-asymptotic-bounded}
  Let $A$ be a closed linear operator on a complex Banach lattice
  $E$. Suppose that $\lambda_0\in\mathbb R$ is a spectral value of $A$
  such that $(\lambda_0, \lambda_0 + \delta] \subseteq
  \rho(A)$ for some $\delta > 0$ and such that $R(\phdot,A)$
  satisfies \eqref{form:growth-condition}.
  \begin{enumerate}[\upshape (a)]
  \item The resolvent $R(\phdot,A)$ is called \emph{individually
      asymptotically positive of bounded type at $\lambda_0$} if there
      exists $\lambda_1 > \lambda_0$ with the following properties:
      $(\lambda_0, \lambda_1] \subseteq \rho(A)$ and
      the set $\{\distPos{R(\lambda,A)f}\colon \lambda \in (\lambda_0,
      \lambda_1]\}$ is bounded for every $f \in E_+$.
  \item The resolvent $R(\phdot,A)$ is called \emph{uniformly
      asymptotically positive of bounded type at $\lambda_0$} if there
    exist $\lambda_1 > \lambda_0$ and $K \ge 0$ with the following
    properties: $(\lambda_0,\lambda_1] \subseteq \rho(A)$ and
    $\distPos{(R(\lambda,A)f)} \le K\|f\|$ for all $f \in E_+$ and all
    $\lambda \in (\lambda_0, \lambda_1]$.
  \end{enumerate}
\end{definition}

Clearly, if the resolvent of $R(\phdot,A)$ is individually
asymptotically positive of bounded type, then it is also individually
asymptotically positive and the same observation also holds
for the uniform properties.

We could also define asymptotic negativity of the resolvent
from the left just as we defined eventual strong negativity of the resolvent in
Definition~\ref{def:resolvents-strong-negative}. However, this
definition would not lead to any fundamentally new concepts, and it does
not seem to have applications similar to the anti-maximum principle that
we considered in Example~\ref{ex:anti-maximum-principle}. 
We shall therefore not discuss it in detail.

Note that in Definition~\ref{def:resolvents-asymptotic-bounded}(a)
$\lambda_1>\lambda_0$ can always be chosen independently of $f$, but
with respect to $f$ in the unit ball, no uniform upper bound for
\begin{displaymath}
  \bigl\{\distPos{R(\lambda,A)f}\colon \lambda \in (\lambda_0,
  \lambda_1]\bigr\}
\end{displaymath}
is guaranteed. In contrast, (b) requires the existence of such a uniform
bound.

\begin{remark}
  Note that if $\lambda_0\in\sigma(A)$ is a pole of the resolvent, then
  the growth condition \eqref{form:growth-condition} is fulfilled if and
  only if $\lambda_0$ is a simple pole.
\end{remark}
 
To state our main theorem of this section it will be useful to introduce
the notion of asymptotic positivity not only for resolvents of unbounded
operators, but also for powers of bounded operators. We recall that a
bounded operator $T$ is called power bounded if the sequence
$(T^n)_{n\in\bbN_0}$ is bounded with respect to the operator norm.

\begin{definition}
  \label{def:operators-asymptotic}
  Let $T$ be a bounded linear operator on a complex Banach lattice $E$
  with $\spr(T) > 0$ such that $\frac{T}{\spr(T)}$ is power bounded.
  \begin{enumerate}[\upshape (a)]
  \item We call $T$ \emph{individually asymptotically positive} if
    $\distPos{\frac{T^n}{\spr(T)^n}f} \to 0$ as $n \to \infty$ for every
    $f\in E_+$.
  \item We call $T$ \emph{uniformly asymptotically positive} if for each
    $\varepsilon > 0$ there is an $n_0 \in \bbN_0$ such that such that
    $\distPos{\frac{T^n}{r(T)^n}f} \le \varepsilon \|f\|$ for every $n
    \ge n_0$ and every $f \in E_+$.
  \end{enumerate}
\end{definition}

We can now state our main theorem on asymptotically positive resolvents.

\begin{theorem}
  \label{thm:resolvents-asymptotic}
  Let $A$ be a closed linear operator on a complex Banach lattice $E$
  and suppose that $\lambda_0 \in \sigma(A) \cap \bbR$ is a simple pole
  of $R(\phdot,A)$.  Then the following assertions are equivalent:
  \begin{enumerate}[\upshape (i)]
  \item The spectral projection $P$ associated with $\lambda_0$ is
    positive, that is, $P \ge 0$.
  \item The resolvent $R(\phdot,A)$ is individually asymptotically
    positive at $\lambda_0$.
  \item The resolvent $R(\phdot,A)$ is uniformly asymptotically positive
    of bounded type at $\lambda_0$.
  \end{enumerate}
  If $\lambda_0=\spb(A)$, then for every $\lambda > \spb(A)$
  the operator $R(\lambda,A) \spr(R(\lambda,A))^{-1}$
  is power bounded and the above assertions {\upshape (i)}--{\upshape (iii)} 
  are also equivalent to:
  \begin{enumerate}[\upshape (i)]\setcounter{enumi}{3}
  \item There is a $\lambda > \spb(A)$ such that the operator
    $R(\lambda,A)$ is individually asymptotically positive.
  \item For each $\lambda > \spb(A)$ the operator $R(\lambda,A)$ is
    uniformly asymptotically positive.
  \end{enumerate}
\end{theorem}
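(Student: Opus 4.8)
The plan is to reduce everything to the Laurent expansion of $R(\phdot,A)$ at the simple pole $\lambda_0$, namely $R(\lambda,A) = (\lambda-\lambda_0)^{-1}P + R_0(\lambda)$ with holomorphic part $R_0$, which is therefore bounded on some set $(\lambda_0,\lambda_1]$. Since $\lambda_0$ is a \emph{simple} pole, the Abel-type growth condition \eqref{form:growth-condition} holds automatically, so the notions in (ii) and (iii) are meaningful, and $(\lambda-\lambda_0)R(\lambda,A) = P + (\lambda-\lambda_0)R_0(\lambda) \to P$ in $\calL(E)$ as $\lambda\downarrow\lambda_0$ (equivalently, Lemma~\ref{lem:behaviour-resolvent}(i) applied to $A - \lambda_0\id$). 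Throughout I would use that $\distPos{\phdot}$ is $1$-Lipschitz and positively homogeneous ($\distPos{cf} = c\,\distPos{f}$ for $c\ge 0$, since $E_+$ is a closed cone), and that $\distPos{g}=0$ forces $g\in E_+$.

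I would first establish the cycle (i) $\Rightarrow$ (iii) $\Rightarrow$ (ii) $\Rightarrow$ (i). For (i) $\Rightarrow$ (iii): choose $\lambda_1 > \lambda_0$ with $(\lambda_0,\lambda_1]\subseteq\rho(A)$ and $K := \sup_{\lambda\in(\lambda_0,\lambda_1]}\|R_0(\lambda)\| < \infty$; for $f\in E_+$ the element $(\lambda-\lambda_0)^{-1}Pf$ is positive (as $P\ge 0$ and $\lambda>\lambda_0$), so $\distPos{R(\lambda,A)f} \le \|R(\lambda,A)f - (\lambda-\lambda_0)^{-1}Pf\| = \|R_0(\lambda)f\| \le K\|f\|$, which is exactly uniform asymptotic positivity of bounded type. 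The step (iii) $\Rightarrow$ (ii) is immediate, since $\distPos{R(\lambda,A)f}\le K\|f\|$ yields $(\lambda-\lambda_0)\distPos{R(\lambda,A)f}\le(\lambda-\lambda_0)K\|f\|\to 0$. For (ii) $\Rightarrow$ (i): for $f\in E_+$ we have $\distPos{(\lambda-\lambda_0)R(\lambda,A)f} = (\lambda-\lambda_0)\distPos{R(\lambda,A)f}\to 0$ by hypothesis, while $(\lambda-\lambda_0)R(\lambda,A)f\to Pf$; continuity of $\distPos{\phdot}$ then gives $\distPos{Pf}=0$, hence $Pf\ge 0$, and since $f\in E_+$ was arbitrary, $P\ge 0$.

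For the case $\lambda_0 = \spb(A)$ I would add the cycle (i) $\Rightarrow$ (v) $\Rightarrow$ (iv) $\Rightarrow$ (i). The key preliminary is that Lemma~\ref{lem:behaviour-resolvent}(ii), applied to $A - \lambda_0\id$ (which has spectral bound $0$ and $0$ as a simple pole), gives $[(\lambda-\lambda_0)R(\lambda,A)]^n\to P$ in $\calL(E)$ as $n\to\infty$, for every $\lambda>\lambda_0$. In particular $(\lambda-\lambda_0)R(\lambda,A)$ is power bounded and, by Gelfand's formula (and $P\neq 0$), has spectral radius $1$; thus $\spr(R(\lambda,A)) = (\lambda-\lambda_0)^{-1} > 0$ and $(\lambda-\lambda_0)R(\lambda,A) = R(\lambda,A)/\spr(R(\lambda,A))$ is the normalised operator of Definition~\ref{def:operators-asymptotic}. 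Given this, (i) $\Rightarrow$ (v) runs exactly as (i) $\Rightarrow$ (iii), with $\|[(\lambda-\lambda_0)R(\lambda,A)]^n - P\|\to 0$ replacing the bound on $R_0$: for $f\in E_+$ and $n$ large, $\distPos{[(\lambda-\lambda_0)R(\lambda,A)]^n f}\le\|[(\lambda-\lambda_0)R(\lambda,A)]^n - P\|\,\|f\| \le \varepsilon\|f\|$. The step (v) $\Rightarrow$ (iv) is trivial, and (iv) $\Rightarrow$ (i) runs as (ii) $\Rightarrow$ (i): from $\distPos{[(\lambda-\lambda_0)R(\lambda,A)]^n f}\to 0$ and $[(\lambda-\lambda_0)R(\lambda,A)]^n f\to Pf$ one gets $\distPos{Pf}=0$, so $Pf\ge 0$ for all $f\in E_+$.

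I do not anticipate a deep obstacle: granting the Laurent expansion and Lemma~\ref{lem:behaviour-resolvent}, the work lies in correctly matching the two normalisations --- $(\lambda-\lambda_0)R(\lambda,A)$ for the resolvent near $\lambda_0$ and $R(\lambda,A)/\spr(R(\lambda,A))$ for the operator powers, which here coincide --- and in keeping the ``bounded type'' refinement in (iii) consistent with the plain asymptotic-positivity notions. The main thing requiring care is the choice of implication cycle: (iii), the strongest of the first three assertions, must be derived from (i), while (ii), the weakest, must be the one returning to (i). Should one wish to avoid citing Lemma~\ref{lem:behaviour-resolvent}(ii), the needed norm convergence $[(\lambda-\lambda_0)R(\lambda,A)]^n\to P$ can be obtained directly from the decomposition $E = \im P\oplus\ker P$, using that $\lambda_0$ being a simple pole forces $A|_{\im P} = \lambda_0\id$ while $\lambda_0 = \spb(A)\in\bbR$ forces $\spr\bigl((\lambda-\lambda_0)R(\lambda,A)|_{\ker P}\bigr) = (\lambda-\lambda_0)/\dist(\lambda,\sigma(A)\setminus\{\lambda_0\}) < 1$.
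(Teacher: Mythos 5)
Your proposal is correct and follows essentially the same route as the paper: the same implication cycles (i)$\Rightarrow$(iii)$\Rightarrow$(ii)$\Rightarrow$(i) and (i)$\Rightarrow$(v)$\Rightarrow$(iv)$\Rightarrow$(i), the same use of $(\lambda-\lambda_0)R(\lambda,A)\to P$ and of Lemma~\ref{lem:behaviour-resolvent}(ii), and the same Lipschitz/homogeneity estimates for $\distPos{\phdot}$. The only cosmetic difference is that in (i)$\Rightarrow$(iii) you bound the holomorphic part $R_0(\lambda)f$ of the Laurent expansion, whereas the paper bounds $R(\lambda,A)(f-Pf)$ via $\sup_\lambda\|R(\lambda,A)|_{\ker P}\|$; since $R(\lambda,A)(f-Pf)=R(\lambda,A)f-(\lambda-\lambda_0)^{-1}Pf=R_0(\lambda)f$ at a simple pole, these are the same estimate, and your explicit verification that $\spr\bigl((\lambda-\lambda_0)R(\lambda,A)\bigr)=1$ is a welcome detail the paper only asserts.
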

A few remarks are in order. First, note that in contrast to
Theorem~\ref{thm:resolvents-strong} we now \emph{assume} $\lambda_0$ to
be a simple pole. Indeed, we cannot expect an asymptotically positive
resolvent to have a simple pole in $\lambda_0$ automatically. To see
this simply consider a two-dimensional Jordan block with eigenvalue
$0$. Second, note that we did not need any domination assumption such as
$D(A) \subseteq E_u$. Hence the theorem is applicable in a wider range
of situations. Third, the above theorem yields the desired equivalence
between individual and uniform eventual behaviour which is not true 
for eventual (strong) positivity.
\begin{proof}[Proof of Theorem~\ref{thm:resolvents-asymptotic}]
  We may assume that $\lambda_0 = 0$. We shall prove (i) $\Rightarrow$ (iii)
  $\Rightarrow$ (ii) $\Rightarrow$ (i) and (i) $\Rightarrow$ (v)
  $\Rightarrow$ (iv) $\Rightarrow$ (i).
 
  ``(i) $\Rightarrow$ (iii)'' Choose $\varepsilon > 0$ sufficiently
  small that the closed punctured disk of radius $\varepsilon$ around $0$ is
  contained in $\rho(A)$. Then
  \begin{displaymath}
    K := \sup_{\lambda \in (0,\varepsilon)} \|R(\lambda,A)|_{\ker P}\|
      < \infty.
  \end{displaymath}
  Moreover, we have $\lambda R(\lambda,A)P=P$ for all $\lambda \in
  (0,\varepsilon)$ since $0$ is a simple pole. 
  Thus, using that $P\geq 0$, for every $f\geq 0$ and
  every $\lambda \in (0,\varepsilon)$
  \begin{displaymath}
    \distPos{R(\lambda,A)f}
    \leq\distPos{\lambda^{-1}Pf}+\|R(\lambda,A)(f-P f)\|
    \leq 0+K\|f-Pf\|\le K \|\id - P\| \|f\|.
  \end{displaymath}
  Clearly (iii) implies (ii). To see the implication ``(ii)
  $\Rightarrow$ (i)'' recall that because $\lambda_0$ is a simple pole
  of the resolvent we have $\lambda R(\lambda,A)\to P$ in $\calL(E)$ as
  $\lambda\downarrow\lambda_0$ . Hence for $f \ge 0$ we have
  \begin{math}
    \distPos{Pf} = \lim_{\lambda \downarrow 0} \distPos{\lambda
      R(\lambda,A)f}= 0
  \end{math}
  and so $P \ge 0$ as claimed.
 
  From now on, assume that $\spb(A) = \lambda_0 = 0$. If
  $\lambda > 0$, then the 
  operator $R(\lambda,A)\spr(R(\lambda,A))^{-1} = \lambda
  R(\lambda,A)$ is power bounded since its powers converge to $P$ with 
  respect to the operator norm according to 
  Lemma~\ref{lem:behaviour-resolvent}(ii). Hence, the notions of individual
  and uniform asymptotic positivity are well-defined for $R(\lambda,A)$.
  
  ``(i) $\Rightarrow$ (v)'' Let $\lambda > 0$. It is sufficient to show
  that $\lambda R(\lambda,A)$ is uniformly asymptotically positive. Note
  that $\spr\bigl(\lambda R(\lambda,A)\bigr)=1$. Moreover, as
  $\spb(A)=0$ is a simple pole, $(\lambda R(\lambda,A))^n \to P$ in
  $\calL(E)$ as $n \to \infty$, see
  Lemma~\ref{lem:behaviour-resolvent}(ii). Thus, given $\varepsilon > 0$
  there is an $n_0 \in \bbN_0$ such that $\|(\lambda R(\lambda,A))^n -
  P\| \le \varepsilon$ for all $n \ge n_0$.  Using that $P\geq 0$ we see
  that
  \begin{displaymath}
    \distPos{(\lambda R(\lambda,A))^nf}
    \leq \|(\lambda R(\lambda,A))^n-P\|\|f\| + \distPos{Pf}
    \leq \varepsilon \|f\|
    \text{.}
  \end{displaymath}
  for all $f \in E_+$ and all $n \ge n_0$. Hence $\lambda R(\lambda,A)$
  is uniformly asymptotically positive.
  
  Clearly, (v) implies (iv). To show that (iv) implies (i), let $f \ge
  0$ and observe that
  \begin{displaymath}
    \distPos{Pf} = \lim_{n \to \infty} \distPos{(\lambda
      R(\lambda,A))^nf} = 0 \text{.}
  \end{displaymath}
  Hence, $P \ge 0$ as claimed.
\end{proof}

In Proposition~\ref{prop:projections-strong} results about
\emph{strongly} positive spectral projections are given. In the setting
of asymptotic positivity, we are instead interested in projections which
are merely positive. Hence, the following proposition and its corollary
can sometimes be useful:

\begin{proposition}
  \label{prop:projections-positive}
  Let $E$ be a (real or complex) Banach lattice and let $P$ be a
  projection. If $\im P$ is one-dimensional and if $\im P$ and $\im P'$
  contain positive non-zero vectors, then $P$ is positive.
\end{proposition}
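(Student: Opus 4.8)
The plan is to use the one-dimensionality of $\im P$ to reduce the positivity of $P$ to a single scalar inequality, and then to use the positive functional in $\im P'$ together with the positive vector in $\im P$ to pin down the sign. First I would fix a non-zero positive vector $v \in \im P$; since $\im P$ is one-dimensional, $\im P = \operatorname{span}\{v\}$, so for every $f \in E$ we have $Pf = \varphi(f)\,v$ for some scalar $\varphi(f)$, and $f \mapsto \varphi(f)$ is a (bounded) linear functional on $E$ (boundedness of $P$ gives boundedness of $\varphi$, though we only need linearity for the argument). To show $P \ge 0$ it then suffices to show $\varphi(f) \ge 0$ whenever $f \ge 0$.

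Next I would bring in the positive non-zero functional $\psi \in \im P'$. Since $P' \psi = \psi$, for every $f \in E$ we have $\langle \psi, f\rangle = \langle \psi, ?\rangle$ — more precisely $\langle \psi, Pf \rangle = \langle P'\psi, f\rangle = \langle \psi, f\rangle$, so applying $\psi$ to $Pf = \varphi(f) v$ gives $\varphi(f)\,\langle \psi, v\rangle = \langle \psi, f\rangle$. Here the key sub-step is that $\langle \psi, v \rangle > 0$: indeed $v \ge 0$, $v \ne 0$, and $\psi \ge 0$, $\psi \ne 0$; while in general a positive non-zero functional applied to a positive non-zero vector need not be strictly positive, it cannot be negative, so $\langle \psi, v\rangle \ge 0$, and one must argue it is non-zero. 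This is the one genuinely delicate point.

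To handle that point I would argue as follows: if $\langle \psi, v \rangle = 0$, then from $\varphi(f)\langle\psi,v\rangle = \langle\psi,f\rangle$ we would get $\langle \psi, f\rangle = 0$ for all $f \in E$, forcing $\psi = 0$, a contradiction. (Note this uses that $\im P \ne \{0\}$, i.e. that $v$ genuinely exists, which is part of the hypothesis.) Hence $\langle \psi, v\rangle > 0$, and therefore for $f \ge 0$ we obtain
\begin{displaymath}
  \varphi(f) = \frac{\langle \psi, f\rangle}{\langle \psi, v\rangle} \ge 0,
\end{displaymath}
since $\psi \ge 0$ and $f \ge 0$ give $\langle \psi, f\rangle \ge 0$. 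Thus $Pf = \varphi(f) v \ge 0$, proving $P \ge 0$.

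The main obstacle is the verification that $\langle \psi, v\rangle \ne 0$; everything else is bookkeeping around the one-dimensionality of $\im P$ and the duality $\langle P'\psi, f\rangle = \langle \psi, Pf\rangle$. The resolution above is short, but it is worth stating carefully because it is exactly where all three hypotheses — $\dim \im P = 1$, $\im P$ contains a positive non-zero vector, and $\im P'$ contains a positive non-zero vector — are simultaneously used.
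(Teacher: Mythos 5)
Your proof is correct, and it follows essentially the same strategy as the paper: write the rank-one projection as $Pf = \varphi(f)\,v$ with $v > 0$ spanning $\im P$, take a positive non-zero $\psi \in \im P'$, and reduce positivity of $P$ to the strict positivity of the pairing $\langle\psi,v\rangle$. Where you diverge is in how you establish $\langle\psi,v\rangle \neq 0$. The paper first invokes $\dim \im P' = \dim \im P = 1$ (citing Kato), picks an auxiliary $\tilde\psi$ with $\langle\tilde\psi,v\rangle = 1$, observes that $P'\tilde\psi$ is then a non-zero element of the one-dimensional $\im P'$, and concludes that the given positive $\psi$ is a non-zero multiple of $P'\tilde\psi$, hence does not annihilate $v$. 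Your argument is more economical: from $P'\psi = \psi$ you get the identity $\varphi(f)\langle\psi,v\rangle = \langle\psi,f\rangle$ for all $f$, so $\langle\psi,v\rangle = 0$ would force $\psi = 0$ outright. This avoids any appeal to $\dim\im P' = 1$ and is self-contained; what the paper's route buys is perhaps a slightly more transparent view of the role of both one-dimensional ranges, but your version is cleaner. One small stylistic note: you do not need boundedness of $\varphi$ at all (as you already remark), since positivity is a purely algebraic conclusion.
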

\begin{proof}
  Since $\im P$ is one-dimensional, so is $\im P'$;
  c.f.~\cite[Section~III.6.6]{Kato1976}.  Let $0 < u \in \im P$ and $0
  < \varphi \in \im P'$. We can find a vector $\psi \in E'$ such that
  $\langle \psi, u \rangle = 1$. Hence, we also have $\langle P'\psi, u
  \rangle = 1$.  Since $\im P'$ is one-dimensional, the vector $\varphi$ is a
  non-zero scalar multiple of $P'\psi$. Thus we have $\langle \varphi, u \rangle \not=
  0$ and hence $\langle \varphi, u\rangle > 0$. After an appropriate
  rescaling of $u$ we may assume that $\langle \varphi, u \rangle = 1$.
  Since $\im P$ is spanned by $u$, one now immediately computes that $Pf
  = \langle \varphi,f \rangle u$ for every $f \in E$. Hence, $P$ is positive.
\end{proof}

\begin{corollary}
  \label{cor:projections-positive}
  Let $E$ be a complex Banach lattice, let $A$ be a closed, densely
  defined operator on
  $E$ and let $\lambda_0\in\sigma(A)$ be a simple pole of the
  resolvent. Assume that $\ker(\lambda_0\id - A)$ is one-dimensional and
  contains a non-zero, positive vector and assume that
  $\ker(\lambda_0\id - A')$ contains a non-zero, positive functional.
  Then the spectral projection $P$ associated with $\lambda_0$ is
  positive.
\end{corollary}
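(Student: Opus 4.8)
The plan is to reduce the statement to Proposition~\ref{prop:projections-positive}. As usual we may assume $\lambda_0 = 0$, replacing $A$ by $A - \lambda_0\id$ if necessary. The first step is to identify $\im P$ with $\ker A$: since $0$ is a \emph{simple} pole of $R(\phdot,A)$, the Laurent expansion of the resolvent about $0$ consists only of the residue term $P$ and the regular part, whence $\im P = \ker A$ (this is the same fact used in the proof of Theorem~\ref{thm:semigroups-strong}; see also \cite[Section~2]{DanersI}). In particular, the hypothesis that $\ker(\lambda_0\id-A)$ is one-dimensional and contains a non-zero positive vector says exactly that $\im P$ is one-dimensional and contains a non-zero positive vector.

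The second step is to carry out the analogous identification for the adjoint. The operator $A'$ is closed, $0$ is again a simple pole of $R(\phdot,A')$, and the associated spectral projection is $P'$; hence $\im P' = \ker A'$ by the same argument applied to $A'$. Thus the hypothesis that $\ker(\lambda_0\id - A')$ contains a non-zero positive functional translates precisely into the statement that $\im P'$ contains a non-zero positive vector.

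With these two identifications in hand, the hypotheses of Proposition~\ref{prop:projections-positive} are satisfied verbatim, and that proposition yields $P \ge 0$, which is the claim. There is no genuine obstacle here; the only points requiring (standard) care are the two facts that a simple pole of the resolvent forces $\im P = \ker A$ and that the spectral projection of $A'$ at this simple pole is $P'$ with $\im P' = \ker A'$, both of which I would simply cite rather than reprove.
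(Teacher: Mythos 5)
Your proof is correct and follows essentially the same route as the paper: identify $\im P$ with $\ker(\lambda_0\id-A)$ via the simple-pole structure of the resolvent and then invoke Proposition~\ref{prop:projections-positive}. You are slightly more explicit than the paper in spelling out the dual identification $\im P' = \ker(\lambda_0\id-A')$, which the paper leaves implicit, but the argument is the same.
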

\begin{proof}
  Since $\lambda_0$ is a simple pole of the resolvent, $\im P$ coincides
  with $\ker(\lambda_0\id - A)$ and is thus one-dimensional. The
  assertion now follows from
  Proposition~\ref{prop:projections-positive}.
\end{proof}
Proposition~\ref{prop:projections-strong} gives conditions under which
$\lambda_0$ is a first-order pole. However, the assumptions of
this proposition are very
strong if we are only interested in positive projections, and therefore
the following proposition should be useful in situations where we do not
know \emph{a priori} whether or not $\lambda_0$ is a first-order pole.

\begin{proposition}
  \label{prop:projection-positive-with-one-strictly-positive-vector}
  Let $E$ be a complex Banach lattice, let $A$ be a closed, densely
  defined operator on $E$ and let $\lambda_0 \in \sigma(A)$ be a
  pole of the resolvent. Assume that $\ker(\lambda_0\id - A)$ is
  one-dimensional and that both $\ker(\lambda_0\id - A)$ and
  $\ker(\lambda_0\id - A')$ contain positive, non-zero
  vectors. Furthermore assume that at least one of the following two
  assumptions is fulfilled:
  \begin{enumerate}[\upshape (a)]
  \item $\ker(\lambda_0\id - A)$ contains a quasi-interior point of
    $E_+$.
  \item $\ker(\lambda_0\id - A')$ contains a strictly positive
    functional.
  \end{enumerate}
  Then $\lambda_0$ is an algebraically simple eigenvalue of $A$ (in
  particular, a first-order pole of the resolvent $R(\phdot,A)$) and the
  corresponding spectral projection is positive.
\end{proposition}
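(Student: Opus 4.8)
The plan is to reduce the whole statement to Corollary~\ref{cor:projections-positive}. Indeed, once we know that $\lambda_0$ is a \emph{simple} pole of $R(\phdot,A)$, the positivity of the associated spectral projection $P$ follows at once from that corollary: for a simple pole we have $\im P = \ker(\lambda_0\id - A)$, which is one-dimensional and contains a positive non-zero vector by hypothesis, while $\im P' = \ker(\lambda_0\id - A')$ contains a positive non-zero functional, again by hypothesis. So the real task is to upgrade the given pole of $R(\phdot,A)$ at $\lambda_0$ to a \emph{first-order} pole, i.e.\ to prove that $\lambda_0$ is an algebraically simple eigenvalue of $A$.

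As usual we may assume $\lambda_0 = 0$. Write $\ker A = \langle v \rangle$ with $v > 0$, and fix a positive non-zero functional $\varphi \in \ker A'$. In case~(a) we may moreover take $v$ to be a quasi-interior point of $E_+$, namely the quasi-interior point that lies in the one-dimensional space $\ker A$ by hypothesis. The one elementary fact we shall use is that $\langle \varphi, v \rangle > 0$. In case~(b) this is immediate, since $\varphi$ is strictly positive and $v > 0$. In case~(a) it holds because otherwise $\varphi$ would vanish on the ideal $E_v$ --- if $|f| \le c v$ then $|\langle \varphi, f \rangle| \le \langle \varphi, |f| \rangle \le c \langle \varphi, v \rangle = 0$ --- and hence, by density, on all of $E$, contradicting $\varphi \neq 0$.

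Now the argument that $0$ is algebraically simple proceeds exactly as in the step ``(iii)~$\Rightarrow$~(iv)'' of the proof of Proposition~\ref{prop:projections-strong}. To see that $\ker A^2 = \ker A$, let $u \in \ker A^2$; then $Au \in \ker A$, so $Au = \alpha v$ for some $\alpha \in \bbC$, and consequently $\alpha \langle \varphi, v \rangle = \langle \varphi, Au \rangle = \langle A'\varphi, u \rangle = 0$, which forces $\alpha = 0$, i.e.\ $u \in \ker A$. Thus the ascent of $\lambda_0\id - A$ equals $1$ (it is $\ge 1$ since $0$ is an eigenvalue); since $0$ is a pole of the resolvent and the order of a pole equals the ascent, $0$ is a first-order pole. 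Equivalently, one may note that the ascent being $1$ forces $\im P = \ker A$ to be one-dimensional, whence the nilpotent part of $A$ on $\im P$ vanishes and the pole is simple. With simplicity established, Corollary~\ref{cor:projections-positive} applies and gives $P \ge 0$, and algebraic simplicity of $\lambda_0$ was part of what we proved.

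I do not anticipate a genuine obstacle in this proof. The only points needing a little care are the reduction ``$v$ may be taken quasi-interior'' in case~(a), the observation $\langle \varphi, v \rangle > 0$ under either of the two alternative hypotheses, and the standard passage from $\ker A^2 = \ker A$ to simplicity of the pole via the identity ``pole order $=$ ascent''.
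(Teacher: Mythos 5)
Your proof is correct and follows essentially the same route as the paper: the paper also reduces everything to showing $\ker A^2=\ker A$ via the pairing $\langle\varphi,v\rangle$ (arguing by contradiction rather than directly, and leaving the fact $\langle\varphi,v\rangle>0$ in case (a) implicit, which you spell out via density of $E_v$), and then invokes Corollary~\ref{cor:projections-positive}. No gaps.
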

\begin{proof}
  We may assume that $\lambda_0 = 0$. Since $0$ is a geometrically
  simple eigenvalue of $A$ by assumption, we only have to prove that it
  is a first-order pole of the resolvent in order to obtain that it is
  algebraically simple. We assume for a contradiction that $\lambda_0$
  is not a first-order pole, i.e.~there is an element $f \in \ker(A^2)
  \setminus \ker A$.
  
  Let $v \in \ker A$ and $\varphi \in \ker A'$.  Since $Af \in \ker A
  \setminus \{0\}$ and $\ker A$ is one-dimensional, we have $\alpha Af =
  v$ for some $\alpha \in \bbC$. We thus have
  \begin{displaymath}
    \langle \varphi, v \rangle = \alpha \langle \varphi, Af \rangle = 0.
  \end{displaymath}
  for all $v \in \ker A$ and all $\varphi \in \ker A'$.
  
  (a) Now assume that (a) is fulfilled. Then there exists a functional
  $0 < \varphi \in \ker A'$ and a quasi-interior point $v \in E_+$ which is
  contained in $\ker A$. For such elements we cannot have $\langle \varphi,
  v\rangle = 0$, so we have arrived at a contradiction.
  
  (b) If (b) is true, then there is a vector $0 < v \in \ker A$ and a
  strictly positive functional $\varphi \in \ker A'$. Again we cannot have
  $\langle \varphi, v \rangle = 0$, and thus we obtain a contradiction.
  
  We have proved that $\lambda_0$ is an algebraically simple eigenvalue of
  $A$.  Corollary~\ref{cor:projections-positive} now implies that the
  corresponding spectral projection is positive.
\end{proof}

\section{Asymptotically positive semigroups}
\label{section:semigroups-asymptotic}

In this section we characterise asymptotically positive semigroups. We
begin with the major definitions.

\begin{definition}
  \label{def:semigroups-asymptotic}
  Let $(e^{tA})_{t \ge 0}$ be a $C_0$-semigroup on a complex Banach
  lattice $E$ with $\spb(A) > -\infty$ and assume that
  $(e^{t(A-\spb(A))})_{t \ge 0}$ is bounded.
  \begin{enumerate}[\upshape (a)]
  \item The semigroup $(e^{tA})_{t \ge 0}$ is called \emph{individually
      asymptotically positive} if for every $f\geq 0$ we have
    $\distPos{e^{t(A-\spb(A))}f} \to 0$ as $t \to \infty$.
  \item The semigroup $(e^{tA})_{t \ge 0}$ is called \emph{uniformly
      asymptotically positive} if for every $\varepsilon > 0$ there is a
    $t_0 \ge 0$ such that $\distPos{e^{t(A-\spb(A))}f} \le \varepsilon
    \|f\|$ for all $t\ge t_0$ and all $f\in E_+$.
  \end{enumerate}
\end{definition}

Before proceeding, let us first note the following simple density
condition for individual asymptotic positivity.
Its proof is a simple $2\varepsilon$-argument.

\begin{proposition}
  \label{prop:density-semigroup}
  Let $(e^{tA})_{t \ge 0}$ be a $C_0$-semigroup on a complex Banach
  lattice $E$ and suppose that $(e^{t(A - \spb(A))})_{t \ge 0}$ is
  bounded.  Suppose that $D \subseteq E_+$ is dense in $E_+$ and that
  $d_+(e^{t(A-\spb(A))}g) \to 0$ as $t \to \infty$ for all $g \in
  D$. Then $(e^{tA})_{t \ge 0}$ is individually asymptotically positive.
\end{proposition}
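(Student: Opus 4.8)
The plan is to reduce to a bounded semigroup and then transfer the convergence from the dense set $D$ to all of $E_+$ by a $2\varepsilon$-argument, using that $\distPos{\cdot}$ is $1$-Lipschitz. Set $B := A - \spb(A)\id$. By hypothesis $M := \sup_{t \ge 0}\|e^{tB}\|_{\calL(E)} < \infty$ and $\distPos{e^{tB}g} \to 0$ as $t \to \infty$ for every $g \in D$; what has to be shown is that $\distPos{e^{tB}f} \to 0$ as $t \to \infty$ for every $f \in E_+$.

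First I would record the elementary fact that $f \mapsto \distPos{f} = \dist(f, E_+)$ is non-negative and, being the distance function to the fixed set $E_+$, satisfies $\lvert \distPos{f_1} - \distPos{f_2} \rvert \le \|f_1 - f_2\|$ for all $f_1, f_2 \in E$. Now fix $f \in E_+$ and $\varepsilon > 0$. Since $D$ is dense in $E_+$, pick $g \in D$ with $\|f - g\| \le \varepsilon$. Then for every $t \ge 0$ we have $\|e^{tB}f - e^{tB}g\| \le M\varepsilon$, and hence, by the Lipschitz estimate, $\distPos{e^{tB}f} \le \distPos{e^{tB}g} + M\varepsilon$. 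Letting $t \to \infty$ and using $\distPos{e^{tB}g} \to 0$ yields $\limsup_{t \to \infty} \distPos{e^{tB}f} \le M\varepsilon$; since $\varepsilon > 0$ was arbitrary and $\distPos{\cdot} \ge 0$, we conclude $\distPos{e^{tB}f} \to 0$, i.e.\ $(e^{tA})_{t \ge 0}$ is individually asymptotically positive.

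There is no genuine obstacle here; the only point requiring (minor) care is that convergence is assumed only on the dense subset $D$, so the two ingredients that must be combined are exactly the uniform bound $M$ on $(e^{tB})_{t \ge 0}$ and the $1$-Lipschitz continuity of $\distPos{\cdot}$. It is worth remarking that the standing hypothesis that $(e^{t(A-\spb(A))})_{t \ge 0}$ be bounded is essential: without a bound uniform in $t$ on $\|e^{tB}\|$, the error term $e^{tB}(f - g)$ need not remain small as $t \to \infty$, and the $2\varepsilon$-argument would break down.
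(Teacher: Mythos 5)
Your proof is correct and is precisely the $2\varepsilon$-argument the paper alludes to (the paper omits the details, noting only that the proof is a simple $2\varepsilon$-argument): combine the $1$-Lipschitz continuity of $\distPos{\phdot}$ with the uniform bound on $(e^{t(A-\spb(A))})_{t\ge 0}$ and the density of $D$ in $E_+$. Nothing further is needed.
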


We now state our main theorem which characterises asymptotic
positivity. In contrast to Theorem~\ref{thm:resolvents-asymptotic} on
resolvents we have to be a bit more careful here concerning the
equivalence between the statements on individual and uniform 
asymptotic positivity.

\begin{theorem}
  \label{thm:semigroups-asymptotic}
  Let $(e^{tA})_{t \ge 0}$ be a $C_0$-semigroup on a complex Banach
  lattice $E$, $\spb(A) > -\infty$ and suppose that
  $(e^{t(A-\spb(A))})_{t \ge 0}$ is bounded. Assume furthermore that
  $\sigma_{\per}(A)$ is non-empty and finite and consists of poles of
  the resolvent. Then the following assertions are equivalent:
  \begin{enumerate}[\upshape (i)]
  \item $\spb(A)$ is a dominant spectral value of $A$ and the associated
    spectral projection $P$ is positive.
  \item The semigroup $(e^{tA})_{t \ge 0}$ is individually
    asymptotically positive.
  \item The operators $e^{t(A-\spb(A))}$ converge strongly to a positive
    operator $Q$ as $t \to \infty$.
  \end{enumerate}
  If assertions {\upshape (i)-(iii)} are fulfilled, then $P = Q$.  If
  $(e^{t(A-\spb(A))})_{t \ge 0}$ is uniformly exponentially stable on
  the spectral space associated with $\sigma(A) \setminus
  \sigma_{\per}(A)$, then {\upshape (i)-(iii)} are equivalent to
  \begin{enumerate}[\upshape (i)]\setcounter{enumi}{3}
  \item The semigroup $(e^{tA})_{t \ge 0}$ is uniformly asymptotically
    positive.
  \end{enumerate}
\end{theorem}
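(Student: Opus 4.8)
The plan is to first normalise $\spb(A)=0$, so that $(e^{tA})_{t\ge0}$ is bounded and $\sigma_{\per}(A)=\sigma(A)\cap i\bbR$ is a finite, non-empty set $\{i\eta_1,\dots,i\eta_k\}$ of poles of $R(\phdot,A)$, and then to record the structural consequences that will feed every implication. Boundedness of the semigroup upgrades each $i\eta_j$ to a \emph{first order} pole (a pole of order $m\ge2$ would force $\|e^{tA}P_j\|$ to grow like $t^{m-1}$), so the associated mutually orthogonal spectral projections $P_j$ satisfy $\im P_j=\ker(i\eta_j\id-A)$ and $e^{tA}P_j=e^{i\eta_j t}P_j$. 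Writing $P_{\per}:=\sum_{j=1}^{k}P_j$ and $T(t):=e^{tA}P_{\per}=\sum_{j=1}^{k}e^{i\eta_j t}P_j$, I note that on $\ker P_{\per}$ the restricted generator has spectrum $\sigma(A)\setminus\sigma_{\per}(A)$, which is disjoint from $i\bbR$; since the restricted semigroup is bounded, the Arendt--Batty--Lyubich--V\~u stability theorem gives $e^{tA}|_{\ker P_{\per}}\to0$ strongly, whence $e^{tA}f-T(t)f\to0$ for every $f\in E$.

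With this splitting in hand the two easy implications are short. For ``(i)$\Rightarrow$(iii)'': dominance of $0$ means $\sigma_{\per}(A)=\{0\}$, so $P_{\per}=P$ and $T(t)=P$ for every $t$; the splitting then yields $e^{tA}\to P$ strongly, and $P\ge0$ makes this (iii) with $Q=P$. For ``(iii)$\Rightarrow$(ii)'' I would simply note that for $f\ge0$ one has $\distPos{e^{tA}f}\le\|e^{tA}f-Qf\|\to0$. Combined with ``(ii)$\Rightarrow$(i)'' below, this gives the equivalence of (i)--(iii) together with $P=Q$. The additional \emph{uniform} statement is equally quick: assuming moreover that $\|e^{tA}(\id-P_{\per})\|\to0$, once (i)--(iii) are known we have $e^{tA}=P+e^{tA}(\id-P)$ with $P\ge0$, hence $\distPos{e^{tA}f}\le\|e^{tA}(\id-P)f\|\le\|e^{tA}(\id-P)\|\,\|f\|$ for $f\in E_+$, which tends to $0$ uniformly in $f$; and (iv) trivially implies (ii).

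The substance of the theorem is ``(ii)$\Rightarrow$(i)''. Starting from individual asymptotic positivity, the splitting gives $\distPos{T(t)f}\to0$ as $t\to\infty$ for every $f\ge0$. Since the closure of the one-parameter sub-semigroup $\{(e^{i\eta_1 t},\dots,e^{i\eta_k t})\colon t\ge0\}$ in the compact group $\bbT^k$ is a subgroup, the neutral element lies in the closure of every tail of this orbit; so for each $s\ge0$ I can choose $t_n\to\infty$ with $e^{i\eta_j t_n}\to1$ for all $j$, whence $T(s+t_n)f\to T(s)f$ and therefore $\distPos{T(s)f}=\lim_n\distPos{T(s+t_n)f}=0$. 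Thus $T(s)f\in E_+$ for all $s\ge0$ and all $f\ge0$, i.e.\ $T(t)$ is a \emph{positive} operator on the Banach lattice $E$ for every $t\ge0$. Now each $T(t)$ has finite spectrum contained in $\{0\}\cup\{e^{i\eta_1 t},\dots,e^{i\eta_k t}\}$, so by Perron--Frobenius theory its spectral radius $1$ lies in $\sigma(T(t))$, is a pole of $R(\phdot,T(t))$, and the peripheral spectrum of $T(t)$ is cyclic; hence $\{e^{in\eta_j t}\colon n\in\bbZ\}\subseteq\sigma(T(t))$ is finite, i.e.\ $\eta_j t\in2\pi\bbQ$, for every $j$. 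Taking $t=1$ and $t=\sqrt2$ forces $\eta_j=0$ for all $j$, so $\sigma_{\per}(A)=\{0\}$ is dominant, $P_{\per}=P$ and $P=T(1)\ge0$; this is (i), and $P=Q$ follows from the construction in ``(i)$\Rightarrow$(iii)''.

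The hard part is precisely this last implication and, within it, extracting \emph{dominance} from asymptotic positivity. Asymptotic positivity only controls $T(t)f$ modulo a null sequence, so one must first invest the almost-periodicity of $t\mapsto T(t)f$ to promote this to genuine positivity of each operator $T(t)$; only then is the Perron--Frobenius machinery (the spectral radius of a positive operator lying in its spectrum, and cyclicity of the peripheral spectrum) available, and even then one has to play off two multiplicatively independent times $t$ against each other to eliminate nonzero peripheral frequencies. By contrast, the reduction to simple poles, the Arendt--Batty--Lyubich--V\~u splitting, and the estimates in the easy implications should all be routine.
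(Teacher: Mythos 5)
Your overall architecture — normalise $\spb(A)=0$, observe boundedness forces simple peripheral poles, split via $P_{\per}$, use Arendt–Batty to kill $e^{tA}|_{\ker P_{\per}}$, and then show $T(t)=e^{tA}P_{\per}\ge 0$ for \emph{every} $t\ge 0$ by a recurrence argument before applying a spectral-cyclicity result — is exactly the paper's architecture, and your treatment of (i)$\Rightarrow$(iii), (iii)$\Rightarrow$(ii) and of assertion (iv) is verbatim the paper's.

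The genuine divergence is in how one converts ``$T(t)\ge 0$ for all $t\ge 0$'' into dominance of $\spb(A)$. The paper observes that $P_{\per}\ge 0$ makes $\im P_{\per}$ a Banach lattice and $(e^{tA}|_{\im P_{\per}})_{t\ge 0}$ a positive $C_0$-semigroup; it then invokes the imaginary additive cyclicity of the boundary spectrum of a positive semigroup (Arendt et al., \cite[Prop.~C-III.2.9, Thm.~C-III.2.10]{Arendt1986}) and lets finiteness of $\sigma_{\per}(A)$ force $\sigma_{\per}(A)=\{0\}$. You instead treat each operator $T(t)$ individually: since $T(t)$ is positive, power bounded, and has finite spectrum with $\spr(T(t))=1$ a simple pole, the operator-level Perron--Frobenius/cyclicity theorem (Lotz, cf.\ Schaefer \cite[Ch.~V]{Schaefer1974}) puts $1\in\sigma(T(t))$ and makes $\sigma(T(t))\cap\bbT$ multiplicatively cyclic, which with finiteness forces every $e^{i\eta_j t}$ to be a root of unity; playing $t=1$ against $t=\sqrt 2$ then annihilates all $\eta_j$. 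This is a legitimate and rather elegant alternative, and the fact that $T(t)$ is a bounded, diagonalisable operator with finite spectrum ensures the pole hypotheses of the operator-level theorem are met. The trade-off is that the operator-level cyclicity result is a more delicate piece of Perron–Frobenius theory than the semigroup version the paper cites (it genuinely requires the pole condition and should be quoted precisely — without it, cyclicity of the peripheral spectrum of a positive operator can fail), and the paper's route avoids both this and the multiplicative-independence step, since a single application of the additive-cyclicity theorem already kills every nonzero $\eta_j$. So: same decomposition and same essential insight, but a different (operator vs.\ semigroup) version of the cyclicity theorem at the finish, plus the extra $t=1,\sqrt2$ device that your choice necessitates.
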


We point out that some of the assumptions of
Theorem~\ref{thm:semigroups-asymptotic} are automatically fulfilled if
the semigroup is eventually norm continuous as the following remark shows.

\begin{remark}
  \label{rem:eventually-norm-continuous-semigroup}
  Let $(e^{tA})_{t \ge 0}$ be an eventually norm continous
  $C_0$-semigroup on a complex Banach space $E$ and assume that
  $\spb(A) > -\infty$. Then the following assertions are true:

  (i) The peripheral spectrum $\sigma_{\per}(A)$ is non-empty and
  compact.

  This follows from the fact that for an eventually norm continuous
  semigroup the set $\{\lambda \in \sigma(A)\colon \repart \lambda \ge r\}$ 
  is compact for every $r<\spb(A)$; see \cite[Theorem~II.4.18]{Engel2000}.

  (ii) If $\sigma_{\per}(A)$ consists of poles of the resolvent, then it
  is finite and $(e^{t(A-\spb(A))})_{t \ge 0}$ is uniformly
  exponentially stable on the spectral space $E_1$ associated with
  $\sigma(A) \setminus \sigma_{\per}(A)$.

  Indeed, assume that $\spb(A)=0$. If $\sigma_{\per}(A)$ consists of
  poles, then due to the compactness, $\sigma_{\per}(A)$ must be finite
  and isolated from the rest of the spectrum. It follows from
  \cite[Theorem~II.4.18]{Engel2000} that $\spb(A|_{E_1})<0$. By the eventual
  norm-continuity $(e^{tA}|_{E_1})_{t \ge 0}$ is uniformly exponentially
  stable; see \cite[Corollary~IV.3.11 and Theorem~V.1.10]{Engel2000}.

  (iii) If $\sigma_{\per}(A)$ consists of poles of the resolvent, then
  $(e^{t(A-\spb(A))})_{t \ge 0}$ is bounded if and only if all these
  poles are of first order.

  To see this assume that $\spb(A)=0$. If $(e^{tA})_{t \ge 0}$ is
  bounded, then
  \begin{math}
    \sup_{\repart \lambda > 0} \|\repart \lambda\, R(\lambda,A)\| <
    \infty
  \end{math}
  by the Laplace transform representation of $R(\phdot,A)$. This readily
  implies that all poles of $R(\phdot,A)$ on $\sigma_{\per}(A)\subseteq
  i \bbR$ are of order $1$. Conversely, if $\sigma_{\per}(A)$ consists
  of first order poles of $R(\phdot,A)$, then by (ii) there are only
  finitely many of them, say $i\beta_1,...,i\beta_n\in i\bbR$, and the 
  spectral space associated to any of them consists of eigenvectors. Hence the
  spectral space $E_0$ associated with $\sigma_{\per}(A)$ is given by
  $\oplus_{k=1}^n \ker(i\beta_kI - A)$ and $(e^{tA})_{t\geq 0}$ is therefore
  bounded on $E_0$. We already know from (ii) that $(e^{tA})_{t\geq 0}$
  is uniformly exponentially stable and thus bounded on the spectral
  space associated with $\sigma(A) \setminus \sigma_{\per}(A)$. Hence,
  it is bounded on $E=E_0\oplus E_1$.
\end{remark}

\begin{proof}[Proof of Theorem~\ref{thm:semigroups-asymptotic}]
  We may assume that $\spb(A) = 0$.
 
  ``(i) $\Rightarrow$ (ii)'' Since the semigroup is bounded and since
  $\spb(A)$ is a dominant spectral value, it follows from \cite[Theorem~2.4]{MR933321}
  that the semigroup converges strongly to $0$ on $\ker P$. Moreover, 
  $0$ must be a simple pole of $R(\phdot,A)$ due to the boundedness of the semigroup.
  Hence, $e^{tA}|_{\im P} = \id_{\im P}$ for all $t \ge 0$. We thus
  conclude that for every $f\geq 0$
  \begin{displaymath}
    \distPos{e^{tA}f}
    \leq \distPos{Pf} + \|e^{tA}(f-Pf)\|
    =\|e^{tA}(f-Pf)\| \to 0
  \end{displaymath}
  as $t\to\infty$. Hence, the semigroup is individually asymptotically
  positive.
 
  ``(ii) $\Rightarrow$ (i)'' Let $P_{\per}$ be the spectral projection
  associated with $\sigma_{\per}(A)$. Note that $\sigma_{\per}(A)$
  consists of simple poles of $R(\phdot,A)$ since the semigroup is
  bounded.  Hence, by virtue of \cite[Proposition~2.3]{DanersI}, we can
  find a sequence $0 \le t_n \to \infty$ such that $e^{t_nA}P_{\per}f
  \to P_{\per}f$ for all $f \in E$.  For every $f \in E_+$ and every $t
  \ge 0$ we have
  \begin{displaymath}
    \distPos{e^{tA}P_{\per}f}
    = \lim_{n \to \infty} \distPos{e^{(t+t_n)A}P_{\per}f}
    \leq \lim_{n \to\infty} \distPos{e^{(t+t_n)A}f}
    + \lim_{n \to \infty}\|e^{(t+t_n)A}(P_{\per}f-f)\| = 0 \text{,}
  \end{displaymath}
  where the last limit is $0$ because the semigroup converges strongly
  to $0$ on $\ker P_{\per}$ (this follows from
  \cite[Theorem~2.4]{MR933321}). Hence $e^{tA}P_{\per} \ge 0$ for all $t
  \ge 0$. In particular $P_{\per} =e^{0A}P_{\per}\ge 0$.  Thus, $\im
  P_{\per}$ is a complex Banach lattice with respect to an appropriate
  equivalent norm, see
  \cite[Proposition~III.11.5]{Schaefer1974}. Moreover, we have shown
  that the $C_0$-semigroup $(e^{tA}|_{\im P_{\per}})_{t \ge 0}$ is
  positive. As $\sigma_{\per}(A|_{\im P_{\per}})\neq\emptyset$ we
  conclude that $\spb(A) = \spb(A|_{\im P_{\per}}) \in \sigma(A|_{\im
    P_{\per}}) \subseteq \sigma(A)$. Moreover, $\sigma_{\per}(A|_{\im
    P_{\per}}) = \sigma_{\per}(A)$ is imaginary additively cyclic; see
  \cite[Definition~B-III.2.5, Proposition~C-III.2.9 and
  Theorem~C-III.2.10]{Arendt1986}.  Since $\sigma_{\per}(A)$ is finite,
  it follows that $\sigma_{\per}(A)=\{0\}$.  This in turn implies that
  $P = P_{\per} \ge 0$.
 
  ``(i) $\Rightarrow$ (iii)'' If (i) is true, then according to
  \cite[Theorem~2.4]{MR933321}, $e^{tA}\to 0$ strongly on $\ker P$ as $t
  \to \infty$. Since $e^{tA}|_{\im P} = \id_{\im P}$ we have 
  $e^{tA}\to P\geq 0$ strongly. In particular (iii)
  holds with $Q = P$.
 
  ``(iii) $\Rightarrow$ (i)'' If $\spb(A)$ is not a spectral value or
  not dominant, then there is an eigenvalue $\lambda \in i\bbR
  \setminus\{0\}$. Hence, $e^{tA}$ does not converge strongly as $t \to
  \infty$. Thus (iii) implies that $\spb(A)$ must be a dominant spectral
  value. Then, however, $e^{tA}$ converges strongly to $P$ as $t \to
  \infty$, which in turn implies $P = Q \ge 0$.
 
  Clearly, (iv) implies (ii). Now, assume that (i) is true and that 
  $(e^{tA})_{t \ge 0}$ is uniformly exponentially stable on $\ker P$.
  If $\varepsilon > 0$, then we can find a $t_0 \ge 0$ such that
  $\|e^{tA}|_{\ker P}\| \le \varepsilon$ for all $t \ge t_0$. This
  implies that
  \begin{displaymath}
    \distPos{e^{tA}f} 
    \leq \distPos{Pf} + \|e^{tA}(f-Pf)\|
    \leq \varepsilon \|f-Pf\|
    \leq \varepsilon \|\id - P\| \|f\|
  \end{displaymath}
  for all $f \in E_+$ and all $t \ge t_0$ and so $(e^{tA})_{t \ge 0}$ is
  uniformly asymptotically positive.
\end{proof}

We shall now give several (counter-) examples regarding the assumptions
and conditions in Theorem~\ref{thm:semigroups-asymptotic}.

\begin{examples}
  \label{examples:semigroups-asymptotic}
  (a) The assumption that $\sigma_{\per}(A)$ be non-empty is essential
  in Theorem~\ref{thm:semigroups-asymptotic}. Indeed, let $B \in
  \bbR^{2\times 2}$ be a matrix with $\sigma(B) = \{-i,i\}$ and define
  $A_n = nB - \frac{1}{n}$ for every $n\in\bbN$. If we endow $\bbC^2$
  with the Euclidean norm and let $E = l^2(\bbN;\bbC^2)$, then $E$ is a
  complex Banach lattice. Let $A$ be the matrix multiplication operator
  on $E$ with symbol $(A_n)_{n \in \bbN}$, that is,
  \begin{equation}
    \label{eq:matrix-mult-op}
    D(A):= \{(x_n) \in E\colon (A_nx_n) \in E\},\qquad
    A(x_n):=(A_nx_n) \text{.}
  \end{equation}
  Then $A$ generates a bounded $C_0$-semigroup on $E$ and $\sigma(A) =
  \{\pm ni - \frac{1}{n}\}$. In particular,
  $\sigma_{\per}(A)=\emptyset$, which implies that $e^{tA}\to 0$
  strongly as $t \to \infty$ (see \cite[Theorem~2.4]{MR933321}). Since
  $\spb(A) = 0$, this implies that $(e^{tA})_{t \ge 0}$ is in particular
  trivially individually asymptotically positive, even though $\spb(A)$
  is not a spectral value of $A$.
 
  (b) If the semigroup $(e^{tA})_{t \ge 0}$ is not uniformly
  exponentially stable on the spectral space associated with $\sigma(A)
  \setminus \sigma_{\per}(A)$ then assertions (i)--(iii) of
  Theorem~\ref{thm:semigroups-asymptotic} do not imply (iv) in
  general. To see this, let $B \in \bbR^{3 \times 3}$ be such that
  $e^{tB}$ is the rotation of angle $t$ about the line in the direction
  of the vector $(1,1,1)$. Let $Q$ be the projection along $(1,1,1)$
  onto its orthogonal complement and define $A_n = nB - \frac{1}{n}Q$
  for every $n\in\bbN$.  Endow $\bbC^3$ with the Euclidean norm and
  consider the complex Banach lattice $E = l^2(\bbN; \bbC^3)$. If $A$ is
  the matrix multiplication operator on $E$ with symbol $(A_n)$
  analogous to \eqref{eq:matrix-mult-op}, then $A$ generates a bounded
  $C_0$-semigroup on $E$ which is individually but not uniformly
  asymptotically positive. Moreover,
  \begin{displaymath}
    \sigma(A)
    = \{0\}\cup\Bigl\{\pm ni-\frac{1}{n}\colon n\in\mathbb N \Bigr\}\text{,}
  \end{displaymath}
  so that $\sigma_{\per}(A) = \{0\}$, where $0$ is a simple pole of the
  resolvent. Thus, all assumptions of
  Theorem~\ref{thm:semigroups-asymptotic} are fulfilled, but assertions
  (i) and (iv) are not equivalent.
 
  (c) The assumption in Theorem~\ref{thm:semigroups-asymptotic}
  that the peripheral spectrum consist of finitely many poles of the
  resolvent cannot simply be omitted. Indeed, let $B$
  and $Q$ be as in (b), but this time, define $A_n = B -
  \frac{1}{n}Q$. As above, denote by $A$ the matrix multiplication
  operator with symbol $(A_n)$ on $E = l^2(\bbN;\bbC^3)$. Then $A$
  generates a bounded $C_0$-semigroup which is easily seen to be
  individually asymptotically positive 
  (though not uniformly asymptotically
  positive). However, $\sigma(A) = \{0,\pm i\} \cup \{\pm i-1/n\colon n
  \in \bbN\}$, so $\spb(A) = 0$ is not a dominant spectral value.
\end{examples}

\begin{remark}
  \label{rem:unif-asymp-resolvent-vs-ind-asymp-sg}
  In Example~\ref{examples:semigroups-asymptotic}(b) the semigroup
  $(e^{tA})_{t \ge 0}$ is not uniformly asymptotically positive.
  However, according to Theorem~\ref{thm:semigroups-asymptotic} the
  spectral projection $P$ associated with $0$ is positive.  Because the
  assumptions of Theorem~\ref{thm:resolvents-asymptotic} are fulfilled
  the \emph{resolvent} $R(\phdot,A)$ is uniformly asymptotically
  positive at $\spb(A)$. In particular, the resolvent of a generator can
  be uniformly asymptotically positive at $\spb(A)$ even if the
  semigroup is only individually asymptotically positive.
\end{remark}

\section{Applications of asymptotic positivity}
\label{section:applications-asymptotic}

In this penultimate section we shall give some applications of our
results on asymptotic positivity. We begin with an analysis of the
finite-dimensional case. Then we revisit the bi-Laplacian with Dirichlet
boundary conditions and formulate a result on asymptotic positivity of
the resolvent which in some manner complements
Proposition~\ref{prop:resolvent-bi-laplacian}. We again consider the
special case of self-adjoint operators on Hilbert spaces, with an
application to the Dirichlet-to-Neumann operator on
$L^2(\partial\Omega)$, as well as a transport process on a metric graph
and a one-dimensional delay differential equation.

\paragraph*{The finite-dimensional case}

We consider the special case of matrices $A \in \bbC^{n \times n}$ and
characterise when the matrix exponential $(e^{tA})_{t \ge 0}$ is
asymptotically positive.

A characterisation of eventual strong positivity of matrix semigroups
was first given in \cite[Theorem~3.3]{Noutsos2008}, and later in
\cite[Theorem~6.1]{DanersI} as an application of the general
$C(K)$-theory.  By characterising \emph{asymptotically} positive matrix
semigroups, Theorem~\ref{thm:asymp-finite-dimensional} below adds
new aspects to the finite-dimensional theory. Moreover, since the matrix
$A$ in Theorem~\ref{thm:asymp-finite-dimensional} is not required to be
real, the theorem also contributes to the Perron--Frobenius theory of
matrices with entries in $\mathbb C$, a topic which was the focus of
\cite{Noutsos2012}. We also refer to \cite{Rump2003}, where
generalisations of Perron--Frobenius theory to complex matrices are
approached from a rather different perspective.

It is evident that in finite dimensions individual and uniform
asymptotic positivity are equivalent. Hence we merely speak of ``asymptotic
positivity''.

\begin{theorem}
  \label{thm:asymp-finite-dimensional}
  Let $A \in \bbC^{n \times n}$ and assume that $(e^{t(A-\spb(A))})_{t
    \ge 0}$ is bounded or equivalently that all $\lambda \in
  \sigma_{\per}(A)$ are simple poles of $R(\phdot,A)$. Then the
  following assertions are equivalent:
  \begin{enumerate}[\upshape (i)]
  \item $(e^{tA})_{t \ge 0}$ is asymptotically positive.
  \item There is a number $c \in \bbR$ such that $A+c\id$ has positive
    spectral radius, $\frac{A+c\id}{\spr(A+c\id)}$ is power bounded and
    $A+c\id$ is asymptotically positive.
  \end{enumerate}
  \begin{proof}
    ``(i) $\Rightarrow$ (ii)'' It follows from
    Theorem~\ref{thm:semigroups-asymptotic} that $\spb(A)$ is a dominant
    spectral value and that the associated spectral projection $P$ is
    positive. Now, choose $c > 0$ sufficiently large such that
    $\spb(A)+c\id > 0$ is larger than the modulus of any other spectral
    value of $A+c\id$. Then in particular $r := \spr(A+c\id) = \spb(A) +
    c$. The spectral projection of $\frac{A+c\id}{r}$ associated with
    the spectral value $1$ is $P$, and $\im P$ coincides with the fixed
    space of $\frac{A+c\id}{r}$. Thus, we clearly have
    \begin{displaymath}
      \Bigl(\frac{A+c\id}{r}\Bigr)^n \to P \ge 0
      \quad \text{as $n\to\infty$,}
    \end{displaymath}
    which implies that $\frac{A+c\id}{r}$ is power bounded and that
    $A+c\id$ is asymptotically positive.
 
    ``(ii) $\Rightarrow$ (i)'' First, let $T \in \bbC^{n \times n}$,
    $\spr(T) > 0$, such that $\frac{T}{\spr(T)}$ is power bounded and
    such that $T$ is asymptotically positive. Let $Q$ be the spectral
    projection associated with $\sigma(T) \cap \spr(T)\bbT$, where
    $\bbT$ denotes the unit circle in $\bbC$. Since $\frac{T}{\spr(T)}$
    is power bounded, all eigenvalues on the circle $\spr(T)\bbT$ are
    simple poles of the resolvent, so the image of $Q$ is spanned by
    eigenvectors of $T$ belonging to eigenvalues of modulus $\spr(T)$.
 
    We can find a sequence $n_k \to \infty$ of positive integers such
    that $\bigl(\lambda\spr(T)^{-1}\bigr)^{n_k} \to 1$ as $k \to \infty$
    for each $\lambda \in \spr(T) \bbT$; this follows from the same
    argument that was used in the proof of
    \cite[Proposition~2.3]{DanersI}. This implies that
    \begin{displaymath}
      \Bigl(\frac{T}{\spr(T)}\Bigr)^{n_k} \to Q
      \quad \text{as $k \to \infty$,}
    \end{displaymath}
    which in turn shows that $Q \ge 0$. Hence, $\im Q$ is a
    (finite-dimensional) complex Banach lattice when equipped with an
    appropriate norm, see
    \cite[Proposition~III.11.5]{Schaefer1974}. Moreover, for each $0 \le
    f \in \im Q$, we have
    \begin{displaymath}
      \frac{T}{\spr(T)}f
      = \lim_{k \to \infty}\Bigl(\frac{T}{\spr(T)}\Bigr)^{n_k+1}f \ge 0 \text{,}
    \end{displaymath}
    so $T|_{\im Q} \ge 0$. This implies that the spectral radius
    $\spr(T|_{\im Q}) = \spr(T)$ is contained in $\sigma(T|_{\im Q})$
    and hence in $\sigma(T)$; see \cite[Proposition~4.1.1(i)]{Meyer-Nieberg1991}.
 
    Next we show that the spectral projection $P$ associated with
    $\spr(T)$ is positive.  Using the Neumann series expansion
    $R(\lambda,T)=\sum_{k=0}^\infty \lambda^{-{k+1}}T^k$ valid for
    $|\lambda|>\spr(T)$ we have
    \begin{displaymath}
      \begin{split}
        \distPos{Pf}
        &= \distPos{\lim_{\lambda \downarrow \spr(T)}
          (\lambda - \spr(T))R(\lambda,T)f} \leq
        \limsup_{\lambda\downarrow\spr(T)} \Bigl((\lambda - \spr(T))
        \sum_{n=0}^\infty
        \frac{\distPos{T^n f}}{\lambda^{n+1}} \Bigr) \\
        & = \limsup_{\lambda \downarrow \spr(T)}
        \Bigl(\frac{\lambda}{\spr(T)} - 1\Bigr)\sum_{n=0}^\infty
        \frac{\distPos{(\frac{T}{\spr(T)})^nf}}
        {\Bigl(\frac{\lambda}{\spr(T)}\Bigr)^{n+1}} = 0
      \end{split}
    \end{displaymath}
    for all $f \ge 0$, where we have used that
    $\distPos{\bigl(\frac{T}{\spr(T)}\bigr)^nf}\to 0$ as $n \to
    \infty$. Hence, $P \ge 0$.
    
    Finally assume that $A+c\id$ fulfils condition (ii). Then by what we
    have just shown $\spr(A+c\id)\in\sigma(A+c\id)$, and hence
    $\spr(A+c\id)=\spb(A+c\id)$ is a dominant spectral
    value of $A+c\id$. Moreover, the associated spectral projection $P$ is
    positive. Hence, $\spb(A)$ is a dominant spectral value of $A$, and
    since the associated spectral projection is still $P$, we conclude
    from Theorem~\ref{thm:semigroups-asymptotic} that $(e^{tA})_{t \ge
      0}$ is asymptotically positive.
  \end{proof}
\end{theorem}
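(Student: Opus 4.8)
The plan is to reduce the statement entirely to Theorem~\ref{thm:semigroups-asymptotic}. Since $\bbC^n$ is a finite-dimensional Banach lattice, $\sigma(A)$ is finite and non-empty, every spectral value is a pole of $R(\phdot,A)$, and the hypothesis supplies boundedness of $(e^{t(A-\spb(A))})_{t\ge0}$, all assumptions of Theorem~\ref{thm:semigroups-asymptotic} hold; that theorem then tells us that (i) is equivalent to: $\spb(A)$ is a dominant spectral value of $A$ and its spectral projection $P$ is positive. So it suffices to show this condition is equivalent to (ii). For ``(i)$\Rightarrow$(ii)'' I would suppose $\spb(A)$ is dominant and $P\ge0$. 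Because $\spb(A)$ is dominant, every other eigenvalue of $A$ has real part strictly below $\spb(A)$, so for $c\in\bbR$ large enough $\spb(A)+c>0$ strictly exceeds the modulus of every other eigenvalue of $T:=A+c\id$; as $\sigma(A)$ is finite such $c$ exists. Put $r:=\spr(T)=\spb(A)+c$. Then $1$ is the only eigenvalue of $T/r$ on the unit circle, it is semisimple (because boundedness of $(e^{t(A-\spb(A))})_{t\ge0}$ makes $\spb(A)$ a simple pole of $R(\phdot,A)$), its eigenspace is $\im P$, and all other eigenvalues of $T/r$ lie strictly inside the unit disk; hence $(T/r)^n\to P$. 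This makes $T/r$ power bounded, and $\distPos{(T/r)^nf}\to\distPos{Pf}=0$ for every $f\ge0$, which is (ii).

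The substantive direction is ``(ii)$\Rightarrow$(i)''. Fix $c$ as in (ii), write $T:=A+c\id$, $r:=\spr(T)>0$. Power boundedness of $T/r$ forces all eigenvalues of $T$ of modulus $r$ to be semisimple; let $Q$ be the spectral projection of $T$ onto the direct sum of the corresponding eigenspaces. By a simultaneous-approximation/pigeonhole argument applied to the finite set of phases $\{\mu/r:\mu\in\sigma(T),\,|\mu|=r\}$ --- the argument underlying \cite[Proposition~2.3]{DanersI} --- there are integers $n_k\to\infty$ with $(\mu/r)^{n_k}\to1$ for every such $\mu$, whence $(T/r)^{n_k}\to Q$. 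Since $\distPos{\phdot}$ is continuous and $\distPos{(T/r)^{n_k}f}\to0$ for $f\ge0$, we get $\distPos{Qf}=0$, so $Q\ge0$. Then $\im Q$ is a finite-dimensional complex Banach lattice under an appropriate equivalent norm (\cite[Proposition~III.11.5]{Schaefer1974}), and for $0\le f\in\im Q$ one has $\frac{T}{r}f=\lim_k(T/r)^{n_k+1}f\ge0$; thus $T|_{\im Q}$ is a positive operator on a Banach lattice, so its spectral radius $r=\spr(T|_{\im Q})$ lies in $\sigma(T|_{\im Q})\subseteq\sigma(T)$. In particular $r\in\sigma(T)$, and since $T|_{\im Q}$ acts diagonalizably on $\im Q$, $r$ is a simple pole of $R(\phdot,T)$.

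It then remains to show the spectral projection $P$ of $T$ at $r$ is positive. Since $r$ is a simple pole, $P=\lim_{\lambda\downarrow r}(\lambda-r)R(\lambda,T)$. Expanding the Neumann series $R(\lambda,T)=\sum_{n\ge0}\lambda^{-(n+1)}T^n$ for $\lambda>r$ and substituting $s:=r/\lambda\in(0,1)$, the subadditivity and positive homogeneity of $\distPos{\phdot}$ give, for every $f\ge0$,
\[
  \distPos{Pf}\;\le\;\limsup_{s\uparrow1}\,(1-s)\sum_{n=0}^{\infty}\distPos{(T/r)^nf}\,s^n\;=\;0,
\]
the last equality being the elementary Abel-summation fact that $(1-s)\sum_n a_ns^n\to0$ whenever $a_n\to0$ (here $a_n=\distPos{(T/r)^nf}$, which tends to $0$ by (ii) and is bounded since $T/r$ is power bounded). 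Hence $P\ge0$, and $r=\spr(T)\in\sigma(T)$ is a genuine spectral value. Because $r$ is real and $\repart\mu\le|\mu|\le r$ for every $\mu\in\sigma(T)$, with simultaneous equality forcing $\mu=r$, it follows that $r=\spb(T)$ is a dominant spectral value of $T$; shifting back, $\spb(A)=r-c$ is a dominant spectral value of $A$ with the same positive spectral projection $P$, and Theorem~\ref{thm:semigroups-asymptotic} yields (i).

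The step I expect to be the main obstacle is exactly this ``(ii)$\Rightarrow$(i)'' passage, namely converting the asymptotic (Abel/Ces\`aro-type) positivity of a single operator $T$ into positivity of its spectral projection at $\spr(T)$: one must first \emph{locate} $\spr(T)$ in the spectrum via the auxiliary peripheral projection $Q$ and the subsequence $(n_k)$, and then control the tail of the Neumann series uniformly as $\lambda\downarrow\spr(T)$, for which the Abel-summation estimate above is the right device. The bookkeeping around semisimplicity (simple poles) under the power-boundedness hypothesis is a further point needing care, though in finite dimensions it is routine.
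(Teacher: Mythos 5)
Your proof is correct and follows essentially the same route as the paper's: reduce both directions to the characterisation of asymptotic positivity via Theorem~\ref{thm:semigroups-asymptotic}, in ``(ii)$\Rightarrow$(i)'' pass through the peripheral projection $Q$ and a subsequence $(n_k)$ to show $Q\ge 0$ and locate $\spr(T)$ in $\sigma(T)$, and then bound $\distPos{Pf}$ via the Neumann series and an Abel-summation estimate. Your version simply makes a couple of implicit steps in the paper explicit (that $\spr(T)$ is a simple pole so the Abel limit $(\lambda-\spr(T))R(\lambda,T)\to P$ exists, that $r=\spb(T)$ is automatically dominant since $\repart\mu\le|\mu|\le r$), which is fine and does not change the substance of the argument.
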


In \cite[Proposition~6.2]{DanersI} we proved that real, eventually
positive semigroups in two dimensions are automatically positive.  This
is also true for asymptotically positive semigroups.

\begin{proposition}
  \label{prop:asymp-two-dimensional}
  Let $A \in \bbR^{2 \times 2}$ such that $(e^{t(A-\spb(A))})_{t \ge 0}$
  is bounded.  If the semigroup $(e^{tA})_{t \ge 0}$ is asymptotically
  positive, then it is positive.
  \begin{proof}
    We may assume $\spb(A) = 0$. From
    Theorem~\ref{thm:semigroups-asymptotic} we know that $\spb(A) = 0$
    is a dominant spectral value of $A$. The boundedness of
    $(e^{t(A-\spb(A))})_{t \ge 0}$ implies that $\spb(A)$ is a simple
    pole of the resolvent and hence the algebraic and geometric
    muliplicities coincide. If the multiplicity of $0$ is two, then $A =
    0$ and the semigroup is positive.
 
    Now assume that $A$ has two distinct simple eigenvalues. Then $A$
    has a real eigenvalue $\lambda<0=\spb(A)$. If $P$ is the spectral
    projection associated with $0$, then $P \ge 0$ by
    Theorem~\ref{thm:semigroups-asymptotic}, and hence for every $t \ge
    0$
    \begin{displaymath}
      e^{tA} = P + e^{\lambda t}(\id - P)
      = e^{\lambda t}\id + (1-e^{\lambda t})P
      \ge 0
    \end{displaymath}
    as $e^{\lambda t} \le 1$.
  \end{proof}
\end{proposition}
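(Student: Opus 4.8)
The plan is to reduce to the case $\spb(A) = 0$ and then combine Theorem~\ref{thm:semigroups-asymptotic} with elementary linear algebra for real $2 \times 2$ matrices. First I would note that positivity of $(e^{tA})_{t \ge 0}$ is equivalent to positivity of $(e^{t(A-\spb(A))})_{t \ge 0}$, since $e^{tA} = e^{t\spb(A)}\,e^{t(A-\spb(A))}$ and the scalar $e^{t\spb(A)}$ is strictly positive; and asymptotic positivity is by definition a property of the rescaled semigroup. Hence we may and do assume $\spb(A) = 0$. Since we are in finite dimensions, $\sigma_{\per}(A)$ is automatically non-empty, finite and consists of poles of the resolvent, so all hypotheses of Theorem~\ref{thm:semigroups-asymptotic} are met.

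Applying that theorem, asymptotic positivity of $(e^{tA})_{t \ge 0}$ forces $\spb(A) = 0$ to be a dominant spectral value of $A$ and the associated spectral projection $P$ to be positive. Moreover, the boundedness of $(e^{t(A-\spb(A))})_{t \ge 0} = (e^{tA})_{t \ge 0}$ guarantees that $0$ is a \emph{simple} pole of $R(\phdot, A)$, so the algebraic and geometric multiplicities of the eigenvalue $0$ coincide. Now a short case distinction finishes the argument. If $0$ has multiplicity two, then $\im P = \bbC^2$, hence $P = \id$ and $A = 0$, so the semigroup is the constant identity and is trivially positive. Otherwise $0$ is a simple eigenvalue, and since $A$ is real the second eigenvalue equals $\operatorname{tr}(A) \in \bbR$ and is therefore a real number $\lambda$; dominance of $\spb(A) = 0$ gives $\lambda < 0$. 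With $P$ the rank-one projection onto $\ker A$ we get, for all $t \ge 0$, $e^{tA} = P + e^{\lambda t}(\id - P) = e^{\lambda t}\id + (1 - e^{\lambda t})P$, and since $e^{\lambda t} \in (0,1]$ while $P \ge 0$, both summands are positive operators, so $e^{tA} \ge 0$.

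The main (quite modest) obstacle is really just verifying that Theorem~\ref{thm:semigroups-asymptotic} is applicable and extracting from it the positivity of $P$ and the dominance of $\spb(A)$; once these are in hand the remaining linear-algebra computation is routine. The only minor points requiring a little care are the reduction to $\spb(A) = 0$ and the observation that a real $2 \times 2$ matrix with one real eigenvalue automatically has both eigenvalues real, which is what rules out a genuinely complex peripheral eigenvalue in the simple-eigenvalue case.
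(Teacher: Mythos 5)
Your proof is correct and follows essentially the same route as the paper's: apply Theorem~\ref{thm:semigroups-asymptotic} to obtain dominance of $\spb(A)=0$ and positivity of $P$, use boundedness to get a simple pole (hence algebraic $=$ geometric multiplicity), and then split into the two-dimensional-kernel case ($A=0$) and the simple-eigenvalue case, finishing with the identity $e^{tA} = e^{\lambda t}\id + (1-e^{\lambda t})P$. The only additions over the paper's version are the explicit justification for the reduction to $\spb(A)=0$ and the trace argument for why the second eigenvalue must be real, both of which the paper leaves implicit.
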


The reader should note that the assertion of the above proposition fails
if $A$ is allowed to be a complex matrix; for example the semigroup
generated by the matrix
\begin{displaymath}
  A =
  \begin{bmatrix}
    0 & 0 \\
    0 & -1+i
  \end{bmatrix}
\end{displaymath}
is asymptotically positive, but not positive.

\paragraph*{The resolvent of the bi-Laplace operator with Dirichlet
  boundary conditions}

Here we consider the same operator $A_p$ ($1 < p < \infty$) as in the
second paragraph of Section~\ref{section:applications-strong} and we use
the properties of $A_p$ given in
Proposition~\ref{prop:bi-laplacian-on-lp}. The eventual positivity of
the resolvent $R(\phdot,A_p)$ was analysed in
Proposition~\ref{prop:resolvent-bi-laplacian}. The disadvantage there is
that, in contrast to the semigroup, our results on eventual strong
positivity only apply to large $p$ and/or small dimensions $n$. We now
show that the resolvent is at least asymptotically positive for
\emph{all} $p \in (1,\infty)$, independent of the dimension.

\begin{proposition}
  \label{prop:resolvent-bi-laplacian-asymptotic}
  Let $p \in (1,\infty)$ and let $\Omega \in C^\infty$ be such that the
  conclusion of Theorem~\ref{thm:positive-eigenfunction-bi-laplacian}
  holds. Then the resolvent
  $R(\phdot,A_p)$ is uniformly asymptotically positive at $\spb(A_p)$.
  \begin{proof}
    According to Lemma~\ref{lemma:spectral-bound-bi-laplacian}
    $\spb(A_p)$ is a simple pole of the resolvent $R(\phdot,A_p)$ and a
    dominant spectral value of $A_p$; moreover, the corresponding
    spectral projection is positive. The assertion therefore follows
    from Theorem~\ref{thm:resolvents-asymptotic}.
  \end{proof}
\end{proposition}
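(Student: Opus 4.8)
The plan is short, since almost everything needed has already been assembled. First I would invoke Lemma~\ref{lemma:spectral-bound-bi-laplacian}: under the standing hypothesis that the conclusion of Theorem~\ref{thm:positive-eigenfunction-bi-laplacian} holds, it asserts that $\lambda_0 := \spb(A_p)$ is a dominant spectral value of $A_p$ and a \emph{simple} pole of the resolvent $R(\phdot,A_p)$, and that the associated spectral projection $P$ satisfies $P \gg_u 0$; in particular $P \ge 0$. This is exactly the situation covered by Theorem~\ref{thm:resolvents-asymptotic} applied to the closed operator $A_p$ on the complex Banach lattice $L^p(\Omega)$ with the real spectral value $\lambda_0 = \spb(A_p)$: namely $\lambda_0$ is a simple pole of $R(\phdot,A_p)$, and assertion (i) of that theorem --- positivity of the spectral projection --- is satisfied.

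Next I would apply the implication ``(i) $\Rightarrow$ (iii)'' of Theorem~\ref{thm:resolvents-asymptotic}, which yields that $R(\phdot,A_p)$ is uniformly asymptotically positive \emph{of bounded type} at $\spb(A_p)$. Since this property is strictly stronger than being merely uniformly asymptotically positive --- the implication being the elementary remark recorded right after Definition~\ref{def:resolvents-asymptotic-bounded} --- the proposition follows at once.

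There is no real obstacle in this argument; the substantive content sits entirely in Lemma~\ref{lemma:spectral-bound-bi-laplacian} (which in turn rests on the Grunau--Sweers theorem, Theorem~\ref{thm:positive-eigenfunction-bi-laplacian}, and on Corollary~\ref{cor:projections-strong}) and in Theorem~\ref{thm:resolvents-asymptotic}. The one point worth emphasising, by contrast with Proposition~\ref{prop:resolvent-bi-laplacian}, is that no domination hypothesis $D(A_p) \subseteq L^p(\Omega)_u$ --- and hence no constraint $p > n/2$ --- enters here, because Theorem~\ref{thm:resolvents-asymptotic} asks only that $\lambda_0$ be a simple pole of the resolvent with a positive spectral projection, and Lemma~\ref{lemma:spectral-bound-bi-laplacian} delivers exactly this for every $p \in (1,\infty)$, independently of the dimension $n$.
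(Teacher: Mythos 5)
Your proposal is correct and is essentially the paper's own proof: quote Lemma~\ref{lemma:spectral-bound-bi-laplacian} for the simple pole, dominance and positivity of the spectral projection, then apply Theorem~\ref{thm:resolvents-asymptotic}. The extra remark that ``(i) $\Rightarrow$ (iii)'' even gives uniform asymptotic positivity of bounded type, which trivially implies the claimed property, is a harmless (and accurate) refinement of the same argument.
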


\paragraph*{Asymptotic positivity for self-adjoint operators on Hilbert
  lattices and the Dirichlet-to-Neumann operator}

In this section we again consider self-adjoint operators $A$ on a
Hilbert lattice, c.f.~the corresponding paragraph in
Section~\ref{section:applications-strong}.  In
Theorem~\ref{thm:evtl-pos-hilbert-lattice} we provided a
characterisation of eventual strong positivity under the assumption that
$D(A) \subseteq E_u$ for some $u \gg 0$. If we do not assume this
domination property, we are still able give a sufficient condition
for the asymptotic positivity of the resolvent and the semigroup.

\begin{theorem}
  \label{th:hilbert-asymptotic}
  Let $H$ be a complex Hilbert lattice and let $A$ be a densely defined,
  self-adjoint operator on $H$ such that $\spb(A)\in\bbR$ is an isolated
  point of the spectrum of $A$. Moreover, assume that the eigenspace
  $\ker(\spb(A)\id-A)$ is one-dimensional and contains a non-zero
  positive vector. Then the resolvent $R(\phdot,A)$ is uniformly
  asymptotically positive at $\spb(A)$ and the semigroup $(e^{tA})_{t
    \ge 0}$ is uniformly asymptotically positive.
\end{theorem}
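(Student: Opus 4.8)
The plan is to verify the hypotheses of Theorem~\ref{thm:resolvents-asymptotic} for the resolvent statement and of Theorem~\ref{thm:semigroups-asymptotic} for the semigroup statement. The key observation is that since $\spb(A)$ is an isolated point of $\sigma(A)$ and $A$ is self-adjoint, $\spb(A)$ is an eigenvalue and a \emph{simple} pole of $R(\phdot,A)$: indeed, for a self-adjoint operator every isolated spectral point is an eigenvalue whose spectral projection is an orthogonal projection, so in particular the pole order is one. Moreover, because $A$ is self-adjoint, every spectral value is real, which immediately gives that $\spb(A)$ is a dominant spectral value and that the semigroup $(e^{t(A-\spb(A))})_{t\ge 0}$ is bounded (even contractive on $\ker P$, where $P$ denotes the spectral projection associated with $\spb(A)$). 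Thus the standing assumptions of both Theorems~\ref{thm:resolvents-asymptotic} and~\ref{thm:semigroups-asymptotic} are satisfied, with $\sigma_{\per}(A)=\{\spb(A)\}$ a single simple pole.

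The heart of the matter is then to show that $P\ge 0$. Here I would invoke the self-duality of Hilbert lattices, exactly as in the proof of Theorem~\ref{thm:evtl-pos-hilbert-lattice}: identifying $H$ with $L^2(\Omega)$ for a suitable measure space $\Omega$, the canonical identification $H\simeq H'$ is a lattice isomorphism on the real part, and under it the Hilbert space adjoint of $A$ coincides with the Banach space dual $A'$. Since $A$ is self-adjoint, this means $\ker(\spb(A)\id-A')$ is (identified with) $\ker(\spb(A)\id-A)$, which by hypothesis is one-dimensional and contains a non-zero positive vector. Hence $\ker(\spb(A)\id-A)$ is one-dimensional and both $\ker(\spb(A)\id-A)$ and $\ker(\spb(A)\id-A')$ contain positive non-zero vectors; since $\spb(A)$ is a simple pole, $\im P=\ker(\spb(A)\id-A)$ is one-dimensional, and Corollary~\ref{cor:projections-positive} (or Proposition~\ref{prop:projections-positive}) yields $P\ge 0$.

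With $P\ge 0$ in hand, the conclusions follow directly. For the resolvent: $\spb(A)$ is a simple pole of $R(\phdot,A)$ with $P\ge 0$, so Theorem~\ref{thm:resolvents-asymptotic} (equivalence of (i) and (iii)) gives that $R(\phdot,A)$ is uniformly asymptotically positive of bounded type at $\spb(A)$, in particular uniformly asymptotically positive. For the semigroup: $\spb(A)$ is a dominant spectral value, $\sigma_{\per}(A)=\{\spb(A)\}$ is finite and consists of simple poles, and $P\ge 0$; moreover, since $A$ is self-adjoint the semigroup $(e^{t(A-\spb(A))})_{t\ge 0}$ is uniformly exponentially stable on $\ker P$ (the spectral space associated with $\sigma(A)\setminus\sigma_{\per}(A)$), because $\spb(A|_{\ker P})<\spb(A)$ strictly as $\spb(A)$ is isolated and self-adjointness makes the growth bound equal the spectral bound. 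Hence the implication ``(i) $\Rightarrow$ (iv)'' of Theorem~\ref{thm:semigroups-asymptotic} applies and $(e^{tA})_{t\ge 0}$ is uniformly asymptotically positive.

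The only genuinely delicate point is the identification of $A'$ with $A$ as operators (not merely as forms) under the lattice isomorphism $H\simeq H'$, i.e.\ making precise that Hilbert-space self-adjointness transfers to the Banach-lattice dual in a way compatible with the positivity structure; this is handled exactly as in the proof of Theorem~\ref{thm:evtl-pos-hilbert-lattice} and requires nothing new. Everything else is a routine matter of checking that isolated-plus-self-adjoint forces a simple pole and a dominant, exponentially separated spectral value.
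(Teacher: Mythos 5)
Your proposal is correct and follows essentially the same route as the paper: deduce from self-adjointness and isolation that $\spb(A)$ is a dominant spectral value and a simple pole, transfer the positive eigenvector to $\ker(\spb(A)\id-A')$ via the Hilbert-lattice self-duality argument from Theorem~\ref{thm:evtl-pos-hilbert-lattice}, conclude $P\ge 0$ from Corollary~\ref{cor:projections-positive}, and then apply Theorems~\ref{thm:resolvents-asymptotic} and~\ref{thm:semigroups-asymptotic}, using uniform exponential stability on $\ker P$ for the uniform statement. The only cosmetic difference is that the paper invokes analyticity of the semigroup to get boundedness of $(e^{t(A-\spb(A))})_{t\ge 0}$, whereas you argue it directly from the spectral theorem; both are fine.
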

\begin{proof}
  Using the same argument as at the beginning of the proof of 
  Theorem~\ref{thm:evtl-pos-hilbert-lattice} we
  deduce that the Banach space adjoint of $A$ has a positive functional
  as an eigenvector for the eigenvalue $\spb(A)$. Since $A$ is
  self-adjoint, $\spb(A)$ is a simple pole of the resolvent and
  therefore Corollary~\ref{cor:projections-positive} implies that the
  spectral projection $P$ associated with $\spb(A)$ is positive. Hence,
  $R(\phdot,A)$ is uniformly asymptotically positive at $\spb(A)$ by
  Theorem~\ref{thm:resolvents-asymptotic}. Because $\spb(A)$ is a
  dominant spectral value and since $(e^{tA)})_{t \ge 0}$ is analytic,
  the semigroup $(e^{t(A-\spb(A)I)})_{t \ge 0}$ is bounded; since by
  assumption $\spb(A)$ is isolated (and $\sigma(A) \subseteq \bbR$),
  $(e^{t(A-\spb(A)I)})_{t \ge 0}$ is even uniformly exponentially stable
  on $\ker P$. Hence, $(e^{tA)})_{t \ge 0}$ is uniformly asymptotically
  positive by Theorem~\ref{thm:semigroups-asymptotic}.
\end{proof}

Of course, similar assertions as in the above theorem also hold for the
eventual positivity of the resolvent at other spectral values than
$\spb(A)$.

\begin{remarks}
  (a) Suppose $A$ is a densely defined, self-adjoint operator on a
  Hilbert lattice such that $\spb(A)$ is isolated in $\sigma(A)$. Then
  Theorem~\ref{thm:semigroups-asymptotic} implies that a
  \emph{necessary} condition for the asymptotic positivity (uniform or
  individual) of $(e^{tA})_{t \ge 0}$ is that the spectral projection $P$
  associated with $\spb(A)$ is positive. In this case, we can also conclude
  the existence of a positive eigenvector for the eigenvalue $\spb(A)$
  since $\spb(A)$ is automatically a simple pole of the resolvent.

  (b) One might wonder whether under the assumptions of the above
  theorem, the semigroup $(e^{tA})_{t \ge 0}$ is individually
  \emph{eventually} positive. If we set $p=2$ in
  Example~\ref{example:domination-property}, we can see that this is not
  in general the case without a domination or smoothing assumption.
\end{remarks}

\begin{example}
  We recall the Dirichlet-to-Neumann operator $D_\lambda$ from
  Section~\ref{section:applications-strong}. Here we may assume that
  $\Omega \subseteq \bbR^n$ is a general bounded domain with sufficiently
  smooth boundary; as mentioned earlier, $D_\lambda$ is a densely
  defined, self-adjoint operator on $L^2(\partial\Omega)$ with compact
  resolvent. It follows directly from the definition that the eigenspace
  associated with $\spb(-D_\lambda)$ is given by the finite-dimensional
  span in $L^2(\partial\Omega)$ of the traces of all eigenfunctions of
  the Laplacian associated with the Robin problem
  \begin{equation}
    \label{eq:robin-dtn}
    \Delta f = \lambda f \quad\text{in $\Omega$}, \qquad 
    \frac{\partial}{\partial\nu}f = \spb(-D_\lambda)f \quad 
    \text{on $\partial\Omega$}.
  \end{equation}
  By Theorem~\ref{th:hilbert-asymptotic} we have that
  $(e^{-tD_\lambda})_{t\ge 0}$ is uniformly asymptotically positive if
  there is a solution of \eqref{eq:robin-dtn} which is unique up to
  scalar multiples and which has non-zero positive trace on
  $\partial\Omega$. Conversely, a necessary condition for the asymptotic
  positivity of $(e^{-tD_\lambda})_{t\ge 0}$ is the \emph{existence} of
  (at least one) solution of \eqref{eq:robin-dtn} with positive trace.
\end{example}

\begin{example}
  Consider the example of Bose condensation from
  Theorem~\ref{thm:bose-condensation}, but without the assumption that
  $q$ be real valued. The generator $-A$ is no longer real, but from
  Theorem~\ref{th:hilbert-asymptotic} we still conclude that the
  corresponding semigroup is uniformly asymptotically positive.
\end{example}

\paragraph*{A network flow with non-positive mass diversion}
Consider a directed graph with $n$ edges $e_k$ of length $l_k$,
$k=1,\ldots,n$, and suppose that we are given a mass distribution on
every edge. Further, assume that a transport process shifts the mass
along the edges with a given velocity. Whenever some mass arrives at a
vertex, it is diverted to the outgoing edges of this vertex according to
some pre-defined weights. Such a transport process is often called a
\emph{network flow} and it can be described by means of a
$C_0$-semigroup on the space $\bigoplus_{k=1}^n L^1([0,l_k])$.

During the last decade a deep and extensive theory of network flow
semigroups has been developed which deals, among other topics, with the
long time behaviour of the flow and relates it to properties of the
underlying graph; see e.g.~\cite{Kramar2005, Dorn2009, Dorn2010}.
However, it seems that so far only positive weights for the mass
diversion in the vertices have been considered. In this section we want
to demonstrate by means of an example that it is possible to consider
non-positive mass diversion and that in such a situation asymptotic
positivity can occur. It is however not our intention to develop a
general theory here.

We consider a directed graph as shown in \eqref{eq:graph}. It consists
of two vertices $v_{-1},v_0 $, an edge $e_1$ of length $1$ directed from
$v_{-1}$ to $v_0$, a ``looping'' edge $e_2$ of length $1$ going from
$v_0$ to itself and another ``looping'' edge $e_3$ of lenght $l$, again
going from $v_0$ to itself. We assume $l>0$ to be an irrational number.
\begin{equation}
  \label{eq:graph}
  \begin{tikzpicture}[scale=2,baseline=-.2cm,
    nw node/.style={circle,draw,fill=black,inner sep=1.25pt}]
    \useasboundingbox  (-.2,-.6) rectangle (1.5,.5);
    \node[nw node] (v0) at (1,0) {};
    \node[nw node] (v1) at (0,0) {};
    \path (v0) node[below left] {$v_0$};
    \path (v1) node[below] {$v_{-1}$};
    \draw (v1) edge[->]  node[above] {$e_1$} (v0);
    \draw (v0) edge[in=100,out=30,loop] node[above] {$e_2$} (); 
    \draw[scale=1.5] (v0) edge[in=-30,out=-100,loop] node[below] {$e_3$} (); 
  \end{tikzpicture}
\end{equation}
\par
We assume that the mass is shifted along the edges with constant
velocity $1$, and that the mass diversion in the vertices is as follows:
Since $v_{-1}$ has no incoming edges, no mass arrives at $v_{-1}$ and
hence no mass is inserted to $e_1$ from $v_{-1}$. Two thirds of the mass
arriving at $v_0$ from $e_1$ is diverted to $e_3$; the other third is
diverted to $e_2$, but with a flipped sign. One half of the mass
arriving at $v_0$ from $e_2$ is diverted to $e_2$ itself, the other half
is diverted to $e_3$.  Similarly, one half of the mass arriving at $v_0$
from $e_3$ is diverted to $e_2$ and the other half is diverted to $e_3$
itself.

Note that the mass diversion in $v_0$ contains a somewhat finer
structure than is usually considered in the literature: in most other
models, all the incoming mass at a vertex is summed up, and then the
entire mass is distributed to the outgoing edges according to certain
weights. In our model, however, the diversion of mass in $v_0$ depends
on the edge it arrives from.

We model the mass distribution by a function $f = (f_1,f_2,f_3) \in
L^1([0,1]) \oplus L^1([0,1]) \oplus L^1([0,l]) =: E$, where the
space $E$ is endowed with the norm 
$\|(f_1,f_2,f_3)\| = \|f_1\|_1 + \|f_2\|_1 + \|f_3\|_1$. Here, $f_k$
describes the mass distribution on the edge $e_k$; for each $k=1,2,3$
the number $0$ in the interval $[0,1]$ (or $[0,l]$, respectively) shall
denote the \emph{starting point} of the edge $e_k$. The time evolution
of our network flow can be described by the abstract Cauchy problem
$df/dt = Af$ where the operator $A$ on $E$ is given by
\begin{displaymath}
  \begin{aligned}
    D(A) &=  \Bigl\{(f_1,f_2,f_3)
    \in W^{1,1}((0,1)) \oplus W^{1,1}((0,1)) \oplus W^{1,1}((0,l))\colon\\
    &\hspace{5em} f_2(0) 
    = \frac{1}{2}f_2(1) + \frac{1}{2}f_3(1) - \frac{1}{3}f_1(1), \\
    &\hspace{5em} f_3(0) 
    = \frac{1}{2}f_2(1) + \frac{1}{2}f_3(1) + \frac{2}{3}f_1(1),
    \quad f_1(0) = 0 \Bigr\}, \\
    A(f_1,f_2,f_3) &= -(f_1',f_2',f_3').
  \end{aligned}
\end{displaymath}
Now we can prove the following properties of the abstract Cauchy problem
associated with $A$:

\begin{theorem}
  The operator $A$ defined above is closed, densely defined and has the
  following properties:
  \begin{enumerate}[\upshape (i)]
  \item Any complex number $\lambda$ is an eigenvalue of $A$ if and only
    if it is a spectral value of $A$ if and only if the matrix
    \begin{displaymath}
      S(\lambda) :=
      \begin{bmatrix}
        e^{-\lambda} - 2 & e^{-\lambda l} \\
        e^{-\lambda} & e^{-\lambda l} - 2
      \end{bmatrix}
    \end{displaymath}
    is singular. Moreover, for each eigenvalue $\lambda$ of $A$ the
    corresponding eigenspace is one-dimensional.
  \item $A$ is dissipative and generates a contractive $C_0$-semigroup
    on $E$.
  \item $A$ has compact resolvent.
  \item$(0,\one_{[0,1]}, \one_{[0,l]}) \in \ker(A)$ and, moreover,
    $(\frac{1}{3}\one_{[0,1]},\one_{[0,1]}, \one_{[0,l]}) \in \ker(A')$.
  \item $\spb(A)$ equals $0$ and is a dominant spectral value of $A$.
  \item The semigroup $(e^{tA})_{t \ge 0}$ is individually
    asymptotically positive, but not positive.
  \end{enumerate}
\end{theorem}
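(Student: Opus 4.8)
The plan is to establish (i)--(v) by direct computations with the first-order transport operator $A=-\partial_x$, and then to deduce (vi) from Theorem~\ref{thm:semigroups-asymptotic} with the help of Proposition~\ref{prop:projection-positive-with-one-strictly-positive-vector}. For (i), note that the resolvent equation $(\lambda\id-A)f=g$ reads $f_k'+\lambda f_k=g_k$ on each edge, with solution $f_k(x)=e^{-\lambda x}\bigl(f_k(0)+\int_0^x e^{\lambda s}g_k(s)\,ds\bigr)$. The condition $f_1(0)=0$ determines $f_1$, and substituting the endpoint values $f_1(1),f_2(1),f_3(l)$ into the two vertex conditions at $v_0$ yields a $2\times 2$ linear system for $(f_2(0),f_3(0))$ whose coefficient matrix is $-\tfrac12 S(\lambda)$. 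Hence $\lambda\in\rho(A)$ whenever $S(\lambda)$ is invertible --- which also shows that $A$ is closed with non-empty resolvent set --- while, taking $g=0$, the same computation shows that $\lambda$ is an eigenvalue precisely when $S(\lambda)$ is singular; since the entries of $S(\lambda)$ never all vanish (because $e^{-\lambda}\neq 0$), a singular $S(\lambda)$ has rank $1$, so the eigenspace is one-dimensional. This proves (i), and a short calculation gives the identity $\det S(\lambda)=2\bigl(2-e^{-\lambda}-e^{-\lambda l}\bigr)$, used repeatedly below.

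For (ii), $D(A)$ is dense in $E$ since it contains all triples of functions supported in the open edges. Dissipativity in $L^1$ is the standard computation: for $f\in D(A)$, using the sign functional one obtains $\repart\langle\operatorname{sgn}f,Af\rangle=\sum_k|f_k(0)|-\sum_k|f_k(l_k)|$, and the vertex conditions give $\sum_k|f_k(0)|\le\sum_k|f_k(l_k)|$ --- indeed $f_1(0)=0$, and the triangle inequality applied to the two remaining conditions gives $|f_2(0)|+|f_3(0)|\le|f_1(1)|+|f_2(1)|+|f_3(l)|$ --- so this quantity is $\le 0$. Since $S(\lambda)$ is invertible for all large real $\lambda$, the range condition of the Lumer--Phillips theorem holds, so $A$ generates a contraction semigroup. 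Part (iii) is then immediate, since $D(A)$ embeds continuously into $W^{1,1}([0,1])\oplus W^{1,1}([0,1])\oplus W^{1,1}([0,l])$, which embeds compactly into $E$.

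For (iv), that $(0,\one_{[0,1]},\one_{[0,l]})\in\ker A$ is a direct check of the three vertex conditions, and integrating by parts over each edge and using the vertex conditions together with $f_1(0)=0$ shows that $\langle Af,(\tfrac13\one_{[0,1]},\one_{[0,1]},\one_{[0,l]})\rangle=0$ for every $f\in D(A)$, so that $(\tfrac13\one_{[0,1]},\one_{[0,1]},\one_{[0,l]})\in D(A')$ with zero image, i.e.\ it lies in $\ker A'$. For (v), contractivity of the semigroup gives $\spb(A)\le 0$, while $\det S(0)=2(2-1-1)=0$ shows $0\in\sigma(A)$, whence $\spb(A)=0$. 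If $\beta\in\bbR$ and $\det S(i\beta)=0$, then $e^{-i\beta}+e^{-i\beta l}=2$; as both summands have modulus $1$, this forces $e^{-i\beta}=e^{-i\beta l}=1$, i.e.\ $\beta\in 2\pi\bbZ$ and $\beta l\in 2\pi\bbZ$, which by the irrationality of $l$ is possible only for $\beta=0$. Hence $\sigma_{\per}(A)=\{0\}$, so $\spb(A)=0$ is a dominant spectral value.

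Finally, for (vi): by (iii) the resolvent is compact, so $0$ is a pole of $R(\phdot,A)$; by (i) and (iv), $\ker A$ is one-dimensional and spanned by the positive vector $(0,\one_{[0,1]},\one_{[0,l]})$, while $\ker A'$ contains $(\tfrac13\one_{[0,1]},\one_{[0,1]},\one_{[0,l]})$, which is a strictly positive functional on $E=L^1\oplus L^1\oplus L^1$. Proposition~\ref{prop:projection-positive-with-one-strictly-positive-vector} (alternative (b)) then shows that $0$ is an algebraically simple eigenvalue --- in particular a first-order pole --- and that the associated spectral projection $P$ is positive. Since, by (v), $\spb(A)=0$ is dominant, $\sigma_{\per}(A)=\{0\}$ is finite and consists of poles, and $(e^{tA})_{t\ge 0}$ is bounded, Theorem~\ref{thm:semigroups-asymptotic} yields that the semigroup is individually asymptotically positive. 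To see that it is not positive, use the explicit small-time form of the transport semigroup: for an initial datum $f=(f_1,0,0)$ with $0\le f_1\in L^1([0,1])$ supported in $(1-\delta,1)$, $f_1\not\equiv 0$, and for $0<t<\delta$, the only inflow into $e_2$ at $v_0$ comes from $e_1$ with the negative weight $-\tfrac13$, whence $(e^{tA}f)_2(x)=-\tfrac13 f_1(1-t+x)$ for $0\le x<t$, which is $<0$ on a set of positive measure; thus $e^{tA}f\not\ge 0$. The main obstacles are the resolvent/ODE analysis underlying (i) and the explicit small-time description of the semigroup used in the last step; the remaining parts consist in feeding this information into the general theory.
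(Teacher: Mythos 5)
Your proof is correct and follows essentially the same route as the paper. The only substantive difference is in part (vi): you apply Proposition~\ref{prop:projection-positive-with-one-strictly-positive-vector}, exploiting that the eigenfunctional $(\tfrac13\one_{[0,1]},\one_{[0,1]},\one_{[0,l]})\in\ker A'$ is strictly positive so that both the simple-pole property and the positivity of $P$ follow in one step, whereas the paper first deduces that $0$ is a first-order pole from the boundedness of the contraction semigroup and then cites Corollary~\ref{cor:projections-positive} --- both variants of course give the same conclusion, and your explicit small-time formula for the non-positivity just makes precise what the paper sketches verbally.
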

\begin{proof}
  Obviously, $A$ is closed and densely defined.
	
  (i) Let $\lambda$ be a complex number. A straightforward computation
  shows that there exists a non-trivial function $f \in \ker(\lambda\id
  - A)$ if and only if $S(\lambda)$ is singular. Moreover, if such a
  function $f$ exists, then the same computation shows that $f$ is
  unique up to scalar multiples, so $\ker(\lambda\id-A)$ is
  one-dimensional. Finally, another simple (but somewhat lengthy)
  computation shows that $\lambda\id- A$ is surjective if $S(\lambda)$
  is not singular. This proves (i).
	
  (ii) Since $\det S(\lambda)$ is an entire function which is not identically $0$,
  $S(\lambda)$ must be regular for some $\lambda > 0$; hence $\lambda
  \in \rho(A)$ for some $\lambda > 0$. Using the boundary 
  condition satisfied by functions in $D(A)$ it
  is easy to check that the restriction of $A$ to the real part $E_\bbR$ 
  of $E$ is indeed dissipative. Since $A$ is a real operator, the restriction 
  $A|_{E_\bbR}$ generates a contractive $C_0$-semigroup on the real Banach 
  space $E|_{E_\bbR}$. Thus, $A$ generates a $C_0$-semigroup on $E$, and it 
  follows from \cite[Proposition~2.1.1]{Fendler1998} that this semigroup
  is contractive, too. In particular, $A$ is dissipative.
	
  (iii) We have seen in (ii) (or we can conclude immediately from (i))
  that $\rho(A) \not= \emptyset$. Since $D(A)$ compactly embeds into
  $E$, it follows that the resolvent is compact.
	
  (iv) The first assertion is obvious and the second assertion can
  easily be checked by using the definition of the adjoint.
	
  (v) Since $A$ is dissipative and has non-empty resolvent set, 
  no spectral value of $A$ can have
  strictly positive real part (alternatively, we could also conclude
  this from (i)).  Since $0 \in \sigma(A)$ according to (iv), we have
  indeed $\spb(A) = 0$. Now assume for a contradiction that $i\beta$ ($0
  \not= \beta \in \bbR$) is another spectral value of $A$ on the
  imaginary axis. Then it follows from (ii) that
  \begin{displaymath}
    0 = \det S(i\beta) = -2(e^{-i\beta} + e^{-i\beta l}) + 4.
  \end{displaymath}
  Taking the real part of the above equation we obtain
  \begin{math}
    \cos (\beta) + \cos (\beta l) = 2
  \end{math}
  and hence $\cos(\beta)=\cos(\beta l) = 1$. Thus, $\beta \in \pi
  (\frac{1}{2} + \bbZ)$ and $\beta l \in \pi (\frac{1}{2} + \bbZ)$ which
  is a contradiction since $l$ is irrational.
	
  (vi) Due to (iii) $\spb(A) = 0$ is a pole of the resolvent and since
  the semigroup $(e^{tA})_{t \ge 0}$ is bounded, $0$ is even a first
  order pole of the resolvent. Since all eigenspaces of $A$ are one
  dimensional,
  Corollary~\ref{cor:projections-positive} together with assertion (iv)
  implies that the associated spectral projection $P$ is positive. As
  $\spb(A)$ is a dominant spectral value according to (v), individual
  asymptotic positivity of the semigroup follows from
  Theorem~\ref{thm:semigroups-asymptotic}.
	
  That the semigroup is not positive is obvious if we consider a
  positive initial mass distribution $f$ which lives only on the first
  edge: after some time some of the mass of $f$ is diverted with a
  negative sign to the second edge, and when this first happens, there
  is no mass close the end of $e_2$ and $e_3$ which could compensate
  those negative values.
\end{proof}

Note in the proof of (i) that we cannot replace the second computation
with a compactness argument: equality of the point spectrum and the
spectrum does \emph{not} follow from the compactness of the embedding
$D(A) \hookrightarrow E$ as long as we have not shown that $\sigma(A)
\neq \bbC$, and to show this, we need assertion (i).

\paragraph*{A delay differential equation}

In \cite[Section~6.5]{DanersI} a delay differential equation was
considered as an example for eventual strong positivity on a
$C(K)$-space.  Here, we want to consider another delay differential
equation whose solution semigroup is only asymptotically positive.

Consider a time-dependent complex value $y(t)$ whose time evolution is
governed by the equation
\begin{equation}
  \label{eq:delay-equation}
  \dot y(t) = y(t-2) - y(t-1).
\end{equation}
As shown in \cite[Section~VI.6]{Engel2000}, this equation can be
rewritten as an abstract Cauchy problem $\dot u = Au$ in the space $E =
C([-2,0])$, where the operator $A$ is given by
\begin{equation}
  \begin{aligned}
    \label{form:generator-of-dealy-semigroup}
    D(A) & = \{f \in C^1([-2,0])\colon f'(0) = f(-2) - f(-1)\} \\
    Af & = f'
  \end{aligned}
\end{equation}
(one has to set $r = 2$, $Y = \bbC$, $B = 0$ and $\Phi(f) = f(-2) -
f(-1)$ in \cite[Section~VI.6]{Engel2000} to obtain our example). 
There it is also shown that the operator
$A$ generates a $C_0$-semigroup on $E$. We are now going to prove the
following theorem on this semigroup.

\begin{theorem}
  \label{thm:delay-equation}
  Let the operator $A$ on $E = C([-2,0])$ be given by
  \eqref{form:generator-of-dealy-semigroup}. Then the operator $A$ and
  the $C_0$-semigroup $(e^{tA})_{t \ge 0}$ on $E$ have the following
  properties:
  \begin{enumerate}[\upshape (i)]
  \item $A$ has compact resolvent and the spectral bound $\spb(A)$
    equals $0$ and is a dominant spectral value.
  \item $\spb(A)$ is an algebraically simple eigenvalue of $A$ and the
    associated spectral projection $P$ is positive.
  \item $(e^{tA})_{t \ge 0}$ is uniformly asymptotically positive, but
    neither positive nor individually eventually strongly positive with
    respect to any quasi-interior point of $E_+$.
  \end{enumerate}
\end{theorem}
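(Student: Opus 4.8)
The plan is to reduce everything to the characteristic function $h(\lambda) := \lambda - e^{-2\lambda} + e^{-\lambda}$ of the delay equation and then to feed the resulting spectral data into the results of Sections~\ref{section:resolvents-asymptotic} and~\ref{section:semigroups-asymptotic}. For (i), compactness of the resolvent is immediate: on $D(A)$ the graph norm is equivalent to the $C^1$-norm, $C^1([-2,0]) \hookrightarrow C([-2,0])$ is compact by Arzel\`a--Ascoli, and $\rho(A) \neq \emptyset$ since $A$ generates a semigroup. By the spectral theory of delay semigroups (\cite[Section~VI.6]{Engel2000}) one has $\sigma(A) = \{\lambda \in \bbC : h(\lambda) = 0\}$, each spectral value being a pole of $R(\phdot,A)$, and since $h(0) = 0$ we get $0 \in \sigma(A)$. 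The substantial part is to show that $h$ has no other zero with $\repart\lambda \ge 0$.

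I would first handle the open right half-plane. If $h(\lambda) = 0$ and $a := \repart\lambda > 0$ then $\lambda \neq 0$, and rewriting the equation as $\lambda = -\lambda\int_1^2 e^{-\lambda\sigma}\,d\sigma$ and dividing by $\lambda$ yields $1 = \bigl|\int_1^2 e^{-\lambda\sigma}\,d\sigma\bigr| \le \int_1^2 e^{-a\sigma}\,d\sigma = (e^{-a}-e^{-2a})/a$; but $g(a) := e^{-a} - e^{-2a} - a$ satisfies $g(0) = 0$ and $g'(a) = (2e^{-a}+1)(e^{-a}-1) < 0$ for $a > 0$, so $(e^{-a}-e^{-2a})/a < 1$ for every $a > 0$, a contradiction. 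On the imaginary axis, write $\lambda = i\beta$ with $\beta \neq 0$; the real and imaginary parts of $h(i\beta) = 0$ give $\cos 2\beta = \cos\beta$ and $\beta = \sin\beta - \sin 2\beta$. The first equation forces $\cos\beta \in \{1,-\tfrac12\}$; if $\cos\beta = 1$ the second gives $\beta = 0$, and if $\cos\beta = -\tfrac12$ then $\sin 2\beta = -\sin\beta$, so the second equation becomes $\beta = 2\sin\beta$, and since $\cos\beta = -\tfrac12$ gives $|\sin\beta| = \sqrt3/2$, this forces $|\beta| = \sqrt 3$, contradicting $|\beta| \ge 2\pi/3 > \sqrt 3$. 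Thus $\spb(A) = 0$ is a dominant spectral value, which proves (i). This spectral localisation is the step I expect to be the main obstacle; the rest is either routine or a direct application of earlier results.

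For (ii): $h'(0) = 1 + 2 - 1 = 2 \neq 0$, so $0$ is a simple zero of $h$ and therefore, again by \cite[Section~VI.6]{Engel2000}, an algebraically simple eigenvalue of $A$, in particular a simple pole of $R(\phdot,A)$. Its eigenspace $\ker A$ is the line $\langle\one\rangle$ (constants satisfy the boundary condition), which contains the positive vector $\one$. A short computation gives $\im A = \{g \in C([-2,0]) : g(0) + \int_{-2}^{-1} g(s)\,ds = 0\}$, a closed subspace of codimension one, so $\ker A' = (\im A)^\perp$ is spanned by the positive functional $\varphi$, $\langle\varphi, g\rangle = g(0) + \int_{-2}^{-1} g(s)\,ds$. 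Corollary~\ref{cor:projections-positive} then yields that the spectral projection $P$ for $0$ is positive; explicitly, $Pf = \tfrac12\bigl(f(0) + \int_{-2}^{-1}f(s)\,ds\bigr)\one$.

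For (iii): the delay semigroup is eventually norm continuous (\cite[Section~VI.6]{Engel2000}), so --- since $\sigma_{\per}(A) = \{0\}$ is a simple pole --- the comments following Theorem~\ref{thm:semigroups-asymptotic} show that $(e^{tA})_{t\ge 0}$ is bounded and uniformly exponentially stable on the spectral space $\ker P$ associated with $\sigma(A) \setminus \sigma_{\per}(A)$. As $\spb(A) = 0$ is dominant and $P \ge 0$, Theorem~\ref{thm:semigroups-asymptotic} gives that $(e^{tA})_{t\ge 0}$ is uniformly asymptotically positive. For the two negative assertions I would take a continuous $f \ge 0$, $f \neq 0$, with $f \equiv 0$ on $[-2,-1]$, $f(0) = 0$ and $f > 0$ on a subinterval of $(-1,0)$. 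Writing $e^{tA}f$ as $\theta \mapsto u(t+\theta)$, where $u$ solves $\dot u(\tau) = u(\tau-2) - u(\tau-1)$ with $u|_{[-2,0]} = f$: for $\tau \in (0,1)$ we have $\dot u(\tau) = f(\tau-2) - f(\tau-1) = -f(\tau-1) \le 0$, not identically $0$, with $u(0) = f(0) = 0$, so $(e^{\tau A}f)(0) = u(\tau) < 0$ for suitable small $\tau > 0$; hence the semigroup is not positive. Moreover $Pf = 0$, so if $(e^{tA})_{t\ge 0}$ were individually eventually strongly positive with respect to some quasi-interior point $u$, then $e^{t_0A}f \ge c\,u$ for some $t_0$ and $c > 0$; applying $P$ and using $Pe^{t_0A} = e^{t_0A}P$ gives $0 = e^{t_0A}Pf = Pe^{t_0A}f \ge c\,Pu$, which is absurd because $u$ quasi-interior in $C([-2,0])$ forces $u(0) > 0$, whence $Pu = \tfrac12\bigl(u(0)+\int_{-2}^{-1}u(s)\,ds\bigr)\one > 0$. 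This completes (iii).
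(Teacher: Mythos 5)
Your proof is correct, and while the overall architecture matches the paper (Arzel\`a--Ascoli for compact resolvent, the characteristic function, Corollary~\ref{cor:projections-positive} for positivity of $P$, Theorem~\ref{thm:semigroups-asymptotic} for uniform asymptotic positivity, and $Pf=0$ to rule out eventual strong positivity), a few of the key steps are handled by genuinely different arguments. For the spectral localisation in the open right half-plane, the paper substitutes $z=\lambda/2$ and bounds $|\sinh z|^2/|z|^2$ against Taylor expansions of $e^{3z}$; you instead write $\lambda=-\lambda\int_1^2 e^{-\lambda\sigma}\,d\sigma$, divide by $\lambda$, and derive the contradiction $1\le(e^{-a}-e^{-2a})/a<1$ from the elementary monotonicity of $g(a)=e^{-a}-e^{-2a}-a$. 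Your integral formulation is shorter, avoids the Taylor-series comparison, and exposes more clearly \emph{why} there are no right-half-plane zeros. On the imaginary axis the paper contents itself with ``it is easy to see''; you supply the full argument via $\cos\beta\in\{1,-\tfrac12\}$ and $|\beta|\ge 2\pi/3>\sqrt3$, which is worth having. For (ii), the paper derives algebraic simplicity through Proposition~\ref{prop:projection-positive-with-one-strictly-positive-vector}, whereas you deduce it from $h'(0)\neq 0$ via the delay-equation spectral theory in \cite[Section~VI.6]{Engel2000} and then read off $\im A$ and $\ker A'$ directly; both are fine, and your explicit formula $Pf=\tfrac12\bigl(f(0)+\int_{-2}^{-1}f\bigr)\one$ is a useful addition. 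Finally, for non-positivity you give a direct and self-contained argument via the delay ODE (initial data supported in $(-1,0)$ with $f(0)=0$ immediately forces $(e^{\tau A}f)(0)=-\int_0^\tau f(s-1)\,ds<0$), rather than citing \cite[Example~B-II.1.22]{Arendt1986}, and the same $f$ (since $Pf=0$) yields the contradiction for eventual strong positivity more directly than the paper's route through Theorem~\ref{thm:semigroups-strong} and Proposition~\ref{prop:projections-strong}. In sum: the same theorem skeleton, but your localisation and non-positivity arguments are more elementary and self-contained, while the paper's are slightly more parsimonious in what they spell out.
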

\begin{proof}
  (i) By the Arzel\`{a}--Ascoli Theorem the embedding $D(A)
  \hookrightarrow E$ is compact, and since $A$ has non-empty resolvent
  set its resolvent is compact. In particular, $\lambda\in\sigma(A)$ if
  and only if $\lambda$ is an eigenvalue of $A$.  A short computation
  shows that this is the case if and only if
  \begin{equation}
    \label{eq:characteristic-equation-for-dde}
    \lambda = e^{-2\lambda} - e^{-\lambda}.
  \end{equation}
  (alternatively, this follows from \cite[Proposition~VI.6.7]{Engel2000}).
  Obviously, $\lambda=0$ is a solution of
  \eqref{eq:characteristic-equation-for-dde}, so we have to show that
  \eqref{eq:characteristic-equation-for-dde} has no other solution with
  non-negative real part. If $\lambda \not= 0$ and if we set $z=\lambda/2$, then
  \eqref{eq:characteristic-equation-for-dde} is equivalent to
  \begin{equation}
    \label{eq:characteristic-equation-for-dde-1}
    e^{3z}=-\frac{\sinh z}{z}.
  \end{equation}
  It is easy to see that \eqref{eq:characteristic-equation-for-dde-1}
  does not have a solution on $i\mathbb R\setminus\{0\}$. We now show
  that \eqref{eq:characteristic-equation-for-dde-1} does not have a
  solution $z=\alpha+i\beta$ with $\alpha>0$ and $\beta\in\mathbb R$
  either. A short calculation using that $\sin^2\beta\leq\beta^2$ and
  $\sinh^2\alpha\geq\alpha^2$ shows that
  \begin{displaymath}
    \begin{split}
      \frac{|\sinh z|^2}{|z|^2}
      &=\frac{\sinh^2\alpha+\sin^2\beta}{\alpha^2+\beta^2}
      \leq \frac{\sinh^2\alpha+\beta^2}{\alpha^2+\beta^2}\\
      &=\frac{\sinh^2\alpha-\alpha^2}{\alpha^2+\beta^2}+1
      \leq\frac{\sinh^2\alpha-\alpha^2}{\alpha^2}+1
      =\frac{\sinh^2\alpha}{\alpha^2}.
    \end{split}
  \end{displaymath}
  Using the Taylor expansions for $\exp$ and $\sinh$ about $z=0$ we
  therefore have
  \begin{displaymath}
    |e^{3z}|
    =e^{3\alpha}
    >e^{\alpha}
    =\sum_{k=0}^\infty\frac{\alpha^k}{k!}
    >\sum_{k=0}^\infty\frac{\alpha^{2k}}{(2k)!}
    >\sum_{k=0}^\infty\frac{\alpha^{2k}}{(2k+1)!}
    =\frac{\sinh\alpha}{\alpha}
    \geq \frac{|\sinh z|}{|z|}
  \end{displaymath}
  for all $\alpha>0$. Hence \eqref{eq:characteristic-equation-for-dde-1}
  cannot have a solution with non-negative real part except for $z=0$.
  
  (ii) Since the resolvent of $A$ is compact, $\spb(A) = 0$ is a pole of
  the resolvent. To show that $0$ is an algebraically simple eigenvalue
  of $A$ we verify the assumptions of
  Proposition~\ref{prop:projection-positive-with-one-strictly-positive-vector}.
  To see that they are fulfilled, note that $\ker A$ is one-dimensional and spanned
  by the quasi-interior point $\one_{[-2,0]}$ of $E_+$.
  Moreover, one can easily check that the positive
  functional $\varphi \in E'$, given by $\varphi(f) = f(0) +
  \int_{-2}^{-1} f(x) \, dx$, is contained in $\ker A'$.
  
  (iii) Since the semigroup $(e^{tA})_{t \ge 0}$ is eventually
  norm-continuous \cite[Theorem~VI.6.6]{Engel2000} it follows from
  Theorem~\ref{thm:semigroups-asymptotic} and
  Remark~\ref{rem:eventually-norm-continuous-semigroup}
  that it is uniformly asymptotically positive. By
  \cite[Example~B-II.1.22]{Arendt1986} the semigroup is not positive.
  
  Finally, assume for a contradiction that the semigroup is individually
  eventually strongly positive with respect to a quasi-interior point
  $u$ of $E_+$. Since $u \gg_{\one_{[-2,0]}} 0$, the semigroup is then
  individually eventually strongly positive with respect to
  $\one_{[-2,0]}$ and Theorem~\ref{thm:semigroups-strong} implies that
  the spectral projection $P$ corresponding to $\spb(A) = 0$ fulfils $P
  \gg_{\one_{[-2,0]}} 0$. However,
  Proposition~\ref{prop:projections-strong} then yields that $\ker(A')$
  contains a strictly positive functional $\tilde \varphi$. Since $0$ is
  an algebraically simple eigenvalue of $A$, it is also an algebraically
  simple eigenvalue of $A'$ and hence $\ker A'$ is one-dimensional; see
  \cite[Section~III.6.6]{Kato1976}. Thus, $\tilde \varphi$ has to be a
  scalar multiple of the functional $\varphi$ from (b), which is clearly
  a contradiction, since $\varphi$ is not strictly positive.
\end{proof}

It is currently unclear whether the semigroup $(e^{tA})_{t \ge 0}$ is
\emph{individually eventually positive} in the sense that for each $f
\in E_+$ there exists $t_0 \ge 0$ such that $e^{tA}f \ge 0$ for all $t
\ge t_0$.

\section{Open problems}
\label{section:open-problems}

We have seen in several examples that the notion of ``eventual
positivity'' on a general Banach lattice is difficult from a structural
point of view, and therefore additional assumptions on the spectrum seem to be
necessary to obtain good descriptions. It is therefore natural to ask if
these assumptions can be changed or even weakened, and if there are
possible alternative definitions. Let us explicitly formulate the
following open problems:

(a) In our characterisations of strong eventual and asymptotic
positivity we always assumed the peripheral spectrum to be
finite. However, in some important examples, as e.g.~in some transport
equations, this assumption is not fulfilled. We therefore ask:

\begin{quote}
  \slshape How can asymptotic positivity of a semigroup be characterised
  if the peripheral spectrum $\sigma_{\per}(A)$ is allowed to be
  infinite and even unbounded?
\end{quote}

(b) Example~\ref{example:domination-property} shows that strong eventual
positivity of the resolvent or the semigroup cannot be characterised by
strong positivity of the spectral projection if the assumption $D(A)
\subseteq E_u$ is dropped. One could ask the following question:

\begin{quote}
  \slshape Suppose that all assumptions of
  Theorem~\ref{thm:semigroups-strong} are fulfilled except for the
  condition $D(A) \subseteq E_u$. Can individual eventual strong
  positivity of the semigroup still be characterised by individual
  eventual strong positivity of the resolvent at $\spb(A)$ plus a
  spectral condition?
\end{quote}

(c) We only defined the notion of asymptotic positivity of a semigroup
$(e^{tA})_{t \ge 0}$ under the assumption that the rescaled semigroup
$(e^{t(A-\spb(A))})_{t \ge 0}$ be bounded. If this assumption is not
fulfilled, it is not clear to the authors if the condition
$\distPos{e^{t(A-\spb(A))}f} \to 0$ for each $f \ge 0$ should still be
used to define individual asymptotic positivity, or if for instance the
condition $\frac{\distPos{e^{t(A-\spb(A))}f}}{\|e^{t(A-\spb(A))}\|} \to
0$ for each $f \ge 0$ would be more appropriate. The same question
arises for asymptotic positivity of the resolvent if the
Abel-boundedness condition in Definition~\ref{def:resolvents-asymptotic}
is dropped:

\begin{quote}
  \slshape How should asymptotic positivity of semigroups and resolvents
  be defined without additional boundedness assumptions?
\end{quote}

(d) The following problem is concerned with eventual positivity rather
than eventual strong positivity: In
Example~\ref{examples:ind-evtl-pos-is-complicated}(b) and in 
\cite[Example~8.2]{DanersI} we showed that
individual eventual positivity of a semigroup does not imply individual
eventual positivity of the resolvent at $\spb(A)$, even in finite
dimensions.  However, if the spectral bound $\spb(A)$ is a dominant
spectral value, one might ask whether at least the converse implication
is true:

\begin{quote}
  \slshape Let $(e^{tA})_{t \ge 0}$ be a $C_0$-semigroup with dominant
  spectral value $\spb(A)$ of $A$ and suppose that the resolvent is
  individually eventually positive at $\spb(A)$.  Does it
  follow (maybe under some additional regularity assumptions) that
  $(e^{tA})_{t \ge 0}$ is individually eventually positive?
\end{quote}

\paragraph*{Acknowledgements}

The authors would like to express their warmest thanks to Wolfgang
Arendt for many stimulating and helpful discussions, Anna Dall'Acqua for
her invaluable assistance concerning the bi-Laplace operator, and Khalid
Akhlil for suggesting to consider the Laplacian with non-local boundary
conditions. The first author wants to express his gratitude for a
pleasant stay at Ulm University, where part of the work was done.

\pdfbookmark[1]{\refname}{biblio}%
%\bibliographystyle{doi}%
%\bibliography{literature_evpos}

\begin{thebibliography}{10}

\bibitem{amann:83:dss}
H.~Amann, \emph{Dual semigroups and second order linear elliptic boundary value
  problems}, Israel J. Math. \textbf{45} (1983), 225--254.
  DOI:\,\href{http://dx.doi.org/10.1007/BF02774019}{\nolinkurl{10.1007/BF02774019}}

\bibitem{MR933321}
W.~Arendt and C.~J.~K. Batty, \emph{Tauberian theorems and stability of
  one-parameter semigroups}, Trans. Amer. Math. Soc. \textbf{306} (1988),
  837--852.
  DOI:\,\href{http://dx.doi.org/10.2307/2000826}{\nolinkurl{10.2307/2000826}}

\bibitem{Arendt1986}
W.~Arendt, A.~Grabosch, G.~Greiner, U.~Groh, H.~P. Lotz, U.~Moustakas,
  R.~Nagel, F.~Neubrander, and U.~Schlotterbeck, \emph{One-parameter semigroups
  of positive operators}, Lecture Notes in Mathematics, vol. 1184,
  Springer-Verlag, Berlin, 1986.
  DOI:\,\href{http://dx.doi.org/10.1007/BFb0074922}{\nolinkurl{10.1007/BFb0074922}}

\bibitem{MR3146835}
W.~Arendt, A.~F.~M. ter Elst, J.~B. Kennedy, and M.~Sauter, \emph{The
  {D}irichlet-to-{N}eumann operator via hidden compactness}, J. Funct. Anal.
  \textbf{266} (2014), 1757--1786.
  DOI:\,\href{http://dx.doi.org/10.1016/j.jfa.2013.09.012}{\nolinkurl{10.1016/j.jfa.2013.09.012}}

\bibitem{ABR90}
W.~Arendt, C.~J.~K. Batty, and D.~W. Robinson, \emph{Positive semigroups
  generated by elliptic operators on {L}ie groups}, J. Operator Theory
  \textbf{23} (1990), 369--407.

\bibitem{arendt:99:wrh}
W.~Arendt and P.~B{\'e}nilan, \emph{Wiener regularity and heat semigroups on
  spaces of continuous functions}, Topics in nonlinear analysis (J.~Escher and
  G.~Simonett, eds.), Progr. Nonlinear Differential Equations Appl., vol.~35,
  Birkh\"auser, Basel, 1999, pp.~29--49.
  DOI:\,\href{http://dx.doi.org/10.1007/978-3-0348-8765-6_8}{\nolinkurl{10.1007/978-3-0348-8765-6_8}}

\bibitem{arendt:12:fei}
W.~Arendt and R.~Mazzeo, \emph{Friedlander's eigenvalue inequalities and the
  {D}irichlet-to-{N}eumann semigroup}, Commun. Pure Appl. Anal. \textbf{11}
  (2012), 2201--2212.
  DOI:\,\href{http://dx.doi.org/10.3934/cpaa.2012.11.2201}{\nolinkurl{10.3934/cpaa.2012.11.2201}}

\bibitem{aw:03:dni}
W.~Arendt and M.~Warma, \emph{Dirichlet and {N}eumann boundary conditions:
  {W}hat is in between?}, J. Evol. Equ. \textbf{3} (2003), 119--135, Dedicated
  to Philippe B{\'e}nilan.
  DOI:\,\href{http://dx.doi.org/10.1007/s000280300005}{\nolinkurl{10.1007/s000280300005}}

\bibitem{MR3397313}
J.~Behrndt and A.~F.~M. ter Elst, \emph{Dirichlet-to-{N}eumann maps on bounded
  {L}ipschitz domains}, J. Differential Equations \textbf{259} (2015),
  5903--5926.
  DOI:\,\href{http://dx.doi.org/10.1016/j.jde.2015.07.012}{\nolinkurl{10.1016/j.jde.2015.07.012}}

\bibitem{MR0223711}
J.-M. Bony, \emph{Principe du maximum dans les espaces de {S}obolev}, C. R.
  Acad. Sci. Paris S\'er. A-B \textbf{265} (1967), A333--A336.

\bibitem{Brauer1961}
A.~Brauer, \emph{On the characteristic roots of power-positive matrices}, Duke
  Math. J. \textbf{28} (1961), 439--445.
  DOI:\,\href{http://dx.doi.org/10.1215/S0012-7094-61-02840-X}{\nolinkurl{10.1215/S0012-7094-61-02840-X}}

\bibitem{campbell:13:lac}
A.~P. Campbell and D.~Daners, \emph{Linear {A}lgebra via {C}omplex {A}nalysis},
  Amer. Math. Monthly \textbf{120} (2013), 877--892.
  DOI:\,\href{http://dx.doi.org/10.4169/amer.math.monthly.120.10.877}{\nolinkurl{10.4169/amer.math.monthly.120.10.877}}

\bibitem{Chung2015}
J.~Chung, \emph{Asymptotic positivity of solutions of second order differential
  equations}, Positivity, 2015, Online First.
  DOI:\,\href{http://dx.doi.org/10.1007/s11117-015-0356-2}{\nolinkurl{10.1007/s11117-015-0356-2}}

\bibitem{MR0550042}
P.~Cl{\'e}ment and L.~A. Peletier, \emph{An anti-maximum principle for
  second-order elliptic operators}, J. Differential Equations \textbf{34}
  (1979), 218--229.
  DOI:\,\href{http://dx.doi.org/10.1016/0022-0396(79)90006-8}{\nolinkurl{10.1016/0022-0396(79)90006-8}}

\bibitem{MR1761420}
P.~Cl{\'e}ment and G.~Sweers, \emph{Uniform anti-maximum principles}, J.
  Differential Equations \textbf{164} (2000), 118--154.
  DOI:\,\href{http://dx.doi.org/10.1006/jdeq.1999.3745}{\nolinkurl{10.1006/jdeq.1999.3745}}

\bibitem{MR0065391}
R.~Courant and D.~Hilbert, \emph{Methods of mathematical physics. {V}ol. {I}},
  Interscience Publishers, Inc., New York, N.Y., 1953.

\bibitem{Daners2014}
D.~Daners, \emph{Non-positivity of the semigroup generated by the
  {D}irichlet-to-{N}eumann operator}, Positivity \textbf{18} (2014), 235--256.
  DOI:\,\href{http://dx.doi.org/10.1007/s11117-013-0243-7}{\nolinkurl{10.1007/s11117-013-0243-7}}

\bibitem{DanersI}
D.~Daners, J.~Gl{\"u}ck, and J.~B. Kennedy, \emph{Eventually positive
  semigroups of linear operators}, J. Math. Anal. Appl. \textbf{433} (2016),
  1561--1593.
  DOI:\,\href{http://dx.doi.org/10.1016/j.jmaa.2015.08.050}{\nolinkurl{10.1016/j.jmaa.2015.08.050}}

\bibitem{Dorn2010}
B.~Dorn, M.~Kramar~Fijav{\v{z}}, R.~Nagel, and A.~Radl, \emph{The semigroup
  approach to transport processes in networks}, Phys. D \textbf{239} (2010),
  1416--1421.
  DOI:\,\href{http://dx.doi.org/10.1016/j.physd.2009.06.012}{\nolinkurl{10.1016/j.physd.2009.06.012}}

\bibitem{Dorn2009}
B.~Dorn, V.~Keicher, and E.~Sikolya, \emph{Asymptotic periodicity of recurrent
  flows in infinite networks}, Math. Z. \textbf{263} (2009), 69--87.
  DOI:\,\href{http://dx.doi.org/10.1007/s00209-008-0410-x}{\nolinkurl{10.1007/s00209-008-0410-x}}

\bibitem{Elhashash2008}
A.~Elhashash and D.~B. Szyld, \emph{On general matrices having the
  {P}erron-{F}robenius property}, Electron. J. Linear Algebra \textbf{17}
  (2008), 389--413.
  DOI:\,\href{http://dx.doi.org/10.13001/1081-3810.1271}{\nolinkurl{10.13001/1081-3810.1271}}

\bibitem{Engel2000}
K.-J. Engel and R.~Nagel, \emph{One-parameter semigroups for linear evolution
  equations}, Graduate Texts in Mathematics, vol. 194, Springer-Verlag, New
  York, 2000.
  DOI:\,\href{http://dx.doi.org/10.1007/b97696}{\nolinkurl{10.1007/b97696}}

\bibitem{Engel2006}
\bysame, \emph{A short course on operator semigroups}, Universitext, Springer,
  New York, 2006.
  DOI:\,\href{http://dx.doi.org/10.1007/0-387-36619-9}{\nolinkurl{10.1007/0-387-36619-9}}

\bibitem{MR1696142}
G.~Esposito, \emph{Non-local boundary conditions in {E}uclidean quantum
  gravity}, Classical Quantum Gravity \textbf{16} (1999), 1113--1126.
  DOI:\,\href{http://dx.doi.org/10.1088/0264-9381/16/4/002}{\nolinkurl{10.1088/0264-9381/16/4/002}}

\bibitem{Fendler1998}
G.~Fendler, \emph{On dilations and transference for continuous one-parameter
  semigroups of positive contractions on {${L}^p$}-spaces}, Ann. Univ. Sarav.
  Ser. Math. \textbf{9} (1998), iv+97.

\bibitem{FGG08}
A.~Ferrero, F.~Gazzola, and H.-C. Grunau, \emph{Decay and eventual local
  positivity for biharmonic parabolic equations}, Discrete Contin. Dyn. Syst.
  \textbf{21} (2008), 1129--1157.
  DOI:\,\href{http://dx.doi.org/10.3934/dcds.2008.21.1129}{\nolinkurl{10.3934/dcds.2008.21.1129}}

\bibitem{Friedland1978}
S.~Friedland, \emph{On an inverse problem for nonnegative and eventually
  nonnegative matrices}, Israel J. Math. \textbf{29} (1978), 43--60.
  DOI:\,\href{http://dx.doi.org/10.1007/BF02760401}{\nolinkurl{10.1007/BF02760401}}

\bibitem{GaGr08}
F.~Gazzola and H.-C. Grunau, \emph{Eventual local positivity for a biharmonic
  heat equation in {$\mathbb R^n$}}, Discrete Contin. Dyn. Syst. Ser. S
  \textbf{1} (2008), 83--87.
  DOI:\,\href{http://dx.doi.org/10.3934/dcdss.2008.1.83}{\nolinkurl{10.3934/dcdss.2008.1.83}}

\bibitem{Gazzola2010}
F.~Gazzola, H.-C. Grunau, and G.~Sweers, \emph{Polyharmonic boundary value
  problems}, Lecture Notes in Mathematics, vol. 1991, Springer-Verlag, Berlin,
  2010, Positivity preserving and nonlinear higher order elliptic equations in
  bounded domains.
  DOI:\,\href{http://dx.doi.org/10.1007/978-3-642-12245-3}{\nolinkurl{10.1007/978-3-642-12245-3}}

\bibitem{MR2568160}
F.~Gesztesy and M.~Mitrea, \emph{Nonlocal {R}obin {L}aplacians and some remarks
  on a paper by {F}ilonov on eigenvalue inequalities}, J. Differential
  Equations \textbf{247} (2009), 2871--2896.
  DOI:\,\href{http://dx.doi.org/10.1016/j.jde.2009.07.007}{\nolinkurl{10.1016/j.jde.2009.07.007}}

\bibitem{MR3183528}
F.~Gesztesy, M.~Mitrea, and R.~Nichols, \emph{Heat kernel bounds for elliptic
  partial differential operators in divergence form with {R}obin-type boundary
  conditions}, J. Anal. Math. \textbf{122} (2014), 229--287.
  DOI:\,\href{http://dx.doi.org/10.1007/s11854-014-0008-7}{\nolinkurl{10.1007/s11854-014-0008-7}}

\bibitem{GrSw98}
H.-C. Grunau and G.~Sweers, \emph{The maximum principle and positive principal
  eigenfunctions for polyharmonic equations}, Reaction diffusion systems
  ({T}rieste, 1995), Lecture Notes in Pure and Appl. Math., vol. 194, Dekker,
  New York, 1998, pp.~163--182.

\bibitem{MR1896075}
\bysame, \emph{Optimal conditions for anti-maximum principles}, Ann. Scuola
  Norm. Sup. Pisa Cl. Sci. (4) \textbf{30} (2001), 499--513 (2002). Available
  at
  \href{http://www.numdam.org/item?id=ASNSP_2001_4_30_3-4_499_0}{\nolinkurl{http://www.numdam.org/item?id=ASNSP_2001_4_30_3-4_499_0}}

\bibitem{MR1473861}
P.~Guidotti and S.~Merino, \emph{Hopf bifurcation in a scalar reaction
  diffusion equation}, J. Differential Equations \textbf{140} (1997), 209--222.
  DOI:\,\href{http://dx.doi.org/10.1006/jdeq.1997.3307}{\nolinkurl{10.1006/jdeq.1997.3307}}

\bibitem{MR1787081}
\bysame, \emph{Gradual loss of positivity and hidden invariant cones in a
  scalar heat equation}, Differential Integral Equations \textbf{13} (2000),
  1551--1568. Available at
  \href{http://projecteuclid.org/euclid.die/1356061139}{\nolinkurl{http://projecteuclid.org/euclid.die/1356061139}}

\bibitem{MR598688}
D.~S. Jerison and C.~E. Kenig, \emph{The {N}eumann problem on {L}ipschitz
  domains}, Bull. Amer. Math. Soc. (N.S.) \textbf{4} (1981), 203--207.
  DOI:\,\href{http://dx.doi.org/10.1090/S0273-0979-1981-14884-9}{\nolinkurl{10.1090/S0273-0979-1981-14884-9}}

\bibitem{MR2117663}
C.~R. Johnson and P.~Tarazaga, \emph{On matrices with {P}erron-{F}robenius
  properties and some negative entries}, Positivity \textbf{8} (2004),
  327--338.
  DOI:\,\href{http://dx.doi.org/10.1007/s11117-003-3881-3}{\nolinkurl{10.1007/s11117-003-3881-3}}

\bibitem{Kato1976}
T.~Kato, \emph{Perturbation theory for linear operators}, second ed.,
  Springer-Verlag, Berlin, 1976, Grundlehren der Mathematischen Wissenschaften,
  Band 132.
  DOI:\,\href{http://dx.doi.org/10.1007/978-3-642-66282-9}{\nolinkurl{10.1007/978-3-642-66282-9}}

\bibitem{katznelson:04:iha}
Y.~Katznelson, \emph{An introduction to harmonic analysis}, third ed.,
  Cambridge Mathematical Library, Cambridge University Press, Cambridge, 2004.

\bibitem{Kramar2005}
M.~Kramar and E.~Sikolya, \emph{Spectral properties and asymptotic periodicity
  of flows in networks}, Math. Z. \textbf{249} (2005), 139--162.
  DOI:\,\href{http://dx.doi.org/10.1007/s00209-004-0695-3}{\nolinkurl{10.1007/s00209-004-0695-3}}

\bibitem{MR614221}
S.~G. Krantz and H.~R. Parks, \emph{Distance to {$C\sp{k}$} hypersurfaces}, J.
  Differential Equations \textbf{40} (1981), 116--120.
  DOI:\,\href{http://dx.doi.org/10.1016/0022-0396(81)90013-9}{\nolinkurl{10.1016/0022-0396(81)90013-9}}

\bibitem{Meyer-Nieberg1991}
P.~Meyer-Nieberg, \emph{Banach lattices}, Universitext, Springer-Verlag,
  Berlin, 1991.
  DOI:\,\href{http://dx.doi.org/10.1007/978-3-642-76724-1}{\nolinkurl{10.1007/978-3-642-76724-1}}

\bibitem{Noutsos2008}
D.~Noutsos and M.~J. Tsatsomeros, \emph{Reachability and holdability of
  nonnegative states}, SIAM J. Matrix Anal. Appl. \textbf{30} (2008), 700--712.
  DOI:\,\href{http://dx.doi.org/10.1137/070693850}{\nolinkurl{10.1137/070693850}}

\bibitem{Noutsos2012}
D.~Noutsos and R.~S. Varga, \emph{On the {P}erron-{F}robenius theory for
  complex matrices}, Linear Algebra Appl. \textbf{437} (2012), 1071--1088.
  DOI:\,\href{http://dx.doi.org/10.1016/j.laa.2012.03.025}{\nolinkurl{10.1016/j.laa.2012.03.025}}

\bibitem{Olesky2009}
D.~D. Olesky, M.~J. Tsatsomeros, and P.~van~den Driessche,
  \emph{{$M_{\vee}$}-matrices: a generalization of {$M$}-matrices based on
  eventually nonnegative matrices}, Electron. J. Linear Algebra \textbf{18}
  (2009), 339--351.
  DOI:\,\href{http://dx.doi.org/10.13001/1081-3810.1317}{\nolinkurl{10.13001/1081-3810.1317}}

\bibitem{MR2124040}
E.~M. Ouhabaz, \emph{Analysis of heat equations on domains}, London
  Mathematical Society Monographs Series, vol.~31, Princeton University Press,
  Princeton, NJ, 2005.

\bibitem{Rump2003}
S.~M. Rump, \emph{Perron-{F}robenius theory for complex matrices}, Linear
  Algebra Appl. \textbf{363} (2003), 251--273, Special issue on nonnegative
  matrices, $M$-matrices and their generalizations (Oberwolfach, 2000).
  DOI:\,\href{http://dx.doi.org/10.1016/S0024-3795(02)00329-4}{\nolinkurl{10.1016/S0024-3795(02)00329-4}}

\bibitem{Schaefer1974}
H.~H. Schaefer, \emph{Banach lattices and positive operators}, Springer-Verlag,
  New York, 1974, Die Grundlehren der mathematischen Wissenschaften, Band 215.
  DOI:\,\href{http://dx.doi.org/10.1007/978-3-642-65970-6}{\nolinkurl{10.1007/978-3-642-65970-6}}

\bibitem{MR1067499}
M.~Schr{\"o}der, \emph{On the {L}aplace operator with nonlocal boundary
  conditions and {B}ose condensation}, Reports on Mathematical Physics
  \textbf{27} (1989), 259--269.
  DOI:\,\href{http://dx.doi.org/10.1016/0034-4877(89)90007-4}{\nolinkurl{10.1016/0034-4877(89)90007-4}}

\bibitem{Seneta1981}
E.~Seneta, \emph{Nonnegative matrices and {M}arkov chains}, second ed.,
  Springer Series in Statistics, Springer-Verlag, New York, 1981.
  DOI:\,\href{http://dx.doi.org/10.1007/0-387-32792-4}{\nolinkurl{10.1007/0-387-32792-4}}

\bibitem{MR1429095}
G.~Sweers, \emph{{$L\sp n$} is sharp for the anti-maximum principle}, J.
  Differential Equations \textbf{134} (1997), 148--153.
  DOI:\,\href{http://dx.doi.org/10.1006/jdeq.1996.3211}{\nolinkurl{10.1006/jdeq.1996.3211}}

\bibitem{MR1396904}
P.~Tak{\'a}{\v{c}}, \emph{An abstract form of maximum and anti-maximum
  principles of {H}opf's type}, J. Math. Anal. Appl. \textbf{201} (1996),
  339--364.
  DOI:\,\href{http://dx.doi.org/10.1006/jmaa.1996.0259}{\nolinkurl{10.1006/jmaa.1996.0259}}

\bibitem{Tanabe1997}
H.~Tanabe, \emph{Functional analytic methods for partial differential
  equations}, Monographs and Textbooks in Pure and Applied Mathematics, vol.
  204, Marcel Dekker, Inc., New York, 1997.

\bibitem{Neerven1996}
J.~van Neerven, \emph{The asymptotic behaviour of semigroups of linear
  operators}, Operator Theory: Advances and Applications, vol.~88, Birkh\"auser
  Verlag, Basel, 1996.
  DOI:\,\href{http://dx.doi.org/10.1007/978-3-0348-9206-3}{\nolinkurl{10.1007/978-3-0348-9206-3}}

\bibitem{MR923320}
J.~Weidmann, \emph{Spectral theory of ordinary differential operators}, Lecture
  Notes in Mathematics, vol. 1258, Springer-Verlag, Berlin, 1987.
  DOI:\,\href{http://dx.doi.org/10.1007/BFb0077960}{\nolinkurl{10.1007/BFb0077960}}

\bibitem{Yosida1995}
K.~Yosida, \emph{Functional analysis}, Classics in Mathematics,
  Springer-Verlag, Berlin, 1995.
  DOI:\,\href{http://dx.doi.org/10.1007/978-3-642-61859-8}{\nolinkurl{10.1007/978-3-642-61859-8}}

\bibitem{Zaslavsky1999}
B.~G. Zaslavsky and B.-S. Tam, \emph{On the {J}ordan form of an irreducible
  matrix with eventually non-negative powers}, Linear Algebra Appl.
  \textbf{302/303} (1999), 303--330, Special issue dedicated to Hans Schneider
  (Madison, WI, 1998).
  DOI:\,\href{http://dx.doi.org/10.1016/S0024-3795(99)00182-2}{\nolinkurl{10.1016/S0024-3795(99)00182-2}}

\end{thebibliography}
\providecommand{\bysame}{\leavevmode\hbox to3em{\hrulefill}\thinspace}

\end{document}